\newcommand{\fa}{\mathfrak{a}}
\newcommand{\fb}{\mathfrak{b}}
\newcommand{\fc}{\mathfrak{c}}
\newcommand{\fm}{\mathfrak{m}}
\newcommand{\fM}{\mathfrak{M}}
\newcommand{\fN}{\mathfrak{N}}
\newcommand{\fp}{\mathfrak{p}}
\newcommand{\fq}{\mathfrak{q}}
\newcommand{\fP}{\mathfrak{P}}
\newcommand{\fQ}{\mathfrak{Q}}
\newcommand{\Spec}{\operatorname{Spec}}
\newcommand{\Supp}{\operatorname{Supp}}
\newcommand{\Ass}{\operatorname{Ass}}
\newcommand{\Ann}{\operatorname{Ann}}
\newcommand{\Fit}{\operatorname{Fit}}
\newcommand{\HT}{\operatorname{ht}}
\newcommand{\rank}{\operatorname{rank}}
\newcommand{\depth}{\operatorname{depth}}
\newcommand{\colim}{\operatorname{colim}}
\newcommand{\Min}
{\operatorname{Min}}
\newcommand{\Max}
{\operatorname{Max}}
\newcommand{\cO}{\mathcal{O}}
\newcommand{\cC}{\mathcal{C}}
\newcommand{\cA}{\mathcal{A}}
\newcommand{\cR}{\mathcal{R}}
\newcommand{\cM}{\mathcal{M}}
\newcommand{\cF}{\mathcal{F}}
\newcommand{\cJ}{\mathcal{J}}
\newcommand{\bP}{\mathbf{P}}
\newcommand{\bR}{\mathbf{R}}
\newcommand{\bQ}{\mathbf{Q}}
\newcommand{\bZ}{\mathbf{Z}}
\newcommand{\cD}{\mathcal{D}}
\newcommand{\Tag}[1]{\href{https://stacks.math.columbia.edu/tag/#1}{\texttt{#1}}}
\newcommand{\citestacks}[1]{\cite[Tag \Tag{#1}]{stacks}}
\newcommand{\citetwostacks}[2]{\cite[Tags \Tag{#1} and \Tag{#2}]{stacks}}
\newcommand{\OS}[1]{\operatorname{OS}_2^\wedge({#1})}
\newtheorem{Thm}{Theorem}[section]
\newtheorem{Lem}[Thm]{Lemma}
\newtheorem{Cor}[Thm]{Corollary}
\theoremstyle{definition}
\newtheorem{Def}[Thm]{Definition}
\newtheorem{Discu}[Thm]{Discussion}
\newtheorem{Exam}[Thm]{Example}
\theoremstyle{remark}
\newtheorem{Rem}[Thm]{Remark}
\newtheorem{Ques}[Thm]{Question}
\title{$(S_2)$-ifications, semi-Nagata rings, and the lifting problem}
\author{Shiji Lyu}
\address{Department of Mathematics, Statistics, and Computer Science\\University of Illinois at Chicago\\Chicago, IL
60607-7045\\USA}
\email{\href{mailto:slyu@uic.edu}{slyu@uic.edu}}
\urladdr{\url{https://homepages.math.uic.edu/~slyu/}}
\begin{document}

\begin{abstract}
This is a two-part article.
In the first part, we study an alternative notion to Nagata rings.
    A Nagata ring is a Noetherian ring $R$ such that every finite $R$-algebra that is an integral domain
    has finite normalization.
    We replace the normalization by an $(S_2)$-ification,
    study new phenomena, and prove parallel results. 
    In particular, we show a Nagata domain has a finite $(S_2)$-ification.
In the second part, we study the local lifting problem.
We show that for a semilocal Noetherian ring $R$ that is $I$-adically complete for an ideal $I$, if $R/I$ has $(S_k)$ (resp. Cohen--Macaulay, Gorenstein, lci) formal fibers, so does $R$.
As a consequence, we show if $R/I$ is a quotient of a Cohen--Macaulay ring, so is $R$.
We also discuss difficulties in lifting geometrically $(R_k)$ formal fibers.
\end{abstract}

\maketitle

For a ring $R$, we write 
$\Min(R)$ for the set of minimal primes of $R$,
$\Spec_1(R)$  the set of primes of height $1$ of $R$,
$\Max(R)$  the set of maximal ideals of $R$.
We also write
$R^\circ=R\setminus\cup\Min(R)$.
In particular, if $R$ is a reduced ring,
then $R^\circ$ is the set of nonzerodivisors in $R$ \citestacks{00EW}.

%For a ring $R$ and elements $x,y\in R$, $(xR:y)$ denotes the colon ideal $\{z\in R\mid zy\in xR\}$.

For an integral domain $R$, $R^\nu$ denotes the normalization of $R$.
The notations $R^{n\sigma}$ and $R^{\sigma}$ are introduced in \S\ref{sec:NaiveandCanS2clos}.

For a scheme $X$, $\cO(X)$ denotes the section ring $\Gamma(X,\cO_X)$.

For a ring $R$ and an ideal $I$ of $R$,
$V(I)$ and $D(I)$ denotes the closed subscheme of $\Spec(R)$ defined by $I$ and its complement.
When $I=fR$ is principal we write $V(f)$ and $D(f)$.

For a ring $R$ and an ideal $I$ of $R$,
a minimal prime divisor of $I$ is an element of $V(I)$ minimal with respect to inclusion.
When $R$ is Noetherian, a prime divisor of $I$ is an element of $\Ass_R(R/I)$.

For a semilocal Noetherian ring $R$, $R^\wedge$ denotes its adic completion with respect to its Jacobson radical.
$R^\wedge$ is a finite product of Noetherian complete local rings.

\section{Introduction}
This is a two-part article motivated by the following classical question,
generally referred to as the lifting problem.
\begin{Ques}[cf. {\cite[Remarque 7.4.8]{EGA4_2}}]\label{ques:formallift}
Let $\bR$ be a property of Noetherian rings.
    Let $R$ be a Noetherian ring, $I$ an ideal of $R$.
    Assume $R$ is $I$-adically complete and $R/I$ is $\bR$.
    Is $R$ always $\bR$?
\end{Ques}
When Question \ref{ques:formallift} admits an affirmative answer, we say $\bR$ has the lifting property.
There have been numerous studies on the lifting problem and its variants,
for many important properties $\bR$.
For example, lifting holds for $\bR$=``Nagata'' \cite{Marot-Nagata-Lift} and $\bR$=``quasi-excellent'' \cite{formal-lifting-excellence-Gabber},  but not for $\bR$=``excellent'' or $\bR$=``universally catenary'' \cite{Greco-Universal-Catenary-No-Lift}.
We refer the reader to \cite[Appendix]{formal-lifting-excellence-Gabber} for more information.

In previous work (see \cite[\S 8]{Lyu-dual-complex-lift}), the author showed that $\bR$=``is a quotient of a Gorenstein ring'' satisfies the lifting property.
However, whether or not $\bR$=``is a quotient of a Cohen--Macaulay ring'' satisfies the lifting property
seems to be difficult.
We provide two perspectives on this question.\\

The first part \S\S\ref{sec:FiniteInclusion}--\ref{sec:liftSagataUC}
discusses a new notion, which the author calls \emph{semi-Nagata rings}.
A Nagata ring is a Noetherian ring $R$ so that for every finite $R$-algebra $B$ that is an integral domain,
$B^\nu$ is finite over $B$.
This is clearly equivalent to the standard definition \citestacks{032R}.
Lifting of the Nagata property is the starting point for lifting of other properties such as quasi-excellence.

We call a ring $R$ \emph{semi-Nagata} if $R$ is Noetherian and for every finite $R$-algebra $B$ that is an integral domain, $B$ admits a finite $(S_2)$-ification,
in the sense that there is a finite inclusion of integral domains $B\to C$ so that $C$ is $(S_2)$ and $B_\fp=C_\fp$ for all $\fp\in\Spec_1(B)$.
We have the following main result, 
which the author believes to be new
(Definition \ref{def:semi-Nagata} and Theorems \ref{thm:semi-Nagatacharacterizelocal} and \ref{thm:semi-Nagatacharacterize})
\begin{Thm}\label{thm:semi-Nagatamain}
Let $R$ be a Noetherian ring.
    \begin{enumerate}[label=$(\roman*)$]
        \item\label{SagataMain:S1fiber} If $R$ is semilocal, then $R$ is semi-Nagata if and only if $R$ has $(S_1)$ formal fibers.
        \item If $R$ is semi-Nagata, then every essentially finitely generated $R$-algebra is semi-Nagata.
        \item $R$ is semi-Nagata if and only if $R$ has $(S_1)$ formal fibers
        and for every $\fp\in\Spec(R)$
        there exists $f\in R,f\not\in\fp$
        so that $(R/\fp)_f$ is $(S_2)$.
        \item $R$ is semi-Nagata if and only if  for every finite $R$-algebra $B$ that is an integral domain,  there is a finite inclusion of integral domains $B\to C$ so that $C$ is $(S_2)$.
    \end{enumerate}
\end{Thm}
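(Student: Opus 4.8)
The ``only if'' direction is immediate from Definition~\ref{def:semi-Nagata}, since a finite $(S_2)$-ification $B\to C$ is in particular a finite inclusion of domains with $C$ being $(S_2)$. For the ``if'' direction, assume that every finite $R$-algebra $B$ that is a domain admits a finite inclusion $B\hookrightarrow D$ into an $(S_2)$ domain; the plan is to verify the two conditions in the characterization of semi-Nagata rings given in part $(iii)$ (proved in Theorem~\ref{thm:semi-Nagatacharacterize}), which I take to be available.

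First I would check the ``generic $(S_2)$'' condition. Fix $\fp\in\Spec(R)$, set $A=R/\fp$, and pick a finite inclusion $A\hookrightarrow D$ with $D$ an $(S_2)$ domain. As $D$ is a finitely generated torsion-free module over the Noetherian domain $A$, it is locally free of positive rank over a dense open $U\subseteq\Spec(A)$, so $A\to D$ is finite and faithfully flat over $U$. Over $U$ the property $(S_2)$ descends from $D$ to $A$: for $\fp'\in U$ choose $\fq\in\Spec(D)$ minimal over $\fp'D$, so that flatness gives $\dim A_{\fp'}=\dim D_{\fq}$ and $\depth A_{\fp'}=\depth D_{\fq}$, whence $(S_2)$ of $D$ forces $(S_2)$ of $A_{\fp'}$. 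Thus $(R/\fp)_f$ is $(S_2)$ for any $f\notin\fp$ with $D(f)\subseteq U$.

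Next I would check that $R$ has $(S_1)$ formal fibers. Since the formal fiber at a prime $\fp$ depends only on $R_\fp$, replace $R$ by $R_\fp$; the hypothesis is inherited, because a finite domain over a localization of $R$ is, after clearing denominators to make its generators integral over $R$, a localization of a finite $R$-domain, and a finite $(S_2)$ extension of the latter localizes to one of the former. This reduces us to $R$ local, where it suffices to show $R$ is semi-Nagata (then $R$ has $(S_1)$ formal fibers by Theorem~\ref{thm:semi-Nagatacharacterizelocal}). With both conditions of $(iii)$ in hand, $R$ is then semi-Nagata in general.

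The crux is therefore the local implication ``every finite $R$-domain has a finite $(S_2)$ extension $\Rightarrow$ $R$ is semi-Nagata'' (equivalently, $\Rightarrow R$ has $(S_1)$ formal fibers), which carries the whole weight since the hypothesis mentions no completions. I would complete a finite inclusion $A\hookrightarrow D$ (with $A$ a finite $R$-domain and $D$ an $(S_2)$ domain) to a finite inclusion $\widehat A\hookrightarrow\widehat D$ and argue that an embedded prime in a formal fiber of $A$ is incompatible with the $(S_2)$ property of $D$. The delicate point is that height-one primes of $A$ need not propagate well along $A\hookrightarrow D$ (going-down can fail when $A$ is far from normal --- precisely the behaviour that bad formal fibers detect), so one must exploit $(S_2)$-ness of $D$ through the naive and canonical $(S_2)$-closures of \S\ref{sec:NaiveandCanS2clos} rather than through a bare intersection of localizations. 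I expect this step either to be available from the proof of Theorem~\ref{thm:semi-Nagatacharacterizelocal} or to require an argument in its spirit.
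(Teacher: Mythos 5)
Your overall route is the paper's own: Theorem \ref{thm:semi-Nagatamain} is obtained there simply by combining Definition \ref{def:semi-Nagata} with Theorems \ref{thm:semi-Nagatacharacterizelocal} and \ref{thm:semi-Nagatacharacterize}, and the two verifications you actually write out --- that the hypothesis of part (iv) passes to localizations by clearing denominators (Remark \ref{Rem:localizesemi-Nagata}), and that it forces the generic $(S_2)$ condition via generic freeness and flat descent of depth --- are correct and coincide with what is done inside the proof of Theorem \ref{thm:semi-Nagatacharacterize}. The problem is the step you yourself say ``carries the whole weight'': the local implication ``every finite $R$-domain admits a finite inclusion into an $(S_2)$ domain $\Rightarrow$ $R$ has $(S_1)$ formal fibers.'' You leave it to the hope that it is ``available from the proof of Theorem \ref{thm:semi-Nagatacharacterizelocal} or requires an argument in its spirit,'' which is not a proof. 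In fact there is no residual gap to bridge, but for a reason you did not identify: Definition \ref{def:semi-Nagata}, the definition used in Theorems \ref{thm:semi-Nagatacharacterizelocal} and \ref{thm:semi-Nagatacharacterize}, is exactly the weak condition of part (iv) (no agreement at height-one primes is imposed), so the implication you need is literally Theorem \ref{thm:semi-Nagatacharacterizelocal}, \ref{semi-Nagatalocal:semi-Nagata}$\Rightarrow$\ref{semi-Nagatalocal:S1ring}, i.e.\ Lemma \ref{lem:semi-NagataIsS1ring}; the equivalence with the introduction's stronger notion (a finite $(S_2)$-ification) is \ref{semi-Nagatalocal:semi-Nagata}$\Leftrightarrow$\ref{semi-Nagatalocal:finiteS2ify}, resp.\ Theorem \ref{thm:semi-Nagatacharacterize}, \ref{semi-Nagata:semi-Nagata}$\Leftrightarrow$\ref{semi-Nagata:finiteS2ify}, since any finite sub-$B$-algebra $C$ of $B^\sigma$ automatically satisfies $B_\fq=C_\fq$ for all $\fq\in\Spec_1(B)$. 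Moreover, the fallback you sketch (complete $A\hookrightarrow D$ and contradict $(S_2)$-ness of $D$ with an embedded prime of a formal fiber of $A$) is not how the paper argues and is not developed enough to assess: Lemma \ref{lem:semi-NagataIsS1ring} instead runs a Noetherian induction on quotients, choosing $x\in R^\circ$, using the induction hypothesis for $R/xR$ to see $(R'/xR')^\wedge$ is $(S_1)$, lifting $(S_1)$ along the nonzerodivisor $x$ to $R'^\wedge$, and transferring to $R^\wedge$ by Lemma \ref{lem:finiteInclMinoverMinAssoverAss}.

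A second omission: part (ii) of the statement is never addressed. It does not follow formally from the other parts without a stability argument; in the paper it is the final assertion of Theorem \ref{thm:semi-Nagatacharacterize}, proved by noting that both characterizing conditions in \ref{semi-Nagata:S1ring} --- being an $(S_1)$-ring and being $(S_2)$-2 --- are preserved by essentially finitely generated algebras (the remark before Lemma \ref{lem:semi-NagataIsS1ring} and Lemma \ref{lem:S2-2rings}). In summary: granting the citations you make, nothing you wrote is mathematically wrong, but as written the attempt is incomplete precisely at the step you flag as decisive, and it silently omits part (ii); both defects are repaired by observing that the body's Definition \ref{def:semi-Nagata} is the weak condition of part (iv) and then quoting Theorems \ref{thm:semi-Nagatacharacterizelocal} and \ref{thm:semi-Nagatacharacterize} in full, which is exactly the paper's proof.
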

The corresponding classical result for Nagata rings is 
\begin{Thm}
Let $R$ be a Noetherian ring.
    \begin{enumerate}[label=$(\roman*)$]
        \item\label{nagata1} If $R$ is semilocal, then $R$ is Nagata if and only if $R$ has geometrically reduced formal fibers.
        \item\label{nagata2} If $R$ is Nagata, then every essentially finitely generated $R$-algebra is Nagata.
        \item\label{nagata3} $R$ is Nagata if and only if $R$ has geometrically reduced formal fibers
        and for every finite $R$-algebra $B$ that is an integral domain 
        there exists $f\in B^\circ$
        so that $B_f$ is normal.
        \item\label{nagata4} $R$ is Nagata if and only if  for every finite $R$-algebra $B$ that is an integral domain,  there is a finite inclusion of integral domains $B\to C$ so that $C$ is normal.
    \end{enumerate}
\end{Thm}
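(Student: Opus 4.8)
The plan is to take part~$(i)$ as the engine: part~$(ii)$ is classical, parts~$(iii)$ and~$(iv)$ then follow from $(i)$ and $(ii)$ by soft arguments, and $(i)$ itself rests on two classical theorems of Nagata --- that a complete Noetherian local ring is Nagata, and that a reduced Nagata local ring is analytically unramified. For the implication ``geometrically reduced formal fibers $\Rightarrow$ Nagata'' in~$(i)$: since $R^\wedge$ is a finite product of complete Noetherian local rings it is Nagata; given a finite $R$-algebra $B$ that is a domain, $B$ is semilocal with $B^\wedge=B\otimes_R R^\wedge$, and each formal fiber of $B$ is a finite base field extension of a formal fiber of $R$ (namely, over a prime $\fq$ of $B$ lying over $\fp$, it equals $R^\wedge\otimes_R\kappa(\fq)=(R^\wedge\otimes_R\kappa(\fp))\otimes_{\kappa(\fp)}\kappa(\fq)$), hence geometrically reduced. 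In particular the generic formal fiber $B^\wedge\otimes_B\Frac(B)$ is reduced, and since $B\to B^\wedge$ is faithfully flat, $B^\wedge$, which embeds into it, is reduced. Then the integral closure $D$ of $B^\wedge$ in its total ring of fractions is finite over $B^\wedge$ (it being reduced and, as a finite $R^\wedge$-algebra, Nagata); and $B^\nu\otimes_B B^\wedge$, which by flatness embeds into $\Frac(B)\otimes_B B^\wedge$ and is integral over $B^\wedge$, lies in $D$, hence is a $B^\wedge$-submodule of the Noetherian module $D$ and so is finite over $B^\wedge$. Faithfully flat descent of module-finiteness along $B\to B^\wedge$ then gives that $B^\nu$ is finite over $B$.

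For the reverse implication of~$(i)$, ``Nagata $\Rightarrow$ geometrically reduced formal fibers'': replacing $R$ by $R/\fp$ (still semilocal and Nagata) reduces to showing that the generic formal fiber $R^\wedge\otimes_R\Frac(R)$ of a semilocal Nagata \emph{domain} $R$ is geometrically reduced over $K:=\Frac(R)$; since this means remaining reduced after every finite extension $L/K$, it suffices that $R^\wedge\otimes_R L$ be reduced. Let $R'$ be the integral closure of $R$ in $L$; the Nagata hypothesis makes $R'$ finite over $R$, hence a semilocal \emph{normal} Nagata domain with fraction field $L$, and $R^\wedge\otimes_R L=(R')^\wedge\otimes_{R'}L$ is a localization of $(R')^\wedge$, so it is enough that $(R')^\wedge$ be reduced. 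But this is exactly the classical statement that a normal --- a fortiori reduced --- Nagata local ring is analytically unramified, applied at the maximal ideals of $R'$. \textbf{This reduction, together with that theorem, is the crux}: it is the single genuinely substantial ingredient, everything around it being formal manipulation.

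Part~$(ii)$ I also take as classical: by the standard reductions it follows from the fact that if $R$ is Nagata then $R[x]$ is Nagata, together with the immediate stability of the Nagata property under passage to quotients and to localizations. Then~$(iv)$ is easy: if $R$ is Nagata take $C=B^\nu$, while conversely, given a finite inclusion of domains $B\to C$ with $C$ normal, $B^\nu=C\cap\Frac(B)$ inside $\Frac(C)$ is a $B$-submodule of the finite $B$-module $C$, hence finite over $B$. Finally~$(iii)$: for the forward direction, each $R_\fp$ is again Nagata and local, so has geometrically reduced formal fibers by~$(i)$, and if $B$ is a finite $R$-algebra domain then $B$ is Nagata by~$(ii)$, so $B^\nu=B[c_1/f_1,\dots,c_r/f_r]$ is a finite $B$-algebra and $B_f=(B^\nu)_f$ is normal for $f:=f_1\cdots f_r\in B^\circ$. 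For the converse direction, a finite $R$-algebra domain $B$ inherits geometrically reduced formal fibers, so by~$(i)$ applied to the local rings $B_\fm$ one gets $(B^\nu)_\fp$ finite over $B_\fp$ for every prime $\fp$; the hypothesis that some $B_f$ is normal forces $B^\nu\subseteq B_f$, whence $B^\nu=\bigcup_n L_n$ with $L_n:=B^\nu\cap f^{-n}B$, each $L_n$ finite over $B$ (being isomorphic to an ideal of $B$); this increasing chain stabilizes after localizing at every prime, and since for chains of this special shape the equality $L_m=L_{m+1}$ persists under further localization, the closed sets $\Supp(L_{n+1}/L_n)$ form a descending chain and hence vanish for $n\gg0$, giving $B^\nu=L_N$ finite over $B$, i.e.\ $R$ is Nagata.
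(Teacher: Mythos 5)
Your proposal is correct in substance, but note that the paper does not actually prove this theorem: it cites EGA IV, \S\S 7.6--7.7 for parts (i)--(iii) and declares (iv) trivial, so there is no internal argument to compare against, and what you have done is reconstruct the standard proof underlying that citation. Your part (i) follows the classical route: for ``geometrically reduced formal fibers $\Rightarrow$ Nagata'' you use that $R^\wedge$ is Nagata (complete local rings are Nagata), reducedness of $B^\wedge$ via the reduced generic formal fiber, finiteness of the integral closure of $B^\wedge$, and faithfully flat descent --- all fine; for the converse you reduce, via passage to $R/\fp$ and to the finite normal extension $R'$, exactly to the Zariski--Nagata theorem that a reduced (normal) Nagata local ring is analytically unramified, which you invoke as a black box. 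That is legitimate given that the paper outsources everything, but be aware this theorem \emph{is} the content of the hard direction, so (i) is really a reduction to it; the same remark applies even more strongly to (ii), which you reduce to ``$R$ Nagata $\Rightarrow$ $R[x]$ Nagata'' --- Nagata's theorem, no easier than (ii) itself --- so (ii) is cited rather than proved. Parts (iii) and (iv), where you do genuine work, are correct; the only quibble is the justification at the end of (iii): the reason the closed sets $\Supp(L_{n+1}/L_n)$ form a descending chain is not that ``$L_m=L_{m+1}$ persists under further localization,'' but that $fL_{n+2}\subseteq L_{n+1}$ (since $fB^\nu\subseteq B^\nu$ and $f\cdot f^{-(n+2)}B=f^{-(n+1)}B$), so that once $(L_n)_\fp=(L_{n+1})_\fp$ one gets $(L_{n+1})_\fp=(L_{n+2})_\fp$; with that propagation in $n$ at each prime, pointwise stabilization (from (i) applied to the $B_\fp$) plus the Noetherian descending chain condition yields $B^\nu=L_N$ exactly as you conclude.
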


See \cite[\S 7.6 and \S 7.7]{EGA4_2} for \ref{nagata1}\ref{nagata2}\ref{nagata3},
whereas \ref{nagata4} is trivial.
In particular, %is now clear that a Nagata ring is semi-Nagata, 
from either \ref{nagata3} or \ref{nagata4} of both theorems,
we have the following result, which the author also believes to be new.
\begin{Cor}\label{cor:Nagataissemi-NagataIntro}
    A Nagata ring is semi-Nagata. 
    In particular, a Nagata domain has a finite $(S_2)$-ification.
\end{Cor}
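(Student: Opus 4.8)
The plan is to deduce the corollary directly from Theorem~\ref{thm:semi-Nagatamain}, by checking, for each finite $R$-algebra $B$ that is an integral domain, that there is a finite inclusion of integral domains $B\to C$ with $C$ satisfying $(S_2)$; the normalization $B^\nu$ serves this purpose. Concretely, I would combine two ingredients: that the normalization of a finite domain over a Nagata ring is again finite (this is the definition of ``Nagata''), and that a normal Noetherian domain satisfies Serre's condition $(S_2)$.

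In detail: assume $R$ is Nagata (hence Noetherian), and let $B$ be a finite $R$-algebra that is an integral domain. Then $B\to B^\nu$ is a finite inclusion of integral domains. Moreover $B^\nu$, being finite over $B$ and hence a finite $R$-module, is Noetherian, and a normal Noetherian domain satisfies $(R_1)$ and $(S_2)$; in particular $B^\nu$ satisfies $(S_2)$. Thus for every such $B$ we have exhibited a finite inclusion of integral domains $B\to C$ (with $C=B^\nu$) such that $C$ satisfies $(S_2)$, which is exactly the hypothesis appearing in part (iv) of Theorem~\ref{thm:semi-Nagatamain}; that theorem then yields that $R$ is semi-Nagata. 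For the final assertion, if in addition $R$ is a domain, apply the definition of semi-Nagata to the finite $R$-algebra $B=R$ itself, obtaining a finite $(S_2)$-ification of $R$.

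There is essentially no technical obstacle here: the entire weight of the statement has been transferred to Theorem~\ref{thm:semi-Nagatamain}(iv). The point worth emphasizing is conceptual rather than computational — the normalization $B^\nu$ is in general \emph{not} itself an $(S_2)$-ification of $B$, because it need not agree with $B$ at height-one primes (the singular locus of $\Spec(B)$ may have codimension one), so the requirement ``$B_\fp=C_\fp$ for all height-one primes $\fp$'' built into the notion of an $(S_2)$-ification genuinely needs the nontrivial content of Theorem~\ref{thm:semi-Nagatamain}(iv), namely that an arbitrary finite inclusion of $B$ into an $(S_2)$ domain can be ``shrunk'' to one agreeing with $B$ in codimension $\leq 1$. (One could equally invoke characterization \ref{nagata3} of the classical theorem together with part (iii) of Theorem~\ref{thm:semi-Nagatamain}, using that a geometrically reduced Noetherian ring is reduced and hence $(S_1)$ and that a normal Noetherian ring is $(S_2)$; this variant additionally uses the comparison of formal fibers and is slightly less direct.)
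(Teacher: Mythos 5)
Your proof is correct and matches the paper's intended derivation: the paper states that the corollary follows "from either (iii) or (iv) of both theorems," and your argument is precisely the (iv) route, taking $C=B^\nu$ and noting a normal Noetherian domain is $(S_2)$, then citing Theorem~\ref{thm:semi-Nagatamain}(iv) to upgrade this to a bona fide $(S_2)$-ification. Your closing remark correctly identifies where the real content lies (Remark~\ref{Rem:Nagatasemi-Nagata} in the body makes the same observation).
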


There are DVRs that are not Nagata,
\cite[Appendix A1, Example 3]{Nagata-local}.
On the other hand we have (Remark \ref{Rem:1dimsemi-Nagata} and Corollary \ref{cor:CMissemi-Nagata})
\begin{Thm}
    A one-dimesional Noetherian ring is semi-Nagata.
    A Cohen--Macaulay ring is semi-Nagata.
\end{Thm}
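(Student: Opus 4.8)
The one-dimensional statement follows directly from the definition. If $R$ is Noetherian with $\dim R\le 1$ and $B$ is a finite $R$-algebra that is an integral domain, then $\dim B=\dim\bigl(R/\ker(R\to B)\bigr)\le 1$; and a Noetherian domain of dimension at most $1$ satisfies $(S_2)$, since $(S_2)$ can fail only at a prime of height $\ge 2$ and $B$ has no such prime. Hence $C=B$ is itself a finite $(S_2)$-ification of $B$, so $R$ is semi-Nagata.

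For a Cohen--Macaulay ring $R$ the plan is to check the two conditions in Theorem~\ref{thm:semi-Nagatamain}(iii). First, $R$ has $(S_1)$---in fact Cohen--Macaulay---formal fibres; one reduces to the case $R$ local. Then $R^\wedge$ is Cohen--Macaulay, because completion preserves depth and Krull dimension and $\depth R=\dim R$. For a prime $\mathfrak Q$ of $R^\wedge$ lying over $\fq\in\Spec R$, the homomorphism $R_\fq\to(R^\wedge)_{\mathfrak Q}$ is flat and local with Cohen--Macaulay target, and its closed fibre is the localization at $\mathfrak Q$ of the formal fibre of $R$ over $\fq$; the additivity of depth and of dimension along flat local homomorphisms, combined with the inequality $\depth\le\dim$, forces that closed fibre to be Cohen--Macaulay. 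So every formal fibre of $R$ is Cohen--Macaulay, and in particular $(S_1)$.

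Second, fix $\fp\in\Spec R$ and set $D=R/\fp$; I must produce $f\in R\setminus\fp$ with $D_f$ satisfying $(S_2)$. The ring $D$ is universally catenary---$R$ is, since a polynomial ring over $R$ is Cohen--Macaulay and hence catenary---and the formal fibres of $D$ are among those of $R$, hence Cohen--Macaulay by the first step. For a Noetherian ring that is universally catenary with Cohen--Macaulay formal fibres the Cohen--Macaulay locus is open (a standard openness criterion; alternatively one reduces to the complete local case, where by Cohen's structure theorem the ring is a quotient of a regular local ring and so has open Cohen--Macaulay locus). The Cohen--Macaulay locus of $D$ also contains the generic point of $\Spec D$, at which the stalk is the field $\kappa(\fp)$, so it contains a nonempty basic open $D(\bar f)$; lifting $\bar f$ to $f\in R\setminus\fp$, we conclude that $D_f=(R/\fp)_f$ is Cohen--Macaulay, in particular $(S_2)$. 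By Theorem~\ref{thm:semi-Nagatamain}(iii), $R$ is semi-Nagata.

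The substantive step is the second condition in the Cohen--Macaulay case. Since Cohen--Macaulay rings need not be excellent, openness of the Cohen--Macaulay locus of $R/\fp$ is not automatic; the Cohen--Macaulay hypothesis enters precisely by making $R/\fp$ a homomorphic image of a Cohen--Macaulay ring (whence universally catenary with Cohen--Macaulay formal fibres), which is what secures that openness. The first condition, and the whole one-dimensional statement, are routine by comparison.
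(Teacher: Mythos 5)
Your one-dimensional argument is exactly the paper's (Remark~\ref{Rem:1dimsemi-Nagata}: take $C=B$), and your overall strategy for the Cohen--Macaulay case matches the paper's Corollary~\ref{cor:CMissemi-Nagata}: verify the two conditions in Theorem~\ref{thm:semi-Nagatamain}(iii), i.e.\ the $(S_1)$ formal fibre condition and the paper's ``$(S_2)$-2'' condition. Your first step is correct: the depth/dimension additivity argument along $R_\fq\to (R^\wedge)_{\fQ}$ is a sound direct replacement for the paper's citation of \cite[Proposition 6.3.8]{EGA4_2}.

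The gap is in the second step. You invoke, as ``a standard openness criterion,'' that a Noetherian ring which is universally catenary with Cohen--Macaulay formal fibres has open Cohen--Macaulay locus. For a \emph{semilocal} ring this is fine (the preimage of the locus in the completion is the Cohen--Macaulay locus of the completion, which is open, and the completion map is faithfully flat and quasi-compact, hence submersive), but for a general ring $D=R/\fp$ it is not an available theorem, and your fallback ``reduce to the complete local case'' only yields openness of $U_{\mathrm{CM}}(D_\fm)$ inside $\Spec(D_\fm)$ for each maximal ideal $\fm$ separately. Since the $\Spec(D_\fm)$ do not form an open cover of $\Spec(D)$, this neither gives openness of $U_{\mathrm{CM}}(D)$ nor even a single $f\notin\fp$ with $D_f$ Cohen--Macaulay: the non-CM locus could a priori be a dense set of closed points while every localized locus is open and contains the generic point. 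Deriving the openness half from the formal-fibre half is precisely the implication the paper avoids relying on --- this is why the semilocal characterization (Theorem~\ref{thm:semi-Nagatacharacterizelocal}) needs only the $(S_1)$-ring condition while the global one (Theorem~\ref{thm:semi-Nagatacharacterize}) keeps $(S_2)$-2 as a separate hypothesis, and why Lemma~\ref{lem:S2-2rings} deduces openness \emph{from} the ``-2'' condition rather than the other way around. The input that actually closes the argument, and what the paper cites, is that $R/\fp$ is a homomorphic image of a Cohen--Macaulay ring and such rings have open Cohen--Macaulay and $(S_k)$ loci (\cite[Proposition 6.11.8 and Remarques 6.11.9]{EGA4_2}); that proof uses the ambient Cohen--Macaulay ring itself (Auslander--Buchsbaum/perfection-type arguments), not merely the universal catenarity and Cohen--Macaulay formal fibres you extract from it. You in fact name the right hypothesis at the end (``homomorphic image of a Cohen--Macaulay ring'') but then replace it by this weaker pair of consequences, which, as stated and as justified, does not suffice.
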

This is not new, see \cite[\S\S1--2]{Macaulay-Cesnavi}.\\

%In contrast to normalization, it turns out that $(S_2)$-ifications are not unique.
One might expect that for a Noetherian domain $R$, the ring $R^{n\sigma}:=\bigcap_{\fp\in\Spec_1(R)} R_\fp$ is integral over $R$ and $(S_2)$,
and is the only $(S_2)$-ification of $R$.
This is not the case, even for a quasi-excellent $R$ (Example \ref{exam:LechFONSI}).
This phenomenon is reflected by the obstructions as in Definition \ref{def:FONSI} (``FONSIs'').
When no FONSIs exist,
the expectation is met (Theorem \ref{thm:NS2ofFONSIfree}).
When they do exist, $R^{n\sigma}$ is not  integral over $R$,
and we replace $R^{n\sigma}$ by $R^\sigma:=R^{n\sigma}\cap R^\nu$.

To show $R^\sigma$ is $(S_2)$,
and to show our main Theorem \ref{thm:semi-Nagatamain},
we show that for a semilocal $R$,
there exists a finite subalgebra of $R^\sigma$ that do not have FONSIs (Theorem \ref{thm:killFONSI}).
After all, FONSIs are pretty rare (Remarks \ref{rem:UCnoFONSI} and \ref{Rem:S2noFONSI} and Lemma \ref{lem:FONSIfinite}).
The idea is inspired by a classical argument of Ratliff \cite[\S 31, Lemma 4]{Mat-CRT}.
We use local cohomology to make a conceptual argument.

We warn the reader that for a semi-Nagata ring $R$,
$R^\sigma$ may not be finite (Example \ref{exam:semi-NagataRsigmaNotFinite}),
resulting in an infinite ascending chain of $(S_2)$-ifications;
and $(S_2)$-ifications of modules may not exist (Remark \ref{rem:NoModuleS2ify}).
Again, expectations are met when FONSIs are not present (Corollary \ref{cor:NOFONSISagataNS2finite} and Theorem \ref{thm:naiveS2ofmodulesfiniteforSagataNOFONSI}).

To conclude the first part, we show that lifting of the semi-Nagata property holds for universally catenary rings.
\begin{Thm}[=Theorem \ref{thm:liftSagataUC}]
    Let $R$ be a Noetherian ring, $I$ an ideal of $R$.
    Assume that 
    \begin{enumerate}
        \item $R$ is $I$-adically complete.
        \item\label{liftSagata:quotSagata} $R/I$ is semi-Nagata.
        \item\label{liftSagata:UC}
        $R$ is universally catenary.
    \end{enumerate}
    Then $R$ is semi-Nagata.
\end{Thm}

The author was not able to show lifting in full generality.
However, when restricted to semilocal rings,
much more advances in the lifting problem are made in the second part (\S\S\ref{sec:Local-Lifting-Nishimura}--\ref{sec:Local-lifting-P0-CM1}) of this article.
We repeat the problem in this setting.
\begin{Ques}[local lifting problem]\label{ques:localformallift}
Let $\bR$ be a property of Noetherian rings.
    Let $R$ be a semilocal Noetherian ring, $I$ an ideal of $R$.
    Assume $R$ is $I$-adically complete and $R/I$ is $\bR$.
    Is $R$ always $\bR$?
\end{Ques}
We show (\S\ref{sec:Local-lifting-result})
\begin{Thm}\label{thm:local-lifting-main}
    Question \ref{ques:localformallift} admits an affirmative answer when $\bR$=``has $(S_k)$ formal fibers,'' where $k\geq 0$ is arbitrary,
    ``has Cohen--Macaulay formal fibers,''
    ``has Gorenstein formal fibers,''
    ``has lci formal fibers,''
    and ``is a quotient of a Cohen--Macaulay ring.''
\end{Thm}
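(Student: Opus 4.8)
The plan is to reduce the whole statement to the single property ``has $(S_k)$ formal fibers'' for all $k$, together with a separate treatment of the Cohen--Macaulay-quotient statement, which is deduced from the Gorenstein-quotient lifting result of \cite{Lyu-dual-complex-lift} combined with the formal-fiber statements. For the formal-fiber properties the strategy follows the Nishimura-style flattening/approximation machinery developed in \S\ref{sec:Local-Lifting-Nishimura}: given a semilocal Noetherian ring $R$ that is $I$-adically complete with $R/I$ having, say, $(S_k)$ formal fibers, one must show that for every $\fp\in\Spec(R)$ and every finite field extension $L$ of $\kappa(\fp)$, the ring $(R^\wedge_\fp\otimes_{\kappa(\fp)}L)$ — or rather the relevant localizations of $R\to R^\wedge$ — satisfy $(S_k)$. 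The key point is that the formal fibers of $R$ over primes \emph{not} containing $I$ are governed by the formal fibers of localizations of $R/I^n$-like quotients, while primes containing $I$ are handled directly since $R/I$ is assumed to have the property and $R$ is $I$-adically complete.

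The key steps, in order, would be: first, establish the case $\bR=$``has $(S_k)$ formal fibers'' by the following dévissage. By a result reducing formal fibers to the semilocal complete case (and using that $R^\wedge$ is a finite product of complete local rings), reduce to $R$ complete local, hence excellent — wait, but $R$ itself need not be excellent, only $I$-adically complete; so instead one works with the map $R\to R^\wedge$ and factors it through $R\to (R/I)$-related intermediate rings. Concretely: (a) for $\fq\in V(I)$, the formal fiber of $R$ at $\fq$ agrees with that of $R/I$ at $\fq$ up to completion along $I$, and one uses that $(R/I)$ has $(S_k)$ formal fibers plus the fact that $I$-adic completion does not alter the fibers over $V(I)$; (b) for $\fp\notin V(I)$, use the Nishimura flatness criterion (\S\ref{sec:Local-Lifting-Nishimura}) to compare the fiber of $R\to R^\wedge$ at $\fp$ with fibers of the rings $R/I^n$ or with a Noether normalization argument, invoking that $(S_k)$ is an open condition and that the locus is stable under the relevant base changes; (c) patch the two ranges using that $(S_k)$ can be checked on fibers and that a Noetherian ring with $(S_k)$ fibers over both $V(I)$ and its complement, satisfying a compatibility at the boundary, has $(S_k)$ fibers everywhere. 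Second, deduce ``Cohen--Macaulay formal fibers'' $= (S_k)$ for all $k$ simultaneously, and ``Gorenstein'' and ``lci'' by the same flattening argument with $(S_k)$ replaced by the Gorenstein (resp. lci) locus, which is likewise open on a locally Noetherian scheme of finite type over the base, so the identical dévissage applies verbatim. Third, for ``is a quotient of a Cohen--Macaulay ring'': by \cite[\S8]{Lyu-dual-complex-lift}, $R/I$ being a quotient of a Gorenstein ring lifts, and a Cohen--Macaulay ring is a quotient of a Gorenstein ring locally; combine the Gorenstein-quotient lifting with the just-proved lifting of Cohen--Macaulay formal fibers and a characterization (à la Kawasaki) of quotients of Cohen--Macaulay rings in terms of a Cohen--Macaulay-ness/dualizing-complex condition plus the formal-fiber behavior, to conclude.

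The main obstacle I expect is step (b)–(c): controlling the formal fibers over primes \emph{not} containing $I$, since $R$ is only complete along $I$, not a complete local ring, so one cannot invoke excellence of $R$ directly and the comparison between the fiber of $R\to R^\wedge$ at such a $\fp$ and the fibers of the ``known'' ring $R/I$ is indirect. This is precisely where the Nishimura-type machinery in \S\ref{sec:Local-Lifting-Nishimura} must do the heavy lifting: one needs a faithfully flat descent / approximation statement showing that $R^\wedge_\fp$ is a filtered colimit (or a suitable limit) of rings whose fiber behavior is inherited from $R/I$, and then that $(S_k)$ (or CM, Gorenstein, lci) passes through this colimit. A secondary difficulty is the boundary-patching in (c): ensuring the $(S_k)$ condition on the fiber of a prime $\fp$ with $I\subseteq\fp$ but $\fp$ not minimal over $I$ is compatible with the condition coming from primes not containing $I$ — this requires that the property $\bR$ in question is detected on the punctured spectrum in a way that glues, which is true for all four of $(S_k)$, CM, Gorenstein, lci because each is an open, localization-stable condition, but must be stated carefully. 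Finally, for the Cohen--Macaulay-quotient statement, the subtlety is that ``quotient of a Cohen--Macaulay ring'' is not obviously a fiber-local condition, so one must first translate it (via Kawasaki's theorem: a Noetherian ring is a quotient of a Cohen--Macaulay ring iff it is universally catenary and admits a codimension function / its completions have a suitable property, together with Cohen--Macaulay formal fibers) into a combination of conditions each of which has already been shown to lift.
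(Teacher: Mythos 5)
There is a genuine gap: your proposal never identifies, let alone constructs, the ingredient that actually makes the Nishimura-type argument of \S\ref{sec:Local-Lifting-Nishimura} run. Theorem \ref{thm:Nishimura-local-lifting-argument} is not a ``flattening/approximation'' black box that one can invoke with ``the identical d\'evissage applies verbatim''; it requires as input a \emph{strictly functorial $\bP$-assignment} (Definition \ref{def:CategoryDQP}), i.e.\ an ideal $\fc(A)$ cutting out the non-$\bP$-locus, compatible under all $\bP$-maps, and the whole point of the argument is to prove that $\fc(R^\wedge)$ is extended from $R$ (via Rotthaus' lemma and induction on dimension), not to compare fibers over $V(I)$ with fibers over its complement. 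Your step (a) is also not correct as stated: for $\fq\supseteq I$ the formal fiber of $R$ at $\fq$ is the fiber of $R_\fq\to R_\fq^\wedge$ (completion at $\fq$, not at $I$), and it is not determined by the formal fibers of $R/I$; no patching of the $V(I)$/$D(I)$ ranges of the kind you describe appears, or could easily be made to work. The bulk of the actual proof is the construction of the assignments in \S\S\ref{sec:ExtendAssignment}--\ref{sec:lciAssign} (local cohomology annihilators for $(S_1)$ and $(S_k)$, a conductor for $(S_2)$, Fitting ideals of dualizing-complex cohomology for Gorenstein), and the lci case is genuinely hard: since $L_{A/\bZ}$ does not have finite cohomology, one needs the structural Lemma \ref{lem:StructureofCotangentModule} and a Fitting invariant for non-finite (pseudo-finite-by-flat) modules (Definition \ref{def:Fitting}, Theorem \ref{thm:lci-assignment}); asserting that the Gorenstein and lci loci are ``open, so the same argument applies'' skips exactly this.

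Two further points. First, the proof in the paper is a bootstrap in $\bP$: the case $(S_1)$ (trivial $\bQ$), then $(S_2)$ (with $\bQ=(S_1)$), then $(S_k)$, CM, Gorenstein, lci with $\bQ=(S_2)$, and for the last group hypothesis \eqref{Nsmr:Q-ify} of Theorem \ref{thm:Nishimura-local-lifting-argument} --- the existence of finite $(S_2)$-ifications --- is supplied by the semi-Nagata theory of the first part (Theorem \ref{thm:semi-Nagatacharacterizelocal}); your outline has no counterpart for this, and without it the hypotheses of the main lifting theorem are not met. (Your reduction of CM formal fibers to ``$(S_k)$ for all $k$'' would be fine on its own, but it does not help with Gorenstein or lci.) Second, your treatment of ``quotient of a Cohen--Macaulay ring'' is closest to the paper: Corollary \ref{cor:local-lift-quot-CM} deduces it from the CM-formal-fibers case via the argument of \cite[\S 8]{Lyu-dual-complex-lift}, which is a Kawasaki-style translation of the kind you gesture at; but as written your version leans on an unproved combination of Gorenstein-quotient lifting and an unspecified characterization, so it too would need to be made precise.
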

Note that when $k=1$, having $(S_k)$ formal fibers is exactly the semi-Nagata property (Theorem \ref{thm:semi-Nagatamain}\ref{SagataMain:S1fiber}).

We show the formal fiber properties in Theorem \ref{thm:local-lifting-main} following the argument of Nishimura \cite{Nishimura-semilocal-lifting}.
The key new input is that we find ideals that define the non-$\bP$-locus (where $\bP$=``$(S_k)$,''``Cohen--Macaulay,'' ``Gorenstein,'' or ``lci'')
of nice rings strictly functorially with respect to nice homomorphisms.
This task was effortless for the properties considered in \cite{Nishimura-semilocal-lifting}.
We explain this in \S\ref{sec:Local-Lifting-Nishimura}.

In our case, such an assignment of ideals can be found with effort. We do a basic reduction in \S\ref{sec:ExtendAssignment},
 saying we just need to assign  $\fm$-primary ideals to complete local rings $(A,\fm)$ that are $\bP$ exactly on the punctured spectrum,
 strictly functorial with respect to flat maps with $\bP$-fibers and $0$-dimensional special fiber.
After the reduction we find a desired assignment for all properties but lci in \S\ref{sec:SkCMGorAssign}.
The case $\bR$=``is a quotient of a Cohen--Macaulay ring'' of Theorem \ref{thm:local-lifting-main} follows from the case $\bR$=``has Cohen--Macaulay formal fibers''
via the argument already present in \cite[\S 8]{Lyu-dual-complex-lift}.

Finding the assignment for the lci property is the most difficult.
This is because the intrinsic invariant that determines a Noetherian ring $A$ is lci or not,
namely the cotangent complex $L_{A/\bZ}$,
does not have finite cohomology modules.
When $A$ is complete local, we can find a regular local ring $R$ and a surjective map $R\to A$,
and $L_{A/R}$ does have finite cohomology modules;
however, this is still fragile with respect to ring maps.
In any case, we need a way to use an ideal to detect the non-flatness of certain non-finite modules,
more explicitly the modules $C_n$ appearing in \cite{Briggs-Iyenger-Cotangent-Complex}.
We investigate their structures in \S\ref{sec:lciAssign}, and successfully define their Fitting invariant (for $n\geq\dim A+2$), which does the trick.
These observations, namely the structure of $C_n$ (Lemma \ref{lem:StructureofCotangentModule}) and Fitting invariant for certain non-finite modules (Definition \ref{def:Fitting}),
may be of their own interest.

Finally, in \S\ref{sec:Local-lifting-P0-CM1}, we treat the lifting of ``$\bP$ in codimension $0$'' and ``Cohen--Macaulay in codimension $1$.''
This section mainly serves as a discussion of difficulties running the arguments in \S\S\ref{sec:Local-Lifting-Nishimura}--\ref{sec:Local-lifting-result} for other standard properties such as $(R_k)$, but some positive results are obtained.
\\

The two parts of this article are not logically dependent on each other.
However, both parts involve $(S_2)$-ifications and equidimensionality.
In the first part this is a focus point,
whereas in the second part this is a safety requirement for the arguments.\\

\textsc{Acknowledgment}. The author thanks Pham Hung Quy for suggesting the author to consider the lifting problem for CM-quotients. The author thanks Linquan Ma, Kevin Tucker, and Wenliang Zhang for helpful discussions. The author was supported by an AMS-Simons Travel Grant.
 
\section{Finite inclusions of Noetherian integral domains}\label{sec:FiniteInclusion}
\begin{Lem}\label{lem:finiteInclMinoverMinAssoverAss}
    Let $R\subseteq R'$ be a finite inclusion of Noetherian semilocal domains.
    Then 
the canonical surjective map $\Spec(R'^\wedge)\to\Spec(R^\wedge)$
restricts to surjective maps
$\Min(R'^\wedge)\to\Min(R^\wedge)$
and
$\Ass(R'^\wedge)\to\Ass(R^\wedge)$.
\end{Lem}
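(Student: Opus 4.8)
The plan is to reduce, after passing to generic formal fibres, to the analogous statement for a finite \emph{flat} inclusion of Noetherian rings, where it will follow from going-down and from flat base change for associated primes. As a preliminary, note that $R'$ is Noetherian and semilocal (being finite over the semilocal Noetherian ring $R$), that the $\operatorname{Jac}(R)R'$-adic and $\operatorname{Jac}(R')$-adic topologies on $R'$ coincide (because $R'/\operatorname{Jac}(R)R'$ is Artinian, being finite over the Artinian ring $R/\operatorname{Jac}(R)$), and that completion commutes with the finite $R$-module $R'$; hence $R'^\wedge=R'\otimes_R R^\wedge$ is a finite $R^\wedge$-algebra, and it is injective over $R^\wedge$ since $R^\wedge$ is $R$-flat and $R\hookrightarrow R'$. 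Thus $R^\wedge\hookrightarrow R'^\wedge$ is a finite injective map of Noetherian rings, and $\Spec(R'^\wedge)\to\Spec(R^\wedge)$ is surjective by lying over.

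First I would observe that all the minimal and associated primes in sight lie over the generic point of $\Spec R$. Indeed $R^\wedge$ is flat over the domain $R$, so every nonzero element of $R$ remains a nonzerodivisor on $R^\wedge$; thus $R^\wedge$ is $R$-torsion-free, so $\Ass_R(R^\wedge)=\{(0)\}$, and by \citestacks{05BZ} every $\fp\in\Ass_{R^\wedge}(R^\wedge)$ satisfies $\fp\cap R=(0)$ --- in particular every minimal prime does. The same reasoning applied to $R'\hookrightarrow R'^\wedge$ shows $\fq\cap R'=(0)$, hence $\fq\cap R=(0)$, for every $\fq\in\Ass_{R'^\wedge}(R'^\wedge)$. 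So $\Min(R^\wedge)$, $\Ass(R^\wedge)$, $\Min(R'^\wedge)$ and $\Ass(R'^\wedge)$ all consist of primes lying over $(0)\in\Spec R$.

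Next, set $C:=(R\setminus 0)^{-1}R^\wedge=R^\wedge\otimes_R\Frac(R)$ and $C':=(R\setminus 0)^{-1}R'^\wedge=R'^\wedge\otimes_R\Frac(R)$. Since $R'$ is integral over $R$ and both are domains, $(R\setminus 0)^{-1}R'$ is a domain that is integral over the field $\Frac(R)$, hence is a field containing $R'$, hence equals $\Frac(R')$, which is a finite extension field of $\Frac(R)$. Consequently $C'=\Frac(R')\otimes_{\Frac(R)}C$ is a \emph{free} $C$-module of finite rank, so $C\hookrightarrow C'$ is a finite, faithfully flat, injective map of Noetherian rings; moreover the natural maps $\Spec C\to\Spec R^\wedge$ and $\Spec C'\to\Spec R'^\wedge$ are injective with images the primes lying over $(0)\in\Spec R$, compatibly with the maps to $\Spec(-)$. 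By the previous paragraph all the relevant minimal and associated primes lie in these images, and for a prime $\fp$ of $R^\wedge$ over $(0)$ one checks that $\fp$ is minimal (resp. associated) in $R^\wedge$ if and only if the corresponding prime of $C$ is --- for ``minimal'' because any prime of $R^\wedge$ strictly inside $\fp$ again lies over $(0)$, and for ``associated'' by compatibility of $\Ass$ with localization --- and similarly for $R'^\wedge$ and $C'$. So it suffices to prove that, for the finite faithfully flat inclusion $C\hookrightarrow C'$, contraction restricts to surjections $\Min(C')\to\Min(C)$ and $\Ass(C')\to\Ass(C)$. For $\Min$: flatness of $C\to C'$ gives going-down, so the contraction of a minimal prime of $C'$ admits no strictly smaller prime of $C$ and is therefore minimal, and surjectivity follows from lying over by lifting a minimal prime of $C$ and then passing to a minimal prime of $C'$ below the lift. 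For $\Ass$: flat base change for associated primes \citestacks{05BX} gives $\Ass_{C'}(C')=\bigcup_{\fp\in\Ass_C(C)}\Ass_{C'}(C'/\fp C')$, and for each $\fp\in\Ass_C(C)$ the nonzero module $C'/\fp C'$ is faithfully flat, hence torsion-free, over the domain $C/\fp$, so $\Ass_{C/\fp}(C'/\fp C')=\{(0)\}$; thus (again by \citestacks{05BZ}) every $\fq\in\Ass_{C'}(C'/\fp C')$ contracts to $\fp$ in $C$, while $\Ass_{C'}(C'/\fp C')\neq\emptyset$. This yields simultaneously that $\fq\cap C\in\Ass_C(C)$ for $\fq\in\Ass(C')$ and that every $\fp\in\Ass_C(C)$ is so obtained.

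The step I expect to be the main obstacle is the reduction of the previous paragraph. For an arbitrary finite injective map of Noetherian rings the contraction of a minimal prime need not be minimal --- for instance $\bZ\hookrightarrow\bZ\times\bZ/p$, whose minimal prime $\bZ\times 0$ contracts to $p\bZ$ --- so the argument genuinely uses that $R$ and $R'$ are domains, which is precisely what makes $C\to C'$ flat after inverting $R\setminus 0$.
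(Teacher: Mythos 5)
Your proof is correct and follows essentially the same route as the paper's: both reduce to a flat situation by localizing at nonzero elements of $R$ (the paper inverts a single $f\in R^\circ$ supplied by generic flatness, you invert all of $R\setminus\{0\}$, where finiteness over the field $\Frac(R)$ makes flatness automatic), observe that the minimal and associated primes of $R^\wedge$ and $R'^\wedge$ survive this localization because the completions are torsion-free over the domains $R$ and $R'$, and conclude by going-down and flat base change for $\Ass$. The details you spell out (such as $R'^\wedge=R'\otimes_R R^\wedge$ and the transfer of $\Min$/$\Ass$ along the localization) are exactly what the paper's citations are standing in for.
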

\begin{proof}
    There exists an $f\in R^\circ$ such that $R'_f$ is flat over $R_f$,
    so $(R'^\wedge)_f$ is flat over $(R^\wedge)_f$.
    As $f$ is a nonzerodivisor in both $R^\wedge$ and $R'^\wedge$
    we get the desired result, cf. \citetwostacks{00ON}{0337}.
\end{proof}

\begin{Lem}\label{lem:FactorFlatBir}
    Let $R\subseteq R'$ be a finite inclusion of Noetherian integral domains.
    Then there exists a factorization $R\subseteq R''\subseteq R'$ so that $R\to R''$ is flat and $R''\to R'$ is birational.
\end{Lem}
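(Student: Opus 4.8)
The plan is to work at the level of fraction fields and to realize $R''$ as a scaled model of $L=\Frac(R')$ over $R$. Set $K=\Frac(R)$ and $L=\Frac(R')$; since $R'$ is a module-finite $R$-algebra and a domain, $L=R'\otimes_R K$ is a finite field extension of $K$, of degree $e$, say. Because $1\in R'$, I would choose a $K$-basis $\omega_1=1,\omega_2,\dots,\omega_e$ of $L$ with all $\omega_i\in R'$: extend $\{1\}$ to a $K$-basis of $L$ and clear denominators in the remaining basis vectors (using that they are integral over $R$). Then $V:=R\omega_1+\cdots+R\omega_e\subseteq R'$ is a free $R$-module of rank $e$ (the $\omega_i$ are $K$-linearly independent, hence $R$-linearly independent), and since $V\otimes_R K=L=R'\otimes_R K$ while $R'/V$ is a finite $R$-module, $R'/V$ is torsion and so is killed by some nonzero $b\in R$; thus $bR'\subseteq V$. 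The candidate is
\[
R'':=R+bV\subseteq R'.
\]

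I would then carry out three verifications. First, $R''$ is a subring of $R'$ containing $R$: the only point is closure under multiplication, and for $r,r'\in R$ and $v,v'\in V$ one computes $(r+bv)(r'+bv')=rr'+b\,(rv'+r'v+bvv')$, where $rv'+r'v\in V$ because $V$ is an $R$-module, and $bvv'\in bR'\subseteq V$ because $v,v'\in V\subseteq R'$ and $R'$ is a ring; hence the product lies in $R+bV$. Second, $R\to R''$ is flat; in fact $R''$ is \emph{free}: since $\omega_1=1$ we have $bV=Rb+Rb\omega_2+\cdots+Rb\omega_e$ and $Rb\subseteq R$, so $R''=R\cdot 1+R(b\omega_2)+\cdots+R(b\omega_e)$, and the elements $1,b\omega_2,\dots,b\omega_e$ form a $K$-basis of $L$ (as $b\neq 0$), hence are $R$-linearly independent, so $R''\cong R^{\oplus e}$. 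Third, $R''\to R'$ is birational: $\Frac(R'')$ contains $b\omega_i/b=\omega_i$ for each $i$, so $\Frac(R'')\supseteq K(\omega_2,\dots,\omega_e)=L=\Frac(R')$, forcing equality. (That $R''$ is module-finite over $R$ and Noetherian is then automatic, and $R'$ stays finite over $R''$.)

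I expect the difficulty to lie entirely in choosing the subalgebra correctly, since the naive candidates all fail: the integral closure of $R$ in $L$ need not be finite over $R$; for a primitive element $\theta\in R'$ the subalgebra $R[\theta]=R[X]/(m_\theta)$ is free over $R$ only when $R$ is normal, so that the minimal polynomial $m_\theta$ of $\theta$ over $K$ already has coefficients in $R$ — and in positive characteristic $L/K$ need not even be primitive; and $R+fR'$ for a single $f\in R$ with $R'_f$ free over $R_f$ is in general not flat over $R$ (this already fails for $R$ a cusp sitting inside a degree-two overring, where one gets too many module generators). The device that rescues the construction is to replace the rank-$e$ lattice $V$ by its scaling $bV$, which \emph{is} closed under multiplication inside $R'$ once $bR'\subseteq V$, and then to adjoin $R\cdot 1$; insisting that $\omega_1=1$ is precisely what lets $R+bV$ be generated by $e$ elements, hence be free.
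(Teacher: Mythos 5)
Your proof is correct, and it takes a genuinely different route from the paper. The paper argues by induction on the intermediate fraction field: it picks $x'\in R'$ not in $K=\Frac(R)$, rescales it to $xx'$ with $x\in R$ so that the monic minimal polynomial of $xx'$ acquires coefficients in $R$, concludes that $R[xx']\cong R[T]/(f_{xx'})$ is finite free over $R$, and iterates (each step strictly enlarges the fraction field, so the process stops), yielding $R''$ as a tower of monogenic free extensions. You instead produce $R''$ in a single step by a conductor-style order construction: take a $K$-basis $1=\omega_1,\omega_2,\dots,\omega_e$ of $L=\Frac(R')$ inside $R'$, let $V$ be the free lattice it spans, choose $b\in R\setminus\{0\}$ with $bR'\subseteq V$, and set $R''=R+bV$; the containment $bR'\subseteq V$ is exactly what makes $R''$ multiplicatively closed, normalizing $\omega_1=1$ is what makes $R''$ free on $1,b\omega_2,\dots,b\omega_e$, and $\omega_i=b\omega_i/b$ gives birationality. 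Both arguments share the basic device of rescaling by nonzero elements of $R$, and both in fact produce a module-finite \emph{free} birational intermediate ring, which is all that is used later (in the proof of Lemma \ref{lem:MinToAssNOETH}). Your version buys a one-shot construction with an explicit rank-$[L:K]$ basis and no induction or minimal-polynomial bookkeeping; the paper's version buys brevity and an $R''$ presented as an iterated hypersurface extension $R[T]/(\text{monic})$. All the individual steps you state check out: $L=R'\otimes_RK$ is a finite field extension, denominators can be cleared into $R'$ by nonzero elements of $R$, $R'/V$ is finitely generated torsion so $b$ exists, and the three verifications (ring, free, birational) are complete as written.
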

\begin{proof}
    Let $x'\in R'^\circ$ be not in the fraction field $K$ of $R$.
    Let $f_{x'}(T)=\sum a_i T^i$ be the monic minimal polynomial of $x'$ over 
    $K$, and let $d=\deg f_{x'}$.
    Then for $x\in R^\circ$ the monic minimal polynomial of $xx'$ over 
    $K$ is $f_{xx'}(T)=\sum x^{d-i}a_i T^i.$
    Therefore we may choose $x$ so that $f_{xx'}\in R[T]$,
    so $R[xx']\cong R[T]/f_{xx'}(T)$ is flat over $R$.
    Inductively we can find our $R''$.
\end{proof}

\begin{Lem}[cf. {\cite[(33.11)]{Nagata-local}}]\label{lem:MinToAssNOETH}
    Let $R\subseteq R'$ be a finite inclusion of Noetherian integral domains.
    Let $x\in R^\circ$.
    Then every minimal prime divisor of $xR'$ lies above a prime divisor of $xR$.
\end{Lem}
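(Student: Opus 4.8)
The plan is to localize at the contracted prime, rephrase the conclusion as a depth statement, and then pass to completions, where the relevant pathology of $R$ becomes a one–dimensional minimal prime detected by Lemma~\ref{lem:finiteInclMinoverMinAssoverAss}.

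First I would put $\fq'$ equal to the given minimal prime divisor of $xR'$ and $\fp=\fq'\cap R$, noting $x\in\fp$ and $x\ne0$. Since the formation of $\Ass$ commutes with localization, the claim is equivalent to $\fp R_\fp\in\Ass_{R_\fp}(R_\fp/xR_\fp)$, so I would replace $R$ by $R_\fp$ and $R'$ by $R'\otimes_R R_\fp$; the latter is still a Noetherian semilocal domain, finite over $R$, with $R\subseteq R'$. Now $R$ is local with maximal ideal $\fp$, the prime $\fq'$ is a maximal ideal of $R'$ lying over $\fp$, and $\HT\fq'=\dim R'_{\fq'}=1$ because $\fq'$ is a minimal prime of the nonzero principal ideal $xR'$. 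As $x$ is a nonzerodivisor in the domain $R$ with $x\in\fp$, we have $\depth(R/xR)=\depth R-1$, and $\fp\in\Ass_R(R/xR)$ is equivalent to $\depth(R/xR)=0$, i.e.\ to $\depth R=1$. Since a domain of positive dimension has depth $\ge1$, the task reduces to showing $\depth R\le1$, equivalently $\depth(R^\wedge)\le1$.

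Next I would complete. The map $R^\wedge\to (R')^\wedge$ is finite, and $(R')^\wedge=\prod_{\fn}\widehat{R'_{\fn}}$ with $\fn$ running over the maximal ideals of $R'$; the factor $\widehat{R'_{\fq'}}$ is a quotient of $(R')^\wedge$, so the composite $R^\wedge\to (R')^\wedge\to\widehat{R'_{\fq'}}$ is finite. Since $R'_{\fq'}$ is a one–dimensional Noetherian local domain, $\widehat{R'_{\fq'}}$ has dimension $1$, hence a minimal prime $\overline{\fQ}$ with $\dim(\widehat{R'_{\fq'}}/\overline{\fQ})=1$. Viewing $\overline{\fQ}$ as a minimal prime $\fQ$ of $(R')^\wedge$ and applying Lemma~\ref{lem:finiteInclMinoverMinAssoverAss}, its contraction $\fP$ to $R^\wedge$ is a minimal prime of $R^\wedge$, and $R^\wedge/\fP\hookrightarrow\widehat{R'_{\fq'}}/\overline{\fQ}$ is a finite injective ring map, so $\dim(R^\wedge/\fP)=1$. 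As $\fP\in\Min(R^\wedge)\subseteq\Ass(R^\wedge)$ and $\depth M\le\dim(A/\fP)$ for every $\fP\in\Ass_AM$ (with $A$ Noetherian local, $M$ finite), we conclude $\depth(R^\wedge)\le1$, which finishes the argument.

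The only step that is not purely formal is the passage to completions: over $R$ itself there is nothing to detect, since $R$ is a domain, and it is precisely after completing that the failure of $R$ to be $(S_2)$ at $\fp$ materializes as a genuine one–dimensional minimal prime of $R^\wedge$; Lemma~\ref{lem:finiteInclMinoverMinAssoverAss} is exactly what transports it back down to $R^\wedge$. With that lemma in hand, no flat/birational factorization (Lemma~\ref{lem:FactorFlatBir}) or conductor bookkeeping is needed, in contrast to more classical treatments of this fact.
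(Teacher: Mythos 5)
Your proof is correct, and it takes a genuinely different route from the paper's. The paper argues without completions: it first invokes Lemma~\ref{lem:FactorFlatBir} to reduce to the birational case, then runs a conductor-element argument in the style of Ratliff/Nagata (choose $b$ with $bR'\subseteq R$, bound powers of $\fp'$ by $bR'$, and derive a contradiction from $\depth R_\fp\geq 2$). You instead localize at $\fp=\fq'\cap R$, rephrase the desired conclusion as the depth bound $\depth R_\fp\leq 1$, and detect it after completion: the height-one maximal ideal $\fq'$ of the localized $R'$ produces a minimal prime of $(R')^\wedge$ whose quotient is one-dimensional, Lemma~\ref{lem:finiteInclMinoverMinAssoverAss} contracts it to a minimal prime $\fP$ of $R^\wedge$ with $\dim(R^\wedge/\fP)=1$ (finiteness preserving dimension), and Ischebeck's inequality $\depth A\leq\dim(A/\fp)$ for $\fp\in\Ass(A)$ finishes. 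All the steps check out: the localization preserves minimality of $\fq'$ over $xR'$ and turns it into a height-one maximal ideal; the Jacobson-radical and $\fp$-adic completions of the semilocal $R'$ agree, so $(R')^\wedge=R'\otimes_R R^\wedge=\prod_\fn\widehat{R'_\fn}$ is finite over $R^\wedge$; and there is no circularity, since Lemma~\ref{lem:finiteInclMinoverMinAssoverAss} precedes this lemma and is proved by generic flatness alone. What each approach buys: yours is shorter and more conceptual, trading the conductor bookkeeping for completion machinery and standard depth facts, and it makes transparent that the obstruction lives in a low-dimensional associated (here minimal) prime of $R^\wedge$; the paper's proof is more elementary and self-contained, staying at the level of $R$ and $R'$ and remaining closer to the classical treatment in Nagata (33.11), which is in keeping with the paper's citation of that source.
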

\begin{proof}
By Lemma \ref{lem:FactorFlatBir} we may assume $R$ and $R'$ has the same fraction field.
Let $\fp'$ be a minimal prime divisor of $xR'$ and let $\fp=\fp'\cap R$.

    Let $b\in R^\circ$ be such that $bR'\subseteq R$ and that $b\in\fp$.
As $\HT(\fp')=1$, $\fp'^nR'_{\fp'}\subseteq bR'_{\fp'}$
for some $n$.
We can therefore find an $s\in R'\setminus\fp'$ such that $s\fp'^n\subseteq bR'\subseteq R$,
and consequently 
$bs^t\fp'^{nt}\subseteq b^{t+1}R'\subseteq b^{t}R$ for all $t$.
If $\fp\not\in\Ass_{R}(R/xR)$,
then $\depth R_\fp\geq 2$,
so $\fp\not\in\Ass_{R}(R/bR)$,
and we can take $c\in\fp$ a nonzerodivisor on $R/bR$,
thus a nonzerodivisor on $R/b^{t}R$ for all $t$.
As $bs^t\fp'^{nt}\subseteq b^{t}R$,
we have $c^{nt}bs^t\in b^{t}R$,
so $bs^t\in b^tR$ as $bs^t\in bR'\subseteq R$.
Then $b\in b^tR'_{\fp'}$ for all $t$,
contradiction.
Therefore $\fp\in\Ass_R(R/xR)$.
\end{proof}

\section{Subalgebras of normalization}
See \cite[\S 33]{Nagata-local} for  relevant materials.
\begin{Thm}\label{thm:Krull}
    Let $R$ be a Noetherian integral domain,
    %$R^\nu$ its normalization,
    $S$ a subalgebra of $R^\nu$.
    Then for every $\fp\in\Spec(R)$,
    there are only finitely many $\fq\in\Spec(S)$ above $\fp$,
    and $\kappa(\fq)$ is finite over $\kappa(\fp)$ for all $\fq$.
\end{Thm}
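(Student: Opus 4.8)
The plan is to reduce first to $S=R^\nu$, then to a local base, and finally to pull uniform bounds out of an exhaustion of $R^\nu$ by module-finite subalgebras, using the completion. The ideas are in the spirit of \cite[\S 33]{Nagata-local}.

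\emph{Reductions.} Every element of $R^\nu$ is integral over $R$, hence over $S$; so $R^\nu$ is integral over $S$ and $\Spec(R^\nu)\to\Spec(S)$ is surjective. Thus each $\fq\in\Spec(S)$ over $\fp$ is $\fQ\cap S$ for some $\fQ\in\Spec(R^\nu)$ over $\fp$, and $\kappa(\fp)\subseteq\kappa(\fq)\subseteq\kappa(\fQ)$, so it suffices to treat $S=R^\nu$. Replacing $R$ by $R_\fp$ and using that normalization commutes with localization, we may assume $(R,\fm)$ is local and $\fp=\fm$; since $R^\nu$ is integral over $R$, the primes over $\fm$ are exactly the maximal ideals, and it remains to show that $R^\nu$ is semilocal with every residue field finite over $R/\fm$.

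\emph{Exhaustion and the count of maximal ideals.} Write $R^\nu=\bigcup_i S_i$, the directed union of its module-finite $R$-subalgebras; each $S_i$ is a domain, finite and semilocal over $R$, with $\Frac(S_i)=\Frac(R)=:K$. The set of primes of $R^\nu$ over $\fm$ is $\varprojlim_i\Max(S_i)$ (transition maps given by contraction), and $R^\nu/\fN=\varinjlim_i S_i/(\fN\cap S_i)$ for $\fN\in\Max(R^\nu)$; hence the theorem follows once $|\Max(S_i)|$ and $[S_i/(\fN\cap S_i):R/\fm]$ are bounded independently of $i$. Let $R^\wedge$ be the completion, with $\Min(R^\wedge)=\{\fP_1,\dots,\fP_r\}$, and put $W=R\setminus\{0\}$. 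Since $S_i$ is finite over $R$, its Jacobson-radical completion is $S_i^\wedge=S_i\otimes_R R^\wedge$, a finite product $\prod_{\fN\in\Max(S_i)}(S_i)_\fN^\wedge$ of complete local rings with residue fields $S_i/\fN$. By flat base change $\Ass_{R^\wedge}(S_i^\wedge)=\Ass_{R^\wedge}(R^\wedge)$ (because $\Ass_R(S_i)=\{0\}$), so every minimal prime $\fq$ of the ring $S_i^\wedge$ satisfies $\fq\cap R=0$ and therefore survives in $W^{-1}S_i^\wedge=(W^{-1}S_i)\otimes_R R^\wedge=W^{-1}R^\wedge$. Consequently $|\Max(S_i)|\le|\Min(S_i^\wedge)|\le|\Min(W^{-1}R^\wedge)|=r$, a bound independent of $i$.

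\emph{The residue degrees.} Fix $\fN\in\Max(S_i)$ and a minimal prime $\fq$ of the factor $(S_i)_\fN^\wedge$. As above, $\fq W^{-1}R^\wedge$ is a minimal prime of $W^{-1}R^\wedge$, so $\fq\cap R^\wedge=\fP_j$ for some $j$; set $D:=R^\wedge/\fP_j$, a complete local domain, hence a Nagata ring, so $D^\nu$ is finite over $D$. The quotient $S_i^\wedge/\fq$ is a domain finite over $D$, and since $W^{-1}(S_i^\wedge/\fq)=W^{-1}D$ it is moreover birational over $D$, whence $S_i^\wedge/\fq\subseteq D^\nu$; as $D^\nu$ is integral over $D\subseteq S_i^\wedge/\fq$, the maximal ideal of the local ring $S_i^\wedge/\fq$ lies under a maximal ideal $\mathfrak M$ of $D^\nu$, and $S_i/\fN$, the residue field of $S_i^\wedge/\fq$, embeds into $D^\nu/\mathfrak M$. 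The latter is finite over the residue field $R/\fm$ of $D$ of degree at most $\dim_{R/\fm}\!\big((R^\wedge/\fP_j)^\nu\otimes_{R^\wedge/\fP_j}(R/\fm)\big)$. Letting $\delta$ be the maximum of these dimensions over $j=1,\dots,r$, we get $[S_i/\fN:R/\fm]\le\delta$ uniformly. Passing to the inverse limit bounds $|\Max(R^\nu)|$ by $r$, and passing to the direct limit bounds each $[R^\nu/\fN:R/\fm]$ by $\delta$, which finishes the proof.

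I expect the main obstacle to be exactly the uniformity of these two bounds. For a single finite $S_i$ both statements are immediate, since finite morphisms have finite fibers; but $R$ need not be Japanese, so the chain $(S_i)$ can be strictly increasing with $\bigcup_i S_i=R^\nu$ not finite over $R$, and one cannot simply pass to a maximal member. The content is that completing at $\fm$ and invoking that complete local domains are Nagata controls the entire chain simultaneously.
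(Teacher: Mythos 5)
Your argument is correct, and it reaches the statement by a genuinely different route than the paper. The paper's proof is two lines: after the very same reduction you make (since $R^\nu$ is integral over $S$, the map $\Spec(R^\nu)\to\Spec(S)$ is surjective, so one may take $S=R^\nu$), it simply cites \cite[Theorem 33.10]{Nagata-local}, which contains exactly the finiteness of the fibre over $\fp$ and of the residue field extensions for $R^\nu$. What you do instead is re-prove that cited statement: you localize at $\fp$, exhaust $R^\nu$ by module-finite subalgebras $S_i$, and extract bounds uniform in $i$ by passing to $S_i\otimes_R R^\wedge$, using the flat base change formula $\Ass_{R^\wedge}(S_i\otimes_R R^\wedge)=\Ass(R^\wedge)$ (valid since $S_i$ is a finite torsion-free $R$-module) to see that every minimal prime of $S_i\otimes_R R^\wedge$ contracts to $(0)$ in $R$ and, after inverting $W=R\setminus\{0\}$, to a minimal prime $\fP_j$ of $R^\wedge$; the count of maximal ideals is then bounded by $|\Min(R^\wedge)|$, and the residue degrees by $\dim_{R/\fm}\bigl((R^\wedge/\fP_j)^\nu\otimes_{R^\wedge/\fP_j}R/\fm\bigr)$, finite because complete local domains are Nagata --- a fact the paper itself invokes elsewhere \citestacks{0335}. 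The limit arguments at the end are routine, and the steps you leave implicit (contractions of associated primes are associated; minimal primes of $W^{-1}R^\wedge$ are exactly the localized minimal primes of $R^\wedge$ since every minimal prime of $R^\wedge$ meets $R$ in $(0)$; an inverse limit of sets of size at most $r$, resp.\ a directed union of subfields of degree at most $\delta$, has size at most $r$, resp.\ degree at most $\delta$) are standard. The trade-off: your version is self-contained, essentially reconstructing the classical proof of this half of Nagata's theorem, while the paper's citation is shorter and also delivers the Krull-domain property of $R^\nu$, which is used separately later (e.g.\ in the proof of Corollary \ref{cor:NS2ofGOONSfreeS}).
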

\begin{proof}
    If $S=R^\nu$,
    then this is part of \cite[Theorem 33.10]{Nagata-local}.
    The general case follows from the fact $\Spec(R^\nu)\to \Spec(S)$ is surjective.
    %\citestacks{00GQ}.
\end{proof}

\begin{Lem}[cf. {\cite[(33.11)]{Nagata-local}}]\label{lem:MinToAss}
    Let $R$ be a Noetherian integral domain,
    %R^\nu$ its normalization,
    $S$ a subalgebra of $R^\nu$.
    Let $a\in S^\circ$,
    and let $\fq$ be a minimal prime divisor of $aS$.   
    
    Then the following hold.
    \begin{enumerate}[label=$(\roman*)$]
\item\label{MtoA:ht} There exists a finite subalgebra $R'$ of $S$ such that $\HT(\fq\cap R')=1$.
        \item\label{MtoA:Ass} If $a\in R$, then $\fp:=\fq\cap R$ is a prime divisor of $aR$. %; that is, $\fp\in\Ass_R(R/aR)$.
    \end{enumerate}
\end{Lem}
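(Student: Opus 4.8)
Here is the plan. I would establish part~\ref{MtoA:ht} first and deduce part~\ref{MtoA:Ass} from it together with Lemma~\ref{lem:MinToAssNOETH}; essentially all the work lies in \ref{MtoA:ht}. The only feature of the hypothesis $S\subseteq R^\nu$ that I would use is that $S$ is integral over $R$.

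For \ref{MtoA:ht}, put $\fp=\fq\cap R$. The goal is to construct a finite $R$-subalgebra $R'$ of $S$ in which $\fq\cap R'$ is a minimal prime of $aR'$, and then to conclude via Krull's principal ideal theorem. Since $\fq$ is minimal in $V(aS)$, the only prime of $S_\fq$ containing $aS_\fq$ is $\fq S_\fq$, so $\sqrt{aS_\fq}=\fq S_\fq$. Choose generators $h_1,\dots,h_s$ of the ideal $\fp$ of $R$. Each $h_j$ lies in $\fq S_\fq=\sqrt{aS_\fq}$, so a power $h_j^{m_j}$ lies in $aS_\fq$; clearing denominators gives equations $\upsilon_j h_j^{m_j}=a\tau_j$ in $S$ with $\tau_j\in S$ and $\upsilon_j\in S\setminus\fq$. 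I would then let $R'$ be the $R$-subalgebra of $S$ generated by $a$ and the finitely many elements $\tau_j,\upsilon_j$: it is module-finite over $R$, since these elements are integral over $R$, hence Noetherian. In the localization $R'_{\fq\cap R'}$ each $\upsilon_j$ is a unit, so $h_j^{m_j}\in aR'_{\fq\cap R'}$ for all $j$, and therefore $\fp R'_{\fq\cap R'}\subseteq\sqrt{aR'_{\fq\cap R'}}$. It follows that every minimal prime $\fb$ of $aR'$ with $\fb\subseteq\fq\cap R'$ contains $\fp R'$ (contracting back to $R'$), so $\fb\cap R\supseteq\fp$; on the other hand $\fb\subseteq\fq\cap R'$ gives $\fb\cap R\subseteq\fp$. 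Thus $\fb$ and $\fq\cap R'$ are comparable primes of $R'$ with the same contraction to $R$, so incomparability for the integral extension $R\subseteq R'$ yields $\fb=\fq\cap R'$. Hence $\fq\cap R'$ is itself a minimal prime of the principal ideal $aR'$, so $\HT(\fq\cap R')\le1$ by Krull's theorem; and $\HT(\fq\cap R')\ge1$ since $0\ne a\in\fq\cap R'$.

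For \ref{MtoA:Ass}, assume $a\in R$ and take $R'$ as produced in \ref{MtoA:ht}, with $\fq'=\fq\cap R'$. Since $0\ne a\in\fq'$ and $\HT(\fq')=1$, the prime $\fq'$ is a minimal prime divisor of $aR'$. As $R\subseteq R'$ is a finite inclusion of Noetherian integral domains and $a\in R^\circ$, Lemma~\ref{lem:MinToAssNOETH} shows $\fq'$ lies above a prime divisor of $aR$; and $\fq'\cap R=\fq\cap R=\fp$, so $\fp$ is a prime divisor of $aR$, as wanted.

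The step I expect to be delicate is \ref{MtoA:ht}: $S$ need not be Noetherian, so Krull's theorem does not apply to it directly, and one cannot make $\fq\cap R'$ minimal over $(a)$ by a going-down argument, because going down can fail for the integral map $R'\hookrightarrow S$. The way around this is that the radical condition $\sqrt{aS_\fq}=\fq S_\fq$ is encoded in the finitely many relations $\upsilon_j h_j^{m_j}=a\tau_j$, which can be absorbed into a module-finite $R$-subalgebra where Krull's theorem does apply; incomparability is then used to single out $\fq\cap R'$ among the minimal primes of $aR'$.
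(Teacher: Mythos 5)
Your proposal is correct, and part \ref{MtoA:Ass} is deduced exactly as in the paper (via Lemma \ref{lem:MinToAssNOETH}), but your proof of \ref{MtoA:ht} takes a genuinely different route. The paper invokes Theorem \ref{thm:Krull} (finiteness of the fibers of $\Spec(S)\to\Spec(R)$, resting on Nagata's Theorem 33.10) to choose a finite $R'\subseteq S$ containing $a$ such that $\fq$ is the \emph{only} prime of $S$ above $\fp'=\fq\cap R'$; then, if $\fp'$ were not minimal over $aR'$, lying over and going up for the integral extension $R'\subseteq S$ would produce a prime strictly inside $\fq$ containing $a$, contradicting minimality, so $\HT(\fp')=1$ by Krull's principal ideal theorem. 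You instead encode the relation $\fq S_\fq=\sqrt{aS_\fq}$ in finitely many equations $\upsilon_j h_j^{m_j}=a\tau_j$, adjoin the witnesses to form $R'$, and use incomparability to identify $\fq\cap R'$ with a minimal prime of $aR'$; this is a clean "clear denominators and descend" argument. Your route is more elementary and slightly more general: it uses only that $S$ is a domain integral over $R$, bypassing the Krull--Akizuki-type finiteness input, whereas the paper's route is shorter given that Theorem \ref{thm:Krull} is already needed elsewhere (e.g., to arrange p-unibranchness in Theorem \ref{thm:killFONSI}), and its "unique prime above" construction is of the same flavor as those later arguments. The one point to state explicitly in your writeup is the existence of a minimal prime $\fb$ of $aR'$ contained in $\fq\cap R'$ (immediate since $a\in\fq\cap R'$), which your incomparability step then shows equals $\fq\cap R'$.
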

\begin{proof}
By Theorem \ref{thm:Krull},
we can take
a finite subalgebra $R'\subseteq S$
so that $a\in R'$ and $\fq$ is the only prime ideal of $S$ above $\fp'=\fq\cap R'$.
If $\fp'$ were not a minimal prime divisor of $aR'$,
then we can find primes $\fp_0'\subsetneq\fp'$ in $R'$ containing $a$.
We can then find primes $\fq_0\subsetneq\fq_1$ of $S$ lying above $\fp_0'\subsetneq\fp'$;
they automatically contain $a$.
By uniqueness, $\fq_1=\fq$,
contradicting the minimality of $\fq$.
Therefore $\fp'$ is a minimal prime divisor of $aR'$,
so $\HT(\fp')=1$ as $R'$ is Noetherian.
This is \ref{MtoA:ht}, and we get \ref{MtoA:Ass} by
Lemma \ref{lem:MinToAssNOETH}.
\end{proof}
\begin{Thm}\label{thm:SisKrull}
     Let $R$ be a Noetherian integral domain,
    $S$ a subalgebra of $R^\nu$.
    Then for every $a\in S^\circ$,
    the set of minimal prime divisors $\fq$ of $aS$ is finite,
    and for every $\fq$,
    $S_\fq$ is a $1$-dimensional Noetherian ring.
\end{Thm}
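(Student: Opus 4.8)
The plan is to reduce the statement to three ingredients: Lemma~\ref{lem:MinToAss}, the finiteness of $\Ass$ of a finite module over a Noetherian ring combined with Theorem~\ref{thm:Krull}, and the Krull--Akizuki theorem. Throughout, note that $S$ is an integral domain (being a subring of the domain $R^\nu$), so $S^\circ=S\setminus\{0\}$; I fix a nonzero $a\in S$, the case of a unit being vacuous.

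First I would handle finiteness. Put $R_0=R[a]\subseteq S$; since $a$ is integral over $R$ this is a finite $R$-algebra, hence Noetherian, and $R_0^\nu=R^\nu$, so $S$ is a sub-$R_0$-algebra of $R_0^\nu$. For any minimal prime divisor $\fq$ of $aS$, Lemma~\ref{lem:MinToAss}\ref{MtoA:Ass}, applied with base ring $R_0$ (legitimate, as $a\in R_0$), gives $\fq\cap R_0\in\Ass_{R_0}(R_0/aR_0)$. The latter is a finite set, and by Theorem~\ref{thm:Krull} only finitely many primes of $S$ sit above any given prime of $R_0$; hence there are only finitely many minimal prime divisors of $aS$. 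I would not attempt to prove that $\fq\cap R_0$ is itself a \emph{minimal} prime divisor of $aR_0$ — that would seem to require a going-down property, i.e.\ normality of $R_0$, which we do not have — and it is precisely to sidestep this that Lemma~\ref{lem:MinToAss}\ref{MtoA:Ass} is used in terms of associated, rather than minimal, primes.

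Next, fix a minimal prime divisor $\fq$ of $aS$. By Lemma~\ref{lem:MinToAss}\ref{MtoA:ht} there is a finite, hence Noetherian, $R$-subalgebra $R'\subseteq S$ with $\fp':=\fq\cap R'$ of height $1$. Since $R\subseteq R'\subseteq S\subseteq R^\nu$ all have the same fraction field $K:=\Frac(R)$, the localization $R'_{\fp'}$ is a one-dimensional Noetherian local domain with fraction field $K$, and the inclusion $R'\setminus\fp'\subseteq S\setminus\fq$ gives $R'_{\fp'}\subseteq S_\fq\subseteq K$. Applying the Krull--Akizuki theorem to this tower yields that $S_\fq$ is Noetherian of Krull dimension $\le 1$; and since $S_\fq$ is a domain whose maximal ideal contains the nonzero element $a$, its dimension is exactly $1$.

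The one genuinely non-formal step is the appeal to Krull--Akizuki: it is what forces $S_\fq$ to be Noetherian even though $S$ itself need not be, and everything else is bookkeeping with fraction fields together with the incomparability of integral extensions already encoded in Lemma~\ref{lem:MinToAss}. I therefore expect the main obstacle to be arranging the correct one-dimensional Noetherian subring beneath $S_\fq$ so that Krull--Akizuki can be invoked; the finiteness part should be comparatively routine once Lemma~\ref{lem:MinToAss} is in hand.
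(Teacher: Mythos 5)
Your proposal is correct and follows essentially the same route as the paper's proof: the paper's reduction ``we may assume $a\in R$'' is exactly your replacement of $R$ by $R[a]$, finiteness comes from Lemma \ref{lem:MinToAss}\ref{MtoA:Ass} together with Theorem \ref{thm:Krull}, and the Noetherian one-dimensionality of $S_\fq$ comes from Lemma \ref{lem:MinToAss}\ref{MtoA:ht} plus Krull--Akizuki applied to $R'_{\fq\cap R'}\subseteq S_\fq$.
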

\begin{proof}
    We may assume $a\in R$.
    Finiteness follows from Lemma \ref{lem:MinToAss}\ref{MtoA:Ass}
    and Theorem \ref{thm:Krull}.
    For each $\fq$,
    Lemma \ref{lem:MinToAss}\ref{MtoA:ht} gives a map $R'_{\fq\cap R'}\to S_\fq$,
    so $S_\fq$ is a $1$-dimensional Noetherian ring by the theorem of Krull--Akizuki \cite[Theorem 33.2]{Nagata-local}.
\end{proof}

\begin{Def}
    Let $R$ be a Noetherian integral domain,
    $S$ a subalgebra of $R^\nu$.
    We say $S$ is $(S_2)$
    if for all $a\in S^\circ$,
    $aS$ is a finite intersection of primary ideals of height $1$.
    This is the same as Serre's condition $(S_2)$ if $S$ is Noetherian.
\end{Def}

\begin{Lem}\label{lem:S2assPrincipal}
     Let $R$ be a Noetherian integral domain,
    $S$ a subalgebra of $R^\nu$.
    Then  the following are equivalent.
    \begin{enumerate}[label=$(\roman*)$]
        \item\label{S2ass:S2} $S$ is $(S_2)$.
        \item\label{S2ass:ass} For all $a\in S^\circ$,
    the set of zero divisors on $S/aS$ is the union of minimal prime divisors of $aS$.
        \item\label{S2ass:intersect} $S=\bigcap_{\fq\in\Spec_1(S)}S_\fq$.
    \end{enumerate}
\end{Lem}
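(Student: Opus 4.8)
The plan is to prove the cyclic chain $\ref{S2ass:S2}\Rightarrow\ref{S2ass:ass}\Rightarrow\ref{S2ass:intersect}\Rightarrow\ref{S2ass:S2}$. Since $S$ is a domain we have $S^\circ=S\setminus\{0\}$, so fix a nonzero $a\in S$ throughout and let $\fp_1,\dots,\fp_n$ be its minimal prime divisors; by Theorem \ref{thm:SisKrull} there are finitely many of them, each has height $1$, and each $S_{\fp_i}$ is a one-dimensional Noetherian local domain. The central object is the ideal $\fq_i:=aS_{\fp_i}\cap S$, and I would first record two facts about it, both read off from the structure of $S_{\fp_i}$: (a) $\fq_i$ is $\fp_i$-primary of height $1$, since $aS_{\fp_i}$ has radical the maximal ideal of $S_{\fp_i}$ and is therefore primary, and contracting a primary ideal along $S\to S_{\fp_i}$ yields a primary ideal whose radical contracts to $\fp_i$; and (b) any $\fp_i$-primary ideal $\fq$ with $\fq\supseteq aS$ already contains $\fq_i$, because every element of $S\setminus\fp_i$ is a nonzerodivisor on $S/\fq$.

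For $\ref{S2ass:S2}\Rightarrow\ref{S2ass:ass}$: starting from a finite primary decomposition $aS=\bigcap_j\fq_j'$ in which every $\sqrt{\fq_j'}$ has height $1$, I would intersect components with a common radical to reduce to the case that the radicals $\fr_1,\dots,\fr_k$ are pairwise distinct. Distinct height-$1$ primes of a domain are incomparable, so this decomposition is then automatically irredundant and $\{\fr_1,\dots,\fr_k\}$ is exactly the set of minimal prime divisors of $aS$. The inclusion ``$s$ a zerodivisor on $S/aS$ $\Rightarrow s\in\fr_1\cup\dots\cup\fr_k$'' is immediate from the primary condition; for the reverse inclusion, given $s\in\fr_i$ I would choose (using irredundancy) an element $t\in\bigl(\bigcap_{j\ne i}\fq_j'\bigr)\setminus\fq_i'$, note $t\notin aS$, and take the least $N\ge 1$ with $s^Nt\in aS$, so that $s\cdot(s^{N-1}t)\in aS$ while $s^{N-1}t\notin aS$, exhibiting $s$ as a zerodivisor.

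For $\ref{S2ass:ass}\Rightarrow\ref{S2ass:intersect}$: to prove $S=\bigcap_{\fq\in\Spec_1(S)}S_{\fq}$, take $x,a\in S$ with $a\ne 0$ such that $x/a$ lies in $S_{\fq}$ for every height-$1$ prime $\fq$, and suppose for contradiction that $x\notin aS$. Then the ideal $(aS:_Sx)$ consists of zerodivisors on $S/aS$, so by \ref{S2ass:ass} and prime avoidance it is contained in some minimal prime divisor $\fp$ of $aS$; but writing $x/a=y/u$ with $y\in S$, $u\in S\setminus\fp$ gives $ux=ay\in aS$, that is, $u\in(aS:_Sx)\setminus\fp$, a contradiction. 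Hence $x\in aS$, proving the claim. For $\ref{S2ass:intersect}\Rightarrow\ref{S2ass:S2}$, I would show $aS=\bigcap_{i=1}^n\fq_i$, which is a decomposition of the required type by fact (a): the inclusion $\subseteq$ is clear, and if $x\in\bigcap_i\fq_i$ then $x/a\in S_{\fp_i}$ for all $i$, while $x/a\in S_{\fq}$ automatically for every other height-$1$ prime $\fq$ (since there $a\notin\fq$), so $x/a\in\bigcap_{\fq\in\Spec_1(S)}S_{\fq}=S$ by \ref{S2ass:intersect}, whence $x\in aS$.

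The main obstacle --- although overall this is a routine piece of commutative algebra --- is that $S$ need not be Noetherian, so associated primes and primary decompositions are not available off the shelf; the remedy is to build the ``primary components'' $\fq_i=aS_{\fp_i}\cap S$ by hand and to extract their primary-ness and height from the Krull--Akizuki input of Theorem \ref{thm:SisKrull} that each $S_{\fp_i}$ is one-dimensional and Noetherian. A secondary technical point is the reduction, inside $\ref{S2ass:S2}\Rightarrow\ref{S2ass:ass}$, to a primary decomposition with pairwise distinct (hence minimal) radicals before one can cleanly read off the set of zerodivisors.
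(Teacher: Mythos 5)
Your proof is correct and follows essentially the same route as the paper's: for \ref{S2ass:S2}$\Rightarrow$\ref{S2ass:ass} you unwind the standard primary-decomposition fact the paper cites from Atiyah--Macdonald (and carefully note that the height-$1$ hypothesis forces all the radicals to be minimal); for \ref{S2ass:ass}$\Rightarrow$\ref{S2ass:intersect} you run the same colon-ideal/prime-avoidance argument, phrased as a contradiction rather than directly; and for \ref{S2ass:intersect}$\Rightarrow$\ref{S2ass:S2} you exhibit the same intersection $aS=\bigcap_i(aS_{\fp_i}\cap S)$. The only discrepancy is cosmetic: the paper writes $(xS:_S yS)$ where it clearly intends $(yS:_S xS)$ (the ideal whose nonzerodivisor element forces $x\in yS$), and your $(aS:_S x)$ is the correct form of that object.
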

\begin{proof}
That \ref{S2ass:S2} implies \ref{S2ass:ass} is clear as minimal prime divisors of and the set of zero divisors modulo $aS$ can be read off of a primary decomposition, cf. \cite[Proposition 4.7]{AMcommalg}.

Assume \ref{S2ass:ass}. Let $z=x/y\in \bigcap_{\fq\in\Spec_1(S)}S_\fq$ where $x,y\in S^\circ$.
Then $(xS:_S yS)$ is not contained in any minimal prime of $yS$,
as they are of height $1$ (Theorem \ref{thm:SisKrull}).
Therefore $(xS:_S yS)$ contains a nonzerodivisor on $S/yS$
    by prime avoidance, so $x\in yS,z\in S$.

    Finally, assume \ref{S2ass:intersect}.
    Then $aS=\bigcap_{\fq\in\Spec_1(S)} aS_\fq$ for all $a\in S^\circ$,
    so  $aS=\bigcap_{\fq\in\Spec_1(S)} (aS_\fq\cap S)$.
    By Theorem \ref{thm:SisKrull}, all but finitely many of the ideals $aS_\fq\cap S$ are $S$, and the others are $\fq$-primary.
    This gives a primary decomposition of $aS$.
\end{proof}

\begin{Lem}\label{lem:S2localization}
    Let $R$ be a Noetherian integral domain, $S$ a subalgebra of $R^\nu$.
    Then the following are true.
    \begin{enumerate}[label=$(\roman*)$]
        \item\label{S2loc:localize} For every multiplicative subset $W$ of $R$, $W^{-1}S$ is $(S_2)$.
        \item\label{S2loc:localcheck} If $S_\fm$ is $(S_2)$ for all $\fm\in\Spec(R)$,
        then $S$ is $(S_2)$.
    \end{enumerate}
\end{Lem}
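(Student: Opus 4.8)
The plan is to run both parts through the characterization of $(S_2)$ as an intersection of localizations at height-$1$ primes, Lemma~\ref{lem:S2assPrincipal}\ref{S2ass:intersect}, exploiting that $W^{-1}S$ and $S_\fm:=(R\setminus\fm)^{-1}S$ are again sub-algebras of the normalization of a Noetherian domain: since normalization commutes with localization, $W^{-1}(R^\nu)=(W^{-1}R)^\nu$, so $W^{-1}S$ is a sub-$(W^{-1}R)$-algebra of $(W^{-1}R)^\nu$ and Theorem~\ref{thm:SisKrull} and Lemma~\ref{lem:S2assPrincipal} apply to it verbatim, and the same goes for $S_\fm$ over $R_\fm$.

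For \ref{S2loc:localize} (which I read as including the hypothesis that $S$ is $(S_2)$, without which $W=\{1\}$ is a counterexample): given $a\in(W^{-1}S)^\circ$, write $a=s/w$ with $s\in S$, $w\in W$; then $s\in S^\circ$ and $a\cdot W^{-1}S=s\cdot W^{-1}S=W^{-1}(sS)$. Since $S$ is $(S_2)$, $sS=\fq_1\cap\dots\cap\fq_n$ with each $\fq_i$ a $\fp_i$-primary ideal with $\HT(\fp_i)=1$ (the definition; the $S_{\fp_i}$ are $1$-dimensional by Theorem~\ref{thm:SisKrull}). Localization is exact, so it commutes with finite intersections and $a\cdot W^{-1}S=\bigcap_i W^{-1}\fq_i$; for indices $i$ with $\fp_i\cap W\neq\emptyset$ we have $W^{-1}\fq_i=W^{-1}S$, which may be dropped, and for indices $i$ with $\fp_i\cap W=\emptyset$, $W^{-1}\fq_i$ is $(W^{-1}\fp_i)$-primary with $\HT(W^{-1}\fp_i)=1$ (as $(W^{-1}S)_{W^{-1}\fp_i}=S_{\fp_i}$), by the standard behaviour of primary ideals under localization, cf.\ \cite[Ch.~4]{AMcommalg}. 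Hence $a\cdot W^{-1}S$ is a finite intersection of height-$1$ primary ideals, so $W^{-1}S$ is $(S_2)$.

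For \ref{S2loc:localcheck}, by Lemma~\ref{lem:S2assPrincipal}\ref{S2ass:intersect} applied to $S$ it suffices to prove $\bigcap_{\fq\in\Spec_1(S)}S_\fq\subseteq S$. Fix $z$ in the left-hand side; it lies in $\Frac(R)$, the common fraction field of $S$ and of each $S_\fm$. For $\fm\in\Max(R)$ the height-$1$ primes of $S_\fm$ are exactly the ideals $\fq S_\fm$ with $\fq\in\Spec_1(S)$, $\fq\cap R\subseteq\fm$, and $(S_\fm)_{\fq S_\fm}=S_\fq$; since $z\in S_\fq$ for every $\fq\in\Spec_1(S)$, Lemma~\ref{lem:S2assPrincipal}\ref{S2ass:intersect} applied to the $(S_2)$-ring $S_\fm$ gives $z\in S_\fm$. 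Thus $z\in\bigcap_{\fm\in\Max(R)}S_\fm$, which equals $S$ because the ideal $\{r\in R:rz\in S\}$ meets $R\setminus\fm$ for every $\fm\in\Max(R)$ and is therefore all of $R$; hence $z\in S$.

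The argument is light. The one place needing genuine care — and the main, if modest, obstacle — is the bookkeeping of primary ideals, heights, and height-$1$ primes under localization in the possibly non-Noetherian ring $S$, together with the verification that the results above transfer to $W^{-1}S$ and $S_\fm$ (they do, since these remain sub-algebras of the normalization of a Noetherian domain). I do not expect anything deeper to be needed.
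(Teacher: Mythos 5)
Your proof is correct and follows essentially the same route as the paper: part \ref{S2loc:localize} is the paper's ``primary decompositions localize'' spelled out (and your reading of the implicit hypothesis that $S$ itself is $(S_2)$ matches the paper's intent), while part \ref{S2loc:localcheck} is the intended application of Lemma \ref{lem:S2assPrincipal}\ref{S2ass:intersect} to each $S_\fm$, combined with the standard denominator-ideal argument for $\bigcap_{\fm\in\Max(R)}S_\fm=S$.
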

\begin{proof}
    \ref{S2loc:localize} is trivial as primary decompositions localize,
    whereas \ref{S2loc:localcheck} follows from Lemma \ref{lem:S2assPrincipal}.
\end{proof}

\begin{Lem}\label{lem:S2filteredunion}
    Let $R$ be a Noetherian integral domain, $S$ a subalgebra of $R^\nu$.
    Assume that $S$ is the filtered union of $R$-subalgebras $\{S_\alpha\}_\alpha$,
    and that each $S_\alpha$ is $(S_2)$.
    Then $S$ is $(S_2)$.
\end{Lem}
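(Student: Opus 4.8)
The plan is to verify condition \ref{S2ass:ass} of Lemma \ref{lem:S2assPrincipal} for $S$: for every $a\in S^\circ$, the set of zerodivisors on $S/aS$ is exactly the union of the minimal prime divisors of $aS$. One inclusion is automatic for any sub-$R$-algebra of $R^\nu$ via Theorem \ref{thm:SisKrull}: if $\fq$ is a minimal prime divisor of $aS$, then $S_\fq$ is a $1$-dimensional Noetherian local domain, so $\fq S_\fq\in\Ass(S_\fq/aS_\fq)$; writing $\fq S_\fq=\Ann(\overline{x/1})$ with $x\in S\setminus aS$, for any $b\in\fq$ there is $t\in S\setminus\fq$ with $tbx\in aS$ while $tx\notin aS$, exhibiting $b$ as a zerodivisor on $S/aS$. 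So the real content is the reverse inclusion.

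The one structural input needed is that $S$ is integral over each $S_\alpha$: since $R\subseteq S_\alpha\subseteq S\subseteq R^\nu$ and every element of $R^\nu$ is integral over $R$, every element of $S$ is integral over $S_\alpha$; in particular $S_\alpha\subseteq S$ satisfies lying over and incomparability. Now fix $a\in S^\circ$ and a zerodivisor $b$ on $S/aS$, say $bc=as$ with $c\in S\setminus aS$ and $s\in S$. Using that the union is filtered, pick $\beta$ with $a,b,c,s\in S_\beta$. Then $c\notin aS_\beta$ (since $aS_\beta\subseteq aS$) while $bc=as\in aS_\beta$, so $b$ is a zerodivisor on $S_\beta/aS_\beta$. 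As $S_\beta$ is $(S_2)$, Lemma \ref{lem:S2assPrincipal}\ref{S2ass:ass} applied to $S_\beta$ places $b$ in some minimal prime divisor $\fq'$ of $aS_\beta$, and $\HT(\fq')=1$ by Theorem \ref{thm:SisKrull}. By lying over choose $\fq\in\Spec(S)$ with $\fq\cap S_\beta=\fq'$, so $a\in\fq$, and let $\fq_0\subseteq\fq$ be minimal over $aS$. Then $\fq_0\cap S_\beta$ contains $aS_\beta$ and is contained in $\fq'$, hence equals $\fq'$ by minimality of $\fq'$; incomparability applied to $\fq_0\subseteq\fq$ forces $\fq_0=\fq$. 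Thus $\fq$ is a minimal prime divisor of $aS$ with $b\in\fq'\subseteq\fq$, giving the desired inclusion. (The degenerate cases $aS=S$ or $b=0$ are vacuous or immediate.)

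The step I expect to require the most care is exactly this last one. It is tempting to instead verify condition \ref{S2ass:intersect}, i.e.\ to descend a quotient $z=x/y\in\bigcap_{\fq\in\Spec_1(S)}S_\fq$ into some $S_\alpha$; but the natural inclusion $(S_\alpha)_{\fq'}\hookrightarrow S_\fq$ need not be an isomorphism, so $z\in S_\fq$ does not yield $z\in (S_\alpha)_{\fq'}$ — the localizations of $S$ can be strictly larger even in height $1$. Routing through condition \ref{S2ass:ass} avoids this: the property ``$b$ lies in a minimal prime divisor of $aS_\beta$'' propagates \emph{upward} to ``$b$ lies in a minimal prime divisor of $aS$'', which is precisely what integrality (lying over plus incomparability) provides.
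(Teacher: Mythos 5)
Your proof is correct and takes essentially the same route as the paper: both verify the zerodivisor criterion of Lemma \ref{lem:S2assPrincipal}\ref{S2ass:ass} for $S$ by reducing to a finite stage $S_\beta$ and using lying over for the integral extension $S_\beta\subseteq S$ to match minimal prime divisors of $aS_\beta$ with those of $aS$. The only difference is cosmetic: the paper argues contrapositively (an element outside all minimal prime divisors of $aS$ is a nonzerodivisor on each $S_\alpha/aS_\alpha$, hence on the colimit $S/aS$), while you descend a witnessing relation $bc=as$ and lift the prime upward.
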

\begin{proof}
    Let $a\in S^\circ$;
    we may assume $a\in S_\alpha$ for all $\alpha$.
    If $\fp_\alpha$ is a minimal prime divisor of $aS_\alpha$,
    then there exists a minimal prime divisor $\fp$ of $aS$ above $\fp_\alpha$,
    as $\Spec(S)\to\Spec(S_\alpha)$ is surjective.
    Now for a $b\in S$ not in any minimal prime divisor of $aS$,
    which we may assume in $S_\alpha$ for all $\alpha$,
    we have $b$ not in any minimal prime divisor of $aS_\alpha$.
    Therefore $b$ is a nonzerodivisor on $S_\alpha/aS_\alpha$ by Lemma \ref{lem:S2assPrincipal}.
    Consequently, $b$ a nonzerodivisor on $S/aS=\colim_\alpha S_\alpha/aS_\alpha$,
    so $S$ is $(S_2)$ by Lemma \ref{lem:S2assPrincipal}.
\end{proof}

\section{Naive and canonical $(S_2)$-closures}\label{sec:NaiveandCanS2clos}
\begin{Def}\label{def:S2ify}
    Let $S$ be an integral domain.
We write $S^{n\sigma}$ for $\bigcap_{\fp\in\Spec_1(S)} S_\fq$
    and $S^{\sigma}$ for $S^{n\sigma}\cap S^\nu$.

For a Noetherian integral domain $R$ and  a subalgebra $S$ of $R^\nu$,
we know $S$ is $(S_2)$ if and only if $S=S^{n\sigma}$ (Lemma \ref{lem:S2assPrincipal}).
    We will see $S^{\sigma}$ is $(S_2)$ (Theorem \ref{thm:SsisS2}),
    so $S$ is $(S_2)$ if and only if $S=S^{\sigma}$.
\end{Def}
\begin{Lem}\label{lem:NS2localization}
%    The naive and canonical $(S_2)$-closures are compatible with localization.
Let $R$ be a Noetherian integral domain, $S$ a subalgebra of $R^\nu$.

Let $W$ be a multiplicative subset of $S$.
Then $W^{-1}(S^{n\sigma})=(W^{-1}S)^{n\sigma}$ and $W^{-1}(S^{\sigma})=(W^{-1}S)^{\sigma}$.
\end{Lem}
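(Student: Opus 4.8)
The plan is to prove the two identities separately, the $n\sigma$-identity by directly comparing the two intersections after clearing denominators, and the $\sigma$-identity by combining the first with two routine localization facts.

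For the $n\sigma$-identity I would first record the setup (assuming $0\notin W$, the other case being trivial): $W^{-1}S$ is a domain with fraction field $K:=\Frac(S)=\Frac(R)$, its height-one primes are exactly the $\fp W^{-1}S$ with $\fp\in\Spec_1(S)$ and $\fp\cap W=\emptyset$, and $(W^{-1}S)_{\fp W^{-1}S}=S_\fp$ for such $\fp$; hence $(W^{-1}S)^{n\sigma}=\bigcap_{\fp\in\Spec_1(S),\,\fp\cap W=\emptyset}S_\fp$, to be compared with $W^{-1}(S^{n\sigma})=W^{-1}\bigl(\bigcap_{\fp\in\Spec_1(S)}S_\fp\bigr)$. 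The inclusion $W^{-1}(S^{n\sigma})\subseteq(W^{-1}S)^{n\sigma}$ is immediate: if $z\in S^{n\sigma}$, $w\in W$, and $\fp\in\Spec_1(S)$ with $\fp\cap W=\emptyset$, then $w$ is a unit of $S_\fp$, so $z/w\in S_\fp$.

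The reverse inclusion is the crux. I would take $z=x/y$ with $x,y\in S$, $y\neq0$, lying in the right-hand side. By Theorem \ref{thm:SisKrull} only finitely many primes of $\Spec_1(S)$ contain $y$ (these are exactly the minimal prime divisors of $yS$), and among them only finitely many, say $\fq_1,\dots,\fq_m$, meet $W$; choose $w_j\in\fq_j\cap W$. Since each $S_{\fq_j}$ is a one-dimensional Noetherian local domain (Theorem \ref{thm:SisKrull}) and $y$ is a nonzero element of $\fq_jS_{\fq_j}$, the quotient $S_{\fq_j}/yS_{\fq_j}$ is Artinian, so $(\fq_jS_{\fq_j})^{N_j}\subseteq yS_{\fq_j}$ for some $N_j$, whence $w_j^{N_j}\in yS_{\fq_j}$ and therefore $w_j^{N_j}z\in S_{\fq_j}$. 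Put $w=\prod_{j=1}^m w_j^{N_j}\in W$. Then $wz\in S_\fp$ for every $\fp\in\Spec_1(S)$: automatically if $y\notin\fp$ (then already $z\in S_\fp$), by hypothesis if $y\in\fp$ and $\fp\cap W=\emptyset$, and for $\fp=\fq_j$ because $w_j^{N_j}z\in S_{\fq_j}$ while the remaining factors of $w$ lie in $S\subseteq S_{\fq_j}$. Hence $wz\in\bigcap_\fp S_\fp=S^{n\sigma}$, so $z=(wz)/w\in W^{-1}(S^{n\sigma})$, giving the first identity.

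For the $\sigma$-identity I would use $S^\sigma=S^{n\sigma}\cap S^\nu$, the fact that localization, being exact, commutes with the finite intersection of the two $S$-submodules $S^{n\sigma},S^\nu$ of $K$, and the fact that normalization commutes with localization, $(W^{-1}S)^\nu=W^{-1}(S^\nu)$ \cite[Proposition 5.12]{AMcommalg} (note $W^{-1}K=K$). Combined with the first identity these give $W^{-1}(S^\sigma)=W^{-1}(S^{n\sigma})\cap W^{-1}(S^\nu)=(W^{-1}S)^{n\sigma}\cap(W^{-1}S)^\nu=(W^{-1}S)^\sigma$. The only step requiring real work is the reverse inclusion in the $n\sigma$-case: the subtlety is that passing from $W^{-1}S$ back to $S$ reintroduces the height-one primes meeting $W$, and one must absorb all of them into a single denominator $w\in W$; this is possible precisely because Theorem \ref{thm:SisKrull} guarantees that set of primes is finite with one-dimensional Noetherian local localizations.
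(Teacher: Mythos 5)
Your proposal is correct and follows essentially the same route as the paper: the easy inclusion, then clearing denominators for $z=x/y\in(W^{-1}S)^{n\sigma}$ by using Theorem \ref{thm:SisKrull} (finitely many height-one primes contain $y$, each with one-dimensional Noetherian localization) to find a single $w\in W$ lying in $yS_{\fq}$ at the finitely many such primes $\fq$ meeting $W$, and finally deducing the $\sigma$-identity from the $n\sigma$-identity since localization commutes with the finite intersection $S^{n\sigma}\cap S^\nu$ and with normalization. Your write-up merely makes explicit the choice of $w$ as a product of powers of elements $w_j\in\fq_j\cap W$, which the paper asserts more tersely.
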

\begin{proof}
    We will show $W^{-1}(S^{n\sigma})=(W^{-1}S)^{n\sigma}$;
    the corresponding identity $W^{-1}(S^{\sigma})=(W^{-1}S)^{\sigma}$
    follows as localization commutes with finite intersections.

    The inclusion $W^{-1}(S^{n\sigma})\subseteq (W^{-1}S)^{n\sigma}$ is clear.
    For the other inclusion,
    let $z=x/y\in (W^{-1}S)^{n\sigma}$ where $x,y\in S^\circ$.
    Let $\fq_1,\ldots,\fq_m,\fq_{m+1},\ldots,\fq_n$ be the minimal prime divisors of $yS$ (there are only finitely many, Theorem \ref{thm:SisKrull}),
    ordered  in a way that $\fq_1,\ldots,\fq_m$ are disjoint from $W$ and $\fq_{m+1},\ldots,\fq_n$ are not.
    Then $z\in S_{\fq_j}$ for all $1\leq j\leq m$,
    and $z\in S_{\fq}$ for all $\fq\in\Spec_1(S)\setminus\{\fq_1,\ldots,\fq_m,\fq_{m+1},\ldots,\fq_n\}$.
    As $S_{\fq_j}$ is a $1$-dimensional Noetherian local ring (Theorem \ref{thm:SisKrull}), we can take $w\in W$ so that $w\in yS_{\fq_j}$ for all $m+1\leq j\leq n$.
    Then $wz=wx/y$ is in $S^{n\sigma}$.
\end{proof}

\begin{Lem}\label{lem:reverseIncl}
Let $S$ be an integral domain,
    %$S$ a subalgebra of $R^\nu$.
    Let $S'$ be a subalgebra of $S^{n\sigma}$ (resp. $S^{\sigma}$).
    Then $S'^{n\sigma}\subseteq S^{n\sigma}$ (resp. $S'^{\sigma}\subseteq S^{\sigma}$).
\end{Lem}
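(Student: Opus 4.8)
The plan is to prove the $n\sigma$-statement first and deduce the $\sigma$-statement from it. Throughout, since $S\subseteq S'\subseteq S^{n\sigma}$ (resp.\ $S^{\sigma}$) and all these rings sit inside $K:=\Frac(S)$, the domains $S$ and $S'$ have the same fraction field $K$; in particular $S'^{\nu}$ is the integral closure of $S'$ in $K$. For the $n\sigma$-statement, fix $z\in S'^{n\sigma}=\bigcap_{\fq\in\Spec_1(S')}S'_\fq$; it suffices to show $z\in S_\fp$ for every $\fp\in\Spec_1(S)$, since then $z\in\bigcap_{\fp\in\Spec_1(S)}S_\fp=S^{n\sigma}$. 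Fix such a $\fp$ and set $W:=S\setminus\fp$.

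The first step is to localize. From $S\subseteq S'\subseteq S^{n\sigma}\subseteq S_\fp$ (the last inclusion being the definition of $S^{n\sigma}$, using the factor indexed by $\fp$ itself), localizing at $W$ gives $S_\fp=W^{-1}S\subseteq W^{-1}S'\subseteq W^{-1}S_\fp=S_\fp$, hence $W^{-1}S'=S_\fp$. The second step is to exhibit a height-one prime of $S'$ lying over $\fp$. Let $\fq_0$ be the contraction to $S'$ of the maximal ideal $\fp S_\fp$ of $S_\fp=W^{-1}S'$; then $\fq_0$ is disjoint from $W$ and $\fq_0\cap S=\fp S_\fp\cap S=\fp$. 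Consequently $W\subseteq S'\setminus\fq_0$, so $(S')_{\fq_0}$ is obtained from $S_\fp=W^{-1}S'$ by inverting the units $\fp S_\fp$; thus $(S')_{\fq_0}=S_\fp$. Since $\fp\in\Spec_1(S)$ and $S$ is a domain, $\dim S_\fp=1$, so $\HT_{S'}(\fq_0)=\dim(S')_{\fq_0}=1$, i.e.\ $\fq_0\in\Spec_1(S')$. Therefore $z\in S'_{\fq_0}=S_\fp$, completing the $n\sigma$-case.

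For the $\sigma$-statement, we have $S'\subseteq S^{\sigma}\subseteq S^{n\sigma}$, so by the case just proved $S'^{n\sigma}\subseteq S^{n\sigma}$. Moreover $S^{\nu}$ is integrally closed in $K$ and $S'\subseteq S^{\sigma}\subseteq S^{\nu}$, so every element of $K$ integral over $S'$ is integral over $S^{\nu}$ and hence lies in $S^{\nu}$; that is, $S'^{\nu}\subseteq S^{\nu}$. Intersecting, $S'^{\sigma}=S'^{n\sigma}\cap S'^{\nu}\subseteq S^{n\sigma}\cap S^{\nu}=S^{\sigma}$.

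The only place requiring care—and the only point one might worry about, given that $S$ is not assumed Noetherian—is the identification $(S')_{\fq_0}=S_\fp$ together with the ensuing height computation; but this rests only on elementary properties of localization and on the observation that in a domain $\HT_S(\fp)=1$ forces $\dim S_\fp=1$. No finiteness or chain conditions are used, which is why the lemma can be stated for arbitrary integral domains $S$.
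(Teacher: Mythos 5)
Your proof is correct and follows essentially the same route as the paper: for each $\fp\in\Spec_1(S)$ you exhibit a height-one prime of $S'$ whose localization equals $S_\fp$, so the family of localizations defining $S'^{n\sigma}$ contains that defining $S^{n\sigma}$, and the $\sigma$-case then follows from $S'^{\nu}\subseteq S^{\nu}$ (the paper notes equality) by intersecting. The only difference is that you spell out the localization and height computations that the paper leaves implicit.
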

\begin{proof}
    For every $\fq\in\Spec_1(S)$,
    we have $S\subseteq S'\subseteq S_\fq$, so
    $S'_\fq=S_\fq$.
    Therefore the family of rings defining the intersection of $S'^{n\sigma}$ contains that of $S^{n\sigma}$,
     giving the $(-)^{n\sigma}$ case.

     If $S'\subseteq S^{\sigma}$, then $S'^{\nu}=S^{\nu}$, which gives the $(-)^{\sigma}$ case.
\end{proof}

\begin{Exam}\label{exam:LechFONSI}
    By a theorem of Lech \cite{Lech-completion-domain},
    a complete Noetherian local ring containing a field is the completion of a Noetherian local domain if and only if its depth is at least $1$.
    Therefore, there exist a Noetherian local domain $R$ of dimension $2$ whose completion is $k[[x,y,z]]/(x,y)\cap(z)$,
    where $k$ is a field.

    In this case, $R^{n\sigma}=\cO(U)$,
    where     $U$ is the punctured spectrum of $R$.
    Therefore $R^{n\sigma}\otimes_R R^\wedge=\cO(U^\wedge)$,
    where    $U^\wedge$ is the punctured spectrum of $R^\wedge$.
    By the specific form of $R^\wedge$
    we see $\cO(U^\wedge)=k((z))\times k[[x,y]]$,
    so $\cO(U^\wedge)$ is not integral over $R^\wedge$,
    and $R^{n\sigma}$ is not integral over $R$.

    If the field $k$ has characteristic zero, then we can even make $R$ quasi-excellent;
    in fact, a Noetherian complete local ring containing a field of characteristic zero is the completion of a quasi-excellent local domain if and only if it is reduced. 
    See \cite[Proof of Theorem 9]{Loepp-03-complete-excellent-domains}.
\end{Exam}

\begin{Def}\label{def:FONSI}
    Let $R$ be a semilocal Noetherian domain.
    A \emph{formal obstruction to naive $(S_2)$-ification for $R$}, or \emph{FONSI for $R$}, is a $P\in \Spec(R^\wedge)$
    such that $\HT(P\cap R)>1$ and that there exists $P_0\in\Min(R^\wedge)$ contained in $P$ such that $\HT(P/P_0)=1$.
    Note that in particular $\HT(P)>1$.

    The set of such $P$ is denoted $\OS{R}.$
    There is a canonical identification $\OS{R}=\bigsqcup_{\fm\in\Max(R)}\OS{R_\fm}$.
    For a non-semi-local $R$, we abuse notations and write $\OS{R}$ for $\bigsqcup_{\fm\in\Max(R)}\OS{R_\fm}$.
\end{Def}

\begin{Rem}\label{rem:UCnoFONSI}
    If $R$ is universally catenary,
   then $\OS{R}=\emptyset$.
    To see this, we may assume $R$ is local, so $R^\wedge$ is (catenary and) equidimensional by Ratliff \citestacks{0AW6},
    so $\HT(P)=\HT(P/P_0)$ for all $P_0\in\Min(R^\wedge)$ contained in $P\in\Spec(R^\wedge)$.
\end{Rem}

\begin{Rem}\label{Rem:S2noFONSI}
Assume $R$ is semilocal.
For $P\in\OS{R}$,
the punctured spectrum of the ring $(R^\wedge)_{\fP}$ is disconnected, as it has an isolated point given by a minimal prime $P_{0}\subsetneq P$ with $\HT(P/P_{0})=1$.
By \citestacks{0BLR} we have $\depth (R^\wedge)_{P}<2$, so $\depth R_{P\cap R}<2$.
   Therefore, if $R$ is $(S_2)$,
   then $\OS{R}=\emptyset$.
\end{Rem}

The rest of this section devotes to the study of rings with no FONSIs.

\begin{Thm}\label{thm:NS2ofFONSIfree}
    Let $R$ be a Noetherian integral domain.
    Assume that $\OS{R}=\emptyset$. 
    Then the following are true.
    \begin{enumerate}[label=$(\roman*)$]
        \item\label{NS2:integral} $R^{n\sigma}$ is integral over $R$, so $R^{n\sigma}=R^\sigma$.
        \item\label{NS2:S2} $R^\sigma$ is $(S_2)$.
         \item\label{NS2:S2unique} If a subalgebra $S$ of $R^\sigma$ is $(S_2)$,
         then $S=R^\sigma$.
        \item\label{NS2:formalS1} If $R$ is semilocal and $R^\wedge$ is $(S_1)$,
        then $R^\sigma$ is finite over $R$.
    \end{enumerate}
\end{Thm}
For a converse to \ref{NS2:integral} see Corollary \ref{cor:noFONSIisNS2integral}.
\begin{proof}
    By Lemmas \ref{lem:NS2localization} and \ref{lem:S2localization},
    we may assume $R$ is local.
    Let $K$ be the fraction field of $R$.
    
    Let $0=Q_1\cap\ldots\cap Q_m\cap Q_{m+1}\cap\ldots\cap Q_n$ be a shortest primary decomposition of $0$ in $R^\wedge$,
    ordered in a way that $P_j=\sqrt{Q_j}$ is a minimal prime of $R^\wedge$ for $1\leq j\leq m$ and not for $m+1\leq j\leq n$.
    Let $S_j=R^\wedge/Q_j$ for $1\leq j\leq m$,
    $S=\prod_{j=1}^m S_j$.
    Let $L_j=(S_j)_{P_j}$,
    so $L=\prod_{j=1}^m L_j$ is the total fraction ring of $S$.
    
    Every $a\in R^\circ$ is a nonzerodivisor on $R^\wedge,R^{n\sigma}\otimes_R R^\wedge$,
    and $S$.
    Therefore there is a commutative diagram
\[
\begin{tikzcd}
R^\wedge \arrow[hookrightarrow,r] \arrow[d] &[0.5em] R^{n\sigma}\otimes_R R^\wedge \arrow[hookrightarrow,r] \arrow[d] &[0.5em] K\otimes_R R^\wedge\arrow[d] \\
S \arrow[hookrightarrow,r]                                           & T \arrow[hookrightarrow,r] & L
\end{tikzcd}
\]
where $T=\prod_{j=1}^m T_j$ and each $T_j$ denote the image of $R^{n\sigma}\otimes_R R^\wedge$ in $L_j$.
Note that the kernel of $K\otimes_R R^\wedge\to L$ is $\bigcap_{j=1}^m Q_j(K\otimes_R R^\wedge)$,
which is nilpotent,
and is zero if $R^\wedge$ is $(S_1)$.

Observe that for every $\fp\in\Spec_{1}(R)$,
we have $R_\fp=(R^{n\sigma})_\fp$.
As $\OS{R}=\emptyset$,
we see that for every $P\in\Spec(R^\wedge)$ so that $\HT(P/P_j)=1$ for some $1\leq j\leq m$,
we have $(R^\wedge)_{P}=(R^{n\sigma}\otimes_R R^\wedge)_{P}$.
Therefore the sub-$S_j$-algebra $T_j$ of $L_j$ satisfies $(S_j)_{P}=(T_j)_{P}$ for all $P\in \Spec_1(S_j)$.

\begin{Lem}\label{lem:naiveS2finiteforcomplete}
    Let $A$ be a Noetherian complete local ring that is $(S_1)$ and has irreducible spectrum.
    Let $F$ be the total fraction ring of $A$ and let $M\subseteq N$ be two sub-$A$-modules of $F$. % containing $A$.
    If $M_{P}=N_{P}$ for all $P\in \Spec_1(A)$,
    and $M$ is finite,
    then $N$ is finite.
\end{Lem}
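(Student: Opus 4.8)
The plan is to reduce the finiteness of $N$ to a statement about local cohomology, exploiting that $A$ is complete local with irreducible spectrum and $(S_1)$. First I would note that since $A$ is $(S_1)$ with irreducible spectrum, the total fraction ring $F$ is a field, namely $F = A_{P_0}$ where $P_0$ is the unique minimal prime; so $M, N$ are naturally identified with fractional ideals, and after clearing a denominator we may assume $M \subseteq N \subseteq F$ with $M$ a finite $A$-submodule of $F$ containing a nonzerodivisor $a$. Pick a nonzerodivisor $a \in M$; then $N/M$ is supported on $\Spec(A) \setminus \Spec_1(A) \cup \{P_0\}$, i.e. on the closed set $Z = V(\fm) \cup (\text{codim} \geq 2)$, but the hypothesis $M_P = N_P$ for all $P \in \Spec_1(A)$ means $N/M$ has support of codimension $\geq 2$ in $\Spec(A)$; combined with $N/M$ being an $A$-module that is a subquotient of the Artinian-free... — more precisely $N/M$ embeds in $F/M$, and $N$ being the preimage of some submodule of $F/M$.

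The key step: I would argue that $N/M \hookrightarrow H^0_{\fm}(\text{something})$ or rather that the obstruction to $N$ being finite is measured by $H^1_{\fm}$. Concretely, consider the short exact sequence $0 \to M \to F \to F/M \to 0$. Since $M$ is finite and $F$ is the fraction field, $F/M$ is a (generally non-finite) torsion module, and $N/M$ is an $A$-submodule of $F/M$ whose support has codimension $\geq 2$ (it avoids all height-one primes and the generic point). Now $N/M$ is a submodule of $F/M$; but more is true: I claim $F/M$ has a natural filtration and that any submodule supported in codimension $\geq 2$ is automatically inside the image of $H^1_{\fm}(M) \to H^1_{\fm}(F) = $ higher local cohomology data. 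The cleanest route: $N/M = \varinjlim_{M \subseteq M' \subseteq N, M'\text{ finite}} M'/M$; it suffices to show these finite modules $M'/M$ have bounded length. Each $M'/M$ is a finite $A$-module supported in codimension $\geq 2$, hence $M'/M \subseteq H^0_{\fm}(F/M)$ would give the bound if $H^0_{\fm}(F/M)$ were finite — but it is, because from $0 \to M \to F \to F/M \to 0$ we get $H^0_{\fm}(F/M) \hookleftarrow$ ... actually $H^0_{\fm}(F) = 0$ (as $\dim A \geq 1$ and $F$ is the fraction field, $\fm$-torsion-free) and the connecting map gives $H^0_{\fm}(F/M) \cong H^1_{\fm}(M)$, which is finite since $M$ is a finite module over the complete (hence excellent enough) local ring $A$ — finiteness of $H^1_{\fm}$ of a finite module holds when $\depth$... no: $H^1_{\fm}(M)$ of a finite module is finite-length iff $M$ satisfies Serre's $(S_2)$ on the punctured spectrum in codim...

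Let me restate the decisive point more carefully. The support of $N/M$ lies in $V(\fm) \cup \{\text{primes of height} \geq 2\}$ minus nothing, but actually $N/M$ avoids $P_0$ and all height-one primes, so $\dim_A(N/M) \leq \dim A - 2$ as a support, EXCEPT it may have $\fm$-torsion. Hmm — the real claim is just: $N/M \subseteq H^0_{\fm}(F/M)$, because any element of $N/M$, being supported away from all height-one primes (and the generic point), is killed by an $\fm$-primary ideal... no, supported in codim $\geq 2$ does not mean $\fm$-torsion. So instead: $N$ is finite iff $N/M$ is finite; $N/M \hookrightarrow F/M$; and the submodule of $F/M$ consisting of all elements whose support has codimension $\geq 2$ is $\Gamma_Z(F/M)$ for $Z = \Spec A \setminus \Spec_1(A) \setminus \{P_0\}$, which by the local-cohomology long exact sequence for the pair $(A, Z)$ relates to $H^1_Z(M)$. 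The finiteness of such "codimension-$\geq 2$ torsion" of $F/M$ when $M$ is finite over a complete local ring is exactly the kind of statement that Theorem~\ref{thm:SisKrull} and the Krull–Akizuki circle of ideas make available, or that follows from $M$ having a finite injective-type resolution with finiteness of the relevant $\Ext$/local cohomology modules. I would invoke that $A$ is complete local, so finite modules have finite Bass numbers and $H^i_{\fm}$ of finite modules are Artinian; the module $\Gamma_Z(F/M)$ sits inside $H^1_Z(M)$ which for a finite module $M$ over the complete local $A$ is cofinite/Artinian, and being both Artinian and "a submodule of $F/M$ that is also an increasing union of finite modules" forces it to be finite.

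The main obstacle, and the heart of the argument, is precisely establishing that the "codimension-$\geq 2$ part" of $F/M$ is a finite $A$-module — i.e., that one cannot build an infinite strictly increasing chain of finite $A$-submodules between $M$ and its "$S_2$-ification inside $F$." I expect the right tool is the long exact sequence in local cohomology with supports in $Z = V(I)$ where $I$ cuts out the non-$\Spec_1$ locus, namely $0 \to \Gamma_Z(M) \to \Gamma_Z(F) \to \Gamma_Z(F/M) \to H^1_Z(M) \to \cdots$, noting $\Gamma_Z(F) = 0$ since $F$ is the fraction field and $Z$ has codimension $\geq 2$ so $F$ is $Z$-torsion-free; hence $N/M \subseteq \Gamma_Z(F/M) \hookrightarrow H^1_Z(M)$, and then the finiteness of $H^1_Z(M)$ for a finite module $M$ over a complete local ring $A$ — which should follow from $\dim(A/I) \leq \dim A - 2$ together with the completeness (so that we can resolve by finite free modules and $H^i_I$ of $A^n$ are expressible via finitely many $H^i_I(A)$, or appeal directly to \citestacks{0AW6}-type finiteness, or to the fact that $H^1_I(M)$ is $I$-cofinite when $\dim A/I \leq 1$... here $\dim A/I$ could be larger). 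If $\dim A/I > 1$ the cofiniteness is subtler, so I would instead only need that $\Gamma_Z(F/M)$ — not all of $H^1_Z(M)$ — is finite, which one gets by a Noetherian-induction / primary-decomposition argument on $F/M$ directly: express $F/M = \varinjlim M_j/M$ over finite $M_j$, observe the $\fm$-depth or the associated primes stabilize, and conclude the $Z$-torsion stabilizes at a finite stage. I would present the argument via this last route, keeping the local-cohomology description as motivation.
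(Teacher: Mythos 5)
Your reduction is fine as far as it goes: writing $M'=\bigcap_{P\in\Spec_1(A)}M_P$, noting $N\subseteq M'$, and observing that $N/M$ is supported in codimension $\geq 2$ (the generic point is easy, since any height-one localization already forces $N_{P_0}=M_{P_0}$) is the same first move the paper makes, in sheaf language, by writing $M'=j_*j^*M$ for $j\colon U\to\Spec(A)$ the open $(S_2)$-locus of $M$. But the decisive step --- that this ``codimension $\geq 2$ correction'' is a finite module --- is exactly where your proposal stops being a proof. That statement is a genuine theorem (Grothendieck's finiteness theorem for $j_*$ of coherent sheaves), and the paper proves it by citing \citestacks{0BK3} after verifying its hypotheses: completeness gives excellence, hence openness of the $(S_2)$-locus and $(S_1)$ formal fibers of $\overline{A}=A/P_0$, and Ratliff's theorem \citestacks{0AW6} (using irreducibility, i.e.\ equidimensionality of $\overline{A}_P$, plus universal catenarity) gives that every associated prime $\fp$ of $\overline{A}^\wedge_P$ for $P\notin U$ satisfies $\dim\overline{A}^\wedge_P/\fp=\HT(P)>1$. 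Your two proposed substitutes do not supply this. The local-cohomology route founders because your $Z$ (the set of primes of height $\geq 2$) is not closed, so there is no ideal $I$ with $Z=V(I)$, and even for closed supports $H^1_I(M)$ is neither Artinian nor cofinite in general when $\dim A/I>1$; moreover the inference ``Artinian and an increasing union of finite submodules, hence finite'' is invalid (an injective hull $E(k)$ is Artinian and a union of finite-length submodules). The fallback route --- ``express $F/M=\varinjlim M_j/M$, observe the depth or associated primes stabilize, and conclude the $Z$-torsion stabilizes'' --- is not an argument, and as sketched it never uses completeness or irreducibility; but these hypotheses are essential, since for non-complete local domains the module $\bigcap_{P\in\Spec_1}R_P$ can fail to be finite or even integral over $R$ (Example \ref{exam:LechGOONS}, Example \ref{exam:semi-NagataRsigmaNotFinite}), and for complete but non-equidimensional rings the sections over the punctured spectrum are likewise non-finite. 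So the heart of the lemma is missing from your proof.

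Two smaller inaccuracies: $F$ need not be a field, only an Artinian local ring $A_{P_0}$, because $A$ is not assumed reduced (indeed the paper applies the lemma to $S_j=R^\wedge/Q_j$ with $Q_j$ primary); and clearing denominators to assume $M$ contains a nonzerodivisor silently excludes degenerate cases such as $M=0$, which should be dispatched separately.
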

\begin{proof}
We will use the fact a Noetherian complete local ring is excellent, \citestacks{07QW}.

   Let $U$ be the locus where $M$ is $(S_2)$.
   Then $U$ is open \cite[Proposition 6.11.6]{EGA4_2},
   and contains $\Spec_1(A)$ as $A$ is $(S_1)$.
   Let $j:U\to \Spec(A)$ be the canonical open immersion.
  It is clear that $j_*j^*M=\bigcap_{P\in\Spec_1(A)} M_{P}$.

Let $P_0$ be the minimal prime of $A$,
and let $\overline{A}=A/P_0$.
Note that $\Ass_A(M)=\{P_0\}$.
   For $P\in \Spec(A)\setminus U$,
the completion $\overline{A}^\wedge_{P}$ is $(S_1)$ as $\overline{A}$ is $(S_1)$ with $(S_1)$ formal fibers.
Moreover, by Ratliff \citestacks{0AW6}, $\overline{A}^\wedge_{P}$ is equidimensional,
as $\overline{A}_{P}$ is equidimensional and universally catenary.
Therefore for all $\fp\in\Ass(\overline{A}^\wedge_{P})$ we have $\dim \overline{A}^\wedge_{P}/\fp = \HT(P)>1$.
By \citestacks{0BK3},
$j_*j^*M$ is finite;
and so is its submodule $N$.
\end{proof}
By the lemma, each $T_j$ is finite over $S_j$,
so $T$ is finite over $S$.
The kernel of $R^{n\sigma}\otimes_R R^\wedge\to T$ is nilpotent, and is zero if $R^\wedge$ is $(S_1)$.
Therefore $R^{n\sigma}\otimes_R R^\wedge$ is integral over $R^\wedge$,
and is finite over $R^\wedge$ if $R^\wedge$ is $(S_1)$.
As $R\to R^\wedge$ is faithfully flat,
we get \ref{NS2:integral} and \ref{NS2:formalS1}, cf. \citetwostacks{02L9}{02LA}.

Let $S$ be an arbitrary subalgebra of $R^{n\sigma}=R^\sigma$.
Then $S^{n\sigma}=R^{n\sigma}$ by Lemmas \ref{lem:reverseIncl} and \ref{lem:NoFonsiht1Preim} below.
Therefore $S$ is $(S_2)$ if and only if $S=R^{n\sigma}$.
This gives \ref{NS2:S2}\ref{NS2:S2unique}.
\end{proof}

\begin{Lem}\label{lem:NoFonsiht1Preim}
Let $R\subseteq R'$ be a finite inclusion of Noetherian integral domains, $S'$ a subalgebra of $R'^\nu$.
    Then for every $\fq'\in\Spec_1(S')$, we have either $\fq'\cap R\in\Spec_1(R)$,
    or $\fq'\cap R=P\cap R$ for some $P\in \OS{R}$;
    in particular, if $\OS{R}=\emptyset$,
    then $\fq'\cap R\in\Spec_1(R)$, and
    $R^{n\sigma}\subseteq S'^{n\sigma}$.
\end{Lem}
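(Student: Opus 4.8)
The plan is to reduce the statement to one about a finite inclusion of complete local domains, where heights become differences of dimensions. First I would set $\fp=\fq'\cap R$; since $S'$ is a domain integral over $R$ we get $\fp\neq 0$, so $\HT_R\fp\geq 1$, and if $\HT_R\fp=1$ there is nothing to prove. So I assume $\HT_R\fp=d\geq 2$ and aim to produce $P\in\OS R$ with $P\cap R=\fp$. I would then reduce to the case $S'=R'$: choosing $0\neq a\in\fq'$, Theorem \ref{thm:SisKrull} forces any minimal prime divisor of $aS'$ contained in $\fq'$ to have height $1$, hence to equal $\fq'$, so $\fq'$ is a minimal prime divisor of $aS'$, and Lemma \ref{lem:MinToAss}\ref{MtoA:ht} supplies a finite $R'$-subalgebra $R_1'\subseteq S'$ with $\HT_{R_1'}(\fq'\cap R_1')=1$; since $R\subseteq R_1'$ is a finite inclusion of Noetherian domains with $(\fq'\cap R_1')\cap R=\fp$, I replace $(R',S',\fq')$ by $(R_1',R_1',\fq'\cap R_1')$. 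Finally I would reduce to $R$ local: picking $\fm\in\Max R$ with $\fp\subseteq\fm$ and localizing at $R\setminus\fm$ leaves $\HT_R\fp$, $\HT_{R'}(\fq')$, and the contraction $\fq'\cap R=\fp$ unchanged (every prime below $\fp$ or $\fq'$ avoids $R\setminus\fm$) and replaces $\OS R$ by $\OS{R_\fm}$, so I may assume $(R,\fm)$ is local, and I write $\fq:=\fq'\in\Spec_1(R')$.

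Next I would pass to $R'^\wedge=R'\otimes_R R^\wedge$, the completion of $R'$ at its Jacobson radical, so that $R^\wedge\hookrightarrow R'^\wedge$ is a finite inclusion. Since $R'\to R'^\wedge$ is faithfully flat, a minimal prime $Q$ of $\fq R'^\wedge$ contracts to $\fq$ and has $\HT_{R'^\wedge}(Q)=\HT_{R'}(\fq)=1$ (flatness together with an Artinian fibre). I would pick a minimal prime $Q_0$ of $R'^\wedge$ with $Q_0\subseteq Q$; as $\HT_{R'^\wedge}(Q)=1$ and $Q_0\subsetneq Q$, we get $\HT(Q/Q_0)=1$. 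Putting $P=Q\cap R^\wedge$ and $P_0=Q_0\cap R^\wedge$: Lemma \ref{lem:finiteInclMinoverMinAssoverAss} gives $P_0\in\Min(R^\wedge)$, clearly $P_0\subseteq P$, and $P\cap R=Q\cap R=\fq\cap R=\fp$, so $\HT(P\cap R)=d>1$. At this stage $P$ has all the properties of a FONSI for $R$ except possibly $\HT(P/P_0)=1$.

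To obtain $\HT(P/P_0)=1$ I would observe that $R^\wedge\hookrightarrow R'^\wedge$ induces finite inclusions of complete local domains $R^\wedge/P_0\hookrightarrow R'^\wedge/Q_0$ and $R^\wedge/P\hookrightarrow R'^\wedge/Q$ (because $P_0=Q_0\cap R^\wedge$ and $P=Q\cap R^\wedge$), whence $\dim R^\wedge/P_0=\dim R'^\wedge/Q_0$ and $\dim R^\wedge/P=\dim R'^\wedge/Q$. Since a complete local domain is catenary, in it the height of a prime is the dimension of the ring minus the dimension of the quotient; applying this in $R^\wedge/P_0$ and in $R'^\wedge/Q_0$ and using the two dimension equalities, $\HT(P/P_0)=\dim R^\wedge/P_0-\dim R^\wedge/P=\dim R'^\wedge/Q_0-\dim R'^\wedge/Q=\HT(Q/Q_0)=1$. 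Therefore $P\in\OS R$ and $P\cap R=\fp$, which proves the main assertion. For the ``in particular'' clause: when $\OS R=\emptyset$ the second alternative cannot occur, so $\fq'\cap R\in\Spec_1(R)$ for all $\fq'\in\Spec_1(S')$; and then for $z\in R^{n\sigma}$ and $\fq'\in\Spec_1(S')$ with $\fp:=\fq'\cap R\in\Spec_1(R)$, the inclusion $R\setminus\fp\subseteq S'\setminus\fq'$ makes $R_\fp\to S'_{\fq'}$ factor the inclusion $\Frac(R)\hookrightarrow\Frac(S')$, so $z\in R_\fp\subseteq S'_{\fq'}$; as $\fq'$ is arbitrary, $z\in S'^{n\sigma}$.

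The hard part will be transporting the ``$\HT=1$'' relation from $R'^\wedge$ down to $R^\wedge$: a finite ring map need not carry minimal primes to minimal primes, nor preserve the relevant heights, so this cannot be argued formally. What makes it work is the combination of Lemma \ref{lem:finiteInclMinoverMinAssoverAss} — which uses that $R$ and $R'$ are domains to force $\Min(R'^\wedge)\to\Min(R^\wedge)$ — with the catenariness of complete local domains, which converts the heights in question into differences of dimensions, and dimensions are insensitive to finite extensions.
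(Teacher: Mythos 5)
Your proposal is correct and follows essentially the same route as the paper: reduce to $S'=R'$ via Lemma \ref{lem:MinToAss}, localize so that $R$ is local, pass to completions, take a height-one minimal prime divisor $Q$ of $\fq'R'^\wedge$ contracting to $\fq'$ and a minimal prime $Q_0\subseteq Q$, and contract to $R^\wedge$ using Lemma \ref{lem:finiteInclMinoverMinAssoverAss}. The only (harmless) difference is the justification of the final equality $\HT\bigl((Q\cap R^\wedge)/(Q_0\cap R^\wedge)\bigr)=1$: the paper cites the dimension formula for the universally catenary ring $R^\wedge$, while you re-derive it from catenarity of complete local domains together with invariance of dimension under finite inclusions.
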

\begin{proof}
The ``in particular'' assertion follows at once as $S'^{n\sigma}=\bigcap_{\fq'\in\Spec_1(S')} S'_{\fq'}\supseteq \bigcap_{\fq'\in\Spec_1(S')} R_{\fq'\cap R}\supseteq R^{n\sigma}$. 

By Lemma \ref{lem:MinToAss}, we may assume $S'=R'$.
We may assume $R$ local, so $R'$ is semilocal.
    By Lemma \ref{lem:finiteInclMinoverMinAssoverAss}
    every minimal prime of $R'^\wedge$ lies above a minimal prime of $R^\wedge$.

    Let    $P'$ be a minimal prime divisor of $\fq'R'^\wedge$, so $\HT(P')=1$ and $P'\cap R'=\fq'$.
    Take $P_0'\in\Min(R'^\wedge)$ contained in $P'$, so $\HT(P'/P_0')=1$.
    As $R^\wedge$ is universally catenary,
    we have $\HT(P'\cap R^\wedge/P_0'\cap R^\wedge)=1$ by the dimension formula \citestacks{02IJ}.
    Therefore either $\HT(P'\cap R)=1$ or $P'\cap R\in\OS{R}$.
    As $P'\cap R=\fq'\cap R$ this proves the lemma.
\end{proof}

\begin{Cor}\label{cor:NS2ofFONSIfreeS}
    Let $R$ be a Noetherian integral domain, $S$ a subalgebra of $R^\nu$.
    Assume that $\OS{R}=\emptyset$. % for all $\fm\in\Spec(R)$.
    Then the following are true.
    \begin{enumerate}[label=$(\roman*)$]
        \item\label{NS2c:integral} $S^{n\sigma}$ is integral over $S$, so $S^{n\sigma}=S^\sigma$.
        \item\label{NS2c:S2} $S^\sigma$ is $(S_2)$.
         \item\label{NS2c:S2unique} If a subalgebra $S'$ of $S^\sigma$ is $(S_2)$,
         then $S'=S^\sigma$.
    \end{enumerate}
\end{Cor}
\begin{proof}
Let $S'$ be a $S$-subalgebra of $R^\nu$.
Let $\fq'\in \Spec_1(S')$.
Then $\HT(\fq'\cap R)=1$ by Lemma \ref{lem:NoFonsiht1Preim},
so $\HT(\fq'\cap S)=1$.
Therefore $S^{n\sigma}\subseteq S'^{n\sigma}$.
Apply this to $S'=R^\nu$,
noting that $(R^\nu)^{n\sigma}=R^\nu$ as $R^\nu$ is a Krull domain \cite[Theorem 33.10]{Nagata-local}, we see \ref{NS2c:integral} holds.

Now apply the inclusion $S^{n\sigma}\subseteq S'^{n\sigma}$ 
and Lemma \ref{lem:reverseIncl}
to a $S$-subalgebra $S'$ of $S^{n\sigma}=S^\sigma$.
We get $S^{n\sigma}= S'^{n\sigma}$,
giving \ref{NS2c:S2}\ref{NS2c:S2unique}.
\end{proof}

\section{Extension rings via first local cohomology}\label{sec:ExtendbyLocCoh}
This is an auxiliary section that provides a conceptual variant of Ratliff's construction \cite[\S 31, Lemma 4]{Mat-CRT}.
\begin{Discu}\label{discu:DefR+M}
Let $R$ be a Noetherian ring and let $I$ be an ideal of $R$.
Then there exists a canonical exact sequence
\[\begin{tikzcd}
    0\arrow[r] & H^0_I(R)\arrow[r] & R \arrow[r] & \cO(D(I)) \arrow[r] & H^1_I(R)\arrow[r] & 0.
\end{tikzcd}\]
For a submodule $M$ of $H^1_I(R)$,
we denote by $R+_I M$ the unique submodule of $\cO(D(I))$ that contains the image of $R$ and has image $M$ of $H^1_I(R)$.
Denote by $R[I;M]$ the subalgebra  of $\cO(D(I))$ generated by $R+_I M$,
and denote by $M^a$ the  unique submodule of $H^1_I(R)$ such that $R+_I M^a=R[I;M]$.
We thus have a commutative diagram with exact rows
\[\begin{tikzcd}
    0\arrow[r] & H^0_I(R)\arrow[r] \arrow[equal,d] & R \arrow[r] \arrow[equal,d] & R+_I M \arrow[r] \arrow[hookrightarrow,d] & M\arrow[r]\arrow[hookrightarrow,d] & 0\\
    0\arrow[r] & H^0_I(R)\arrow[r] & R \arrow[r] & R[I;M] \arrow[r] & M^a\arrow[r] & 0.
\end{tikzcd}\]
There is an obvious functoriality with respect to ring maps,
giving compatibility with flat base change; and an obvious functoriality with inclusion of submodules. 

The module $R+_I M$ is finite if and only if $M$ is finite; in which case $R[I,M]$ is a finitely generated $R$-algebra.
\end{Discu}

\begin{Discu}\label{discu:Rmoda+M}
   If an ideal $\fa\subseteq R$ is $I^\infty$-torsion,
then for $\overline{R}=R/\fa$ and $\overline{I}=(I+\fa)/\fa$ we have $H^1_I(R)=H^1_{\overline{I}}(\overline{R})$, and $R+_I M=\overline{R} +_{\overline{I}} M$. 
In particular, taking $\fa=H^0_I(R)$, we reduce to the case $H^0_I(R)=0$,
or equivalently, $I$ contains a nonzerodivisor on $R$.
\end{Discu}

\begin{Discu}\label{discu:R+MTwoSteps}
As $R+_IM\subseteq\cO(D(I))$ we have $H^0_I(R+_I M)=0$.
There is a canonical identification $H^1_I(R+_I M)=H^1_I(R)/M$,
as $H^1_I(R)=H^1_I(R/H^0_I(R))$.
For a submodule $N$ containing $M^a$, this gives canonical identifications\\
$R[I;M]+_{IR[I;M]}N/M^a=R+_IN$
and
$R[I;M][IR[I;M];N/M^a]=R[I;N]$.
\end{Discu}

\begin{Discu}\label{discu:R+MdifferentIdeals}
    Assume $H^0_I(R)=0$ and let $J$ be an ideal of $R$ containing $I$.
    Then the canonical identification $R\Gamma_J(R\Gamma_I(R))=R\Gamma_J(R)$
    gives a canonical identification $H^0_J(H^1_I(R))=H^1_J(R)$.
    Therefore, if $M$ is a submodule of $H^1_J(R)$,
    then it can be viewed as  a submodule of $H^1_I(R)$,
    and we have canonical identifications $R+_I M=R+_JM$ and $R[I;M]=R[J;M]$.
\end{Discu}

\begin{Lem}[cf. \citestacks{0BHZ}]\label{lem:onestepExtend}
   Let $(R,\fm)$ be a Noetherian local ring of depth at least $1$ that is not a DVR.
    Let $M$ be a submodule of $H^1_\fm(R)$ that is annihilated by $\fm$.
    Then $R[\fm;M]$ is integral over $R$.
\end{Lem}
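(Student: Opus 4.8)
The plan is to show that $A:=R[\fm;M]$ is a finite $R$-module; since a module-finite $R$-algebra is integral over $R$, this yields the assertion. As $\depth R\geq 1$, we have $H^0_\fm(R)=0$, so by Discussion~\ref{discu:DefR+M} we may regard $R$ as a subring of $\cO(D(\fm))$ with $\cO(D(\fm))/R=H^1_\fm(R)$; since $R\subseteq A\subseteq\cO(D(\fm))$, the $R$-module $A/R$ is a submodule of $H^1_\fm(R)$. Recall that $H^1_\fm(R)$ is an Artinian $R$-module (as $R$ is Noetherian local), so any submodule of it annihilated by $\fm$ is a semisimple Artinian $R$-module, hence of finite length, i.e.\ finite-dimensional over $R/\fm$. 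Thus it suffices to prove $\fm A\subseteq R$; I will in fact show $\fm A\subseteq\fm$, and then $A/R$ is a finite $R/\fm$-module, so $A$ is a finite $R$-module.

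Using the exact sequence $0\to R\to R+_\fm M\to M\to 0$ of Discussion~\ref{discu:DefR+M} and that $\fm M=0$, choose $x_1,\dots,x_r\in R+_\fm M$ lifting a finite generating set of $M$; then $R+_\fm M=R+Rx_1+\cdots+Rx_r$ and $A=R[x_1,\dots,x_r]$. Each $x_i$ maps into $M\subseteq H^1_\fm(R)$, so, as $\fm M=0$, the submodule $\fm x_i$ maps to $0$ in $H^1_\fm(R)=\cO(D(\fm))/R$, i.e.\ $\fm x_i\subseteq R$, so $\fm x_i$ is an ideal of $R$. This is the one place the hypothesis is used: if $\fm x_i=R$, pick $m\in\fm$ with $mx_i=1$; then $m$ is a nonzerodivisor (if $rm=0$ then $r=rmx_i=0$), and for every $n\in\fm$ we have $nx_i\in\fm x_i=R$ and $n=m(nx_i)\in mR$, so $\fm=mR$ is principal and generated by a nonzerodivisor, which by Krull's intersection theorem forces $R$ to be a DVR --- contradiction. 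Hence $\fm x_i\subsetneq R$, so $\fm x_i\subseteq\fm$ for all $i$.

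Finally, $A$ is generated as an $R$-module by $1$ together with the products $x_{i_1}\cdots x_{i_k}$ with $k\geq 1$, and for any such product, iterating $\fm x_i\subseteq\fm$ gives $\fm\cdot x_{i_1}\cdots x_{i_k}=(\fm x_{i_1})\,x_{i_2}\cdots x_{i_k}\subseteq\fm\cdot x_{i_2}\cdots x_{i_k}\subseteq\cdots\subseteq\fm x_{i_k}\subseteq\fm$. Hence $\fm A\subseteq\fm\subseteq R$, which as explained in the first paragraph shows $A$ is finite over $R$, hence integral over $R$. The only substantive point is the implication ``$\fm x_i=R\Rightarrow R$ is a DVR''; everything else is formal bookkeeping inside $\cO(D(\fm))$ with the exact sequences of Discussion~\ref{discu:DefR+M}.
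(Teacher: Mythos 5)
Your proof is correct, and its engine is the same as the paper's: for $y\in R+_\fm M$ one has $y\fm\subseteq R$, and the dichotomy ``$y\fm=R$ forces $\fm$ principal, hence $R$ a DVR'' versus ``$y\fm\subseteq\fm$'' is exactly the paper's key step. Where you diverge is the finish. The paper stops at $y\fm\subseteq\fm$ and applies the determinant trick (\citestacks{0B5T}, with $\fm$ a finite $R$-module containing a nonzerodivisor of $\cO(D(\fm))$) to conclude that each element of $R+_\fm M$ is integral, hence so is the algebra it generates; this works verbatim for an arbitrary submodule $M$ killed by $\fm$ and needs no finiteness input. You instead propagate $\fm x_i\subseteq\fm$ through monomials to get $\fm\, R[\fm;M]\subseteq\fm$, and then use Artinian-ness of $H^1_\fm(R)$ to see that $R[\fm;M]/R$ has finite length, so $R[\fm;M]$ is module-finite over $R$, hence integral. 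This yields a nominally stronger conclusion (finiteness, which the paper only extracts in Lemma \ref{lem:punibrLocalCohomWholeExtend} by combining the present lemma with finiteness of the socle), at the cost of tacitly assuming $M$ is finitely generated when you pick $x_1,\dots,x_r$; that is harmless here, since any $M$ annihilated by $\fm$ lies in the socle of the Artinian module $H^1_\fm(R)$ and so is finite, but you should say this explicitly, as the lemma itself does not hypothesize it. Your closing appeal to Krull's intersection theorem for ``principal maximal ideal generated by a nonzerodivisor implies DVR'' is terse but standard (the paper is equally terse at the corresponding point).
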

\begin{proof}
As $\depth R\geq 1$, $R$ is a subring of $\cO(D(\fm))$. 
    Let $y\in R+_\fm M$. Then $y\fm\subseteq R$.
    If $y\fm=R$, then we can write $1=yt$ for some $t\in \fm$, so $a=ayt\in tR$ for all $a\in\fm$, and $\fm=tR$ is principal,  contradiction.
    Therefore $y\fm=\fm$, so $y$ is integral over $R$ by \citestacks{0B5T} and the fact $\fm$ contains a nonzerodivisor on $R$,
    which is automatically a nonzerodivisor on $\cO(D(\fm))$. 
\end{proof}

\begin{Def}\label{def:conservativeExtensions}
    Let $R$ be a Noetherian ring and let $U$ be a scheme-theoretically dense open subset of $\Spec(R)$.
    A \emph{finite $U$-modification} of $R$ is a finite $R$-subalgebra $R'$ of $\cO(U)$.

    Scheme-theoretically, this means 
    $R\to R'$ is finite, and
    $\Spec(R')\times_{\Spec(R)}U$ is scheme-theoretically dense in $\Spec(R')$ and maps isomorphically onto $U$.
\end{Def}

\begin{Def}\label{def:conservative-inertPrimes}
    Let $R$ be a Noetherian integral domain.
    We say $\fp\in\Spec(R)$ is \emph{p-unibranch} if, for every finite $D(\fp)$-modification $R'$ of $R$, % $R$-subalgebras $R'$ of $R^\nu$ that satisfies $R_\fq=R'_\fq$ for all $\fq\in D(\fp)$,
    the ring $R'_\fp$ is local.
\end{Def}

We record the following standard fact.
\begin{Lem}\label{lem:extendUmodif}
    Let $R$ be a Noetherian integral domain and let $U$ be an open subset of $\Spec(R)$.
    Let $W$ be a multiplicative subset of $R$ and let $W^{-1}U$ be the preimage of $U$ in $\Spec(W^{-1}R)$.
    Then every finite $(W^{-1}U)$-modification of $W^{-1}R$ is of the form $W^{-1}R'$ where $R'$ is a finite $U$-modification of $R$.
    In particular, if $\fp\in\Spec(R)$ is disjoint from $W$,
    then $\fp\in\Spec(R)$ is p-unibranch
    if and only if $W^{-1}\fp\in\Spec(W^{-1}R)$ is p-unibranch.
\end{Lem}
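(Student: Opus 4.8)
The plan is to reduce everything to the identity $\cO(W^{-1}U)=W^{-1}\cO(U)$ and then transport a modification from below to above by clearing denominators.

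First I would establish $\cO(W^{-1}U)=W^{-1}\cO(U)$. Since $R$ is Noetherian, $U=D(f_1)\cup\cdots\cup D(f_r)$ is quasi-compact, and $\cO(U)$ is the equalizer of the two restriction maps $\prod_i R_{f_i}\rightrightarrows\prod_{i,j}R_{f_if_j}$; the open $W^{-1}U$ is covered by the basic opens $D(f_i)\subseteq\Spec(W^{-1}R)$, with $D(f_i)\cap D(f_j)=D(f_if_j)$, so $\cO(W^{-1}U)$ is the equalizer of $\prod_i(W^{-1}R)_{f_i}\rightrightarrows\prod_{i,j}(W^{-1}R)_{f_if_j}$. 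As $W^{-1}(-)$ is exact it commutes with this finite limit, and $(W^{-1}R)_{f_i}=W^{-1}(R_{f_i})$, so the two equalizers coincide. (Equivalently, writing $U=D(I)$, this is the compatibility of the exact sequence of Discussion \ref{discu:DefR+M} with the flat base change $R\to W^{-1}R$.) Consequently, if $R'$ is a finite $U$-modification of $R$ then $W^{-1}R'\subseteq W^{-1}\cO(U)=\cO(W^{-1}U)$ is a finite $(W^{-1}U)$-modification of $W^{-1}R$; so $R'\mapsto W^{-1}R'$ has the claimed target and it remains to prove surjectivity.

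For surjectivity, let $B$ be a finite $(W^{-1}U)$-modification of $W^{-1}R$ and pick $b_1,\dots,b_n\in B$ generating it as a $W^{-1}R$-module; as $B$ is a ring they also generate it as a $W^{-1}R$-algebra. Each $b_i$ lies in $W^{-1}\cO(U)\subseteq\Frac(R)$ and is integral over $W^{-1}R$. I would choose $v\in W$ clearing the denominators expressing the $b_i$ over $\cO(U)$, and then $s\in W$ clearing the denominators occurring in monic integral equations for the $vb_i$ over $W^{-1}R$ (via the substitution sending a root $x$ to $sx$); then the elements $u_i:=svb_i$ lie in $\cO(U)$, are integral over $R$, and — since $sv\in(W^{-1}R)^\times$ — still span $B$ over $W^{-1}R$. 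Hence $R':=R[u_1,\dots,u_n]\subseteq\cO(U)$ is module-finite over $R$, i.e.\ a finite $U$-modification of $R$, and $W^{-1}R'=W^{-1}R[u_1,\dots,u_n]=B$. This denominator juggling — producing one element of $W$ that simultaneously brings each $b_i$ into $\cO(U)$ and upgrades its integrality from $W^{-1}R$ to $R$, without altering the module it generates — is the only step that needs care; everything else is formal.

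For the ``in particular'' statement, when $\fp\cap W=\emptyset$ the preimage of $D(\fp)$ in $\Spec(W^{-1}R)$ is $D(W^{-1}\fp)$, and for any $R$-algebra $R'$ one has $(W^{-1}R')\otimes_{W^{-1}R}(W^{-1}R)_{W^{-1}\fp}=R'\otimes_R R_\fp=R'_\fp$. Thus $R'_\fp$ is local if and only if the localization of $W^{-1}R'$ at $W^{-1}\fp$ is local; combining this with the bijection just established between finite $D(\fp)$-modifications of $R$ and finite $D(W^{-1}\fp)$-modifications of $W^{-1}R$ yields both directions of the p-unibranch equivalence.
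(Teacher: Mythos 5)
Your proof is correct, but it takes a genuinely more elementary route than the paper's. The paper first reduces to the case where $W$ is generated by a single element $f$ by a standard limit argument, and then argues sheaf-theoretically: the modification $T$ and $\cO_U$ glue to a coherent algebra $\cA$ on $V=U\cup D(f)$, and a coherent subalgebra of the integral closure of $\cO_{\Spec(R)}$ in $j_*\cA$ (alternatively, an appeal to Zariski's Main Theorem) produces a finite $U$-modification $R''$ with $W^{-1}R''=T$. You instead work entirely inside $\Frac(R)$: the identification $\cO(W^{-1}U)=W^{-1}\cO(U)$ via the finite \v{C}ech equalizer for a quasi-compact $U$ and exactness of localization, followed by clearing denominators from finitely many module generators and rescaling their monic integral equations, yields $R'=R[u_1,\dots,u_n]\subseteq\cO(U)$ finite over $R$ with $W^{-1}R'=B$; this handles an arbitrary multiplicative set in one step and avoids both the limit reduction and relative normalization, at the cost of leaning on the domain hypothesis (so that $\cO(U)$, $B$, and all integral elements live in $\Frac(R)$) --- which is exactly the setting of the lemma, whereas the paper's gluing argument is the one that would survive without integrality of $R$. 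One small wording point: the assignment $R'\mapsto W^{-1}R'$ need not be injective, so ``bijection'' overstates what you proved; but the p-unibranch equivalence only needs that the assignment is well defined and surjective together with $(W^{-1}R')_{W^{-1}\fp}=R'_\fp$, and you established all of these.
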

\begin{proof}
    By standard theory of limits, particularly \citetwostacks{01ZO}{081E},
    we may assume $W$ is generated by a single element $f$.
    Let $T$ be a finite $(W^{-1}U)$-modification of $W^{-1}R$.
    Then $T$ and $\cO_U$ glue to a coherent $\cO_{V}$-algebra $\cA$,
    where $V=U\cup D(f)$.
    Let $j:V\to \Spec(A)$ be the open immersion, and let $\cA'$ be the integral closure of $\cO_{\Spec(A)}$ in $j_*\cA$.
    Then $j^*\cA'=\cA$, and we can find a coherent subalgebra $\cA''$ of $\cA'$ such that  $j^*\cA''=\cA$,
    which corresponds to a $U$-modification $R''$ of $R$ that satisfies $W^{-1}R''=T$.
    Alternatively, one can use Zariski's Main Theorem, \citestacks{05K0}.
\end{proof}
\begin{Lem}\label{lem:punibrLocalCohomWholeExtend}
    Let $(R,\fm)$ be a Noetherian local domain of % depth at least $1$ and 
    dimension at least $2$.
    Assume that $\fm\in \Spec(R)$ is p-unibranch.
    
    Let $M$ be a submodule of $H^1_\fm(R)$.
    Then $R[\fm;M]$ is integral over $R$.
\end{Lem}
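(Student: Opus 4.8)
The plan is to reduce to the one-step case (Lemma \ref{lem:onestepExtend}) by an induction, using $p$-unibranchness to keep the auxiliary rings local. First I reduce to $M$ finitely generated: since $H^1_\fm(R)$ commutes with filtered colimits, $R[\fm;M]$ is the filtered union of the $R[\fm;M']$ over finitely generated $M'\subseteq M$, and a filtered union of integral extensions is integral; moreover each such $M'$ has finite length, being finitely generated inside the Artinian module $H^1_\fm(R)$. Next I reduce to $M$ cyclic: $R[\fm;M]$ is a finitely generated $R$-algebra (Discussion \ref{discu:DefR+M}), generated by finitely many $y\in R+_\fm M$, and $R[y]=R[\fm;R\bar y]$ for $\bar y\in M$ the class of $y$, so it suffices to prove each such $y$ is integral over $R$. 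Assume then $M=R\bar y$ and let $n$ be least with $\fm^n\bar y=0$; I induct on $n$. If $n\le 1$ then $\fm y\subseteq R$; since $\dim R\ge 2$, $\fm$ is not principal, so $R$ is not a DVR and $y$ is integral over $R$ by the argument of Lemma \ref{lem:onestepExtend}.

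Now let $n\ge 2$. For $a\in\fm$ we have $\fm^{n-1}(ay)=a\cdot\fm^{n-1}y\subseteq a\cdot(R:_{\Frac R}\fm)\subseteq\fm\cdot(R:_{\Frac R}\fm)\subseteq R$ (using $\fm^n y\subseteq R$), so the class of $ay$ in $H^1_\fm(R)$ is killed by $\fm^{n-1}$; by the inductive hypothesis $R[ay]$ is integral over $R$, hence module-finite, and as $R[ay]/R\subseteq H^1_\fm(R)$ it is a finite $D(\fm)$-modification of $R$. Picking generators $a_1,\dots,a_r$ of $\fm$ and setting $R':=R[a_1y,\dots,a_ry]$, we obtain a finite $D(\fm)$-modification with $\fm y\subseteq R'$ (and $\fm^j y^j\subseteq R'$ for all $j$). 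Since $\fm$ is $p$-unibranch, $R'$ is local; its maximal ideal $\fm'$ again has dimension $\ge 2$ and depth $\ge 1$, and is $p$-unibranch: by Lemma \ref{lem:extendUmodif} this is a local question, and any finite $D(\fm')$-modification of $R'$ is a finite $R$-subalgebra of $\cO(D(\fm))$ whose conductor into $R$ is $\fm$-primary, hence a finite $D(\fm)$-modification of $R$.

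It remains to show $y$ is integral over $R'$. Here $\fm y\subseteq R'$ shows the class of $y$ in $H^1_{\fm'}(R')$ is annihilated by $\fm$. Iterating the construction of the previous paragraph over $(R',\fm')$ yields a tower $R'\subseteq R''\subseteq\cdots$ of finite (hence integral) $D(\fm')$-modifications, each local by $p$-unibranchness; the crux is that this tower stabilizes. To see this, the ideals $\bigl(R'+R'y+\cdots+R'y^{m-1}:_{R'}y^m\bigr)$ of the Noetherian ring $R'$ form an ascending chain (if $r y^m\in R'+\cdots+R'y^{m-1}$ then $ry^{m+1}\in R'+\cdots+R'y^m$), so it terminates; one then argues --- this is the delicate point, and where $p$-unibranchness is used essentially --- that the terminal ideal is the unit ideal, whence $y^m\in R'+R'y+\cdots+R'y^{m-1}$ and $R'[y]$ is module-finite over $R'$. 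Equivalently, once a modification $\widetilde R$ with maximal ideal $\widetilde\fm$ satisfies $\widetilde\fm\,y\subseteq\widetilde R$, then $\widetilde\fm\,y\ne\widetilde R$ since $\widetilde\fm$ is not principal, hence $\widetilde\fm\,y\subseteq\widetilde\fm$ and the determinant trick applied to the faithful finitely generated module $\widetilde\fm$ gives integrality of $y$ over $\widetilde R$.

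The main obstacle is exactly this termination: understanding how the maximal ideal of a $D(\fm)$-modification lies over $\fm$, and thereby ruling out an infinite strictly increasing tower. The $p$-unibranch hypothesis is indispensable here --- without it the conclusion fails, since by Example \ref{exam:LechGOONS} the ring $R[\fm;H^1_\fm(R)]=\cO(D(\fm))$ need not be integral over $R$.
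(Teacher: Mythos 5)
Your reductions (to finitely generated, then cyclic $M$), the induction set-up on the least $n$ with $\fm^n\bar y=0$, and the construction of the finite $D(\fm)$-modification $R'=R[a_1y,\dots,a_ry]$ with $\fm y\subseteq R'$, local and again p-unibranch, are all fine and close in spirit to the paper. The proof, however, is not complete where it matters. After passing to $(R',\fm')$ you only know that the class of $y$ in $H^1_{\fm'}(R')$ is killed by the \emph{extended} ideal $\fm R'$, not by $\fm'$, and its $\fm'$-adic annihilation order can exceed $n-1$ (even $n$), so the inductive hypothesis does not apply to $(R',\fm')$; this is exactly why you resort to the tower. Neither of your two repairs closes the argument: the chain of colon ideals $\bigl(R'+R'y+\cdots+R'y^{m-1}:_{R'}y^m\bigr)$ stabilizes for trivial Noetherian reasons whether or not $y$ is integral, and the whole content is the unproved assertion that it stabilizes at the unit ideal, which is just a restatement of ``$R'[y]$ is module-finite''; and the ``equivalently'' paragraph presupposes a stage $\widetilde R$ with $\widetilde\fm\,y\subseteq\widetilde R$, i.e.\ annihilation by the \emph{new} maximal ideal, which your construction never produces --- it only gives $\fm\widetilde R\,y\subseteq\widetilde R$. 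This gap is not formal: the implication ``$yJ\subseteq A$ for some $\fm_A$-primary $J$, $\depth A\geq 1$, $\dim A\geq 2$ $\Rightarrow$ $y$ integral over $A$'' is false in general. For instance in $A=k[[x,y,z]]/\bigl((x,y)\cap(z)\bigr)$ (the completion occurring in Example \ref{exam:LechGOONS}) one has $\cO(D(\fm_A))\cong k((z))\times k[[x,y]]$, and the element corresponding to $(z^{-1},0)$ is multiplied into $A$ by the $\fm_A$-primary ideal $(x,y,z^2)$ yet is not integral over $A$. So p-unibranchness must enter essentially at precisely the point you leave open, and you give no indication how.

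For comparison, the paper avoids any termination statement by enlarging the \emph{module} rather than chasing a single element: it sets $M_1=(0:_{H^1_\fm(R)}\fm)$, which is finite because $H^1_\fm(R)$ is Artinian, so $R_1=R[\fm;M_1]$ is finite over $R$ by Lemma \ref{lem:onestepExtend}, and p-unibranchness keeps $R_1$ local of dimension $\geq 2$ and depth $\geq 1$; it then iterates with $M_{i+1}=(M_i^a:\fm)$, using Discussion \ref{discu:R+MTwoSteps} to realize each $R[\fm;M_{i+1}]$ as a one-step extension of the previous stage, hence finite over $R$. Since $M_i\supseteq(0:_{H^1_\fm(R)}\fm^i)$, the $M_i$ exhaust $H^1_\fm(R)$, so every element of $R[\fm;M]$ lies in some finite stage and integrality follows with no stabilization claim whatsoever. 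To repair your argument you would either have to adjoin at each step the full $\fm$-annihilated layer in this way (keeping track of annihilation by the original $\fm$, so the order genuinely drops), or prove your missing claim that $\fm\widetilde R\,y\subseteq\widetilde R$ plus p-unibranchness forces integrality --- and the latter is essentially the lemma itself.
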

\begin{proof}
    Let $M_1$ be the socle of $H^1_\fm(R)$, which is finite as $H^1_\fm(R)$ is Artinian \cite[Chapter V, Corollary 6.5]{Residues-Duality}. 
    The ring $R_1:=R[\fm;M_1]$ is then finite over $R$ by Lemma \ref{lem:onestepExtend}.
    As $\fm\in \Spec(R)$ is p-unibranch, $R_1$ is a local ring, and we denote its maximal ideal by $\fm_1$.
    We know $\dim R_1=\dim R\geq 2$, %and we know $\depth R_1\geq 1$ as $R_1$ is a subring of $\cO(D(\fm))$.
    therefore the same argument applies to the ring $R_1$,
    and from Discussion \ref{discu:R+MTwoSteps} we see $R[\fm;M_2]$ is finite over $R$ for $M_2=(M_1^a:\fm)$.
    Inductively we see $R[\fm;H^1_\fm(R)]$ is integral over $R$,
    and so is its subring $R[\fm;M]$.
\end{proof}

\begin{Rem}
    Instead of p-unibranchness, the conclusion of Lemma \ref{lem:punibrLocalCohomWholeExtend} holds with the weaker assumption that all maximal ideals of all finite $D(\fm)$-modifications of $R$ have height at least $2$.
    This is true, for instance, when $R$ is universally catenary, by the dimension formula \citestacks{02IJ}.
\end{Rem}

\section{Eliminating the obstructions}
\begin{Lem}\label{lem:FONSIisoverFONSI}
    Let $R\subseteq R'$ be a finite inclusion of Noetherian integral domains.
    Then for every $P'\in\OS{R'}$ we have $P'\cap R^\wedge\in\OS{R}$.
    In particular, if $\OS{R}=\emptyset$, then $\OS{R'}=\emptyset$.
\end{Lem}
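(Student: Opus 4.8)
The plan is to reduce to the semilocal case and then transfer the obstruction along the completion. Given $P' \in \OS{R'}$, let $\fq' = P' \cap R'$; by definition $\HT(\fq') > 1$. We want to produce $P \in \Spec(R^\wedge)$ with $P = P' \cap R^\wedge$ (for suitable choices of local factors) witnessing membership in $\OS{R}$. First I would localize: since the formation of $\OS{(-)}$ and of the completion are compatible with localization at maximal ideals, and since $\fq' \cap R$ is contained in some maximal ideal of $R$ over which $\fq'$ lies, I may assume $R$ is local, hence $R'$ is semilocal, and $R'^\wedge = \prod_i (R'_{\fm'_i})^\wedge$ with $\fm'_i$ the maximal ideals of $R'$. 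The prime $P'$ lives in one factor $(R'_{\fm'_i})^\wedge$, so I may further assume $R'$ is local.

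Next I would invoke Lemma \ref{lem:finiteInclMinoverMinAssoverAss}: the finite inclusion $R \subseteq R'$ of semilocal domains gives a surjection $\Spec(R'^\wedge) \to \Spec(R^\wedge)$ carrying $\Min(R'^\wedge)$ onto $\Min(R^\wedge)$. By hypothesis there is $P_0' \in \Min(R'^\wedge)$ with $P_0' \subseteq P'$ and $\HT(P'/P_0') = 1$. Set $P = P' \cap R^\wedge$ and $P_0 = P_0' \cap R^\wedge$; then $P_0 \in \Min(R^\wedge)$ by the lemma, $P_0 \subseteq P$, and since $R^\wedge \hookrightarrow R'^\wedge$ is finite (indeed $R'^\wedge = R' \otimes_R R^\wedge$ is finite over $R^\wedge$), the going-up/integrality dimension formula forces $\HT(P/P_0) = \HT(P'/P_0') = 1$. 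Moreover $\HT(P \cap R)$: note $P \cap R = P' \cap R = \fq' \cap R$, and by Lemma \ref{lem:NoFonsiht1Preim} applied with $S' = R'$, since $\fq' \in \Spec_1(R')$... wait — here $\HT(\fq') > 1$, so that lemma is about the height-one case. Instead I would argue directly: $\HT(P' \cap R^\wedge) \geq \HT(P'/P_0') \cdot$ -- rather, use that $R^\wedge \to R'^\wedge$ is finite and injective, so $\HT(P) = \HT(P')$ by incomparability/going-up for integral extensions, hence $\HT(P) > 1$; and $P \cap R = \fq' \cap R$ has $\HT(\fq' \cap R) = \HT(\fq') > 1$ because $R \subseteq R'$ is finite injective (integral), giving $\HT(\fq' \cap R) = \HT(\fq')$. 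Thus $P \in \OS{R}$.

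The main obstacle I anticipate is the bookkeeping across the three height comparisons — between $\fq'$ and $\fq' \cap R$ in the finite extension $R \subseteq R'$, between $P'$ and $P$ in the finite extension $R^\wedge \subseteq R'^\wedge$, and between $P'/P_0'$ and $P/P_0$ — and in particular making sure the surjectivity on minimal primes from Lemma \ref{lem:finiteInclMinoverMinAssoverAss} is used in the correct direction (it gives $P_0 \in \Min(R^\wedge)$ from $P_0' \in \Min(R'^\wedge)$, which is exactly what we need). Each individual comparison is the standard dimension theory of finite/integral extensions (incomparability, going-up, and equality of heights of lying-over primes for finite extensions of Noetherian domains, via $\dim R_\fq = \dim R'_{\fq'}$), so once the reductions are in place the argument is short. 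The ``in particular'' clause is then immediate: if $\OS{R} = \emptyset$ then no $P' \in \OS{R'}$ can exist, since each would produce an element of $\OS{R}$.
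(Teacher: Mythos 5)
You take essentially the same route as the paper — invoke Lemma~\ref{lem:finiteInclMinoverMinAssoverAss} to place $P_0'\cap R^\wedge$ in $\Min(R^\wedge)$, then compare heights — but there is a gap in your justification of the crucial step $\HT(P/P_0)=1$.

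You write that "the going-up/integrality dimension formula forces $\HT(P/P_0)=\HT(P'/P_0')=1$," and later assert as a general fact the "equality of heights of lying-over primes for finite extensions of Noetherian domains, via $\dim R_\fq=\dim R'_{\fq'}$." That general fact is false. For a finite injective map $A\subseteq B$ of Noetherian domains, incomparability and going-up give only $\HT(\fq)\leq\HT(\fq\cap A)$; the reverse inequality can fail when $A$ is not universally catenary (Nagata's non-catenary Noetherian local domains yield counterexamples). The inequality you actually need at this step is $\HT(P/P_0)\leq\HT(P'/P_0')=1$; the bound $\HT(P/P_0)\geq 1$ is trivial once $P_0\subsetneq P$. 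So going-up gives you the wrong direction. What saves the argument is that $R^\wedge$ is complete local, hence universally catenary, so the dimension formula \citestacks{02IJ} applies to the finite injection of domains $R^\wedge/(P_0'\cap R^\wedge)\to R'^\wedge/P_0'$ and yields the needed equality. This is exactly what the paper's proof invokes; the universal catenarity of $R^\wedge$ must be flagged, since it is the entire reason the height comparison works and it is where completeness enters.

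Your other claimed equality, $\HT(\fq'\cap R)=\HT(\fq')$ for the finite inclusion $R\subseteq R'$, is likewise not valid in general (there is no catenarity hypothesis on $R$). However, there you only need $\HT(\fq'\cap R)\geq\HT(\fq')>1$, which does follow from incomparability, so that overclaim is harmless. Replace both "equalities" by the correct one-sided inequality where $R$ is involved, and invoke universal catenarity of the complete ring $R^\wedge$ plus the dimension formula where it is needed, and the argument is complete.
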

\begin{proof}
    The proof is similar to the proof of Lemma \ref{lem:NoFonsiht1Preim}.
    
    Let $P'\in\OS{R'}$. 
    Take $P_0'\in\Min(R'^\wedge)$ contained in $P'$ with $\HT(P'/P_0')=1$.
    As $R^\wedge$ is universally catenary,
    we have $\HT(P'\cap R^\wedge/P_0'\cap R^\wedge)=1$ by the dimension formula \citestacks{02IJ}.
    By Lemma \ref{lem:finiteInclMinoverMinAssoverAss}
    $P_0'\cap R^\wedge\in\Min(R^\wedge)$.
    We have $\HT(P'\cap R)\geq \HT(P'\cap R')>1$,
    by the fact $R\to R'$ is integral and by the definition of $\OS{R'}$.
    Therefore $P'\cap R^\wedge\in \OS{R}.$
\end{proof}

The following result is \cite[Theorem 31.1]{Mat-CRT}, and also follows from \cite[Proposition 6.10.6]{EGA4_2}.
\begin{Thm}\label{thm:finiteFailureofCatnery}
    Let $R$ be a Noetherian ring, $\fp\in\Spec(R)$.
    Then there exist at most finitely many $\fP\in V(\fp)$ such that $\HT(\fP/\fp)=1$ and that $\HT(\fP)>\HT(\fp)+1$.
\end{Thm}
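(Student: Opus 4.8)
The plan is to bound the ``bad'' primes $\fP$ in terms of the minimal primes of a well-chosen ideal. Write $n=\HT(\fp)$, which is finite since $R$ is Noetherian. Since $R_\fp$ is a Noetherian local ring of dimension $n$ it has a system of parameters, so after clearing denominators I may choose $a_1,\dots,a_n\in\fp$ such that $\fp R_\fp$ is the unique prime of $R_\fp$ containing $\fa R_\fp$, where $\fa:=(a_1,\dots,a_n)$. Any prime $\fq$ of $R$ with $\fa\subseteq\fq\subsetneq\fp$ would give a prime of $R_\fp$ strictly between $\fa R_\fp$ and $\fp R_\fp$, so in fact $\fp$ is a \emph{minimal} prime of $\fa$. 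Let $\fq_1,\dots,\fq_r$ be the (finitely many) minimal primes of $\fa$ other than $\fp$; being distinct minimal primes of $\fa$, each $\fq_i$ is incomparable with $\fp$, so in particular $\fq_i\not\subseteq\fp$.

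The crux is a dimension estimate. Suppose $\fP\in V(\fp)$ satisfies $\HT(\fP/\fp)=1$ and $\HT(\fP)>n+1$. Adjoining one element to a Noetherian local ring drops its Krull dimension by at most one, so iterating over $a_1,\dots,a_n$ yields $\dim R_\fP\le n+\dim(R_\fP/\fa R_\fP)$, whence $\dim(R_\fP/\fa R_\fP)\ge 2$. Since $\dim(R_\fP/\fa R_\fP)$ is the maximum of $\dim(R_\fP/\fq R_\fP)$ over those minimal primes $\fq$ of $\fa$ contained in $\fP$, some such $\fq$ satisfies $\dim(R_\fP/\fq R_\fP)\ge 2$. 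But $\dim(R_\fP/\fp R_\fP)=\HT(\fP/\fp)=1$, so $\fq\neq\fp$; hence $\fq=\fq_i$ for some $i$, and $\fp+\fq_i\subseteq\fP$ with $\fp+\fq_i\supsetneq\fp$.

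To conclude, pass to the Noetherian domain $R/\fp$: the image of such a $\fP$ is a height-one prime containing the nonzero ideal $(\fp+\fq_i)/\fp$, hence is minimal over it, and by Krull's height theorem there are only finitely many such primes for each fixed $i$. As $i$ ranges over $\{1,\dots,r\}$, only finitely many $\fP$ can occur, which proves the theorem. I do not expect a genuine obstacle here; the only place needing care is the reduction step---arranging that $\fp$ itself is minimal over $\fa$, and then extracting from the estimate a minimal prime of $\fa$ distinct from $\fp$. (One may alternatively invoke constructibility of the dimension function, \cite[Proposition 6.10.6]{EGA4_2}, as already noted, but the argument above is elementary and self-contained.)
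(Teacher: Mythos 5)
Your proof is correct. The paper gives no argument of its own for this statement—it simply cites \cite[Theorem 31.1]{Mat-CRT} (Ratliff/McAdam) and \cite[Proposition 6.10.6]{EGA4_2}—and your argument is essentially the classical one behind that citation: choose $a_1,\dots,a_n\in\fp$ with $\fp$ minimal over $\fa=(a_1,\dots,a_n)$, show every bad $\fP$ must contain one of the other minimal primes $\fq_i$ of $\fa$, and conclude since $\fP/\fp$ is then one of the finitely many minimal primes of the nonzero ideal $(\fp+\fq_i)/\fp$ in the domain $R/\fp$. The only (harmless) deviation from the textbook proof is the middle step, where you use the inequality $\dim(R_\fP/\fa R_\fP)\geq\HT(\fP)-n\geq 2$ directly instead of adjoining an auxiliary element and invoking Krull's height theorem; also note that the final finiteness is just the finiteness of the set of minimal primes of an ideal in a Noetherian ring rather than Krull's height theorem itself.
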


\begin{Lem}\label{lem:FONSIfinite}
    Let $R$ be a Noetherian semilocal domain.
    Then $\OS{R}$ is finite.
\end{Lem}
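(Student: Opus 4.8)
The plan is to reduce to the local case and then feed everything into Ratliff's finiteness theorem, stated just above as Theorem~\ref{thm:finiteFailureofCatnery}.

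First I would invoke the canonical identification $\OS{R}=\bigsqcup_{\fm\in\Max(R)}\OS{R_\fm}$ from Definition~\ref{def:GOONS}; since a semilocal ring has only finitely many maximal ideals, it suffices to prove that $\OS{R}$ is finite when $R$ is local. Write $A:=R^\wedge$, a Noetherian complete local ring. The key observation is that the defining condition $\HT(P\cap R)>1$ is stronger than it looks: if $P\in\OS{R}$ and $P_0\in\Min(A)$ is a witness, so that $P_0\subseteq P$ and $\HT(P/P_0)=1$, then since $R\to A$ is flat, going-down gives $\HT(P)\geq\HT(P\cap R)>1$. As $\HT(P_0)=0$, this says precisely that $\HT(P)>\HT(P_0)+1$.

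Thus every $P\in\OS{R}$ is, for some $P_0\in\Min(A)$, a prime with $\HT(P/P_0)=1$ and $\HT(P)>\HT(P_0)+1$. The set $\Min(A)$ is finite, and for each $P_0\in\Min(A)$ Theorem~\ref{thm:finiteFailureofCatnery}, applied to the ring $A$ and the prime $P_0$, yields only finitely many such $P$. Taking the union over $P_0\in\Min(A)$ shows $\OS{R}$ is finite, completing the reduced case and hence the proof.

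I do not anticipate a genuine obstacle here: the argument is short once one notices the right input. The only point requiring a little care is the inequality $\HT(P)\geq\HT(P\cap R)$, which rests on the flatness (hence going-down, or the dimension formula for flat local maps of Noetherian local rings) of the completion map; this is exactly what converts the hypothesis $\HT(P\cap R)>1$ into the form $\HT(P)>\HT(P_0)+1$ demanded by Theorem~\ref{thm:finiteFailureofCatnery}.
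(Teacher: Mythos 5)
Your proof is correct and follows essentially the same route as the paper: the paper's (terse) proof is precisely to apply Theorem~\ref{thm:finiteFailureofCatnery} to each of the finitely many minimal primes $P_0$ of $R^\wedge$, using the observation (recorded in Definition~\ref{def:GOONS}) that $\HT(P)>1=\HT(P_0)+1$ for any FONSI $P$, which is the flatness/going-down point you spell out. Your preliminary reduction to the local case is harmless but not needed, since the semilocal $R^\wedge$ already has only finitely many minimal primes.
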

\begin{proof}
    This follows from Theorem \ref{thm:finiteFailureofCatnery}
    as $R^\wedge$ has only finitely many minimal primes.
\end{proof}

The next result is the key technical lemma towards the main theorems.
The proof idea is to ``split up'' a FONSI with a finite extension in $R^\sigma$,
using disconnectedness (Remark \ref{Rem:S2noFONSI}) and the extensions discussed in \S\ref{sec:ExtendbyLocCoh},
and to note this process must terminate.

\begin{Thm}\label{thm:killFONSI}
    Let $R$ be a Noetherian semilocal domain. 
    Then there exists a finite subalgebra $R'$ of $R^\sigma$ such that $\OS{R'}=\emptyset$.
\end{Thm}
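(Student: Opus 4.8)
The plan is to induct on a suitable measure of the ``complexity'' of the FONSIs of $R$ and, at each step, to enlarge $R$ inside $R^\sigma$ so as to strictly decrease this measure. Since $\OS{R}=\bigsqcup_{\fm}\OS{R_\fm}$ and $R^\sigma$ localizes well (Lemma~\ref{lem:NS2localization}), and since a finite subalgebra can be assembled one maximal ideal at a time (using Lemma~\ref{lem:extendUmodif} and gluing, or simply handling finitely many $\fm$ in turn), I would first reduce to the case where $R$ is local, say $(R,\fm)$, with $\OS{R}$ finite and nonempty (Lemma~\ref{lem:FONSIfinite}). For $P\in\OS{R}$ let $\fp=P\cap R$; by Remark~\ref{Rem:S2noFONSI} the ring $(R^\wedge)_P$ has depth $<2$, hence $\depth R_\fp<2$, while $\HT(\fp)>1$ by definition of a FONSI. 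The measure to induct on will be something like $\max\{\HT(P):P\in\OS{R}\}$ together with the number of $P$ achieving this maximum, or a finite multiset of such heights ordered lexicographically — I would pick whichever makes the bookkeeping cleanest.

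The core of the argument is a single reduction step at a chosen FONSI $P\in\OS{R}$ with $\fp=P\cap R$ of maximal height among members of $\OS{R}$. Localizing at $\fp$ (legitimate by Lemma~\ref{lem:extendUmodif} and Lemma~\ref{lem:NS2localization}, and harmless because passing to $R_\fp$ does not create FONSIs above other primes, by Lemma~\ref{lem:FONSIisoverFONSI} run in reverse together with Lemma~\ref{lem:NoFonsiht1Preim}), I may assume $(R,\fp)$ is local of dimension $\HT(\fp)\geq 2$ and depth $\geq 1$, with $P\in\Spec(R^\wedge)$ a FONSI lying over the closed point. The disconnectedness of the punctured spectrum of $(R^\wedge)_P$ (Remark~\ref{Rem:S2noFONSI}) says $\cO$ of that punctured spectrum is a nontrivial product, equivalently $H^1$ of the appropriate local cohomology is nonzero in a way visible over $R$ after completion; concretely $\cO(D(\fp))$ — equivalently $R+_\fp H^1_\fp(R)$ followed by the algebra $R[\fp;H^1_\fp(R)]$ of Discussion~\ref{discu:DefR+M} — is strictly larger than $R$ at $P$. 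I would take $R'':=R[\fp;M]$ for $M$ a suitable finite submodule of $H^1_\fp(R)$ (e.g. generated by the socle, as in the proof of Lemma~\ref{lem:punibrLocalCohomWholeExtend}); this $R''$ is finite over $R$, sits inside $\cO(D(\fp))\subseteq R^{n\sigma}$, and — this is the point — also lies in $R^\nu$, hence in $R^\sigma=R^{n\sigma}\cap R^\nu$, because elements of $R+_\fp M$ multiply $\fp$ into $R$ and $\HT(\fp)\geq 2$ forces them (by Lemma~\ref{lem:onestepExtend}/\ref{lem:punibrLocalCohomWholeExtend}-type reasoning, or directly via $y\fp=\fp$ and \citestacks{0B5T}) to be integral over $R$. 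Passing to $R''$, the prime $P$ splits: the fiber of $\Spec(R''^\wedge)\to\Spec(R^\wedge)$ over $P$ now separates the isolated minimal-prime component that made $P$ a FONSI from the rest, so no prime of $R''^\wedge$ over $P$ is still a FONSI for $R''$ of height $\HT(\fp)$; by Lemma~\ref{lem:FONSIisoverFONSI} every FONSI of $R''$ lies over a FONSI of $R$, so the maximal height in $\OS{R''}$ has dropped, or the count at that height has dropped.

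Iterating the reduction step finitely many times yields a finite sub-$R$-algebra $R'$ of $R^\sigma$ with $\OS{R'}=\emptyset$; finiteness of the whole tower follows because each step enlarges a finite algebra to a finite algebra and there are only finitely many steps. The main obstacle I anticipate is making precise the claim that the enlargement $R''$ actually ``splits'' the FONSI $P$ — i.e. that after adjoining (the socle part of) $H^1_\fp(R)$, the offending disconnection of the punctured spectrum of the completed local ring at $P$ is resolved and no new FONSI of equal or larger height is created at primes over $P$ or elsewhere. This requires carefully tracking $H^1$ under the extension (Discussion~\ref{discu:R+MTwoSteps}), combining it with the behavior of minimal primes and associated primes under the finite birational-after-flat factorization (Lemmas~\ref{lem:finiteInclMinoverMinAssoverAss} and \ref{lem:FactorFlatBir}), and invoking the dimension formula over the universally catenary ring $R^\wedge$ to control heights of the new primes. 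A secondary technical point is checking that all these operations are compatible with the earlier reduction to the local case, so that eliminating FONSIs at one maximal ideal does not reintroduce them at another; this should follow formally from Lemma~\ref{lem:extendUmodif} and the fact that FONSIs are detected maximal-ideal by maximal-ideal.
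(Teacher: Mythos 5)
There is a genuine gap, and it sits exactly where you flag your ``main obstacle.'' Your single enlargement step $R''=R[\fp;M]$ with $M$ the socle of $H^1_\fp(R_\fp)$ is finite over $R$ (Lemma \ref{lem:onestepExtend}), but there is no reason it splits the FONSI $P$: the disconnection lives in the punctured spectrum of $(R^\wedge)_P$, i.e.\ the idempotent $e$ corresponds to a finite submodule $N\subseteq H^1_{PA}(A)$ for $A=(R^\wedge)_P$, and via flat base change this only pulls back to \emph{some} finite submodule $M\subseteq H^1_{\fp T}(T)$, $T=R_\fp$, which need not be (contained in) the socle, nor killed by $\fp$. For such a general finite $M$, finiteness of $T[\fp T;M]$ over $T$ is genuinely false in general -- this is precisely the FONSI phenomenon (Example \ref{exam:LechGOONS}, where $R^{n\sigma}$, built from $H^1_\fm(R)$, is not integral over $R$) -- and the paper secures it only after a preparatory finite $D(\fp)$-modification $R_1$ (via Theorem \ref{thm:Krull}) making all preimages of $\fp$ p-unibranch, so that Lemma \ref{lem:punibrLocalCohomWholeExtend} applies. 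Your sketch invokes that lemma but never arranges its p-unibranch hypothesis, and iterating socle extensions without it breaks down as soon as an intermediate ring fails to be local.

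The second gap is termination. Your proposed measure (maximal height of a FONSI, with multiplicity) is not shown to drop: even when $P$ acquires more preimages in $\Spec(R''^\wedge)$, Lemma \ref{lem:FONSIisoverFONSI} only says FONSIs of $R''$ lie over FONSIs of $R$; nothing prevents a preimage of $P$ from again being a FONSI of comparable height, so the induction has no engine. The paper's termination mechanism is different and is the real content of the proof: among all finite $D(\fp R_1)$-modifications $R_2$ it chooses one maximizing the number of preimages of $P$ in $\Spec(R_2^\wedge)$ -- a quantity bounded because $((R_1^\wedge)_{\operatorname{red}})_P$ has finite normalization (complete local rings are Nagata) -- and then shows that a surviving FONSI $P_2$ above $P$ would allow a further finite modification $R_3$ (built from the idempotent as above, using p-unibranchness for finiteness) with strictly more preimages of $P$, a contradiction. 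Note also that the paper never claims one splitting step removes the FONSI; it is the maximality that does the work. Without these two ingredients -- the p-unibranch preparation and the bounded-splitting maximality argument -- your outline does not close.
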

\begin{proof}
By Lemmas \ref{lem:FONSIisoverFONSI} and \ref{lem:FONSIfinite} it suffices to find a finite subalgebra $R'$ of $R^\sigma$ for each $P\in\OS{R}$ such that no FONSIs of $R'$ are above $P$.
Fix a $P\in\OS{R}$ and let $\fp=P\cap R\in\Spec(R)$, so we have $\HT(\fp)>1$.

By Theorem \ref{thm:Krull}, there exists a finite $D(\fp)$-modification $R_1$ of $R$ such that all preimages %$\fp_{11},\ldots,\fp_{1n}$ 
of $\fp$ in $\Spec(R_1)$ are p-unibranch.
We know $R_1\subseteq R^\sigma$ as $\HT(\fp)>1$. 
Next, we take a finite $D(\fp R_1)$-modification $R_2$ of $R_1$,
so that the number of maximal ideals of $(R_2^\wedge)_{P}$ (\emph{i.e.} the number of preimages of $P$ in $\Spec(R_2^\wedge)$) is maximal among all possible $R_2$;
 to see this is achievable,
note that $(R_2^\wedge)_{\operatorname{red}}$ is finite birational over $(R_1^\wedge)_{\operatorname{red}}$, so $((R_2^\wedge)_{\operatorname{red}})_{P}$ is contained in the normalization of $((R_1^\wedge)_{\operatorname{red}})_{P}$,
which is finite as a complete Noetherian local ring is Nagata \citestacks{0335},
so the number of maximal ideals of $(R_2^\wedge)_{P}$ is bounded.
Note that we still have $R_2\subseteq R^\sigma$ and that all preimages of $\fp$ in $\Spec(R_2)$ are p-unibranch.
We will show  no FONSIs of $R_2$ are above $P$.

Assume that there exists a $P_2\in \OS{R_2}$ above $P$. 
Let $\fp_2=P_2\cap R_2$, so $\fp_2$ lies above $\fp$, therefore is p-unibranch; and $\HT(\fp_2)>1$ as $P_2\in \OS{R_2}$.
In particular $T:=(R_2)_{\fp_2}$ and $A:=(R_2^\wedge)_{P_2}$ have depth at least $1$.

The punctured spectrum $U$ of the ring $A$ is disconnected (Remark \ref{Rem:S2noFONSI}),
%, as it has an isolated point given by a minimal prime $P_{02}\subsetneq P_2$ with $\HT(P_2/P_{02})=1$.
therefore $\cO(U)$ has a nontrivial idempotent $e$.
In the notations of Discussion \ref{discu:DefR+M},
there exists a finite submodule $N\subseteq H^1_{P_2A}(A)$ such that $A+_{P_2A}N=A[P_2A;N]=A[e]$.
As $T\to A$ is flat and as $H^0_{\fp_2T}(T)=0$, we have $H^1_{P_2A}(A)=H^0_{P_2A}(H^1_{\fp_2T}(T)\otimes_T A)$ (Discussion \ref{discu:R+MdifferentIdeals}).
Therefore there exists a finite submodule $M\subseteq H^1_{\fp_2T}(T)$ such that $N\subseteq M\otimes_T A$.
Note that $\fp_2T\in\Spec(T)$ is p-unibranch (Lemma \ref{lem:extendUmodif}), so
the ring $T[\fp_2T;M]$ is finite over $T$ (Lemma \ref{lem:punibrLocalCohomWholeExtend}), and therefore a finite $D(\fp_2T)$-modification.
We can then find a finite $D(\fp_2)$-modification $R_3$ of $R_2$,
which is automatically a finite $D(\fp R_1)$-modification of $R_1$,
such that $(R_3)_{\fp_2}=T[\fp_2T;M]$ (Lemma \ref{lem:extendUmodif}).
Then $(R_3^\wedge)_{P_2}=T[\fp_2T;M]\otimes_T A=A[\fp_2A;M\otimes_T A]\supseteq A[\fp_2A;N]=A[P_2A;N]=A[e]$,
where we used compatibilities in Discussions \ref{discu:DefR+M} and \ref{discu:R+MdifferentIdeals}.
In particular, $(R_3^\wedge)_{P_2}$ is not local, so $P_2$ has more than $1$ preimages in $\Spec(R_3^\wedge)$, so $P$ has more preimages in $\Spec(R_3^\wedge)$ than  in $\Spec(R_2^\wedge)$,
contradicting maximality.
\end{proof}

The following result could have been established along the way;
we derive it formally from the theorem.
\begin{Cor}\label{cor:noFONSIisNS2integral}
    Let $R$ be a Noetherian integral domain.
    Then $\OS{R}=\emptyset$ if and only if $R^{n\sigma}$ is integral over $R$.
    In particular, if $\OS{R}=\emptyset$,
    then $\OS{W^{-1}R}=\emptyset$ for all multiplicative subsets $W$ of $R$.
\end{Cor}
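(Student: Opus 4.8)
The ``only if'' direction needs nothing new: if $\OS R=\emptyset$ then $R^{n\sigma}$ is integral over $R$ by Theorem~\ref{thm:NS2ofGOONSfree}\ref{NS2:integral}. The plan for the ``if'' direction is to reduce to Theorem~\ref{thm:killFONSI}. Since $R^\sigma=R^{n\sigma}\cap R^\nu$ with $R^{n\sigma}\subseteq\Frac(R)$, the hypothesis that $R^{n\sigma}$ be integral over $R$ is the same as $R^{n\sigma}=R^\sigma$; and since $\OS R=\bigsqcup_{\fm\in\Max(R)}\OS{R_\fm}$ while $(R_\fm)^{n\sigma}=(R^{n\sigma})_\fm$ (Lemma~\ref{lem:NS2localization}) stays integral over $R_\fm$, I may assume $R$ is local, hence semilocal. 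So assume $R^{n\sigma}=R^\sigma$; I want $\OS R=\emptyset$.

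First I would apply Theorem~\ref{thm:killFONSI} to obtain a finite $R$-subalgebra $R'$ of $R^\sigma$ with $\OS{R'}=\emptyset$; then, by Lemma~\ref{lem:FONSIisoverFONSI}, every finite $R$-subalgebra $R_\alpha$ of $R^\sigma$ containing $R'$ again has $\OS{R_\alpha}=\emptyset$, and the $R_\alpha$ form a directed system whose union is $R^\sigma=R^{n\sigma}$. For each such $\alpha$, Theorem~\ref{thm:NS2ofGOONSfree} gives that $(R_\alpha)^{n\sigma}=(R_\alpha)^\sigma$ is $(S_2)$, and by Lemma~\ref{lem:reverseIncl} these rings are again directed with union $R^{n\sigma}$; so Lemma~\ref{lem:S2filteredunion} shows $R^{n\sigma}$ is $(S_2)$ (this is the mechanism behind the general statement that $S^\sigma$ is $(S_2)$, specialized here).

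It then remains to rule out a FONSI of $R$. Suppose $P\in\OS R$, with $P_0\in\Min(R^\wedge)$, $P_0\subseteq P$, $\HT(P/P_0)=1$ and $\fp:=P\cap R$ of height $\geq 2$. I would transport $P$ to the completions $R_\alpha^\wedge$: writing $R^\sigma\otimes_R R^\wedge=\colim_\alpha R_\alpha^\wedge$, pick a prime $\widetilde P$ over $P$ and a minimal prime $\widetilde P_0\subseteq\widetilde P$ over $P_0$ (possible by surjectivity of $\Min\to\Min$ for the finite maps $R^\wedge\to R_\alpha^\wedge$, Lemma~\ref{lem:finiteInclMinoverMinAssoverAss}, plus a going-up argument). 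For each $\alpha$, incomparability in the finite extension of domains $R^\wedge/P_0\subseteq R_\alpha^\wedge/(\widetilde P_0\cap R_\alpha^\wedge)$ forces $\HT\big((\widetilde P\cap R_\alpha^\wedge)/(\widetilde P_0\cap R_\alpha^\wedge)\big)=1$, so $\widetilde P\cap R_\alpha^\wedge$ exhibits the FONSI configuration over a minimal prime of $R_\alpha^\wedge$; since $\OS{R_\alpha}=\emptyset$, the contraction $\fq_\alpha:=\widetilde P\cap R_\alpha$ then has height $1$. Passing to the union, $\fq:=\widetilde P\cap R^\sigma\in\Spec_1(R^\sigma)$ while $\fq\cap R=\fp$ has height $\geq 2$, and one should derive a contradiction with $R^\sigma=R^{n\sigma}$: for $a\in\fp\setminus\{0\}$ the prime $\fq$ is minimal over $aR^\sigma$ (Theorem~\ref{thm:SisKrull}), and the inclusions $R^\sigma\subseteq R_{\mathfrak l}$ over the height-one primes $\mathfrak l\subseteq\fp$ of $R$ ought to pin $\fq$ down to a prime contracting to a height-one prime of $R$, contradicting $\HT\fp\geq 2$. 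This would give $\OS R=\emptyset$; the ``in particular'' then follows formally by combining the two directions of the equivalence with Lemma~\ref{lem:NS2localization}.

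The step I expect to be the main obstacle is precisely this last contradiction. The difficulty is that integrality of $R^\sigma$ over $R$ does not by itself prevent heights from dropping under contraction --- the very existence of FONSIs is such a phenomenon --- so one must genuinely exploit both the intersection presentation $R^{n\sigma}=\bigcap_{\mathfrak l\in\Spec_1(R)}R_{\mathfrak l}$ and the $(S_2)$-ness of $R^{n\sigma}$ proved above. The way I would make it work is to pass to the one-dimensional branch $R^\wedge/P_0$ of the FONSI and show that the image there of $R^{n\sigma}\otimes_R R^\wedge$ --- which contains the images of all the $R_{\mathfrak l}$ --- is too large to be integral over $R^\wedge/P_0$, contradicting integrality of $R^{n\sigma}\otimes_R R^\wedge$ over $R^\wedge$.
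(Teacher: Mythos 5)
The ``only if'' direction (Theorem~\ref{thm:NS2ofGOONSfree}\ref{NS2:integral}), the ``in particular'' statement, and the reduction to a local domain are all correct and match the paper. The gap is in the ``if'' direction, and it is deeper than a loose end.

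Your argument culminates with a prime $\fq\in\Spec_1(R^\sigma)$ satisfying $\fq\cap R=\fp$ and $\HT\fp\geq 2$, and you hope this contradicts integrality of $R^\sigma$ over $R$. It does not. The paper's own proof produces exactly such a configuration after Theorem~\ref{thm:killFONSI}: lifting $P$ and $P_0$ along $R^\wedge\to R'^\wedge$ and using the dimension formula, it finds $\fp'\in\Spec(R')$ with $\HT\fp'=1$ and $\fp'\cap R=\fp$ of height $>1$, where $R\subseteq R'$ is a \emph{finite} inclusion of Noetherian domains. Finite integral extensions of Noetherian domains do drop heights when the base fails to be universally catenary; that is precisely the phenomenon a FONSI encodes (cf.\ Remark~\ref{Rem:UCnoFONSI}). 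Going-down fails here, and no general principle rules out a height-one prime of an integral extension lying over a height-$\geq 2$ prime, so ``pinning $\fq$ down to a height-one contraction'' is asking for something false. Your fallback --- pass to the branch $R^\wedge/P_0$ and show the image of $R^{n\sigma}\otimes_R R^\wedge$ there is too large to be integral --- does work in Example~\ref{exam:LechGOONS}, where $P_0$ is an isolated point of the punctured spectrum, but it is not written out and making it rigorous in general would essentially reconstruct the paper's device.

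What the paper does with $\fp'$ is qualitatively different: it \emph{constructs} a non-integral subring rather than trying to show a height drop is impossible. Setting $\Sigma$ to be the set of primes of $R'$ over $\fp$, $\Sigma_0=\Sigma\setminus\{\fp'\}$, and $W'=R'\setminus\cup\Sigma_0$, it shows the proper localization $W'^{-1}R'$ of $R'_\fp$ lands inside $(R_\fp)^{n\sigma}$. Here $\HT\fp'=1$ is used crucially: for a height-one prime $\mathfrak{l}\subseteq\fp$ of $R$ one picks $x\in\fp\setminus\mathfrak{l}$, notes $\Frac(R)=R'_{\fp'}[1/x]$ because $R'_{\fp'}$ is a one-dimensional local domain, and then, using $R'_{\mathfrak{l}}=R_{\mathfrak{l}}$, pushes a given $\beta\in W'^{-1}R'$ into $R_{\mathfrak{l}}$. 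Since a proper localization of a finite $R_\fp$-algebra is never integral over $R_\fp$, this gives the contradiction. Finally, the detour through Lemma~\ref{lem:S2filteredunion} to establish that $R^{n\sigma}$ is $(S_2)$ plays no role in either your final step or the paper's argument, and can be omitted.
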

\begin{proof}
    The ``in particular'' statement follows from Lemma \ref{lem:NS2localization}.

    That  $\OS{R}=\emptyset$ implies $R^{n\sigma}$ being integral over $R$ is Theorem \ref{thm:NS2ofFONSIfree}\ref{NS2:integral}.
    Assume $\OS{R}\neq\emptyset$.
    We show $R^{n\sigma}$ is not integral over $R$.
    By Lemma \ref{lem:NS2localization} we may assume $R$ is local.
    Let $R'$ be as in Theorem \ref{thm:killFONSI}.
    Let $P\in \OS{R}$ and let $P_0\in\Min(R^\wedge)$ be contained in $P$ so that $\HT(P/P_0)=1$.
    Write $\fp=P\cap R$, so $\HT(\fp)>1$.

Note that $R^\wedge\to R'^\wedge$ is finite injective.
    Let $P_0'\in\Min(R'^\wedge)$
    be above $P_0$.
    Let $P'\in V(P_0')$ be above $P$.
    As $R^\wedge$ is universally catenary,
    we have $\HT(P'/P_0')=\HT(P/P_0)=1$.
    As $\OS{R'}=\emptyset$,
    we have $\HT(\fp')\leq 1$,
    where $\fp'=P'\cap R'$.
    As $\fp'\cap R=\fp\neq0$ we have $\HT(\fp')=1$.

    Let $\Sigma$ be the finite set of all preimages of $\fp$ in $\Spec(R')$
    and let $\Sigma_0=\Sigma\setminus\{\fp'\}$.
    Let $W'=R'\setminus \cup\Sigma_0$.
    We claim that $W'^{-1}R'\subseteq (R_\fp)^{n\sigma}$.
    This tells us $(R_\fp)^{n\sigma}$ is not integral over $R_\fp$ (as $\Spec(W'^{-1}R')\to \Spec(R'_\fp)$
    is not surjective),
    and Lemma \ref{lem:NS2localization} 
    gives $R^{n\sigma}$ is not integral over $R$.

    To see the claim, let $\fq\in\Spec_1(R)$ be contained in $\fp$.
    We need to show $W'^{-1}R'\subseteq R_\fq$.
    As $\HT(\fp)>1$, we can find $x\in\fp\setminus\fq$.
    Let $\beta\in W'^{-1}R'$.
    Since $\HT(\fp')=1$,
    we know the fraction field of $R'_{\fp'}$, which is also the fraction field of $R$ and $R'$, is equal to $R'_{\fp'}[\frac{1}{x}]$.
    Therefore $x^N\beta\in R'_{\fp'}$ for some $N$,
    so $x^N\beta\in R'_{\fp}$ as $\beta\in W'^{-1}R'$. 
    As $R'\subseteq R^\sigma$,
    we have $R_\fq=R'_{\fq}$.
    Thus $x^N\beta\in R_\fq$,
    so $\beta\in R_\fq$ as $x\not\in\fq$.
\end{proof}

The following are the main results on $(S_2)$-closures.
\begin{Thm}\label{thm:RsIsS2}
    Let $R$ be a Noetherian integral domain.
    Then $R^\sigma$ is $(S_2)$.
\end{Thm}
\begin{proof}
By Lemmas \ref{lem:NS2localization} and \ref{lem:S2localization},
    we may assume $R$ is local.
    By Theorem \ref{thm:killFONSI} we can find a finite $R'\subseteq R^\sigma$ so that $\OS{R'}=\emptyset$.
    Let $\cR$ be the set of all finite $R'$-subalgebras of $R^\sigma$.
    For every $R''\in\cR$ we have $R''^{\sigma}\subseteq R^{\sigma}$ by Lemma \ref{lem:reverseIncl},
    so $R^{\sigma}=\bigcup_{R''\in\cR}R''^{\sigma}$,
    and this union is filtered by Lemmas \ref{lem:FONSIisoverFONSI} and \ref{lem:NoFonsiht1Preim}.
    By Lemma \ref{lem:FONSIisoverFONSI} and Theorem \ref{thm:NS2ofFONSIfree}, each $R''^{\sigma}$ is $(S_2)$.
    We conclude by Lemma \ref{lem:S2filteredunion}. 
\end{proof}

\begin{Cor}
   Let $R$ be a Nagata integral domain. Then there exists a (finite) subalgebra $R'$ of $R^\nu$ that is $(S_2)$ and satisfies $R_\fp=R'_{\fp}$ for all $\fp\in\Spec_1(R)$.
\end{Cor}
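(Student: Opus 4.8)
The plan is to take $R'=R^\sigma=R^{n\sigma}\cap R^\nu$ and check the three requirements; the only substantive one has already been done in Theorem \ref{thm:RsIsS2}. By construction (Definition \ref{def:S2ify}) $R^\sigma$ is a sub-$R$-algebra of $R^\nu$, so that condition is free. For the Serre condition, $R^\sigma$ is $(S_2)$ by Theorem \ref{thm:RsIsS2} — this is where all the real work (p-unibranch modifications, killing FONSIs via Theorem \ref{thm:killFONSI}, and the filtered-union Lemma \ref{lem:S2filteredunion}) resides.

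Next I would address finiteness, which is the one place the Nagata hypothesis enters. Applying the definition of Nagata ring to the finite $R$-algebra $B=R$ shows that $R^\nu$ is a finite $R$-module. Since $R$ is Noetherian, $R^\nu$ is a Noetherian $R$-module, so its $R$-submodule $R^\sigma$ is a finitely generated $R$-module; in particular $R^\sigma$ is module-finite over $R$, a fortiori a finite $R$-algebra. (The only thing to be slightly careful about here is that $R^\sigma$, defined as an intersection inside $R^\nu$, is genuinely an $R$-submodule of $R^\nu$ — which is clear — and that module-finiteness upgrades to a finite algebra, which is trivial.)

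Finally, for the height-one comparison, fix $\fp\in\Spec_1(R)$. The ring $R_\fp$ itself occurs among the factors of the intersection $R^{n\sigma}=\bigcap_{\fq\in\Spec_1(R)}R_\fq$, so $R\subseteq R^{n\sigma}\subseteq R_\fp$; localizing at $\fp$ gives $(R^{n\sigma})_\fp=R_\fp$. Since $R\subseteq R^\sigma\subseteq R^{n\sigma}$, localizing at $\fp$ yields $R_\fp\subseteq (R^\sigma)_\fp\subseteq (R^{n\sigma})_\fp=R_\fp$, hence $R'_\fp=R_\fp$ for every $\fp\in\Spec_1(R)$, as required.

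In short, once Theorem \ref{thm:RsIsS2} is in hand there is essentially no obstacle: the corollary is the observation that for a Nagata domain the canonical $(S_2)$-ification $R^\sigma$ already lives inside the (finite) normalization $R^\nu$ and is therefore finite. Equivalently, a Nagata domain admits a finite $(S_2)$-ification contained in its normalization, which is exactly the content just verified.
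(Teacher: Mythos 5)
Your proposal is correct and is exactly the intended derivation: the paper states this corollary as an immediate consequence of Theorem \ref{thm:RsIsS2}, taking $R'=R^\sigma\subseteq R^\nu$, with finiteness coming from the Nagata hypothesis (so $R^\nu$, hence its $R$-submodule $R^\sigma$, is module-finite over the Noetherian ring $R$) and the height-one agreement coming from $R^\sigma\subseteq R^{n\sigma}\subseteq R_\fp$. Your verification of these points, including that finiteness makes $R^\sigma$ Noetherian so the paper's notion of $(S_2)$ is Serre's condition, matches the paper's approach.
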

Similarly we have
\begin{Thm}\label{thm:SsisS2}
    Let $R$ be a Noetherian integral domain, $S$ a subalgebra of $R^\nu$.
    Then $S^\sigma$ is $(S_2)$.
\end{Thm}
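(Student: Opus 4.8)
The plan is to reduce to a local base ring and then imitate the proof of Theorem~\ref{thm:RsIsS2}, with Corollary~\ref{cor:NS2ofGOONSfreeS} playing the role that Theorem~\ref{thm:NS2ofGOONSfree} plays there.

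First I would reduce to the case that $R$ is local: by Lemmas~\ref{lem:NS2localization} and \ref{lem:S2localization} it suffices to show $(S^\sigma)_\fm=(S_\fm)^\sigma$ is $(S_2)$ for each $\fm\in\Max(R)$, and $S_\fm$ is a sub-$R_\fm$-algebra of $(R_\fm)^\nu$, so we may replace $(R,S)$ by $(R_\fm,S_\fm)$. After this, every finite sub-$R$-algebra of $R^\nu$ is a Noetherian semilocal domain, so Theorem~\ref{thm:killFONSI} is available.

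Next, observe that $S^\sigma$ is automatically ``$\sigma$-closed'': since $S\subseteq S^\sigma\subseteq S^{n\sigma}$, Lemma~\ref{lem:reverseIncl} gives $(S^\sigma)^{n\sigma}\subseteq S^{n\sigma}$, and intersecting with $R^\nu=(S^\sigma)^\nu$ yields $(S^\sigma)^\sigma\subseteq S^{n\sigma}\cap R^\nu=S^\sigma$, whence $(S^\sigma)^\sigma=S^\sigma$ (the reverse inclusion being automatic). Therefore it is enough to find a finite sub-$R$-algebra $R'$ of $S^\sigma$ with $\OS{R'}=\emptyset$. Granting such an $R'$, the ring $S^\sigma$ is a sub-$R'$-algebra of $(R')^\nu=R^\nu$ (note $R\subseteq R'\subseteq R^\nu$), so Corollary~\ref{cor:NS2ofGOONSfreeS} applies with base ring $R'$ and tells us that $(S^\sigma)^\sigma$ is $(S_2)$; since $(S^\sigma)^\sigma=S^\sigma$, we are done. (Alternatively, once $R'$ is in hand one can run the filtered-union argument of Theorem~\ref{thm:RsIsS2} verbatim: $S^\sigma=\bigcup_{R''}(R'')^\sigma$ over the finite sub-$R'$-algebras $R''$ of $S^\sigma$, a filtered union by Lemmas~\ref{lem:FONSIisoverFONSI} and \ref{lem:NoFonsiht1Preim}, with each $(R'')^\sigma$ being $(S_2)$ by Corollary~\ref{cor:NS2ofGOONSfreeS} because $\OS{R''}=\emptyset$; then Lemma~\ref{lem:S2filteredunion} finishes.)

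To produce $R'$, I would take a finite sub-$R$-algebra $E$ of $S^\sigma$, apply Theorem~\ref{thm:killFONSI} to $E$ to obtain a finite sub-$E$-algebra $R'$ of $E^\sigma$ with $\OS{R'}=\emptyset$ (automatically finite over $R$), and then check that $R'\subseteq S^\sigma$. For the last point one has $R'\subseteq E^\sigma=E^{n\sigma}\cap R^\nu$, and one must see $E^\sigma\subseteq S^{n\sigma}$; this is where Lemma~\ref{lem:NoFonsiht1Preim} (which identifies, for $\fq\in\Spec_1(S)$, the contraction $\fq\cap E$ as either a height-one prime of $E$ or the contraction of a FONSI of $E$) together with Lemma~\ref{lem:reverseIncl} does the work, showing that although $E^{n\sigma}$ itself need not lie in $S^{n\sigma}$, its intersection with $R^\nu$ does. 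I expect this verification — that the FONSI-eliminating extension from Theorem~\ref{thm:killFONSI} can be chosen to lie inside $S^\sigma$ — to be the main obstacle; everything else is a bookkeeping translation of the proof of Theorem~\ref{thm:RsIsS2}.
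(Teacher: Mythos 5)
Your first two paragraphs are fine: $(S^\sigma)^\sigma=S^\sigma$ does follow from Lemma \ref{lem:reverseIncl}, and once a finite sub-$R$-algebra $R'$ of $S^\sigma$ with $\OS{R'}=\emptyset$ is in hand, Corollary \ref{cor:NS2ofGOONSfreeS} applied to the pair $(R',S^\sigma)$ (or your filtered-union variant) finishes the proof. The genuine gap is exactly the step you flag: producing such an $R'$. Your proposed route — take an arbitrary finite sub-$R$-algebra $E$ of $S^\sigma$, apply Theorem \ref{thm:killFONSI} to $E$, and argue $E^\sigma=E^{n\sigma}\cap R^\nu\subseteq S^{n\sigma}$ — does not work as stated, and the cited lemmas do not give it. Lemma \ref{lem:NoFonsiht1Preim} only says that $\fq\in\Spec_1(S)$ contracts in $E$ either to a height-one prime or to the trace of a FONSI of $E$; in the second case nothing constrains $E^\sigma$ inside $S_\fq$, and intersecting with $R^\nu$ does not repair this. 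Concretely, take $E=R$ and $S=R'$ as in Example \ref{exam:semi-NagataRsigmaNotFinite}: there $\fm'\in\Spec_1(S)$ lies over the height-two maximal ideal of $R$, so $S^{n\sigma}\subseteq S_{\fm'}=R'_{\fm'}$, while (as noted in that example) the localization of $R^\sigma$ at $R'\setminus\fm'$ contains the normalization of $R'_{\fm'}$, which strictly contains $R'_{\fm'}$; if $R^\sigma=E^\sigma$ were contained in $R'_{\fm'}$, that normalization would be contained in $R'_{\fm'}$ as well, a contradiction. So for a badly chosen $E$ the output of Theorem \ref{thm:killFONSI} need not lie in $S^\sigma$, and the failure is precisely the FONSI phenomenon, not a bookkeeping issue.

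The missing idea — and the heart of the paper's proof — is the choice of $E$: it must be taken inside $S$ itself, adapted to the set $\Sigma=\{\fq\in\Spec_1(S)\mid \HT(\fq\cap R)>1\}$, which is finite by Lemma \ref{lem:NoFonsiht1Preim}, Lemma \ref{lem:FONSIfinite} and Theorem \ref{thm:Krull}; then Lemma \ref{lem:MinToAss}\ref{MtoA:ht} lets you enlarge $R$ inside $S$ to a finite subalgebra $E$ with $\HT(\fq\cap E)=1$ for all $\fq\in\Sigma$, hence (by integrality of $E\subseteq S$) for all $\fq\in\Spec_1(S)$. For such an $E$ one has $E^{n\sigma}\subseteq E_{\fq\cap E}\subseteq S_\fq$ for every $\fq\in\Spec_1(S)$, so $E^{n\sigma}\subseteq S^{n\sigma}$ and a fortiori $E^\sigma\subseteq S^\sigma$; since after your localization $R$ is local, $E$ is semilocal, Theorem \ref{thm:killFONSI} applies to $E$ and yields the $R'\subseteq E^\sigma\subseteq S^\sigma$ you need, and your argument then closes. (The paper itself never passes through a FONSI-free $R'$: after choosing this $E$ it writes $S^\sigma=\bigcup_{R''}R''^{\sigma}$ over the finite sub-$E$-algebras $R''$ of $S$ and invokes Theorem \ref{thm:RsIsS2} and Lemma \ref{lem:S2filteredunion}.) Either way, the adapted choice of $E$ via Lemma \ref{lem:MinToAss} is indispensable, and it is what your sketch is missing.
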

\begin{proof}
    By Lemmas \ref{lem:NS2localization} and \ref{lem:S2localization},
    we may assume $R$ is local.
    Let $\Sigma=\{\fq\in\Spec_1(S)\mid \HT(\fq\cap R)>1\}$.
    Then by Lemmas \ref{lem:NoFonsiht1Preim} and \ref{lem:FONSIfinite} and Theorem \ref{thm:Krull},
    $\Sigma$ is finite.
    We may therefore find a finite subalgebra $R'$ of $S$ such that $\HT(\fq\cap R')=1$ for all $\fq\in\Sigma$,
    Lemma \ref{lem:MinToAss}.
    For all $\fq\in\Spec_1(S)\setminus\Sigma$,
    $\HT(\fq\cap R)=1$,
    so $\HT(\fq\cap R')=1$ as $R\to R'$ is integral.
    Therefore $\HT(\fq\cap R')=1$ for all $\fq\in\Spec_1(S)$,
    consequently $\HT(\fq\cap R'')=1$ for all $\fq\in\Spec_1(S)$ and all $R''\in \cR:=$ the set of all finite $R'$-subalgebras of $S$.
    As in the proof of Corollary \ref{cor:NS2ofFONSIfreeS},
    we see $S^{n\sigma}=\bigcup_{R''\in\cR}R''^{n\sigma}$,
    so $S^{\sigma}=\bigcup_{R''\in\cR}R''^{\sigma}$
    as $S^\nu=R''^\nu=R^\nu$ for all $R''\in\cR$.
    Again, as in the proof of Corollary \ref{cor:NS2ofFONSIfreeS},
    this union is filtered,
    and we conclude by Lemma \ref{lem:S2filteredunion} and Theorem \ref{thm:RsIsS2}.
\end{proof}

\section{Semi-Nagata rings}\label{sec:SagataRings}
\begin{Def}\label{def:semi-Nagata}
    A ring $R$ is \emph{semi-Nagata} if $R$ is Noetherian and, for all finite ring maps $R\to B$ where $B$ is an integral domain, there exists a finite inclusion $B\subseteq C$ of integral domains such that $C$ is $(S_2)$.
\end{Def}
We will show that we can take $C$ inside $B^\sigma$,
Theorem \ref{thm:semi-Nagatacharacterize}.
Therefore Definition \ref{def:semi-Nagata} is equivalent to the definition given in the introduction.
\begin{Rem}\label{Rem:1dimsemi-Nagata}
    A one-dimensional Noetherian ring is semi-Nagata, as we can take $C=B$.
   % This fact is nontrivial only in view of the result that
\end{Rem}
\begin{Rem}\label{Rem:Nagatasemi-Nagata}
    A Nagata ring is semi-Nagata, as we can take $C$ to be the normalization of $B$.
\end{Rem}
\begin{Rem}[cf. {\cite{Greco-excellent-finite}}]\label{Rem:finiteInclsemi-Nagata}
    Let $R\to R'$ be a finite map of Noetherian rings.
    If $R$ is semi-Nagata, so is $R'$;
    if $\Spec(R')\to\Spec(R)$ is surjective and $R'$ is semi-Nagata,
    so is $R$.
    %then $R$ is semi-Nagata if and only if $R'$ is.
    This is trivial from our definition.
\end{Rem}
\begin{Rem}\label{Rem:localizesemi-Nagata}
    For a Noetherian ring $R$, a multiplicative subset $W$ of $R$, and a finite ring map $W^{-1}R\to C$ where $C$ is an integral domain,
    there exists a finite ring map $R\to B$  where $B$ is an integral domain and $W^{-1}B=C$.
    Indeed, let $B_0$ be the integral closure of $R$ in $C$.
    Then $W^{-1}B_0=C$, so $W^{-1}B=C$ for some finite subalgebra $B$.
    This tells us a localization of a semi-Nagata ring is semi-Nagata.
\end{Rem}

Following Grothendieck \citestacks{0BIR}, we say a Noetherian ring $R$ is an \emph{$(S_1)$-ring} if all formal fibers of $R$ are $(S_1)$.
We use the fact that the property $(S_1)$ satisfies the axiomatic properties \citestacks{0BIY}.
An essentially finitely generated algebra over an $(S_1)$-ring is an $(S_1)$-ring, \citestacks{0BIV}. 

\begin{Lem}\label{lem:semi-NagataIsS1ring}
    A semi-Nagata ring is an $(S_1)$-ring.
\end{Lem}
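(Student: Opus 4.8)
The plan is to reduce to the local case and then exhibit a violation of $(S_1)$ in a formal fiber of $R$ would produce a finite domain extension with no $(S_2)$-ification, contradicting semi-Nagataness. First I would recall that the $(S_1)$-ring property can be checked locally at maximal ideals (and in fact, by \citestacks{0BIY}, it suffices to check it for local rings and is stable under the operations we need), so we may assume $(R,\fm)$ is local. Suppose toward a contradiction that some formal fiber of $R$ is not $(S_1)$, i.e. there is a prime $\fp\in\Spec(R)$ and a prime $P\in\Spec(R^\wedge)$ lying over $\fp$ such that the local ring $(R^\wedge/\fp R^\wedge)_P = (R^\wedge)_P/\fp(R^\wedge)_P$ fails $(S_1)$; equivalently this fiber ring has an embedded prime, i.e. $\Ass$ of the fiber contains a non-minimal prime. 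Passing to $R/\fp$ (which is again semi-Nagata as a quotient — or rather, arguing with the finite domain $B = R/\fp$ directly), we are reduced to showing: a semi-Nagata domain $B$ has $(S_1)$ formal fibers, equivalently (since $B$ is a domain with fraction field $K$) that $B^\wedge$ is $(S_1)$, i.e. $B^\wedge$ has no embedded primes.

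The heart of the argument is then to connect the existence of an embedded prime of $B^\wedge$ to a FONSI-type obstruction that survives in every finite domain extension. By Definition \ref{def:semi-Nagata}, choose a finite inclusion $B\subseteq C$ with $C$ an integral domain that is $(S_2)$; then $C$ is Noetherian and $(S_2)$, so by Remark \ref{Rem:S2noFONSI} we have $\OS{C}=\emptyset$, and $C^\wedge$ (the completion of the semilocal domain $C$ at its Jacobson radical) satisfies $(S_2)$, in particular $(S_1)$. The key point is that $C^\wedge$ is finite over $B^\wedge$ and the induced map $\Spec(C^\wedge)\to\Spec(B^\wedge)$ is surjective, and by Lemma \ref{lem:finiteInclMinoverMinAssoverAss} it restricts to a surjection $\Ass(C^\wedge)\to\Ass(B^\wedge)$. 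Since $C^\wedge$ is $(S_1)$, every element of $\Ass(C^\wedge)$ is a minimal prime, hence maps to a minimal prime of $B^\wedge$; combined with surjectivity onto $\Ass(B^\wedge)$ this forces every associated prime of $B^\wedge$ to be minimal, i.e. $B^\wedge$ has no embedded primes, i.e. $B^\wedge$ is $(S_0)$.

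This gives $(S_0)$ formal fibers but not immediately $(S_1)$, so one more layer is needed: I would run the same argument not just for $B$ itself but for $B/\fq$ for every prime $\fq$, or more precisely apply the reduction above so that the statement to prove becomes exactly ``for every finite domain $B'$ over $R$ and every height-one prime of $B'$, the corresponding fiber component is unmixed.'' Concretely: $R$ has $(S_1)$ formal fibers iff for every $\fp\in\Spec(R)$ the fiber $\kappa(\fp)\otimes_R R^\wedge$ is $(S_1)$; writing $B=R/\fp$, its completion $B^\wedge = R^\wedge/\fp R^\wedge$, and the fiber over $\fp$ is the localization of $B^\wedge$ at the generic point, but $(S_1)$ of all fibers over all primes is what we want, which by a standard reduction (using that $(S_1)$ of formal fibers is a property we may check after killing each prime) amounts to $B^\wedge$ being $(S_1)$ for each such finite domain quotient $B$ — and since $R$ semi-Nagata implies $B = R/\fp$ semi-Nagata (Remark \ref{Rem:finiteInclsemi-Nagata} applied to surjections, or directly from the definition since finite $B$-domains are finite $R$-domains), the previous paragraph's conclusion that $B^\wedge$ has no embedded primes, applied to all localizations $B_\fq$ as well, upgrades to $(S_1)$. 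The main obstacle I anticipate is bookkeeping the reduction from ``$(S_1)$ formal fibers of $R$'' to ``$B^\wedge$ unmixed for all finite domain quotients/localizations $B$'' cleanly — i.e. making sure the local-at-a-prime and completion operations interact correctly and that semi-Nagataness passes to all the auxiliary rings involved — rather than any single hard inequality; the ring-theoretic core is just Lemma \ref{lem:finiteInclMinoverMinAssoverAss} plus Remark \ref{Rem:S2noFONSI}.
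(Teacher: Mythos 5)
Your reduction to showing that $(R/\fp)^\wedge$ has no embedded primes for every prime $\fp$ of a (local) semi-Nagata ring is fine, and your final descent step --- using Lemma \ref{lem:finiteInclMinoverMinAssoverAss} to pull $(S_1)$ of $C^\wedge$ down to $B^\wedge$ along the finite inclusion $B\subseteq C$ --- is exactly how the paper finishes. But there is a genuine gap at the central step: you assert that because $C$ is an $(S_2)$ domain, ``$C^\wedge$ satisfies $(S_2)$, in particular $(S_1)$.'' Serre's conditions do not ascend to the completion without control of the formal fibers, and this failure is precisely the phenomenon the lemma is about. In fact the claim is false in general: by Heitmann's realization theorem a complete local ring of depth $\geq 2$ in which no nonzero integer is a zerodivisor is the completion of a local UFD, so e.g.\ $k[[x,y,z,w]]/(x^2,xy)$ (which has the embedded prime $(x,y)$ and depth $2$) is the completion of a normal, hence $(S_2)$, local domain. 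So ``$C$ is $(S_2)$'' gives nothing about $\Ass(C^\wedge)$ by itself, and noting $\OS{C}=\emptyset$ from Remark \ref{Rem:S2noFONSI} does not help either. Proving that \emph{this particular} $C$ has $(S_1)$ completion is essentially equivalent to the statement you are trying to prove, so as written the argument is circular.

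The paper closes exactly this hole with a Noetherian induction on quotients of $R$: assuming the lemma for all proper quotients, one reduces to $R$ a domain with a finite $(S_2)$ extension $R'$, picks a noninvertible $x\in R^\circ$, and uses the induction hypothesis to know the fibers of $R/xR\to (R/xR)^\wedge$ are $(S_1)$; these control the fibers of $R'/xR'\to(R'/xR')^\wedge$, and since $R'$ is $(S_2)$ the ring $R'/xR'$ is $(S_1)$, so ascent \citestacks{0339} gives $(R'/xR')^\wedge$ is $(S_1)$, whence $R'^\wedge$ is $(S_1)$ because $x$ is a nonzerodivisor (\cite[Proposition 3.4.4]{EGA4_2}). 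Only after this does the descent along $R^\wedge\subseteq R'^\wedge$ via Lemma \ref{lem:finiteInclMinoverMinAssoverAss} apply. So your outline needs this inductive mechanism (or some substitute supplying $(S_1)$-ness of the formal fibers of a smaller ring) inserted where you currently assume ascent for free.
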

\begin{proof}
% Remark \ref{Rem:localizesemi-Nagata}
    Let $R$ be a semi-Nagata local ring.
    We need to show the fibers of $R\to R^\wedge$ are $(S_1)$.
    This is enough by Remark \ref{Rem:localizesemi-Nagata}.
    
    By Noetherian induction we may assume this is true for all proper quotients of $R$.
    If $R$ were not an integral domain we are done,
    so we may assume $R$ is an integral domain.
    Then there exist a finite inclusion $R\subseteq R'$ of integral domains so that $R'$ is $(S_2)$.

    Let  $x\in R^\circ$ be a noninvertible element. 
    Then $R'/xR'$ is $(S_1)$. %it is also an $(S_1)$-ring,
    As the fibers of $R/xR\to (R/xR)^\wedge$ are $(S_1)$ by the induction hypothesis, 
    so are  the fibers of $R'/xR'\to (R'/xR')^\wedge$.
    Therefore $(R'/xR')^\wedge$ is $(S_1)$ \citestacks{0339},
    so $R'^\wedge$ is $(S_1)$ \cite[Proposition 3.4.4]{EGA4_2}.
    As $R\subseteq R'$ is a finite inclusion of Noetherian semilocal domains
    we see $R^\wedge$ is $(S_1)$ (Lemma \ref{lem:finiteInclMinoverMinAssoverAss}).
\end{proof}

\begin{Lem}\label{lem:completeS1finiteS2ify}
    Let $R$ be a Noetherian semilocal domain such that $R^\wedge$ is $(S_1)$.
    Then there exists a finite subalgebra of $R^\sigma$ that is $(S_2)$.
\end{Lem}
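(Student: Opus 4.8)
The plan is to reduce to the FONSI-free situation already settled. By Theorem~\ref{thm:killFONSI} I first choose a finite sub-$R$-algebra $R'$ of $R^\sigma$ with $\OS{R'}=\emptyset$; since $R'\subseteq R^\sigma\subseteq R^\nu\subseteq\Frac(R)=:K$, the ring $R'$ is a Noetherian semilocal domain, finite and birational over $R$. The strategy is then to apply Theorem~\ref{thm:NS2ofGOONSfree} to $R'$: parts \ref{NS2:integral} and \ref{NS2:formalS1} give that $R'^{\sigma}=R'^{n\sigma}$ is finite over $R'$, hence over $R$; part \ref{NS2:S2} gives that it is $(S_2)$; and Lemma~\ref{lem:reverseIncl}, applied to $R'\subseteq R^\sigma$, gives $R'^{\sigma}\subseteq R^\sigma$. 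Thus $R'^{\sigma}$ is the desired finite sub-$R$-algebra of $R^\sigma$. The one hypothesis of Theorem~\ref{thm:NS2ofGOONSfree}\ref{NS2:formalS1} that needs an argument is that $R'^\wedge$ is again $(S_1)$.

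To prove that, I would exploit that $R\subseteq R'$ is finite birational, so there is $d\in R\setminus\{0\}$ with $dR'\subseteq R$; then $dR'$ is an ideal of $R$ and multiplication by $d$ is an isomorphism of $R$-modules $R'\qis dR'$. Tensoring with the flat $R$-algebra $R^\wedge$ and using $R'^\wedge=R'\otimes_RR^\wedge$, we see that $R'^\wedge$ is, as an $R^\wedge$-module, isomorphic to the ideal $(dR')R^\wedge$ of $R^\wedge$; hence $\Ass_{R^\wedge}(R'^\wedge)\subseteq\Ass_{R^\wedge}(R^\wedge)=\Min(R^\wedge)$, the last equality because $R^\wedge$ is $(S_1)$. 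Now for any $P'\in\Ass_{R'^\wedge}(R'^\wedge)$, the contraction $P'\cap R^\wedge$ lies in $\Ass_{R^\wedge}(R'^\wedge)$ (associated primes and finite ring maps over a Noetherian base), hence is a minimal prime of $R^\wedge$; as $R^\wedge\to R'^\wedge$ is integral, incomparability forces $P'$ to be a minimal prime of $R'^\wedge$. So $\Ass(R'^\wedge)=\Min(R'^\wedge)$, i.e. $R'^\wedge$ is $(S_1)$. (Equivalently: the generic formal fibre $K\otimes_{R'}R'^\wedge=K\otimes_RR^\wedge$ is a localization of $R^\wedge$, hence $(S_1)$, and for a domain the $(S_1)$-property of the completion is detected on its generic formal fibre because the domain has no embedded primes.)

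Once $R'^\wedge$ is known to be $(S_1)$, the remaining steps are the applications of Theorem~\ref{thm:NS2ofGOONSfree} and Lemma~\ref{lem:reverseIncl} listed above, which are purely formal. The \emph{main obstacle} is the middle step: checking that passing from $R$ to the finite birational extension $R'$ produced by Theorem~\ref{thm:killFONSI} does not destroy $(S_1)$-ness of the completion — everything else is bookkeeping with the results of the previous sections.
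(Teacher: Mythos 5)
Your proposal is correct and follows essentially the same route as the paper: reduce via Theorem~\ref{thm:killFONSI} to a finite $R'\subseteq R^\sigma$ with $\OS{R'}=\emptyset$, check $R'^\wedge$ is $(S_1)$, and then apply Theorem~\ref{thm:NS2ofGOONSfree} together with Lemma~\ref{lem:reverseIncl}. The only difference is that where you verify $(S_1)$-ness of $R'^\wedge$ by hand (embedding $R'^\wedge$ as an ideal of $R^\wedge$, or via the generic formal fibre, plus incomparability), the paper simply cites Lemma~\ref{lem:finiteInclMinoverMinAssoverAss}, whose content is exactly this transfer of associated primes along the finite birational extension.
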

\begin{proof}
     By Theorem \ref{thm:killFONSI} we can find a finite subalgebra $R'$ of $R^\sigma$ 
    so that $\OS{R'}=\emptyset$.
    $R'^\wedge$ is $(S_1)$ by Lemma \ref{lem:finiteInclMinoverMinAssoverAss}.
    Therefore $R'^\sigma$ is a finite $R'$-algebra inside $R^\sigma$ (Lemma \ref{lem:reverseIncl}) that is $(S_2)$ (Theorem \ref{thm:NS2ofFONSIfree}).
\end{proof}
    
\begin{Thm}\label{thm:semi-Nagatacharacterizelocal}
    Let $R$ be a Noetherian semilocal ring.
    Then the following are equivalent.
    \begin{enumerate}[label=$(\roman*)$]
        \item\label{semi-Nagatalocal:semi-Nagata} $R$ is semi-Nagata.
  \item\label{semi-Nagatalocal:finiteS2ify} For every finite ring map $R\to B$ where $B$ is an integral domain,
  there exists a finite subalgebra $C$ of $B^\sigma$ that is $(S_2)$.

  \item\label{semi-Nagatalocal:S1ring}
  $R$ is an $(S_1)$-ring.
\end{enumerate}
\end{Thm}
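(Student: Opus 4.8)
The plan is to run the cycle of implications $\ref{semi-Nagatalocal:finiteS2ify}\Rightarrow\ref{semi-Nagatalocal:semi-Nagata}\Rightarrow\ref{semi-Nagatalocal:S1ring}\Rightarrow\ref{semi-Nagatalocal:finiteS2ify}$, of which only the last arrow carries real content. For $\ref{semi-Nagatalocal:finiteS2ify}\Rightarrow\ref{semi-Nagatalocal:semi-Nagata}$ I would just unwind the definitions: given a finite ring map $R\to B$ with $B$ an integral domain, a finite sub-$B$-algebra $C$ of $B^\sigma$ is a subring of $\Frac(B)$, hence an integral domain, and $B\subseteq C$ is a finite inclusion of integral domains; so if $C$ is $(S_2)$, the condition of Definition \ref{def:semi-Nagata} holds. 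The implication $\ref{semi-Nagatalocal:semi-Nagata}\Rightarrow\ref{semi-Nagatalocal:S1ring}$ is exactly Lemma \ref{lem:semi-NagataIsS1ring}, and does not even use that $R$ is semilocal.

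For $\ref{semi-Nagatalocal:S1ring}\Rightarrow\ref{semi-Nagatalocal:finiteS2ify}$, fix a finite ring map $R\to B$ with $B$ an integral domain. First I would observe that $B$, being finite---hence essentially finitely generated---over the $(S_1)$-ring $R$, is again an $(S_1)$-ring by \citestacks{0BIV}; and since $B$ is a domain, it is reduced, hence satisfies Serre's condition $(S_1)$ as a ring. These two facts---that $B$ is $(S_1)$ and that its formal fibers are $(S_1)$---together yield that $B^\wedge$ is $(S_1)$, by \citestacks{0339}. Next, $B$ is a Noetherian semilocal domain, being finite over the Noetherian semilocal ring $R$. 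I would then apply Lemma \ref{lem:completeS1finiteS2ify} to $B$, which directly produces a finite sub-$B$-algebra $C$ of $B^\sigma$ that is $(S_2)$, completing the proof.

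The only delicate point is the deduction that $B^\wedge$ is $(S_1)$: this requires both that $B$ inherits $(S_1)$ formal fibers from $R$ and that $B$ itself is $(S_1)$ as a ring, the latter holding here only because $B$ is a domain. Beyond this, all of the genuine work has already been done in Lemma \ref{lem:completeS1finiteS2ify}, and through it in Theorem \ref{thm:killFONSI}, so I do not anticipate any further obstacle.
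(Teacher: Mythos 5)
Your proposal is correct and follows essentially the same route as the paper: (ii)$\Rightarrow$(i) is trivial, (i)$\Rightarrow$(iii) is Lemma \ref{lem:semi-NagataIsS1ring}, and for (iii)$\Rightarrow$(ii) one replaces $R$ by $B$ (using \citestacks{0BIV}), notes $B^\wedge$ is $(S_1)$ via \citestacks{0339} since the domain $B$ is $(S_1)$ with $(S_1)$ formal fibers, and invokes Lemma \ref{lem:completeS1finiteS2ify}. The extra care you take in spelling out why $B^\wedge$ is $(S_1)$ is exactly the point the paper leaves implicit.
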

\begin{proof}
    Lemma \ref{lem:semi-NagataIsS1ring} gives \ref{semi-Nagatalocal:semi-Nagata} implies \ref{semi-Nagatalocal:S1ring},
    whereas \ref{semi-Nagatalocal:finiteS2ify} trivially implies \ref{semi-Nagatalocal:semi-Nagata}.
Finally, to see \ref{semi-Nagatalocal:S1ring} implies \ref{semi-Nagatalocal:finiteS2ify},
    we may replace $R$ by $B$ 
    and assume $R=B$ is an integral domain.
    Then $R^\wedge$ is $(S_1)$ by \citestacks{0339},
    so \ref{semi-Nagatalocal:finiteS2ify} follows from Lemma \ref{lem:completeS1finiteS2ify}.
\end{proof}

\begin{Exam}\label{exam:semi-NagataRsigmaNotFinite}
    It is possible that $R^\sigma$ is not finite over $R$,
    even when $R$ is semi-Nagata.
    As in Example \ref{exam:LechFONSI},
    there is a Noetherian local domain $(R,\fm)$ of dimension $2$ such that $R^\wedge\cong k[[x,y,z]]/(x^2,y^2)\cap (z)$,
    where $k$ is a field.
    Then $k[[x,y,z]]/(x^2,y^2)\times k[[x,y]]$
    is a finite $D(\fm R^\wedge)$-modification of $R^\wedge$,
    thus isomorphic to $R'^\wedge$ where $R'$ is a finite $D(\fm)$-modification of $R$ \citetwostacks{0ALK}{05EU}.
By \cite[Proposition 6.3.8]{EGA4_2} the Cohen--Macaulay ring $R'$ is an $(S_1)$-ring, so $R'$ is semi-Nagata, thus so is $R$.
On the other hand,
for the maximal ideal $\fm'$ of $R'$ of height $1$,
the normalization $T$ of $R'_{\fm'}$ is contained in a localization of $R^\sigma$,
as $\HT(\fm'\cap R)=2$.
Since $T$ is not finite over $R'_{\fm'}$ (otherwise the nonreduced ring $R'^\wedge_{\fm'}\cong k[[x,y,z]]/(x^2,y^2)$ will be a subring of a finite product of DVRs),
we see $R^\sigma$ is not finite over $R$.

We remark that the ring $R$ in Example \ref{exam:LechFONSI} is also semi-Nagata for the same reason.
\end{Exam}

\begin{Rem}\label{rem:NoModuleS2ify}
    In general, it cannot be expected that $C$ as in Theorem \ref{thm:semi-Nagatacharacterizelocal}\ref{semi-Nagatalocal:finiteS2ify} is $(S_2)$ as a $B$-module.
    In fact, for the semi-Nagata ring $R$ in Example \ref{exam:LechFONSI} (or \ref{exam:semi-NagataRsigmaNotFinite})
   there exists no  inclusion of finite $R$-modules  $R\to M$ so that $M$ is $(S_2)$.
   To see this, as $\dim R=2$ we have $\depth M\geq 2$
   (so $\depth(M^\wedge)\geq 2$).
   At this point, we can apply \citestacks{00NM} to see $R$ universally catenary, so $R^\wedge$ is equidimensional \citestacks{0AW6}, 
   which is a contradiction.
   For a more explicit examination of what failed, 
   by \citetwostacks{0AVZ}{0DWR} we see $M^\wedge=\Gamma(U^\wedge,\cF^\wedge)$,
   where $U^\wedge$ is the punctured spectrum of $R^\wedge$ and  $\cF^\wedge$ is the sheaf on $\Spec(R^\wedge)$ associated with $M^\wedge$.
   Then for the minimal prime $P_0$ of $R^\wedge$ with $\dim(R^\wedge/P_0)=1$,
   we see $(M^\wedge)_{P_0}$
   is a direct factor of $M^\wedge$.
   Therefore $(M^\wedge)_{P_0}$, and its submodule $(R^\wedge)_{P_0}$, is finite over $R^\wedge$; in other words, $k((z))$ is finite over $k[[z]]$, contradiction.

   Note that, as noted in Example \ref{exam:LechFONSI},
   when the chacteristic of $k$ is zero, we can even make the ring $R$ quasi-excellent.
\end{Rem}

\begin{Rem}\label{rem:UCS2ring=S2module}
    On the other hand, if $R$ is a Noetherian universally catenary domain and $R'$ is an integral domain containing and finite over $R$,
    then $R'$ is $(S_2)$ as a ring if and only if $R'$ is $(S_2)$ as an $R$-module.
    To see this, let us assume $(R,\fm)$ local,
    so $R'$ is semilocal with maximal ideals $\fm'_1,\ldots,\fm'_n$.
    Since $R$ is universally catenary we know $\HT(\fm'_i)=\dim R$ for every $i$,
    see \citestacks{02IJ}.
    Moreover, as $\fm'_i$ are the only preimages of $\fm$ in $\Spec(R')$,
    we have $\min_i\depth R'_{\fm'_i}=\depth_R R'$.
    We conclude that $\depth_R R'\geq\max\{2,\dim R\}$
    if and only if $\depth R'_{\fm'_i}\geq\max\{2,\HT(\fm'_i)\}$ for all $i$.
    This gives the original assertion via localization.
\end{Rem}

\begin{Exam}
    Similar to Example \ref{exam:LechFONSI},
    there exists a Noetherian local domain $(R,\fm)$ of dimension $2$ so that $R^\wedge\cong k[[x,y,z]]/(x^2,y^2)\cap (x)$
    is not $(S_1)$.
    Then $R$ is not semi-Nagata.
\end{Exam}

We say a Noetherian ring $R$ is \emph{$(S_2)$-2} if for every $\fp\in\Spec(R)$,
there exists an $f\in R\setminus\fp$ so that $(R/\fp)_f$ is $(S_2)$.

\begin{Lem}\label{lem:S2-2rings}
    Let $R$ be an $(S_2)$-2 Noetherian ring.
    Then the following hold.
    \begin{enumerate}[label=$(\roman*)$]
    \item\label{S2-2:open} The $(S_2)$ locus of every finite $R$-module is open.
        \item\label{S2-2:fg} Every  essentially finitely generated $R$-algebra is $(S_2)$-2.
    \end{enumerate}
\end{Lem}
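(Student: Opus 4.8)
I would treat the two parts separately, dispatching \ref{S2-2:fg} first since it is the more formal, and then setting up \ref{S2-2:open} via Nagata's openness criterion.

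\textbf{Part \ref{S2-2:fg}.} The class of $(S_2)$-2 rings is visibly stable under quotients (if $\overline{\fp}\in\Spec(R/J)$ corresponds to $\fp\supseteq J$, an $f\notin\fp$ with $(R/\fp)_f$ $(S_2)$ works, since $(R/J)/\overline{\fp}=R/\fp$) and under localization (a prime of $W^{-1}R$ comes from a prime $\fp$ disjoint from $W$, and $(W^{-1}R/\fp W^{-1}R)_f$ is a localization of $(R/\fp)_f$). As an essentially finitely generated $R$-algebra is a localization of a quotient of some $R[x_1,\dots,x_n]$, it suffices to prove $R[x]$ is $(S_2)$-2 and then induct on $n$. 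So fix $\mathfrak P\in\Spec R[x]$ and put $\fp=\mathfrak P\cap R$. If $\mathfrak P=\fp R[x]$, then $R[x]/\mathfrak P=(R/\fp)[x]$, which after inverting an $f\notin\fp$ with $(R/\fp)_f$ $(S_2)$ is a polynomial ring over an $(S_2)$ ring, hence $(S_2)$. If $\mathfrak P\supsetneq\fp R[x]$, then $\overline{\mathfrak P}=\mathfrak P/\fp R[x]$ is a nonzero prime of $(R/\fp)[x]$ meeting $R/\fp$ in $0$; choosing $0\neq g\in\overline{\mathfrak P}$ of least degree and inverting its leading coefficient together with one further element of $R\setminus\fp$, one arranges $\overline{\mathfrak P}=(g)$ with $g$ of unit leading coefficient, so that $R[x]/\mathfrak P$ becomes finite free over a localization of $R/\fp$; inverting $f$ as above and using that a flat extension of an $(S_2)$ ring with $(S_k)$ (automatically so, being over fields) fibers is $(S_2)$ (\citestacks{0339}), we get $R[x]/\mathfrak P$ is $(S_2)$ on a neighborhood of $\mathfrak P$. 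In all cases the witnessing element lies outside $\mathfrak P$, so $R[x]$ is $(S_2)$-2.

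\textbf{Part \ref{S2-2:open}.} Let $U\subseteq\Spec R$ be the $(S_2)$-locus of a finite module $M$. It is stable under generization since $(S_2)$ localizes, so by Nagata's criterion for openness (\cite[Theorem 24.2]{Mat-CRT}) it suffices to show that for each $\fp\in U$ the set $U\cap V(\fp)$ contains a nonempty open subset of $V(\fp)$. We may assume $\fp\in\Supp M$. The $(S_1)$-locus of $M$ is already open, its complement being the finite union of $V(\fq)$ over the embedded primes of $M$; replacing $\Spec R$ by this locus (it is $(S_2)$-2 and contains $\fp$) and then $R$ by $R/\Ann M$ (still $(S_2)$-2, and this does not change the problem), we may assume $M$ is $(S_1)$ and faithful. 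Then the non-$(S_2)$-locus is $B=\{\fq:\HT\fq\geq 2,\ \depth M_\fq=1\}$ and is stable under specialization; moreover, for any $M$-regular $x\in R$ one has $\Ass(M/xM)=\{\fq\supseteq xR:\depth M_\fq=1\}$, so $B\cap V(x)$ is finite, hence closed. It remains to see that $V(\fp)\cap B$ is not dense in $V(\fp)$. I would prove this by induction on $\dim R$: the case $\dim R\leq 1$ is vacuous ($B=\emptyset$), and in the inductive step one picks an $M$-regular $x$, applies the inductive hypothesis to the $(S_2)$-2 ring $R/xR$ (of smaller dimension) and the $(S_1)$-reduction of $M/xM$ to control $B$ along $V(x)$, and uses the $(S_2)$-2 hypothesis on the residue domains $R/\fr_j$ (for the minimal primes $\fr_j$ of $R$ through $\fp$), together with closedness of $\bigcup_{i\neq j}V(\fr_i+\fr_j)$ and the bound $\depth M_\fq\geq\min_j\depth(R/\fr_j)_\fq$, to confine the remaining failures of $(S_2)$ along $V(\fp)$ to a proper closed subset.

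\textbf{Expected main obstacle.} The hard point is entirely in \ref{S2-2:open}: the $(S_2)$ condition on $M$ at a prime genuinely mixes the behaviour of all irreducible components of $\Supp M$ through that prime, so it can fail for ``connectedness-in-codimension-one'' reasons even when every quotient $R/\fr_j$ is as good as one could want; hence the $(S_2)$-2 hypothesis, which only records the $R/\fp$ individually, must be fed into a careful bookkeeping of how the components meet, and the main friction is to make the induction on $\dim R$ cooperate with the passage $M\rightsquigarrow M/xM$ (which shifts Serre's condition by one, so that $B\cap V(x)$ really is visibly the embedded part of $M/xM$ rather than something about $(S_2)$ of $M/xM$ directly). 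Part \ref{S2-2:fg}, by contrast, should be routine given the ascent of $(S_k)$ along polynomial and finite flat extensions.
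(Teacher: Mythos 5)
Your proof of part \ref{S2-2:fg} is correct, and it takes a more hands-on route than the paper's: the paper reduces at once to a finite type inclusion of Noetherian domains $R\subseteq B$ with $R$ being $(S_2)$ and gets $B_g$ $(S_2)$ from generic flatness \citestacks{051R}, generic Cohen--Macaulayness of the fibers \citestacks{045U}, and ascent of $(S_2)$ along flat maps with Cohen--Macaulay fibers \citestacks{0339}; your variable-by-variable reduction (the polynomial case plus the finite-free-after-inverting-a-leading-coefficient case) proves the same statement with the same ascent input at the end, and is fine.

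Part \ref{S2-2:open} is where the paper's proof is simply the citation \cite[Proposition 6.11.6]{EGA4_2}, and your replacement argument has genuine gaps. With $M$ faithful and $(S_1)$, the non-$(S_2)$ locus is not $B=\{\fq:\HT\fq\geq 2,\ \depth M_\fq=1\}$ but its specialization closure $\bigcup_{\fq\in B}V(\fq)$; $B$ itself is not stable under specialization, a finite set of primes need not be closed, and for Nagata's criterion you must show that the trace of the whole union $\bigcup_{\fq\in B}V(\fq)$ on $V(\fp)$ misses a nonempty open subset of $V(\fp)$ --- showing that ``$V(\fp)\cap B$ is not dense'' is strictly weaker, since primes $\fq'\in B$ with $\fq'\not\supseteq\fp$ can still specialize into $V(\fp)$, and your $\Ass(M/xM)$ bookkeeping only catches those $\fq'$ that contain the chosen element $x$. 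More seriously, the inequality $\depth M_\fq\geq\min_j\depth(R/\fr_j)_\fq$ on which your inductive step leans is false: take $R=k[[x,y]]$ and $M=(x,y)$, which is faithful and $(S_1)$ with $\depth M=1<2=\depth R$. Relatedly, your inductive step never uses the hypothesis $\fp\in U$, i.e.\ that $M_\fp$ is $(S_2)$; without that hypothesis the desired conclusion is simply false (if $M_\fp$ fails $(S_2)$, every point of $V(\fp)$ does), so any correct argument must exploit it, e.g.\ by choosing a maximal $M_\fp$-regular sequence of length $\min(2,\dim M_\fp)$ inside $\fp$, spreading it out, and only then feeding in the $(S_2)$-2 hypothesis through $\depth(R/\fp)_\fq$ via a filtration that is generically free over $R/\fp$. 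Finally, an induction on $\dim R$ is not well-founded for general Noetherian rings, which can be infinite dimensional. So as written part \ref{S2-2:open} is a sketch with a false key step; it needs either the EGA citation, as in the paper, or a complete argument of the indicated type.
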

\begin{proof}
    \cite[Proposition 6.11.6]{EGA4_2} gives \ref{S2-2:open}.
    For \ref{S2-2:fg}, it suffices to show for a finite type inclusion of Noetherian domains $R\subseteq B$, if $R$ is $(S_2)$,
    then $B_g$ is $(S_2)$ for some $g\in B^\circ$.
    To see this, we may assume $R\to B$ is flat \citestacks{051R},
    and has Cohen--Macaulay fibers \citestacks{045U}.
    Then $B$ is $(S_2)$ by \citestacks{0339}. 
\end{proof}

\begin{Thm}\label{thm:semi-Nagatacharacterize}
    Let $R$ be a Noetherian ring.
    Then the following are equivalent.
    \begin{enumerate}[label=$(\roman*)$]
        \item\label{semi-Nagata:semi-Nagata} $R$ is semi-Nagata.
  \item\label{semi-Nagata:finiteS2ify} For every finite ring map $R\to B$ where $B$ is an integral domain,
  there exists a finite subalgebra $C$ of $B^\sigma$ that is $(S_2)$.

  \item\label{semi-Nagata:S1ring}
  $R$ is an $(S_1)$-ring and is $(S_2)$-2.
\end{enumerate}

If $R$ is semi-Nagata, then every essentially finitely generated $R$-algebra is semi-Nagata.
\end{Thm}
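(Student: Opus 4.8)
The plan is as follows. The implication \ref{semi-Nagata:finiteS2ify}$\Rightarrow$\ref{semi-Nagata:semi-Nagata} is immediate from Definition \ref{def:semi-Nagata}. For \ref{semi-Nagata:semi-Nagata}$\Rightarrow$\ref{semi-Nagata:S1ring} I would cite Lemma \ref{lem:semi-NagataIsS1ring} for the $(S_1)$-ring property, and for $(S_2)$-2 argue: given $\fp\in\Spec(R)$, apply the semi-Nagata hypothesis to the finite $R$-algebra $R/\fp$ to get a finite inclusion $R/\fp\subseteq C$ with $C$ an $(S_2)$ domain; generic flatness yields $f\in R\setminus\fp$ with $(R/\fp)_f\to C_f$ finite and free, hence faithfully flat, and since $(S_2)$ descends along faithfully flat maps (compare depth and dimension at a prime of $C_f$ minimal over the given prime of $(R/\fp)_f$) the ring $(R/\fp)_f$ is $(S_2)$.

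The real content is \ref{semi-Nagata:S1ring}$\Rightarrow$\ref{semi-Nagata:finiteS2ify}. Given a finite map $R\to B$ with $B$ a domain, $B$ is again an $(S_1)$-ring (\citestacks{0BIV}) and $(S_2)$-2 (Lemma \ref{lem:S2-2rings}\ref{S2-2:fg}), so I would reduce to $R=B$ a domain. A Noetherian domain is $(S_2)$ in codimension $\leq 1$ and has open $(S_2)$-locus (Lemma \ref{lem:S2-2rings}\ref{S2-2:open}), so the non-$(S_2)$-locus is $V(I)$ with $\HT(I)\geq 2$; moreover $R^\sigma_\fp=R_\fp$ for $\fp\notin V(I)$ (Lemmas \ref{lem:NS2localization} and \ref{lem:S2assPrincipal}), so every finite sub-$R$-algebra of $R^\sigma$ is $(S_2)$ away from $V(I)$. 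The plan is to produce, by induction on $d:=\dim V(I)$, a finite sub-$R$-algebra $R'$ of $R^\sigma$ whose non-$(S_2)$-locus has dimension $<d$; iterating until that locus is empty yields an $(S_2)$ algebra $C$, still finite over $R$ (each extension is finite over the previous) and inside $R^\sigma$ (Lemma \ref{lem:reverseIncl}, iterated).

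For the inductive step I would let $\fp_1,\dots,\fp_r$ be the minimal primes of $I$ and treat them in turn, building $R=C_0\subseteq C_1\subseteq\cdots\subseteq C_r=R'$ with each $C_i$ finite over $R$, $C_i\subseteq C_{i-1}^\sigma$, and $(C_i)_{\fp_i}$ being $(S_2)$. To pass from $C_{i-1}$ to $C_i$: the local domain $(C_{i-1})_{\fp_i}$ is an $(S_1)$-ring, so Theorem \ref{thm:killFONSI} gives a finite subalgebra $A\subseteq((C_{i-1})_{\fp_i})^\sigma$ with $\OS{A}=\emptyset$; as $A$ is a semilocal Noetherian domain with $(S_1)$ completion (\citestacks{0339}), parts \ref{NS2:S2} and \ref{NS2:formalS1} of Theorem \ref{thm:NS2ofGOONSfree} make $A^\sigma$ an $(S_2)$ ring finite over $(C_{i-1})_{\fp_i}$ inside $((C_{i-1})_{\fp_i})^\sigma=(C_{i-1}^\sigma)_{\fp_i}$. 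Clearing denominators of a set of module generators of $A^\sigma$ into $C_{i-1}^\sigma$ and adjoining them to $C_{i-1}$ produces $C_i$ with $(C_i)_{\fp_i}=A^\sigma$, hence $(S_2)$. The delicate point, which I expect to be the main obstacle, is that this must not disturb the primes already handled: for $j<i$ the ring $(C_{i-1})_{\fp_j}$ is already $(S_2)$, hence equals its own $\sigma$-closure, so the new generators lie in $(C_{i-1}^\sigma)_{\fp_j}=((C_{i-1})_{\fp_j})^\sigma=(C_{i-1})_{\fp_j}$ after localizing at $\fp_j$ and contribute nothing, so $(C_i)_{\fp_j}=(C_{i-1})_{\fp_j}$ stays $(S_2)$. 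Taking $R'=C_r$: its non-$(S_2)$-locus lies over $V(I)$ and misses every $\fp_i$, so its image in $\Spec(R)$ is a closed subset of $V(I)$ of dimension $<d$, and the locus itself (finite surjective onto that image) has dimension $<d$.

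Finally, the closing assertion follows formally from the equivalence \ref{semi-Nagata:semi-Nagata}$\Leftrightarrow$\ref{semi-Nagata:S1ring}: if $R$ is semi-Nagata it is an $(S_1)$-ring and is $(S_2)$-2, and both properties pass to essentially finitely generated $R$-algebras (\citestacks{0BIV} and Lemma \ref{lem:S2-2rings}\ref{S2-2:fg}), so every such algebra is again semi-Nagata.
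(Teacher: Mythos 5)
Your argument is correct in substance and follows essentially the same route as the paper: the implications \ref{semi-Nagata:finiteS2ify}$\Rightarrow$\ref{semi-Nagata:semi-Nagata} and \ref{semi-Nagata:semi-Nagata}$\Rightarrow$\ref{semi-Nagata:S1ring} are handled the same way (generic flatness plus faithfully flat descent of $(S_2)$), and for \ref{semi-Nagata:S1ring}$\Rightarrow$\ref{semi-Nagata:finiteS2ify} you use the same two ingredients the paper does: the semilocal statement (your inline combination of Theorem \ref{thm:killFONSI} with Theorem \ref{thm:NS2ofGOONSfree}\ref{NS2:S2}\ref{NS2:formalS1} is exactly Lemma \ref{lem:completeS1finiteS2ify}, which the paper uses through Theorem \ref{thm:semi-Nagatacharacterizelocal} and Lemma \ref{lem:NS2localization}), and the fact that a finite subalgebra of $R^\sigma$ agrees with $R$ over the $(S_2)$-locus, with openness of $(S_2)$-loci supplied by $(S_2)$-2 via Lemma \ref{lem:S2-2rings}. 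The only real difference is bookkeeping: you repair the generic points of the non-$(S_2)$-locus and let the bad locus shrink, while the paper repairs one point at a time, spreads the repair over a principal open $D(f)$ by compactness of the constructible topology, and terminates by the ascending chain condition on open subsets.

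Two repairs are needed. First, your termination ``by induction on $d:=\dim V(I)$'' is not well-founded as stated: a Noetherian ring can have infinite Krull dimension, so $d$ may be infinite. Your construction, however, already produces a strictly descending chain of closed subsets of $\Spec(R)$ (the images of the successive non-$(S_2)$-loci: each is contained in the previous one because finite subalgebras of $C^\sigma$ agree with $C$ at $(S_2)$-points, and each omits the generic points of the previous one), so you should terminate by the descending chain condition on closed subsets of the Noetherian space $\Spec(R)$ rather than by dimension; this is exactly the paper's mechanism. For this you must know the non-$(S_2)$-locus of $C_r$ is closed --- otherwise its image has no honest generic points to treat at the next stage and its closure could recapture the $\fp_i$ --- which requires invoking Lemma \ref{lem:S2-2rings}\ref{S2-2:open} for the finite $R$-algebra $C_r$ (itself $(S_2)$-2 by \ref{S2-2:fg}); make this explicit. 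Second, a harmless slip: $(C_{i-1})_{\fp_i}$ is semilocal rather than local, but Theorem \ref{thm:killFONSI} and Theorem \ref{thm:NS2ofGOONSfree}\ref{NS2:formalS1} are stated for semilocal domains, so the step goes through.
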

\begin{proof}
As note in Lemma \ref{lem:S2-2rings} and before Lemma \ref{lem:semi-NagataIsS1ring},
\ref{semi-Nagata:S1ring} is preserved by essentially finitely generated algebras, giving the last assertion.
%This fact is used in the proof below as well.

    To see \ref{semi-Nagata:semi-Nagata} implies  \ref{semi-Nagata:S1ring},
    %from Remark \ref{Rem:localizesemi-Nagata} and Theorem \ref{thm:semi-Nagatacharacterizelocal} we know
    Lemma \ref{lem:semi-NagataIsS1ring} says a semi-Nagata ring is an $(S_1)$-ring.
    Therefore it suffices to show for a semi-Nagata domain $R$, there exists an $f\in R^\circ$ so that $R_f$ is $(S_2)$.
    Let $R\subseteq R'$ be a finite inclusion so that $R'$ is $(S_2)$.
    Then any $f\in R^\circ$ so that $R'_f$ is flat over $R_f$ works.

    As  \ref{semi-Nagata:finiteS2ify} implies  \ref{semi-Nagata:semi-Nagata},
    it suffices to show \ref{semi-Nagata:S1ring} implies \ref{semi-Nagata:finiteS2ify}.
    Assume \ref{semi-Nagata:S1ring} and assume $R$ is an integral domain.
    We must show that there exists a finite subalgebra $R'$ of $R^\sigma$ that is $(S_2)$.
Let $U$ be the $(S_2)$ locus of $R$, which is open by Lemma \ref{lem:S2-2rings},
and let $\fp\not\in U$, so $\HT(\fp)>1$.
    
    %Let $\fm\in\Max(R)$.
    The local ring $R_\fp$ is semi-Nagata by Theorem \ref{thm:semi-Nagatacharacterizelocal},
    so by Lemma \ref{lem:NS2localization} there exists a finite subalgebra $R_1$ of $R^\sigma$ such that $(R_1)_\fp$ is $(S_2)$.
    As $R$ is $(S_2)$-2, the $(S_2)$ locus $U_1$ of the ring $R_1$ is open by Lemma \ref{lem:S2-2rings},
    and we have $\bigcap_{h\in R\setminus\fp}D_{R_1}(f)\subseteq U_1$.
    As the constructible topology of $\Spec(R_1)$ is compact \citestacks{0901},
    we see there exists $f_1\in R\setminus\fp$ such that $(R_1)_{f_1}$ is $(S_2)$.
    Since $R_1\subseteq R^\sigma$ we know $R=R_1$ over $U$,
    so the image $Z_1$ of the non-$(S_2)$ locus of $R_1$ in $\Spec(R)$ is disjoint from $U\cup D(f_1)$.
    If $Z_1\neq\emptyset$, take $\fq\in Z_1$, then similarly the semilocal ring $(R_1)_\fq$ is semi-Nagata,
    and we can find $R_2\subseteq R_1^\sigma\subseteq R^\sigma$ (Lemma \ref{lem:reverseIncl}) so that  the image $Z_2$ of the non-$(S_2)$ locus of $R_2$ in $\Spec(R)$ is disjoint from $U\cup D(f_1)\cup D(f_2)$ and $f_2\in R\setminus\fq$.
As the topological space $\Spec(R)$ is Noetherian, we get our desired $R'$ after finitely many steps.
\end{proof}

We include the following argument for a different perspective.

\begin{proof}[Alternative proof of \ref{semi-Nagata:S1ring} implies \ref{semi-Nagata:finiteS2ify}]
Let $R$ be a Noetherian integral domain that satisfies \ref{semi-Nagata:S1ring}.
We want to show there exists a finite subalgebra of $R^\sigma$ that is $(S_2)$.

Let $\fm\in\Max(R)$.
Then $R_\fm$ is semi-Nagata by Theorem \ref{thm:semi-Nagatacharacterizelocal},
    so by Lemma \ref{lem:NS2localization} there exists a finite subalgebra $R(\fm)$ of $R^\sigma$ such that $(R(\fm))_\fm$ is $(S_2)$.

As $R$ is $(S_2)$-2, the $(S_2)$ locus of the ring $R(\fm)$ is open (Lemma \ref{lem:S2-2rings}).
    As the constructible topology of $\Spec(R(\fm))$ is compact \citestacks{0901}, there exists $f(\fm)\in R\setminus\fm$ such that $R(\fm)_{f(\fm)}$ is $(S_2)$.
    
    Take finitely many $\fm_1,\ldots,\fm_n$ so that $D(f(\fm_i))\ (1\leq i\leq n)$ cover $\Spec(R)$ and let $R_1$ be the $R$-algebra generated by all $R(\fm_i)$.
    Then $R(\fm_i)_{f(\fm_i)}\subseteq (R_1)_{f(\fm_i)}$, so by Remark \ref{Rem:S2noFONSI} and Lemma \ref{lem:FONSIisoverFONSI} we have $\OS{R_1}=\emptyset$.
    As $R_1^\sigma\subseteq R^\sigma$ (Lemma \ref{lem:reverseIncl}) we may replace $R$ by $R_1$ to assume $\OS{R}=\emptyset$.
    Apply the same construction again, we see from Theorem \ref{thm:NS2ofFONSIfree}\ref{NS2:S2unique} (and Lemma \ref{lem:NS2localization}) that $R(\fm_i)_{f(\fm_i)}=R^\sigma_{f(\fm_i)}$ for all $i$,
    so $R^\sigma$ is finite over $R$.    
\end{proof}

\begin{Cor}\label{cor:CMissemi-Nagata}
    A Cohen--Macaulay ring is semi-Nagata. %An essentially finitely generated algebra over a semi-Nagata ring is semi-Nagata.
\end{Cor}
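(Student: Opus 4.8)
The plan is to verify the intrinsic criterion of Theorem \ref{thm:semi-Nagatacharacterize}\ref{semi-Nagata:S1ring}: a Cohen--Macaulay ring $R$ is an $(S_1)$-ring and is $(S_2)$-2, whence semi-Nagata. (As a sanity check, $R$ is universally catenary, so $\OS{R}=\emptyset$ by Remark \ref{Rem:UCnoFONSI}; this will not be used directly.)

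That $R$ is an $(S_1)$-ring is in fact already observed in the course of Example \ref{exam:semi-NagataRsigmaNotFinite}: a Cohen--Macaulay local ring $A$ has Cohen--Macaulay formal fibers. This is \cite[Proposition 6.3.8]{EGA4_2}; alternatively, $A^\wedge$ is Cohen--Macaulay and flat over the Cohen--Macaulay ring $A$, so applying the transfer of the Cohen--Macaulay property along the flat local maps $A_\fp\to (A^\wedge)_{\fq}$ (for $\fq$ a prime of $A^\wedge$ and $\fp=\fq\cap A$), e.g. \cite[Theorem 23.3]{Mat-CRT}, shows that every local ring of a formal fiber of $A$ is Cohen--Macaulay. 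In particular the formal fibers of $R$ are $(S_1)$ --- indeed $(S_k)$ for every $k$.

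For the $(S_2)$-2 property, fix $\fp\in\Spec(R)$; I must produce $f\in R\setminus\fp$ with $(R/\fp)_f$ being $(S_2)$. Since $R$ has $(S_2)$ formal fibers by the previous paragraph and $R/\fp$ is (essentially) of finite type over $R$, the ring $R/\fp$ also has $(S_2)$ formal fibers (\citestacks{0BIV}, noting that $(S_2)$ satisfies the axioms of \citestacks{0BIY}; concretely, by flat base change the formal fibers of a quotient $B/I$ are among those of $B$). Therefore the $(S_2)$ locus of $R/\fp$ is open --- this is once more the input \cite[Proposition 6.11.6]{EGA4_2} used for Lemma \ref{lem:S2-2rings}. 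This open set contains the generic point of the domain $\Spec(R/\fp)$, because $\kappa(\fp)$ is (trivially) $(S_2)$; hence it contains $D(\bar f)$ for some $0\neq\bar f\in R/\fp$. Lifting $\bar f$ to $f\in R\setminus\fp$ yields that $(R/\fp)_f$ is $(S_2)$. This establishes $(S_2)$-2, and Theorem \ref{thm:semi-Nagatacharacterize} now gives that $R$ is semi-Nagata.

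The step needing the most care is the implication ``$(S_2)$ formal fibers $\Rightarrow$ open $(S_2)$ locus'' invoked above: it rests on the fact that --- in contrast with regularity, where a separate ``$J$-2''-type condition is required beyond being a $G$-ring --- Serre's conditions already cut out open loci under the mere hypothesis of good formal fibers; the reference is \cite[\S 6.11]{EGA4_2}, or the axiomatic framework of \citestacks{0BIY}. One also needs to extract \emph{Cohen--Macaulay} (equivalently $(S_k)$ for all $k$) formal fibers for $R$, not merely $(S_1)$ ones, which is precisely the role of the flat local transfer of Cohen--Macaulayness in the second paragraph.
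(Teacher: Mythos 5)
Your first half is fine: the flat local transfer argument (or \cite[Proposition 6.3.8]{EGA4_2}) does show a Cohen--Macaulay ring has Cohen--Macaulay, hence $(S_k)$, formal fibers, which is the same input the paper uses for the $(S_1)$-ring half of Theorem \ref{thm:semi-Nagatacharacterize}\ref{semi-Nagata:S1ring}.

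The gap is exactly at the step you flag as ``needing the most care,'' and it is a real one: the inference ``$R/\fp$ has $(S_2)$ formal fibers, therefore its $(S_2)$ locus is open (so contains a nonempty basic open $D(\bar f)$)'' is not a valid implication for a general, non-semilocal Noetherian ring, and neither of your references supports it. \cite[Proposition 6.11.6]{EGA4_2} is the Nagata-type criterion going in the \emph{other} direction: it takes the $(S_2)$-2 condition (existence of the $f$'s, i.e.\ precisely what you are trying to prove) as input and outputs openness of $(S_2)$ loci of finite modules --- that is how it is used in Lemma \ref{lem:S2-2rings}; and \citestacks{0BIY} only records the permanence axioms for formal-fiber properties, not openness of loci. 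Having good formal fibers is a punctual condition on each local ring and does not control the behavior of $A_\fq$ for $\fq$ ranging over a dense set of points of $V(\fp)$: the situation is parallel to regularity, where G-rings need not be J-2 (Hochster-type non-openness examples have excellent local rings, so all formal fibers are as good as one likes, yet no nonempty open of the domain lies in the good locus). This is also why Definition \ref{def:CategoryDQP} imposes ``$U_\bP(A)$ is open'' and ``$A$ is a $\bP$-ring'' as separate conditions. Your completion-and-submersion argument does prove the claim when $R$ is (semi)local, but the corollary concerns arbitrary Cohen--Macaulay rings. The paper's proof instead invokes \cite[Proposition 6.11.8 and Remarques 6.11.9]{EGA4_2}, where the $(S_2)$-2 property is extracted from the global Cohen--Macaulay structure itself (regular sequences computing heights, catenarity), not from formal fibers; some such use of the ambient Cohen--Macaulay hypothesis is needed to close your argument.
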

\begin{proof}
\cite[Proposition 6.3.8]{EGA4_2} tells us a Cohen--Macaulay ring is an $(S_1)$-ring,
and \cite[Proposition 6.11.8 and Remarques 6.11.9]{EGA4_2}   tell us a Cohen--Macaulay ring is $(S_2)$-2.
\end{proof}

\begin{Cor}\label{cor:NOFONSISagataNS2finite}
    Let $R$ be a semi-Nagata integral domain so that $\OS{R}=\emptyset$.
    Then $R^\sigma$ is finite over $R$.
\end{Cor}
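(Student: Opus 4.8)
The plan is to bootstrap the \emph{local} finiteness of $R^\sigma$ — which is essentially already available in the FONSI-free setting — to \emph{global} finiteness, using openness of the $(S_2)$ locus together with the rigidity of $(S_2)$-ifications when $\OS{-}=\emptyset$. First I would collect the inputs. Since $\OS{R}=\emptyset$, Corollary~\ref{cor:noFONSIisNS2integral} and Theorem~\ref{thm:NS2ofGOONSfree}\ref{NS2:integral} give that $R^\sigma=R^{n\sigma}$ is integral over $R$; in particular $R^\sigma$ is the filtered union of its finite $R$-subalgebras. Because $R$ is semi-Nagata it is $(S_2)$-$2$ by Theorem~\ref{thm:semi-Nagatacharacterize}, and for every multiplicative set the localization remains semi-Nagata with empty $\OS{-}$ (Remark~\ref{Rem:localizesemi-Nagata}, Corollary~\ref{cor:noFONSIisNS2integral}). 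Fix $\fm\in\Max(R)$: by Theorem~\ref{thm:semi-Nagatacharacterizelocal} there is a finite sub-$R_\fm$-algebra $C$ of $(R_\fm)^\sigma$ that is $(S_2)$, and since $\OS{R_\fm}=\emptyset$ the uniqueness statement Theorem~\ref{thm:NS2ofGOONSfree}\ref{NS2:S2unique} forces $C=(R_\fm)^\sigma=(R^\sigma)_\fm$ (Lemma~\ref{lem:NS2localization}); hence $(R^\sigma)_\fm$ is finite over $R_\fm$.

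Next I would spread this out. Writing the finitely many $R_\fm$-module generators of $(R^\sigma)_\fm$ with denominators in $R\setminus\fm$, there is a finite $R$-subalgebra $B_\fm$ of $R^\sigma$ with $(B_\fm)_\fm=(R^\sigma)_\fm$, which is $(S_2)$ by Theorem~\ref{thm:RsIsS2}. Since $R$ is $(S_2)$-$2$, the $(S_2)$ locus of the finite $R$-module $B_\fm$ is open (Lemma~\ref{lem:S2-2rings}\ref{S2-2:open}), so there is $f_\fm\in R\setminus\fm$ with $(B_\fm)_{f_\fm}$ being $(S_2)$. Now $(B_\fm)_{f_\fm}$ is a sub-$R_{f_\fm}$-algebra of $(R_{f_\fm})^\sigma=(R^\sigma)_{f_\fm}$ (Lemma~\ref{lem:NS2localization}) that is $(S_2)$, and $\OS{R_{f_\fm}}=\emptyset$, so Theorem~\ref{thm:NS2ofGOONSfree}\ref{NS2:S2unique} again gives $(B_\fm)_{f_\fm}=(R^\sigma)_{f_\fm}$.

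Finally I would glue: by quasi-compactness of $\Spec(R)$ choose $\fm_1,\dots,\fm_n$ with $D(f_{\fm_1}),\dots,D(f_{\fm_n})$ covering $\Spec(R)$, and let $B$ be the $R$-subalgebra of $R^\sigma$ generated by $B_{\fm_1},\dots,B_{\fm_n}$, which is finite over $R$. For each $i$ the chain $(B_{\fm_i})_{f_{\fm_i}}\subseteq B_{f_{\fm_i}}\subseteq (R^\sigma)_{f_{\fm_i}}$ consists of equalities, so $B\hookrightarrow R^\sigma$ is an isomorphism after localizing at each $f_{\fm_i}$ and therefore $B=R^\sigma$. The step I expect to be the main obstacle is precisely this globalization: $R^\sigma$ is not a finite $R$-module a priori — and genuinely need not be once FONSIs are present (Example~\ref{exam:semi-NagataRsigmaNotFinite}) — so one cannot simply invoke ``locally finite implies finite'' for modules; the content is to produce a single honest finite subalgebra that is everywhere locally all of $R^\sigma$, and this is made possible only by combining the openness of the $(S_2)$ locus ($(S_2)$-$2$) with the rigidity of $(S_2)$-ifications in the FONSI-free case. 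This is, in effect, the endgame of the alternative proof of Theorem~\ref{thm:semi-Nagatacharacterize} carried out under the standing hypothesis $\OS{R}=\emptyset$.
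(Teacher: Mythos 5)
Your proof is correct, but it takes a noticeably longer route than the paper, which treats this corollary as a two-line consequence of results you already cite: since $R$ is semi-Nagata, Theorem \ref{thm:semi-Nagatacharacterize}\ref{semi-Nagata:finiteS2ify} applied with $B=R$ directly produces a \emph{globally} finite sub-$R$-algebra $C$ of $R^\sigma$ that is $(S_2)$, and then the rigidity statement Theorem \ref{thm:NS2ofGOONSfree}\ref{NS2:S2unique} (valid because $\OS{R}=\emptyset$) forces $C=R^\sigma$ at once, with no localization, spreading out, or gluing. You instead use only the semilocal characterization (Theorem \ref{thm:semi-Nagatacharacterizelocal}) plus the $(S_2)$-$2$ property from Theorem \ref{thm:semi-Nagatacharacterize}, and rebuild the local-to-global passage by hand: finiteness of $(R^\sigma)_\fm$ over $R_\fm$ via uniqueness, a finite subalgebra $B_\fm\subseteq R^\sigma$ capturing it, spreading out over a principal open, rigidity again to get $(B_\fm)_{f_\fm}=(R^\sigma)_{f_\fm}$, and gluing over a finite cover. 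As you yourself observe, this reprises the endgame of the paper's alternative proof of Theorem \ref{thm:semi-Nagatacharacterize} under the standing hypothesis $\OS{R}=\emptyset$, so the argument is valid but duplicates globalization work already packaged in a theorem you invoke anyway; what it buys is a self-contained picture of exactly where $(S_2)$-$2$ and FONSI-freeness enter the globalization. One step you should make explicit: openness of the $(S_2)$ locus of $B_\fm$ does not by itself yield a single $f_\fm\in R\setminus\fm$ with $(B_\fm)_{f_\fm}$ being $(S_2)$; you also need that the non-$(S_2)$ locus is closed and its image under the finite map $\Spec(B_\fm)\to\Spec(R)$ is closed and disjoint from $\Spec(R_\fm)$ (equivalently, argue via compactness of the constructible topology, as the paper does in the proof of Theorem \ref{thm:semi-Nagatacharacterize}). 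With that sentence added, every step checks out.
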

\begin{proof}
    Immediate from Theorem \ref{thm:NS2ofFONSIfree}\ref{NS2:S2unique}.
\end{proof}

We remark on $(S_2)$-ification of modules. %This will be useful for the lifting problem.
\begin{Thm}\label{thm:naiveS2ofmodulesfiniteforSagataNOFONSI}
    Let $R$ be a semi-Nagata ring.
    Assume $\OS{R/\fp_0}=\emptyset$ for all $\fp_0\in\Min(R)$.

    Let $\Sigma=\{\fp\in\Spec(R)\mid \HT(\fp/\fp_0)=1\text{ for some }\fp_0\in\Min(R)\}$.
    Assume that $\Sigma=\Spec_1(R)$.
    
    Let $M$ be a finite module so that $\Ass(M)=\Min(R)$.    
   Then $N:=\bigcap_{\fp\in\Sigma}M_\fp$ is finite and $(S_2)$.
\end{Thm}
Note that the assumptions on $\Sigma$ and $M$ are satisfied, for example, when $R$ is an integral domain and $M$ is torsion-free.
\begin{proof}
    We will use Theorem \ref{thm:semi-Nagatacharacterize}\ref{semi-Nagata:S1ring}.

    Let $U$ be the locus where $M$ is $(S_2)$.
   Then $U$ is open (Lemma \ref{lem:S2-2rings}),
   and contains $\Sigma$ as  $\Sigma=\Spec_1(R)$ and $\Ass(M)=\Min(R)$.
   Let $j:U\to \Spec(R)$ be the canonical open immersion.
  It is now clear that $N=j_*j^*M$.

Let $\fp_0$ be a minimal prime of $R$,
and let $\overline{R}=R/\fp_0$.
By \citestacks{0BK3} to show $N$ is finite it suffices to show 
for $\fp\in \Spec(R)\setminus U$ containing $\fp_0$,
every $P_0\in\Ass(\overline{R}^\wedge_{\fp})$
satisfies $\dim (\overline{R}^\wedge_\fp/P_0)>1$.
   
   We know $\overline{R}^\wedge_\fp$ is $(S_1)$ as $\overline{R}$ is $(S_1)$ with $(S_1)$ formal fibers.
Moreover,
for every $P_0\in \Min(\overline{R}^\wedge_\fp)$,
we have $\dim (\overline{R}^\wedge_\fp/P_0)>1$
as $\HT(\fp)>1$ and as $\OS{\overline{R}_\fp}=\emptyset$ (Corollary \ref{cor:noFONSIisNS2integral}).
Thus for all $P_0\in \Ass(\overline{R}^\wedge_\fp)$,
we have $\dim (\overline{R}^\wedge_\fp/P_0)>1$.
Therefore $N$ is finite.
It is $(S_2)$ by for example \cite[Th\'eor\`eme 5.10.5]{EGA4_2}.
\end{proof}

\section{Lifting the semi-Nagata property}\label{sec:liftSagataUC}
In this section, we prove the following result,
giving a partial answer to Question \ref{ques:formallift} for the semi-Nagata property.
\begin{Thm}\label{thm:liftSagataUC}
    Let $R$ be a Noetherian ring, $I$ an ideal of $R$.
    Assume that 
    \begin{enumerate}
        \item $R$ is $I$-adically complete.
        \item\label{liftSagata:quotSagata} $R/I$ is semi-Nagata.
        \item\label{liftSagata:UC}
        $R$ is universally catenary.
    \end{enumerate}
    Then $R$ is semi-Nagata.
\end{Thm}

We proceed with the proof. 
We use the characterization Theorem \ref{thm:semi-Nagatacharacterize} without further mentioning.
By Noetherian induction, we may assume
\begin{enumerate}[start=4]
    \item\label{liftSagata:allquotSagata}
    $R/\fa$ is semi-Nagata for all nonzero ideals $\fa$ of $R$.
\end{enumerate}
By Definition \ref{def:semi-Nagata} we may also assume 
\begin{enumerate}[resume]
    \item $R$ is an integral domain, 
\end{enumerate} 
and we only need to find a finite subalgebra $R'$ of $K$, the fraction field of $R$, that is $(S_2).$
As $R$ is $I'$-adically complete for all ideals $I'\subseteq I$, 
by \eqref{liftSagata:allquotSagata} we may assume 
\begin{enumerate}[resume]
    \item $I$ is generated by a single element $f\neq 0$.
\end{enumerate}
Let $\cM$ be the set of minimal prime divisors of $I$.

We will construct a sequence of submodules $R=M_0\subseteq M_1\subseteq\ldots$ of $K$
so that the union $M=\bigcup_i M_i$ is finite over $R$ and that $M/fM$ is $(S_1)$.
We will later show that the existence of such an $M$ implies $R$ is semi-Nagata.

For a finite submodule $X$ of $K$ we let %$\cE(X)$ be the set of embedded primes of the $R$-module $X/fX$,
%$\cE_0(X)$ the minimal members of $\cE(X)$,
$J(X)$ be the intersection of the embedded primes of the $R$-module $X/fX$.
%members of $\cE_0(X)$, which is also the intersection of the members of $\cE(X)$.
The module $H^0_{J(X)}(X/fX)$ is canonically identified with $H(X):=H^1_{J(X)}(X)[f]$ as $f$ is a nonzerodivisor on $X$,
and we have a module $X^+:=X+_{J(X)}H(X)\subseteq K$ fitting into an exact sequence
\[
    \begin{tikzcd}
    0\arrow[r] & X \arrow[r] & X^+ \arrow[r] & H(X) \arrow[r] & 0
    \end{tikzcd}
\]
similar to the construction $R+_I M$ in Discussion \ref{discu:DefR+M}.
We will show that $M_0=R$ and $M_{i+1}=M_i^+$ gives the desired sequence of modules.

The exact sequence above gives an exact sequence
\[
    \begin{tikzcd}
    0\arrow[r] & H(X) \arrow[r] & X/fX \arrow[r] & X^+/fX^+ \arrow[r] & H(X) \arrow[r] & 0.
    \end{tikzcd}
\]
As $H(X)$ is identified with $H^0_{J(X)}(X/fX)$ we see from primary decomposition that $\Ass\left(\frac{X/fX}{H(X)}\right)$ is the set of minimal prime divisors of $X/fX$.
We know $\Supp(X)=\Spec(R)$ as $X$ is a submodule of $K$,
so $\Supp(X/fX)=\Spec(R/fR)$ by Nakayama's Lemma,
thus $\Ass\left(\frac{X/fX}{H(X)}\right)=\cM$.
It follows that the associated primes of
the module $M/fM=\colim_i M_i/fM_i=\colim_i \frac{M_i/fM_i}{H(M_i)}$
are all in $\cM$,
so $M/fM$ is $(S_1)$ as soon as $M/fM$ is finite.

The discussion above also shows
$\Ass(X^+/fX^+)\subseteq \cM\cup V(J(X))$,
so $J(X^+)\supseteq J(X)$.
As $R$ is Noetherian, there exists $i_0$ so that $J(M_{i})=J(M_{i_0})$ for all $i\geq i_0$.
Let $\cJ$ be the set of minimal prime divisors of $J:=J(M_{i_0})$.
For every $i\geq i_0$ we have an exact sequence
\[
    \begin{tikzcd}
    0\arrow[r] & \frac{M_{i}/fM_{i}}{H(M_{i})} \arrow[r] & M_{i+1}/fM_{i+1} \arrow[r] & H(M_{i}) \arrow[r] & 0
    \end{tikzcd}
\]
which gives an injection $H(M_{i+1})=H^0_J(M_{i+1}/fM_{i+1})\to  H(M_{i})$
as $H^0_J\left(\frac{M_{i}/fM_{i}}{H(M_{i})}\right)=0$.
Thus there exists $i_1\geq i_0$ so that $H(M_{i+1})_{\fP}=H(M_{i})_{\fP}$ for all $\fP\in\cJ$ and all $i\geq i_1$,
as the lengths of $H(M_{i})_{\fP}$ are finite.
It follows that
$$\left(\frac{M_{i_1}/fM_{i_1}}{H(M_{i_1})}\right)_{\fP}=\left(\frac{M_{i_1+1}/fM_{i_1+1}}{H(M_{i_1+1})}\right)_{\fP}=\ldots=(M/fM)_{\fP}$$ for all $\fP\in \cJ$;
note that the same is true for all $\fP\in D(J)$,
as, in that case, $(M_i)_{\fP}=(M_{i+1})_{\fP}$.

We now apply Theorem \ref{thm:naiveS2ofmodulesfiniteforSagataNOFONSI} to the semi-Nagata ring $A=R/fR$ and the module $Y=\frac{M_{i_1}/fM_{i_1}}{H(M_{i_1})}$.
The condition $\Ass(Y)=\Min(A)$ follows from the construction.
The conditions $\OS{A/\fp_0}=\emptyset$ and $\Sigma=\Spec_1(A)$ follows from the condition $R$ is universally catenary, see Remark \ref{rem:UCnoFONSI}.
If $\fP\in D(J)$ or $\fP\in\cJ$ then we know $(M/fM)_{\fP}=Y_{\fP}$.
However, $D(J)\cup \cJ$ covers $\Spec_1(A)$ as $R$ is catenary.
We see $(M/fM)_{\fP}=Y_{\fP}$ for all $\fP\in\Spec_1(A)$,
so Theorem \ref{thm:naiveS2ofmodulesfiniteforSagataNOFONSI} tells us $M/fM$ is finite,
hence $(S_1)$ as noted before.

As $M$ is a submodule of $K$, we know $M\subseteq M_\fp=R_\fp$ for all $\fp\in\cM$,
in particular $M$ is $f$-adically separated.
Therefore $M$ is a submodule of its $f$-adic completion,
which is finite as $R$ is $f$-adically complete and $M/fM$ is finite.

We have found a finite submodule $M$ of $K$ containing $R$ so that $M/fM$ is $(S_1)$.
Let $\fp\in V(f)$.
The fibers of $R_\fp/fR_\fp\to R_\fp^\wedge/fR_\fp^\wedge$ are $(S_1)$
as $R/fR$ is semi-Nagata,
so $M_\fp^\wedge/fM_\fp^\wedge$ is $(S_1)$ by \cite[Proposition 6.4.1]{EGA4_2},
thus $M_\fp^\wedge$ is $(S_1)$ by \cite[Proposition 3.4.4]{EGA4_2}.
We know $R_g=M_g$ for some $g\in R^\circ$,
so $(R_\fp^\wedge)_g=(M_\fp^\wedge)_g$ and $g$ is a nonzerodivisor in both $R_\fp^\wedge$ and $M_\fp^\wedge$,
showing that $R_\fp^\wedge$ is $(S_1)$.

As $\OS{R_\fp}=\emptyset$ (Remark \ref{rem:UCnoFONSI}),
by Theorem \ref{thm:NS2ofFONSIfree}
there exists a finite subalgebra $R(\fp)$ of $R^\sigma$ so that $R(\fp)_\fp$ is $(S_2)$.
Note that this is the same as $R(\fp)_\fp$ is an $(S_2)$ $R_\fp$-module, as $R$ is universally catenary (Remark \ref{rem:UCS2ring=S2module}).
As $R/\fp$ is semi-Nagata and therefore $(S_2)$-2, the proof of \cite[Proposition 6.11.6]{EGA4_2} tells us there exists $h(\fp)\not\in\fp$ so that $R(\fp)_\fP$ is $(S_2)$ for all $\fP\in V(\fp)\cap D(h)$.
As the constructible topology of $R/fR$ is compact \citestacks{0901}
we see there exist finitely many finite subalgebras $R_1,\ldots,R_n$ of $R^\sigma$ so that for every $\fP\in V(f)$ there exists a $j$ such that $(R_j)_\fP$ is $(S_2)$,
in other words, $(R_j)_\fP=(R^\sigma)_\fP$ (Theorem \ref{thm:NS2ofFONSIfree}).
Let $R'$ be the finite subalgebra generated by all $R_j$, so $R'_\fP=(R^\sigma)_\fP$ for all $\fP\in V(f)$,
thus $R'_\fP$ is $(S_2)$ for all $\fP\in V(f)$.
As $R$ is $f$-adically complete,
this tells us $R'_\fP$ is $(S_2)$ for all $\fP\in \Max(R)$,
so $R'$ is $(S_2)$, as desired.

\section{The local lifting argument}\label{sec:Local-Lifting-Nishimura}
In this section, we present an adapted version of Nishimura's argument for local lifting \cite{Nishimura-semilocal-lifting}.
It is a variant of Rotthaus' argument \cite{Rotthaus-qe-semilocal-lifting}, which is axiomitized in \cite{BI-semilocal-lifting}.
A key component of the argument in all the three aforementioned articles
is that the property of concern must imply reducedness (cf. \cite[Remark after Theorem 2.3]{BI-semilocal-lifting}).
We remove this restriction. 
%as follows.

\begin{Discu}\label{discu:Properties}
    Let $\bP$ be a property of Noetherian rings.
The $\bP$-locus $U_\bP(A)$ of a ring $A$ is the set of $\fp\in\Spec(A)$ so that $A_\fp$ satisfies $\bP$.
%A Noetherian ring is $\bP$-2

A map $\varphi:A\to B$ of Noetherian rings is said to be a  \emph{$\bP$-map} if $\varphi$ is flat with geometrically $\bP$ fibers.
If $\bP$ satisfies \ref{PisPointwise}\ref{Pdescends}\ref{Pascends} below,
then for a $\bP$-map $\varphi$ we always have $\Spec(\varphi)^{-1}(U_\bP(A))=U_\bP(B)$.

A Noetherian ring $A$ is said to be a \emph{$\bP$-ring} if its formal fibers are geometrically $\bP$,
in other words, $A_\fp\to A^\wedge_\fp$ is a $\bP$-map for all $\fp\in\Spec(A)$.
By \citestacks{0BIU}, if $\bP$ satisfies \ref{PisSing}--\ref{Pascends} below,
then a Noetherian ring $A$ is a $\bP$-ring if and only if $A_\fm\to A^\wedge_\fm$ is a $\bP$-map for all $\fm\in\Max(A)$.
When $A$ is semilocal, this is to say $A\to A^\wedge$ is a $\bP$-map (cf. \cite[Proposition 7.3.14]{EGA4_2});
and if $\bP$ satisfies \ref{PisSing}--\ref{Pascends} below,
this is also equivalent to that for every finite $A$-algebra $B$ that is an integral domain,
$(B^\circ)^{-1}B^\wedge$ satisfies $\bP$,
see \cite[Proposition 7.3.16]{EGA4_2}.
By \citestacks{0BIV}, an essentially finitely generated algebra over a $\bP$-ring is a $\bP$-ring.

    Consider the following conditions $\bP$ may satisfy.
    \begin{enumerate}[label=(\Roman*)]
        \item\label{PisSing} Every regular Noetherian ring satisfies $\bP$.
        \item\label{PisPointwise} A Noetherian ring $A$ satisfies $\bP$ if and only if all $A_\fp$, $\fp\in\Spec(A)$, satisfies $\bP$.
        \item\label{Pdescends} For a flat local map $A\to B$ of Noetherian local rings,
        if $B$ satisfies $\bP$, so does $A$.
        \item\label{Pascends} For a local $\bP$-map $A\to B$ of Noetherian local rings,
        %with geometrically $\bP$ fibers,
        if $A$ satisfies $\bP$, so does $B$.
        \item\label{PisOpen} For a Noetherian complete local ring $A$, $U_\bP(A)$ is open.
        \item\label{PGroLocalizes} Let $\varphi:A\to B$ be a flat local map of Noetherian local rings.
        If $A$ is a $\bP$-ring and the closed fiber of $\varphi$ is geometrically $\bP$,
        then $\varphi$ is a $\bP$-map.
    \end{enumerate}
\end{Discu}

\begin{Rem}\label{rem:MurGroLocalize}
Whether or not $\bP$ satisfies \ref{PGroLocalizes} is generally called the Grothendieck localization problem for $\bP$.
The paper \cite{Mur-Grothendieck-localization} provides a uniform treatment of this problem,
and provides a list of references of known results on what properties satisfy \ref{PisSing}--\ref{PGroLocalizes}.
In particular, \ref{PisSing}--\ref{PGroLocalizes} hold for $\bP$=``$(S_k)$,'' ``Cohen--Macaulay,'' ``Gorenstein,'' and ``lci.''
\end{Rem}

\begin{Def}\label{def:CategoryDQP} %[Functorial ideals of non-$\bP$ locus]
    Let $\bP$, $\bQ$ be two properties of Noetherian rings so that $\bP$ implies $\bQ$ and that both $\bP$ and $\bQ$ satisfy \ref{PisSing}--\ref{PGroLocalizes}.
    Let $\cD^\bQ_{\bP-1}$ and $\cD^\bQ_{\bP}$ be two subcategory of rings described as follows.
    The objects of $\cD^\bQ_{\bP-1}$ are Noetherian rings $A$ that satisfy the following conditions.
    \begin{enumerate}
        \item $A$ satisfies $\bQ$.    
        \item $U_\bP(A)$ is open.
    \end{enumerate}
    The morphisms of $\cD^\bQ_{\bP-1}$ are $\bP$-maps.
    $\cD^\bQ_\bP$ is the full subcategory of $\cD^\bQ_{\bP-1}$ of objects $A$ satisfying
    \begin{enumerate}[resume]
    \item\label{DisPring} $A$ is a $\bP$-ring.
\end{enumerate}
    
    We note that if $\varphi:A\to B$ is a $\bP$-map of Noetherian rings,
    then $A\in\cD^\bQ_{\bP-1}$
    implies $B\in\cD^\bQ_{\bP-1}$;
    if, further, $\varphi$ is faithfully flat,
    then $B\in\cD^\bQ_{\bP-1}$ implies $A\in\cD^\bQ_{\bP-1}$,
    cf. \citestacks{02JY}.

    For a subcategory $\cC$ of $\cD^\bQ_{\bP-1}$,
    a \emph{strictly functorial $\bP$-assignment on $\cC$} is an assignment $A\mapsto \fc(A)$ for all $A\in \cC$ where $\fc(A)$ is a nonzero ideal of $A$ satisfying $V(\fc(A))=\Spec(A)\setminus U_\bP(A)$,
    such that $\varphi(\fc(A))B=\fc(B)$ for all $\varphi:A\to B$ in $\cC$.

When $\bQ$ is the trivial property,
that is, every Noetherian ring satisfies $\bQ$,
we write $\cD_{\bP-1}$ and $\cD_{\bP}$ instead.
\end{Def}

\begin{Rem}\label{rem:BI-Q-is-reduced}
    In \cite{Nishimura-semilocal-lifting,BI-semilocal-lifting},
    $\bQ$=``reduced,''
    and $\fc(A)$ is the unique radical ideal that satisfies $V(\fc(A))=\Spec(A)\setminus U_{\bP}(A)$.
    The lifting of $\bQ$-rings is \cite{Marot-Nagata-Lift}.
    As a reduced ring is $(R_0)$ we see $\fc(A)_\fq=A_\fq$ for all $\fq\in\Min(A)$,
    in particular $\fc(A)\neq 0$. % for all $\fp\in\Spec(A)$.
    We have $\varphi(\fc(A))B=\fc(B)$ for all $\varphi:A\to B$ in $\cD^\bQ_{\bP-1}$ as
    $\Spec(\varphi)^{-1}(U_\bP(A))=U_\bP(B)$
    and as the fibers of $\varphi$ are reduced,
    so $\varphi(\fc(A))B$ is radical.

    In our case, we do not have such luxury, and it is necessary to find the assignments case-by-case. 
    We will find assignments on $\cD^\bQ_{\bP}$
for $\bQ$ trivial and $\bP$=``$(S_1)$,''
$\bQ$=``$(S_1)$'' and $\bP$=``$(S_2)$,''
and $\bQ$=```$(S_2)$'' and $\bP$=``$(S_k)$'' $(k\geq 3)$, ``Gorenstein,'' and ``lci.''
\end{Rem}

\begin{Thm}\label{thm:Nishimura-local-lifting-argument}
    Let $\bP$, $\bQ$ be two properties of Noetherian rings so that $\bP$ implies $\bQ$ and that both $\bP$ and $\bQ$ satisfy \ref{PisSing}--\ref{PGroLocalizes}.
    Assume that there exists a strictly functorial $(\bP,\bQ)$-assignment on $\cD^\bQ_\bP$.

    Let $R$ be a Noetherian semilocal ring, $I$ an ideal of $R$.
    Assume 
    \begin{enumerate}
        \item $R$ is $I$-adically complete.
        \item\label{Nsmr:quotPring} $R/I$ is a $\bP$-ring.
        \item\label{Nsmr:Qring} $R$ is a $\bQ$-ring.
        \item\label{Nsmr:Q-ify} For every finite $R$-algebra $B$ that is an integral domain,
        there exists a finite inclusion of domains $B\subseteq C$
        such that $C$ satisfies $\bQ$.
    \end{enumerate}
    Then $R$ is a $\bP$-ring.
\end{Thm}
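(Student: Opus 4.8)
The plan is to follow the argument of Nishimura \cite{Nishimura-semilocal-lifting} (a variant of Rotthaus \cite{Rotthaus-qe-semilocal-lifting}, axiomatized in \cite{BI-semilocal-lifting}), substituting the assignment $\fc$ for the radical ideal defining the non-$\bP$-locus that is available in those references because there $\bP$ forces reducedness.

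First I would reduce to a statement about a single generic formal fiber. Since $\bP$ satisfies \ref{PisSing}--\ref{Pascends} and $R$ is semilocal, \cite[Proposition 7.3.16]{EGA4_2} shows that $R$ is a $\bP$-ring if and only if $(B^\circ)^{-1}B^\wedge$ satisfies $\bP$ for every finite $R$-algebra $B$ that is a domain. Given such a $B$, hypothesis \ref{Nsmr:Q-ify} provides a finite inclusion of domains $B\subseteq C$ with $C$ satisfying $\bQ$; then $C$ is finite over $R$, hence $IC$-adically complete, a $\bQ$-ring (being essentially of finite type over the $\bQ$-ring $R$), and $C/IC$ is a $\bP$-ring (essentially of finite type over the $\bP$-ring $R/I$, see \citestacks{0BIV}). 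Using $C^\wedge=C\otimes_BB^\wedge$ one identifies $(C^\circ)^{-1}C^\wedge$ with $\Frac(C)\otimes_{\Frac(B)}(B^\circ)^{-1}B^\wedge$, so the natural map $(B^\circ)^{-1}B^\wedge\to(C^\circ)^{-1}C^\wedge$ is module-finite and faithfully flat; by \ref{Pdescends} it then suffices to treat $C$. Renaming, I am reduced to: if $A$ is a Noetherian semilocal domain that satisfies $\bQ$, is a $\bQ$-ring, is $I$-adically complete, and has $A/I$ a $\bP$-ring, then $(A^\circ)^{-1}A^\wedge$ satisfies $\bP$. (If $I=0$ this is immediate, so assume $I\neq 0$.)

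Next I would reformulate via the assignment. Since $A$ satisfies $\bQ$ and is a $\bQ$-ring, $A\to A^\wedge$ is a $\bQ$-map (cf. \cite[Proposition 7.3.14]{EGA4_2}), so $A^\wedge$ satisfies $\bQ$ by \ref{PisPointwise} and \ref{Pascends}; and $A^\wedge$, being a finite product of complete local (hence excellent) rings, is a $\bP$-ring with open $\bP$-locus (\ref{PisOpen}). Thus $A^\wedge\in\cD^\bQ_\bP$, and $\fc(A^\wedge)$ is a nonzero ideal with $V(\fc(A^\wedge))=\Spec(A^\wedge)\setminus U_\bP(A^\wedge)$. As $A$ is a domain and $A\to A^\wedge$ is faithfully flat, the primes of $A^\wedge$ that survive in $(A^\circ)^{-1}A^\wedge$ are exactly those $P$ with $P\cap A=0$; hence, using \ref{PisPointwise}, $(A^\circ)^{-1}A^\wedge$ satisfies $\bP$ if and only if $\fc(A^\wedge)\cap A\neq 0$. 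The remaining task is therefore to produce a nonzero element of $\fc(A^\wedge)$ lying in $A$.

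For this I would run Nishimura's argument with $\fc$ in place of his radical ideal. The inputs are: $A$ is $I$-adically complete, hence so is $A^\wedge$, with $A^\wedge/I^nA^\wedge=(A/I^n)^\wedge$; and each $A/I^n$ is a $\bP$-ring, because being a $\bP$-ring depends only on the reduction and $(A/I^n)_{\mathrm{red}}=(A/I)_{\mathrm{red}}$, so each $A/I^n\to(A/I^n)^\wedge$ is a $\bP$-map. Using the strict functoriality of $\fc$ along these maps (and the auxiliary property $\bQ$, which keeps the rings $A/I^n$ and their completions in suitable categories $\cD^\bQ_{\bP}$), one transfers the non-$\bP$-locus data to $A/I^n$ at each finite level, and then a Krull-intersection/inverse-limit argument over $n$, using $A=\varprojlim A/I^n$ inside $A^\wedge=\varprojlim A^\wedge/I^nA^\wedge$, forces $\fc(A^\wedge)$ to meet $A$ nontrivially. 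I expect the main obstacle to be exactly this comparison step: the quotient $A^\wedge\to A^\wedge/I^nA^\wedge$ is not flat, so the functoriality of $\fc$ does not apply to it and the comparison must be routed through the flat maps $A\to A^\wedge$ and $A/I^n\to(A/I^n)^\wedge$; this is precisely why one needs $\fc$ to be strictly functorial on all of $\cD^\bQ_{\bP}$ (not merely for isomorphisms) while carrying along the property $\bQ$, and it is this demand that forces the delicate case-by-case constructions of the assignments in \S\ref{sec:SkCMGorAssign} (for ``$(S_k)$,'' Cohen--Macaulay, and Gorenstein) and, most difficult of all, in \S\ref{sec:lciAssign} (for lci, where the relevant cotangent modules fail to be finitely generated).
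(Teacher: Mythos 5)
Your setup is on the right track: reducing to the claim that $\fc(R^\wedge)\cap R\neq 0$ after arranging $R$ to be a $\bQ$-domain (via condition (4)) is exactly the reformulation the paper uses, and your observation that $R^\wedge\in\cD^\bQ_\bP$ is correct. But the core of the argument is missing. You name the obstacle (``the quotient $A^\wedge\to A^\wedge/I^n A^\wedge$ is not flat, so the functoriality of $\fc$ does not apply'') and then stop at ``the comparison must be routed through the flat maps $A\to A^\wedge$ and $A/I^n\to (A/I^n)^\wedge$''; but that routing is precisely the content of the proof, and it does not proceed through the rings $A/I^n$ at all.

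The actual mechanism in the paper is: set $C=\fc(R^\wedge)$, $C_n=C+I^nR^\wedge$, $\fa_n=C_n\cap R$, and aim to prove $C_n=\fa_nR^\wedge$ for all $n$, which by Rotthaus' Lemma 2 yields $C\cap R\neq 0$. To prove $C_n=\fa_n R^\wedge$, one examines a primary component $Q$ of $C_n$ whose radical is not maximal, sets $\fp=\sqrt{Q}\cap R\in\Spec(R)\setminus\Max(R)$, and works in the commutative square
\[
\begin{CD}
R_\fp @>>> (R^\wedge)_\fp\\
@VVV @VVV\\
(R_\fp)^* @>>> ((R^\wedge)_\fp)^*
\end{CD}
\]
where $(-)^*$ denotes $I$-adic completion. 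The decisive inputs here are (a) an \emph{induction on $\dim R$}, which gives that $(R_\fp)^*$ is a $\bP$-ring because $\dim R_\fp<\dim R$; (b) the verification that all four maps above, and the completion maps to $(R^\wedge)_\fp$ and $((R^\wedge)_\fp)^*$, are $\bP$-maps between objects of $\cD^\bQ_\bP$ — this uses Grothendieck localization \ref{PGroLocalizes} for the map $(R_\fp)^*\to ((R^\wedge)_\fp)^*$, and quasi-excellence lifting to ensure $((R^\wedge)_\fp)^*$ is a $\bP$-ring; and (c) strict functoriality of $\fc$ across this square, which yields $C((R^\wedge)_\fp)^*=\fc((R_\fp)^*)((R^\wedge)_\fp)^*$ and then, by contraction, $C_n(R^\wedge)_\fp=\fa_n(R^\wedge)_\fp$. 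None of (a)--(c) appear in your sketch: there is no induction on dimension, no localization at a non-maximal $\fp$, no appearance of $(R_\fp)^*$, and no application of \ref{PGroLocalizes}. The ``Krull-intersection/inverse-limit over $n$'' idea you gesture at is closest to Rotthaus' Lemma 2, but that lemma is the \emph{last} step and only applies after $C_n=\fa_nR^\wedge$ has been established by the above localization argument; it cannot substitute for it.

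One smaller point: your reduction replaces the ambient $R$ by a finite domain $C\supseteq B$ and then treats $C$ as the new base ring. The paper does essentially this too, but note that for the induction on dimension to apply one must re-verify hypotheses (2)--(4) for the smaller rings $(R_\fp)^*$; your sketch does not mention this, and it is not automatic (it is handled in the paper's applications by establishing the theorem for $(S_1)$, then $(S_2)$, etc., in turn).
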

\begin{proof}
    Fix a strictly functorial $\bP$-assignment $A\mapsto\fc(A)$ on $\cD^\bQ_\bP$.
    
    Let $R$ be a ring with an ideal $I$ that satisfy the assumptions.
    For every $R$-algebra $S$ we denote by $S^*$ the $(IS)$-adic completion of $S$.
    %As an Artinian ring is a $\bP$-ring,
    By induction, we may assume

    \begin{enumerate}[start=5]
        \item\label{Nsmr:indOnDim} the theorem holds when the dimension of $R$ is strictly smaller.
    \end{enumerate}

    It suffices to show for every $R$-algebra $B$ that is an integral domain,
    $(B^\circ)^{-1}B^\wedge$ satisfies $\bP$.
    By \eqref{Nsmr:Q-ify} 
    %and     \ref{Pdescends} 
    we may assume $B$ satisfies $\bQ$. 
    Replace $R$ by $B$ we may  assume
     \begin{enumerate}[resume]
        \item $R$ is an integral domain that satisfies $\bQ$. %$, and $I\neq 0$. 
    \end{enumerate}
    By \eqref{Nsmr:Qring}, \ref{PisOpen}, and the fact a complete local ring is a $\bP$-ring we see
    \begin{enumerate}[resume]
        \item\label{Nsmr:RhatisQ} $R^\wedge\in \cD^\bQ_\bP$. 
    \end{enumerate}
    By \eqref{Nsmr:indOnDim}, we have
    \begin{enumerate}[resume]
        %\item\label{quotPring} $R/J$ is a $\bP$-ring for all nonzero ideals $J$ of $R$. 
        \item\label{Nsmr:locPring} $(R_\fp)^*$ is a $\bP$-ring for all $\fp\in\Spec(R)\setminus\Max(R)$.
    \end{enumerate}

Write $C=\fc(R^\wedge)\neq 0$, and for every $n\in\bZ_{\geq 1}$, $C_n=C+I^nR^\wedge$,
$\fa_n=C_n\cap R$.
We will show $C_n=\fa_nR^\wedge$ for all $n$,
which implies $C\cap R\neq 0$ by  \cite[Lemma 2]{Rotthaus-qe-semilocal-lifting},
which then tells us the generic fiber $(R^\circ)^{-1}R^{\wedge}$ is $\bP$,
as $V(C)=\Spec(R^\wedge)\setminus U_\bP(R^\wedge)$.

By consideration of a primary decomposition of $C_n$,
and by the fact flat base change commutes with finite intersections,
it suffices to show for a primary ideal $Q$ containing $C_n$ we have $C_n\subseteq (Q\cap R)R^\wedge$.
If $\sqrt{Q}$ is maximal then this is trivial as $Q=(Q\cap R)R^\wedge$.
Therefore we may assume $\sqrt{Q}$ is not maximal.
Let $\fp=\sqrt{Q}\cap R=\sqrt{Q\cap R}\in\Spec(R)\setminus\Max(R)$.
As $R\to R^\wedge$ is flat and as $Q\cap R$ is $\fp$-primary,
every prime divisor of $(Q\cap R)R^\wedge$ is above $\fp$.
Therefore it suffices to show
$C_n (R^\wedge)_\fp\subseteq (Q\cap R)(R^\wedge)_\fp$.
In the remainder of the proof we show 
$C_n (R^\wedge)_\fp=\fa_n (R^\wedge)_\fp$
for all $\fp\in\Spec(R)\setminus\Max(R)$,
which is enough as $Q\cap R\supseteq C_n\cap R=\fa_n$.

    Consider the commutative diagram of rings
    \[\begin{CD}
        R@>>> R^\wedge\\
        @VVV @VVV\\
        R_\fp @>{f_\fp}>> (R^\wedge)_\fp\\
        @V{g_\fp}VV @V{g^\wedge_\fp}VV\\
        (R_\fp)^* @>{f^*_\fp}>> ((R^\wedge)_\fp)^*.\\
    \end{CD}\]
We know $(R^\wedge)_\fp$ is (quasi-)excellent as $R^\wedge$ is complete,
therefore $((R^\wedge)_\fp)^*$ is quasi-excellent \cite{formal-lifting-excellence-Gabber}.
In particular, both $(R^\wedge)_\fp$  and $((R^\wedge)_\fp)^*$ are $\bP$-rings and have open $\bP$-locus.
By \eqref{Nsmr:locPring} $(R_\fp)^*$ is a local $\bP$-ring,
hence its $\bP$-locus is open by \ref{PisOpen}, cf. \citestacks{02JY}.
    
    We know $f_\fp$ and $g_\fp$ are $\bQ$-maps and $g^\wedge_\fp$ is a $\bP$-map by \eqref{Nsmr:Qring}\eqref{Nsmr:locPring} and \citestacks{0BK9}.
    The map  $f^*_\fp$ is faithfully flat \citestacks{0AGW}, and 
    for every $\fQ\in V(I(R_\fp)^*)$,
    the fiber of $f^*_\fp$ over $\fQ$
    is the same as the formal fiber of $R$ over $\fQ\cap R\in V(I)$,
    which is geometrically $\bP$
    by \eqref{Nsmr:quotPring}.
    As every maximal ideal of $((R^\wedge)_\fp)^*$ contains $I((R^\wedge)_\fp)^*$,
    we see from \ref{PGroLocalizes} that $f^*_\fp$ is a $\bP$-map. 
    Consequently, if we remove $R$ and $R_\fp$, then the diagram above is a diagram in $\cD^\bQ_\bP$ (cf. \eqref{Nsmr:RhatisQ}).
    Therefore $C((R^\wedge)_\fp)^*=\fc(((R^\wedge)_\fp)^*)=\fc((R_\fp)^*)((R^\wedge)_\fp)^*$.

    Now, let 
    $\fb=(\fc((R_\fp)^*)+I^n(R_\fp)^*)\cap R_\fp$,
    so $\fb(R_\fp)^*=\fc((R_\fp)^*)+I^n(R_\fp)^*$
    as $(R_\fp)^*$ is the $I$-adic completion of $R_\fp$.
    Then we have $\fb ((R^\wedge)_\fp)^*=C_n((R^\wedge)_\fp)^*$,
    so $\fb (R^\wedge)_\fp=C_n(R^\wedge)_\fp$
    as both sides contain $I^n (R^\wedge)_\fp$.
    Contract to $R$ we see $\fb=\fa_n R_\fp$, so $\fa_n (R^\wedge)_\fp=C_n(R^\wedge)_\fp$,
    as desired.
    \end{proof}
\begin{Rem}
    We used \cite{formal-lifting-excellence-Gabber} to ensure the ring $((R^\wedge)_\fp)^*$ is a $\bP$-ring.
    A weaker result may be enough.
     
     If lci implies $\bP$, then every lci ring is a $\bP$-ring (cf. \cite[(5.4)]{avramov-ci}).
    The ring $((R^\wedge)_\fp)^*$ is a quotient of a regular ring as $R^\wedge$ is, so it is a $\bP$-ring,
    avoiding \cite{formal-lifting-excellence-Gabber}.
    This is the case in our applications in \S\ref{sec:Local-lifting-result}.
    
    We needed to do this because our $\fc(-)$ is only defined on $\cD^\bQ_{\bP}$.
    In \cite{Nishimura-semilocal-lifting,BI-semilocal-lifting},
    $\fc(-)$ is defined on the whole of $\cD^\bQ_{\bP-1}$ (Remark \ref{rem:BI-Q-is-reduced}),
    so this is unnecessary.
\end{Rem}

\section{Extending $\bP$-assignments}\label{sec:ExtendAssignment}
\begin{Def}\label{def:CategoryCompleteQP}
    Let $\bP,\bQ$, $\cD^\bQ_\bP$ be as in Definition \ref{def:CategoryDQP}.
    For an integer $d\geq 0$ let $^d\cA^\bQ_\bP$
    be the full subcategory of $\cD^\bQ_\bP$ of rings $A\in\cD^\bQ_\bP$ that are complete local of dimension $d$
    whose $\bP$-locus is the punctured spectrum.
   % (\emph{i.e.} $U_\bP(A)=\Spec(A)\setminus\Max(A)$).
    Let $\cA^\bQ_\bP$ be the  disjoint union of all $^d\cA^\bQ_\bP$.
    In other words,
    the objects are $\cA^\bQ_\bP$ are complete local rings in $\cD^\bQ_\bP$ whose $\bP$-locus is the punctured spectrum,
    and the morphisms are local $\bP$-maps whose closed fiber has dimension $0$.

    For every $A\in \cA^\bQ_\bP$,
    denote by $\fm_A$ the maximal ideal of $A$.
    We know $\Spec(A)\setminus U_\bP(A)=\{\fm_A\}$.
    Therefore an ideal $\fc$ satisfying $V(\fc)=\Spec(A)\setminus U_\bP(A)$ is the same as $\fc$ being $\fm_A$-primary.

    When $\bQ$ is trivial we write $^d\cA_\bP$ and $\cA_\bP$ instead.
\end{Def}
\begin{Lem}\label{lem:extendPassignment}
    Let $\bP,\bQ$, $\cD^\bQ_\bP,\cA^\bQ_\bP$ be as in Definitions \ref{def:CategoryDQP} and \ref{def:CategoryCompleteQP}.
    Assume that $\bP$ implies $(S_1)$.
    Then every strictly functorial $\bP$-assignment $\fc(-)$ on $\cA^\bQ_\bP$
    extends uniquely to a strictly functorial $\bP$-assignment $\fc(-)$ on $\cD^\bQ_\bP$
    in a way that $\fc(A)$ has no embedded prime divisors for all $A\in\cD^\bQ_\bP$.
\end{Lem}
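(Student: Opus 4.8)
The plan is to build the extension by gluing, at the minimal primes of the non-$\bP$-locus, the primary ideals cut out by the given assignment on $\cA^\bQ_\bP$. For $A\in\cD^\bQ_\bP$ put $Z_A=\Spec(A)\setminus U_\bP(A)$, a closed set, and record two families of morphisms of $\cD^\bQ_\bP$: every localization $A\to A_\fp$ is a $\bP$-map, since its nonempty fibers are single points $\Spec\kappa(\fq)$ hence geometrically regular; and every completion $A_\fp\to(A_\fp)^\wedge$ is a $\bP$-map, since a localization of a $\bP$-ring is a $\bP$-ring. A complete local ring is a $\bP$-ring and, by ascent of $\bQ$ along the faithfully flat $\bQ$-map $A_\fp\to (A_\fp)^\wedge$, satisfies $\bQ$, so $(A_\fp)^\wedge\in\cD^\bQ_\bP$. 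If $\fp\in\Min(Z_A)$ then the non-$\bP$-locus of $A_\fp$ is $\{\fp A_\fp\}$, hence that of $(A_\fp)^\wedge$ is its closed point and $(A_\fp)^\wedge\in\cA^\bQ_\bP$. Set $\fc(A)_\fp:=\fc((A_\fp)^\wedge)\cap A_\fp$, an $\fp A_\fp$-primary ideal; since an $\fm$-primary ideal of a complete Noetherian local ring is extended from any Noetherian local subring with the given completion, $\fc(A)_\fp(A_\fp)^\wedge=\fc((A_\fp)^\wedge)$. Let $Q_\fp\subseteq A$ be the contraction of $\fc(A)_\fp$ along $A\to A_\fp$, and define $\fc(A):=\bigcap_{\fp\in\Min(Z_A)}Q_\fp$ (the unit ideal when $Z_A=\emptyset$). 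This is a primary decomposition with $\Ass(A/\fc(A))=\Min(Z_A)$, so $\fc(A)$ has no embedded prime divisors and $V(\fc(A))=Z_A$; it is nonzero because at a minimal prime $\fr$ of $A$ it localizes to $A_\fr$ if $\fr\notin Z_A$ and to $\fc(A_\fr)\neq 0$ if $\fr\in Z_A$ (here $(A_\fr)^\wedge=A_\fr$). For $A\in\cA^\bQ_\bP$ one recovers the original $\fc(A)$, so $\fc$ is an extension with the required no-embedded-primes property.

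The heart of the argument is strict functoriality. Let $\varphi\colon A\to B$ be a $\bP$-map in $\cD^\bQ_\bP$; then $\Spec(\varphi)^{-1}(Z_A)=Z_B$, so $V(\varphi(\fc(A))B)=Z_B=V(\fc(B))$. As flat base change commutes with finite intersections, $\varphi(\fc(A))B=\bigcap_{\fp\in\Min(Z_A)}Q_\fp B$; and since the fibers of $\varphi$ are geometrically $\bP$, hence $(S_1)$ (as $\bP$ implies $(S_1)$), hence without embedded primes, the formula for associated primes under a flat map gives that $\Ass(B/Q_\fp B)$ consists of the generic points of the fiber over $\fp$, each of which lies in $\Min(Z_B)$ (because $\fp\in\Min(Z_A)$ and such a point is minimal in its fiber). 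Thus $\varphi(\fc(A))B$, like $\fc(B)$, is unmixed with associated primes exactly $\Min(Z_B)$, so it suffices to verify $\varphi(\fc(A))B\cdot B_{\fp'}=\fc(B)_{\fp'}$ for each $\fp'\in\Min(Z_B)$. Writing $\fp=\fp'\cap A$, going-down shows $\fp\in\Min(Z_A)$ and that $\fp'$ is a minimal prime of the fiber $B\otimes_A\kappa(\fp)$, so the closed fiber of $A_\fp\to B_{\fp'}$ is zero-dimensional. Completing, $(A_\fp)^\wedge\to(B_{\fp'})^\wedge$ is flat local with the same closed fiber, which is zero-dimensional and geometrically $\bP$; since its source is a $\bP$-ring, the Grothendieck-localization property \ref{PGroLocalizes} makes it a morphism of $\cA^\bQ_\bP$. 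Strict functoriality on $\cA^\bQ_\bP$ gives $\fc((A_\fp)^\wedge)(B_{\fp'})^\wedge=\fc((B_{\fp'})^\wedge)$; expanding both sides via the extension property of $\fm$-primary ideals, namely $\fc((A_\fp)^\wedge)=\fc(A)_\fp(A_\fp)^\wedge$ and $\fc((B_{\fp'})^\wedge)=\fc(B)_{\fp'}(B_{\fp'})^\wedge$, and using that $B_{\fp'}\to(B_{\fp'})^\wedge$ is faithfully flat, yields $\varphi(\fc(A))B\cdot B_{\fp'}=\fc(B)_{\fp'}$.

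For uniqueness, any strictly functorial extension $\fc'$ with no embedded prime divisors must, by functoriality along $A\to A_\fp\to(A_\fp)^\wedge$ for $\fp\in\Min(Z_A)$, satisfy $\fc'(A)_\fp(A_\fp)^\wedge=\fc((A_\fp)^\wedge)$, hence $\fc'(A)_\fp=\fc((A_\fp)^\wedge)\cap A_\fp=\fc(A)_\fp$; being unmixed with support $Z_A$, $\fc'(A)$ equals the intersection of the contractions of its localizations at the primes of $\Min(Z_A)$, so $\fc'(A)=\fc(A)$. I expect the main obstacle to be the functoriality step: it relies on two things, that the fibers of $\varphi$ have no embedded primes so that the identity can be checked after localizing at $\Min(Z_B)$ (this is precisely where the hypothesis that $\bP$ implies $(S_1)$ enters), and the identification of $(A_\fp)^\wedge\to(B_{\fp'})^\wedge$ as a morphism of $\cA^\bQ_\bP$ (which rests on the Grothendieck-localization property \ref{PGroLocalizes} and on the fact that $\varphi$ localized at minimal primes of $Z_B$ has zero-dimensional closed fiber).
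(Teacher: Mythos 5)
Your proposal is correct and follows essentially the same route as the paper: the same formula $\fc(A)=\bigcap_{\fp}\bigl(\fc(A^\wedge_{\fp})\cap A\bigr)$ over the generic points $\fp$ of $\Spec(A)\setminus U_\bP(A)$, the same uniqueness argument via functoriality along $A\to A_\fp\to A^\wedge_\fp$, and the same functoriality check (reduce to localizations at the generic points of $\Spec(B)\setminus U_\bP(B)$ using that the fibers are $(S_1)$, then use \ref{PGroLocalizes} to see $A^\wedge_\fp\to B^\wedge_{\fp'}$ is a morphism of $\cA^\bQ_\bP$). The only cosmetic difference is that you apply \ref{PGroLocalizes} directly to the already-Artinian closed fiber, where the paper phrases the same step as showing $\kappa(\fp)\to (B/\fp B)^\wedge_{\fp'}$ is a $\bP$-map using that $B/\fp B$ is a $\bP$-ring.
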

\begin{proof}
    Let $\fc(-)$ on $\cA^\bQ_\bP$ be a given strictly functorial $\bP$-assignment.
    
    For $A\in \cD^\bQ_{\bP}$,
    let $\fp_1,\ldots,\fp_n\ (n\geq 0)$ be the generic points of $\Spec(A)\setminus U_\bP(A)$.
    If a desired extension exists,
    then it must satisfy $\fc(A)=\bigcap_i(\fc(A_{\fp_i})\cap A)$ as $\fc(A)$ has no embedded prime divisors.
    Furthermore,
    we must have $\fc(A_{\fp_i})=\fc(A^\wedge_{\fp_i})\cap A_{\fp_i}$,
    as the completion map $A_{\fp_i}\to A^\wedge_{\fp_i}$ is in $\cD^\bQ_\bP$ (\emph{i.e.} a $\bP$-map) by condition \eqref{DisPring} in Definition \ref{def:CategoryDQP}.
    As $\fp_i$ is a generic point of $\Spec(A)\setminus U_\bP(A)$
    we see $U_\bP(A_{\fp_i})=D(\fp_iA_{\fp_i})$,
    therefore $U_\bP(A^\wedge_{\fp_i})=D(\fp_iA^\wedge_{\fp_i})$,
    in other words $A^\wedge_{\fp_i}\in \cA^\bQ_\bP$.
    This shows the uniqueness of the extension; we must have $\fc(A)=\bigcap_i\fc(A^\wedge_{\fp_i})\cap A$.

    It remains to verify that $\fc(A):=\bigcap_i\fc(A^\wedge_{\fp_i})\cap A$ is indeed a strictly functorial $\bP$-assignment;
    by construction it has no embedded prime divisors
    as each $\fc(A^\wedge_{\fp_i})\cap A$ is $\fp_i$-primary.
    It is clear that $V(\fc(A))=\Spec(A)\setminus U_\bP(A)$.
    We have $\fc(A)\neq 0$ as $\fc(A)=A$ when $n=0$, and $\fc(A)_{\fp_1}=\fc(A^\wedge_{\fp_1})\cap A_{\fp_1}\neq 0$ when $n>0$, as $\fc(A^\wedge_{\fp_1})$ is nonzero and $(\fp_1A^\wedge_{\fp_1})$-primary.

    It remains to show for $\varphi:A\to B$ in $\cD^\bQ_\bP$,
    we have $\fc(A)B=\fc(B)$,
    where $\varphi$ is omitted in the notation.
    Let $\fq_{ij}\ (1\leq j\leq m_i)$ be the minimal prime divisors of $\fp_i B$, where $m_i\geq 0$.
    As $\varphi$ is a $\bP$-map, $\Spec(\varphi)^{-1}(U_\bP(A))=U_\bP(B)$,
    so $\fq_{ij}\ (1\leq j\leq m_i,1\leq i\leq n)$
    are exactly the generic points of $\Spec(B)\setminus U_\bP(B)$.
    Moreover, as $\bP$ implies $(S_1)$,
    we see for $\fc_i:=\fc(A^\wedge_{\fp_i})\cap A$,
    $\Ass_{B}(B/\fc_iB)=\{\fq_{ij}\mid 1\leq j\leq m_i\}$.
    Therefore it suffices to show
    $\fc_iB_{\fq_{ij}}=\fc(B^\wedge_{\fq_{ij}})\cap B_{\fq_{ij}}$,
    and as both sides are $\fq_{ij}$-primary, passing to the completion we see it suffices to show $\fc(A^\wedge_{\fp_i})B^\wedge_{\fq_{ij}}=\fc(B^\wedge_{\fq_{ij}})$.
    We know $A^\wedge_{\fp_i},B^\wedge_{\fq_{ij}}\in {^d\cA^\bQ_\bP}$
    for $d:=\HT(\fp_i)=\HT(\fq_{ij})$,
    therefore, as our $\fc(-)$ is strictly functorial on $\cA^\bQ_\bP$,
    it suffices to show $A^\wedge_{\fp_i}\to B^\wedge_{\fq_{ij}}$ is a $\bP$-map.
    By \ref{PGroLocalizes} it suffices to show 
    $\kappa(\fp_i)\to (B/\fp_iB)^\wedge_{\fq_{ij}}$
    is a $\bP$-map.
    This follows from the fact $\varphi:A\to B$ is a $\bP$-map
    and the fact $B/\fp_iB$,
    a quotient of $B\in\cD^\bQ_\bP$,
    is a $\bP$-ring.
\end{proof}

In the next two sections, we will find assignments on $\cA^\bQ_{\bP}$
for $\bQ$ trivial and $\bP$=``$(S_1)$,''
$\bQ$=``$(S_1)$'' and $\bP$=``$(S_2)$,''
and $\bQ$=```$(S_2)$'' and $\bP$=``$(S_k)$'' $(k\geq 3)$, ``Gorenstein,'' and ``lci.''

\section{$(S_k)$-, Cohen--Macaulay-, and Gorenstein-assignments}\label{sec:SkCMGorAssign}

\begin{Lem}\label{lem:S1assignment}
    Let $\bP$=``$(S_1)$.''
    Then $\fc(A)=\Ann_A(H^0_{\fm_A}(A))$
    is a strictly functorial $\bP$-assignment on $\cA_\bP$.
\end{Lem}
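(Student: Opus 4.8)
The plan is to unwind the definition of $\cA_\bP$ and then reduce everything to two standard flat-base-change facts: that $H^0_{(-)}(-)$ commutes with flat base change, and that the annihilator of a \emph{finite} module commutes with flat base change.

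First I would check that $\fc(A)$ is a nonzero ideal with $V(\fc(A))=\Spec(A)\setminus U_\bP(A)$ for every $A\in\cA_\bP$. By definition $A$ is complete local with $U_\bP(A)=\Spec(A)\setminus\{\fm_A\}$; since the non-$(S_1)$-locus of a Noetherian ring is the union of $V(\fq)$ over the embedded primes $\fq$ of $A$, this forces $\fm_A$ to be an embedded prime, so $\fm_A\in\Ass(A)$ and $H^0_{\fm_A}(A)\neq 0$, whence $\fc(A)=\Ann_A(H^0_{\fm_A}(A))$ is a proper ideal of the local ring $A$. As $A$ is Noetherian we have $H^0_{\fm_A}(A)=(0:_A\fm_A^N)$ for $N\gg 0$, so $\fm_A^N\subseteq\fc(A)\subseteq\fm_A$; since a $0$-dimensional local ring is $(S_1)$ we have $\dim A\geq 1$, hence $\fm_A^N\neq 0$ and $\fc(A)\neq 0$, and $\sqrt{\fc(A)}=\fm_A$ gives $V(\fc(A))=\{\fm_A\}=\Spec(A)\setminus U_\bP(A)$.

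Next comes strict functoriality. Let $\varphi:A\to B$ be a morphism of $\cA_\bP$, that is, a flat local map whose closed fiber $B/\fm_A B$ is Artinian; then $\fm_A B\subseteq\fm_B$ and $\fm_B^M\subseteq\fm_A B$ for some $M$, so $(\fm_A B)^N$ is squeezed between $\fm_B^{MN}$ and $\fm_B^N$. Choosing $N$ large enough that $H^0_{\fm_A}(A)=(0:_A\fm_A^N)$ and $H^0_{\fm_B}(B)=(0:_B\fm_B^N)=(0:_B\fm_B^{MN})$, flatness of $\varphi$ gives $H^0_{\fm_A}(A)\otimes_A B=(0:_A\fm_A^N)\otimes_A B=(0:_B\fm_A^N B)=H^0_{\fm_B}(B)$. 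Finally, since $H^0_{\fm_A}(A)$ is a finite $A$-module, say generated by $m_1,\ldots,m_r$, its annihilator is $\bigcap_i\ker\bigl(A\xrightarrow{\cdot m_i}H^0_{\fm_A}(A)\bigr)$; as flat base change commutes with finite intersections and with these kernels, $\fc(A)B=\bigcap_i\ker\bigl(B\xrightarrow{\cdot(m_i\otimes 1)}H^0_{\fm_B}(B)\bigr)=\Ann_B(H^0_{\fm_B}(B))=\fc(B)$, using that the $m_i\otimes 1$ generate $H^0_{\fm_B}(B)$.

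I do not anticipate a genuine obstacle. The one point worth isolating is that it is the annihilator of the finite module $H^0_{\fm_A}(A)$ --- rather than that module itself --- that is compatible with flat base change, and that one must base-change through $\fm_A B$ before replacing it by $\fm_B$; both steps are routine once the Artinian-closed-fiber hypothesis is in hand. Note that the geometrically $(S_1)$ condition on the fibers of $\varphi$ is not needed for the equality $\fc(A)B=\fc(B)$ itself; it only serves to keep $\cA_\bP$ closed under morphisms (so that $\fc(B)$ is defined) and enters later in Lemma~\ref{lem:extendPassignment}.
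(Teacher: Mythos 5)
Your proof is correct and follows essentially the same route as the paper's (which simply asserts that $\fc(A)$ is $\fm_A$-primary and strictly functorial because the closed fibers of morphisms in $\cA_\bP$ are $0$-dimensional, and that $\fc(A)\neq 0$ since Artinian rings are $(S_1)$, forcing $\dim A>0$); you have merely written out the flat-base-change details for $H^0_{\fm_A}(A)$ and its annihilator. No gaps.
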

\begin{proof}
    It is clear that $\fc(A)$
    is $\fm_A$-primary and $\fc(-)$ is strictly functorial,
    as the closed fiber of all maps in $\cA_{\bP}$ have dimension $0$.
    As an Artinian ring is $(S_1)$ we see $^0\cA^\bQ_\bP=\emptyset$,
    so $\dim A>0$ and any $\fm_A$-primary ideal is nonzero.
\end{proof}

\begin{Discu}\label{discu:cofS2}
   Let $\bQ$=``$(S_1)$'' and $\bP$=``$(S_2)$.''
    For $A\in \cA^\bQ_\bP$,
    let $\Sigma_1$ be the set of primary components $Q$ of $0$
    so that $\dim(A/Q)=1$,
    and let $\Sigma_2$ be the set of primary components $Q$ of $0$
    so that $\dim(A/Q)>1$.
    Note that $A$ is $(S_1)$ and not $(S_2)$, so $0$ has no embedded primes and $\dim A>1$, therefore primary components of $0$ are uniquely determined and $\Sigma_2\neq\emptyset$.
    Let
    $A_1=A/\bigcap_{Q\in\Sigma_1}Q,A_2=A/\bigcap_{Q\in\Sigma_2}Q$.
    Let $U_2$ be the punctured spectrum of $A_2$ and let $A_2'=\cO(U_2)$.
    As $\dim(A/Q)>1$ for all $Q\in\Sigma_2$,
    similar to Lemma \ref{lem:naiveS2finiteforcomplete} (cf. \cite[Lemma 2.11]{Macaulay-Cesnavi})
    we have $A_2'$ is finite over $A_2$.
    This gives a finite birational ring map $A\to A_1\times A'_2$.
    Let $\fc(A)$ be the conductor of this map.

    Let $U$ (resp. $U_1$) be the punctured spectrum of $A$ (resp. $A_1$).
    Then we have $U=U_1\sqcup U_2$.
    Therefore $\fc(A)$ is either $\fm_A$-primary or $A$.
    As $U$ is $(S_2)$ we have $U_2$ is $(S_2)$,
so $A_2'$ is $(S_2)$ by \cite[Th\'eor\`eme 5.10.5]{EGA4_2}.
In particular $A\neq A_1\times A_2'$
as $A$ is local and not $(S_2)$,
so $\fc(A)$ is $\fm_A$-primary.

A maximal ideal (resp. minimal prime) of $A_2'$ lies above $\fm_A$ (resp. the radical of an element in $\Sigma_2$),
as $A\to A_2'$ is finite (resp. there exists an element $f\in A_2$ that is a nonzerodivisor on both $A_2$ and $A_2'$ so that $(A_2)_f=(A_2')_f$).
Therefore \citestacks{02IJ} tells us for all $\fM'\in\Max(A_2')$ and $\fP'_0\in\Min(A_2')$ with $\fM'\supseteq\fP'_0$,
we have $\HT(\fM'/\fP'_0)>1$,
in particular $\HT(\fM')>1$.
\end{Discu}
\begin{Lem}\label{lem:S2assignment}
    Let $\bQ$=``$(S_1)$'' and $\bP$=``$(S_2)$.''
    Then $\fc(-)$ as in Discussion \ref{discu:cofS2} is a strictly functorial $\bP$-assignment
    on $\cA_\bP^\bQ$.
\end{Lem}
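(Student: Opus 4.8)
The plan is to reduce everything to strict functoriality. That $\fc(A)$ is a nonzero ideal with $V(\fc(A))=\{\fm_A\}=\Spec(A)\setminus U_\bP(A)$ is already contained in Discussion~\ref{discu:cofS2}: there it is shown that $\fc(A)$ is $\fm_A$-primary, and it is nonzero because $\dim A>1$. So fix a morphism $\varphi\colon A\to B$ of $\cA^\bQ_\bP$, that is, a flat local homomorphism with geometrically $(S_2)$ fibres whose closed fibre is $0$-dimensional; I must prove $\varphi(\fc(A))B=\fc(B)$. Write $\widetilde A:=A_1\times A_2'$ for the finite birational $(S_2)$-extension of $A$ produced in Discussion~\ref{discu:cofS2}, so that $\fc(A)=(A:_A\widetilde A)=\Ann_A(\widetilde A/A)$ with $\widetilde A/A$ a finite $A$-module. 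The strategy is to show that the analogous data for $B$ are obtained from those for $A$ by the flat base change $-\otimes_A B$.

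First I would base-change the primary decomposition of $(0)$. Since $\varphi$ is flat, $(0)=\bigcap_Q Q$ in $A$ gives $(0)=\bigcap_Q QB$ in $B$; for a $\fp$-primary component $Q$ of $(0)$, the flat-base-change formula for associated primes together with the fact that the fibres of $\varphi$ are $(S_1)$ (so $\Ass=\Min$ in each fibre) shows $\Ass_B(B/QB)=\Min_B(B/\fp B)$, and every $\fq\in\Min(B)$ arises in this way for the unique $\fp=\fq\cap A\in\Min(A)$. The crucial --- and hardest --- point is the dimension identity $\dim(B/\fq)=\dim(A/\fp)$ for such a pair. Here $A/\fp\to B/\fp B$ is a flat local homomorphism with $0$-dimensional closed fibre (a base change of $\varphi$), and $A/\fp$ is a complete local domain, hence equidimensional and universally catenary; by Ratliff's theorem (\citestacks{0AW6}) and the dimension formula for flat local homomorphisms --- exactly as in the proof of Lemma~\ref{lem:naiveS2finiteforcomplete}, and alternatively cf.\ \cite[\S 6.1]{EGA4_2} --- the complete local ring $B/\fp B$ is then equidimensional of dimension $\dim(A/\fp)$, and being catenary (a quotient of the excellent ring $B$, \citestacks{07QW}) this yields $\dim(B/\fq)=\dim(A/\fp)$ for each $\fq/\fp B\in\Min(B/\fp B)$. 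Consequently a primary component $Q'$ of $(0)$ in $B$ lies in $\Sigma_1(B)$, resp.\ $\Sigma_2(B)$, exactly when it is a component of $QB$ for some $Q\in\Sigma_1(A)$, resp.\ $\Sigma_2(A)$; so $\bigcap_{\Sigma_1(B)}Q'=\big(\bigcap_{\Sigma_1(A)}Q\big)B$ and likewise for $\Sigma_2$. Hence $B_1=A_1\otimes_A B$ and $B_2=A_2\otimes_A B$, and $A_2\to B_2$ is again flat local with $0$-dimensional closed fibre.

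Next I would base-change the $(S_2)$-ification of the higher-dimensional part. Because $A_2\to B_2$ is flat local with $0$-dimensional closed fibre, $\fm_{B_2}$ is the only prime of $B_2$ lying over $\fm_{A_2}$, so the punctured spectrum $U_2(B)$ of $B_2$ is the preimage of $U_2(A)$, i.e.\ $U_2(B)=U_2(A)\times_{\Spec A_2}\Spec B_2$. As $U_2(A)\hookrightarrow\Spec A_2$ is a quasi-compact open immersion (the complement of a closed point) and $A_2\to B_2$ is flat, flat base change of sections gives $B_2':=\cO(U_2(B))=\cO(U_2(A))\otimes_{A_2}B_2=A_2'\otimes_{A_2}B_2=A_2'\otimes_A B$, which is again finite birational over $B_2$, as it must be. Therefore the finite birational map $B\to B_1\times B_2'$ used to define $\fc(B)$ is obtained from $A\to\widetilde A$ by applying $-\otimes_A B$: that is, $B_1\times B_2'=\widetilde A\otimes_A B$.

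Finally, since annihilators of finite modules commute with flat base change,
\[
\varphi(\fc(A))B=\Ann_A(\widetilde A/A)\cdot B=\Ann_B\big((\widetilde A/A)\otimes_A B\big)=\Ann_B\big((B_1\times B_2')/B\big)=\fc(B),
\]
which is strict functoriality. The main obstacle is the dimension identity $\dim(B/\fq)=\dim(A/\fp)$ in the second paragraph --- that a flat local base change with $0$-dimensional closed fibre over a complete local domain preserves the coheights of minimal primes; granting that, the rest is formal flat-base-change bookkeeping.
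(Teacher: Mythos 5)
Your high-level strategy is the same as the paper's: show that $B_1=A_1\otimes_A B$, $B_2=A_2\otimes_A B$, $B_2'=A_2'\otimes_A B$, and then obtain $\fc(A)B=\fc(B)$ by flat base change of the conductor. The $\Sigma_1$ half, the flat base change of $\cO(U_2)$, and the final annihilator computation are all correct and match the paper.

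The gap is in your justification of the crucial step, the dimension bound for $\Sigma_2$. You assert that $B/\fp B$ is equidimensional of dimension $\dim(A/\fp)$, citing Ratliff's theorem (\citestacks{0AW6}) ``exactly as in the proof of Lemma~\ref{lem:naiveS2finiteforcomplete}.'' But in that lemma Ratliff is applied to a completion $\overline{A}_P\to\overline{A}^\wedge_P$; here $A/\fp\to B/\fp B$ is a flat local map with $0$-dimensional closed fibre that is \emph{not} a completion, and Ratliff's theorem says nothing about it. The underlying claim you need --- ``equidimensionality of the source passes to the target along a flat local map of Noetherian complete local rings with $0$-dimensional closed fibre'' --- is not a standard consequence of Ratliff plus the dimension formula, and neither of your citations establishes it; flatness gives going-down, not going-up, so you cannot lift a maximal chain starting at a prescribed minimal prime $\fq$ of $B$. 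The paper sidesteps this entirely: it only needs $\dim(B/\fq)>1$ for $\fq$ over $\Sigma_2$, and it proves this by base-changing the finite birational $(S_2)$-ification $A_2\to A_2'$. Since $A_2'\otimes_A B$ is $(S_2)$ and universally catenary (hence locally equidimensional by \cite[Corollaire 5.10.9]{EGA4_2}), and every maximal ideal $\fN'$ of $A_2'\otimes_A B$ contracts to a maximal ideal of $A_2'$ of height $>1$ and hence itself has height $>1$ by flatness, the dimension formula \citestacks{02IJ} applied to the finite birational map $A_2\otimes_A B\to A_2'\otimes_A B$ transfers these height bounds to coheights of minimal primes of $A_2\otimes_A B$. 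You need either to reproduce an argument of this shape, or to actually prove the equidimensionality-descent claim you invoked --- the current citation does not support it.
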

\begin{proof}
Again, as an Artinian ring is $(S_2)$ the $\fm_A$-primary ideal $\fc(A)$ is nonzero.
    It remains to show
    for $\varphi:A\to B$ in $\cA^\bQ_\bP$,
    we have $\fc(A)B=\fc(B)$.

    We know $\varphi$ has $(S_1)$ fibers and its closed fiber has dimension $0$.
    For $Q\in\Sigma_1$,
    $B/QB$ is therefore $1$-dimensional and $(S_1)$,
    so all prime divisors $\fP$ of $QB$ are such that $\dim(B/\fP)=1$.
    If we can show for all $Q\in\Sigma_2$ and all prime divisors $\fP$ of $QB$ (which are automatically minimal),
    we have $\dim(B/\fP)>1$,
    then it will follow that $A_1\otimes_A B=B_1$ and 
    $A_2\otimes_A B=B_2$,
    so $A_2'\otimes_A B=B_2'$
    and $\fc(A)B=\fc(B)$.
    
    We have a commutative diagram
    \[
    \begin{CD}
        A_2@>>> A_2'\\
        @VVV @VVV\\
       A_2\otimes_A B@>>> A_2'\otimes_A B
    \end{CD}
    \]
    of rings.
    The ring $A_2'\otimes_A B$ is $(S_2)$ (as $A_2'$ and the fibers of $\varphi$ are) and universally catenary, hence locally equidimensional \cite[Corollaire 5.10.9]{EGA4_2}.
    Let $\fN'\in\Max(A_2'\otimes_A B)$.
    Then $\fN'\cap (A_2\otimes_A B)$ is the maximal ideal $\fN$ of the local ring $A_2\otimes_A B$,
    so $\fN'\cap A_2$ is the maximal ideal of $A_2$,
    so $\fN'\cap A_2'\in\Max(A_2')$.
    %As the closed fiber of $\varphi$ has dimension $0$
    %we see
    By flatness $\HT(\fN')\geq \HT(\fN'\cap A_2')>1$.
    As $\fN'$ was arbitrary, a similar discussion as the case of $A_2$ tells us for
    all $\fQ_0\in\Min(A_2\otimes_A B)$
   % with $\fN\supseteq\fQ_0$,
    we have $\dim((A_2\otimes_A B)/\fQ_0)>1$,
    as desired.
\end{proof}

\begin{Discu}\label{discu:dualcplxofS2}
    Let $A$ be an $(S_2)$ Noetherian local ring that admits a normalized dualizing complex $\omega$.
    Then $A$ is catenary \citestacks{0A80} %\cite[p. 287]{Residues-Duality} 
    and  $(S_2)$,
    so $A$ is equidimensional \cite[Corollaire 5.10.9]{EGA4_2}.
    We will use the standard facts \citetwostacks{0A7U}{0A7V} of dualizing complexes without explicit reference.
    
    We know $\omega\in D^{[-d,-p]}(A)$, where $d=\dim A, p=\depth A$, $H^{-p}(\omega)\neq 0$,
    and $\Supp(H^{-d}(\omega))=\Spec(A)$, as $A$ is equidimensional.
\end{Discu}

\begin{Discu}\label{discu:cofSk}
   Let $\bQ$=``$(S_2)$'' and $\bP$=``$(S_k)$,'' where $k\geq 3$.
   Let $A\in {^d\cA^\bQ_\bP}$,
   and let $p=\depth A$.
   
Let $\omega$ be a normalized dualizing complex of the complete local ring $A$.
%so $\omega\in D^{[-d,-p]}(A)$ and the support of $H^{-d}(\omega)$ is $\Spec(A)$. % \citetwostacks{0A7U}{0A7V}.
Let $\fp\in\Spec(A)$ be of height $d-1$.
Then $\omega_\fp\in D^{\geq -d}(A_\fp)$ and $H^{-d}(\omega_\fp)\neq 0$.
This tells us for all $b>-1-\depth A_\fp$,
we have $H^b(\omega_\fp)=0$.
%, see %\citestacks{0A7U}.
As $A_\fp$ is $(S_k)$ we have $\depth A_\fp\geq\min\{d-1,k\}$,
so for all $b>-1-\min\{d-1,k\}$,
$H^b(\omega_\fp)=0$.
As $A$ is not $(S_k)$,
$p<\min\{d,k\}$,
so $-p>-\min\{d,k\}\geq -1-\min\{d-1,k\}$.
This tells us $H^{-p}(\omega_\fp)=0$,
in other words, the support of the nonzero module $H^{-p}(\omega)$ is $\{\fm_A\}$.
%We also have $\omega$
By local and Matlis duality \citetwostacks{0A84}{08Z9} we see $H^p_{\fm_A}(A)$ is nonzero and of finite length.
   \end{Discu}

   \begin{Lem}\label{lem:Skassignment}
           Let $\bQ$=``$(S_2)$'' and  $\bP$=``$(S_k)$'' $(k\geq 3)$.
    Then $\fc(A)=\Ann_A(H^{\depth A}_{\fm_A}(A))$
    is a strictly functorial $\bP$-assignment on $\cA^\bQ_\bP$.
   \end{Lem}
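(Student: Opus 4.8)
The plan is to check that $\fc(A)=\Ann_A(H^{\depth A}_{\fm_A}(A))$ satisfies the three requirements for a strictly functorial $\bP$-assignment on $\cA^\bQ_\bP$: that $\fc(A)$ is nonzero, that $V(\fc(A))=\Spec(A)\setminus U_\bP(A)$, and that $\varphi(\fc(A))B=\fc(B)$ for every morphism $\varphi\colon A\to B$ of $\cA^\bQ_\bP$. The first two are immediate from Discussion \ref{discu:cofSk}, which shows that, with $p=\depth A$, the module $H^p_{\fm_A}(A)$ is nonzero and of finite length. Hence $\fc(A)$ is a proper ideal whose radical is $\fm_A$, so $V(\fc(A))=\{\fm_A\}$; and this equals $\Spec(A)\setminus U_\bP(A)$ because the $\bP$-locus of an object of $\cA^\bQ_\bP$ is by definition the punctured spectrum. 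Moreover $\dim A>0$ (an Artinian local ring is $(S_k)$, so $\cA^\bQ_\bP$ contains no object of dimension $0$), so the $\fm_A$-primary ideal $\fc(A)$ is nonzero.

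For strict functoriality, let $\varphi\colon A\to B$ be a morphism in $\cA^\bQ_\bP$, that is, a flat local homomorphism whose closed fiber $B/\fm_AB$ has dimension $0$. First I would record the consequences of the closed fiber being Artinian: $\fm_AB$ is $\fm_B$-primary, and, by the depth formula $\depth B=\depth A+\depth(B/\fm_AB)$ for flat local homomorphisms, $\depth B=p$. Then, since local cohomology commutes with the flat base change $A\to B$ and since $H^i_{\fm_AB}(-)=H^i_{\fm_B}(-)$ (because $\sqrt{\fm_AB}=\fm_B$), one obtains a canonical identification $H^p_{\fm_B}(B)=H^p_{\fm_A}(A)\otimes_A B$.

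It remains to see that the annihilator of a finite module is compatible with the flat base change $\varphi$: writing $M=H^p_{\fm_A}(A)$, which is finite since it has finite length, and choosing generators $m_1,\dots,m_n$ of $M$, we have $\Ann_A(M)=\ker\big(A\to M^{\oplus n},\ a\mapsto(am_1,\dots,am_n)\big)$, so applying the exact functor $-\otimes_A B$ and using that $m_1\otimes1,\dots,m_n\otimes1$ generate $M\otimes_A B$ gives $\Ann_A(M)B=\Ann_B(M\otimes_A B)$. Chaining the identities yields
\[
\varphi(\fc(A))B=\Ann_A(M)B=\Ann_B(M\otimes_A B)=\Ann_B\big(H^p_{\fm_B}(B)\big)=\fc(B),
\]
as required. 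I do not anticipate a genuine obstacle: the only substantive input is the finiteness of $H^p_{\fm_A}(A)$, already in place from Discussion \ref{discu:cofSk}, and the rest is standard commutative algebra (depth under flat local maps, flat base change for local cohomology, base change of annihilators of finite modules); the one point worth double-checking is that $\depth B=\depth A$, so that the same cohomological degree governs both sides.
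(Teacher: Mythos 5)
Your proposal is correct and follows essentially the same route as the paper: the paper likewise derives $\fm_A$-primariness and nonvanishing of $\fc(A)$ from Discussion \ref{discu:cofSk} (plus the fact that an Artinian ring is $(S_k)$, so $\dim A>0$), and obtains functoriality from $\depth A=\depth B$ for morphisms with $0$-dimensional closed fiber. You merely spell out the details the paper leaves implicit in ``This shows $\fc(A)B=\fc(B)$,'' namely flat base change for local cohomology with $\sqrt{\fm_AB}=\fm_B$ and compatibility of annihilators of finite modules with flat base change, all of which are correct.
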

   \begin{proof}
       By Discussion \ref{discu:cofSk} %we see 
       $H^{\depth A}_{\fm_A}(A)$ is of finite length,
       so 
       $\fc(A)$ is $\fm_A$-primary.
       Again, as an Artinian ring is $(S_k)$ we see $\fc(A)\neq 0$.
       %Its construction commutes with base change 
       For all $\varphi:A\to B$ in $\cA^\bQ_\bP$,
       the closed fiber of $\varphi$ has dimension $0$,
       so we have $\depth A=\depth B$ \citestacks{0337}.
       This shows $\fc(A)B=\fc(B)$.
      % As an Artinian ring is $(S_k)$ we see $\dim A>0$,
       %so the $\fm_A$-primary ideal $\fc(A)$
   \end{proof}

\begin{Rem}\label{rem:CMassignment}
    It follows formally that for $\bQ$=``$(S_2)$'' and  $\bP$=``Cohen--Macaulay'' we have a strictly functorial $\bP$-assignment on $\cA^\bQ_\bP$.
    Indeed, $^d\cA^\bQ_\bP=\emptyset$ for $d\leq 2$,
    and for $d>2$ and $A\in {^d\cA^\bQ_\bP}$ we let $\fc(A)$  be as in Lemma \ref{lem:Skassignment} for $k=d$.
    It also happens that for all $d$,
    the formula for $\fc(A)$ is the same,
    $\fc(A)=\Ann_A(H^{\depth A}_{\fm_A}(A))$.
\end{Rem}

Basics about Fitting ideals of a finite module can be found in \citestacks{07Z6} and \cite[Chapter 20]{Eisenbud-CA}.
The \emph{Fitting invariant} of a finite module $M$ over a Noetherian ring $A$ %, denoted $I(M)$,
is the first nonzero Fitting ideal of $M$.
$M$ is projective of constant rank  if and only if the Fitting invariant of $M$ is $A$,
see \citestacks{07ZD}.
%\cite[Proposition 20.8]{Eisenbud-CA}.

\begin{Discu}\label{discu:cofGor}
   Let $\bQ$=``$(S_2)$'' and $\bP$=``Gorenstein.''
   Let $A\in {^d\cA^\bQ_\bP}$.

Let $\omega$ be a normalized dualizing complex of the complete local ring $A$.
   When $d=0$, we let $\fc(A)$ be the Fitting invariant of the module $H^0(\omega)$,
   which is nonzero by definition,
   and is $\fm_A$-primary as $\dim A=0$ and as $\omega$ is not free.
   When $d>0$,
   let $\fc(A)=\Fit_1(H^{-d}(\omega))\cap\Ann_A(H^{1-d}(\omega))\cap\ldots\cap\Ann_A(H^{0}(\omega))$.
   As the punctured spectrum of $A$ is Gorenstein and as $\Supp(H^{-d}(\omega))=\Spec(A)$ (Discussion \ref{discu:dualcplxofS2}),
   we see $\fc(A)$ is $\fm_A$-primary,
   and therefore nonzero as $\dim A>0$.
\end{Discu}

\begin{Lem}\label{lem:Gorassignment}
       Let $\bQ$=``$(S_2)$'' and $\bP$=``Gorenstein.''
       Then $\fc(-)$ as in Discussion \ref{discu:cofGor} is a strictly functorial $\bP$-assignment on $\cA^\bQ_\bP$.
\end{Lem}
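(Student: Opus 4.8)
The plan is to reduce the statement to the compatibility of Fitting ideals, annihilators of finite modules, and finite intersections of ideals with flat base change, once the behaviour of normalized dualizing complexes under the morphisms of $\cA^\bQ_\bP$ is pinned down. Discussion \ref{discu:cofGor} already establishes, for every $A\in\cA^\bQ_\bP$, that $\fc(A)$ is a nonzero $\fm_A$-primary ideal with $V(\fc(A))=\Spec(A)\setminus U_\bP(A)=\{\fm_A\}$; so the only remaining point is strict functoriality, namely $\fc(A)B=\fc(B)$ for every morphism $\varphi\colon A\to B$ of $\cA^\bQ_\bP$.

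First I would record what such a $\varphi$ is: a local, hence faithfully flat, homomorphism with geometrically Gorenstein fibres whose closed fibre $B/\fm_AB$ is Artinian. In particular $\dim B=\dim A=d$ and $\depth B=\depth A$, by \citestacks{0337}, so $A$ and $B$ lie in the same ${}^d\cA^\bQ_\bP$, and $B$ is a complete local ring carrying a normalized dualizing complex. Let $\omega_A$ and $\omega_B$ be normalized dualizing complexes of $A$ and $B$. The key step is the identification $\omega_B\simeq\omega_A\otimes_AB$, where the tensor is ordinary but coincides with the derived tensor since $B$ is $A$-flat. That $\omega_A\otimes_A^{\mathbf{L}}B$ is a dualizing complex of $B$ is the flat base-change property of dualizing complexes, available because the closed fibre $B/\fm_AB$ is Gorenstein; and it is already normalized because $B/\fm_AB$ is Artinian Gorenstein: flat base change for $R\operatorname{Hom}$ gives $R\operatorname{Hom}_B\bigl(B/\fm_AB,\ \omega_A\otimes_AB\bigr)\simeq R\operatorname{Hom}_A\bigl(\kappa(\fm_A),\omega_A\bigr)\otimes_AB\simeq B/\fm_AB$ in degree $0$, whence by change of rings $R\operatorname{Hom}_B\bigl(\kappa(\fm_B),\omega_A\otimes_AB\bigr)\simeq R\operatorname{Hom}_{B/\fm_AB}\bigl(\kappa(\fm_B),B/\fm_AB\bigr)\simeq\kappa(\fm_B)$ in degree $0$. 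Since $B$ is $A$-flat, taking cohomology then gives $H^i(\omega_B)\cong H^i(\omega_A)\otimes_AB$ for every $i$.

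The rest is bookkeeping with the faithfully flat map $\varphi$. If $d=0$, then $\fc(A)$ is the first nonzero Fitting ideal of $H^0(\omega_A)$; Fitting ideals commute with arbitrary base change, so $\Fit_j\bigl(H^0(\omega_A)\bigr)B=\Fit_j\bigl(H^0(\omega_B)\bigr)$ for all $j$, and faithful flatness gives $\Fit_j\bigl(H^0(\omega_A)\bigr)=0$ if and only if $\Fit_j\bigl(H^0(\omega_A)\bigr)B=0$; hence the first nonzero Fitting ideal of $H^0(\omega_B)$ equals $\fc(A)B$, that is $\fc(B)=\fc(A)B$. If $d>0$, then
\[
\fc(A)=\Fit_1\bigl(H^{-d}(\omega_A)\bigr)\cap\Ann_A\bigl(H^{1-d}(\omega_A)\bigr)\cap\cdots\cap\Ann_A\bigl(H^{0}(\omega_A)\bigr),
\]
and we use that Fitting ideals commute with base change, that annihilators of finite modules commute with flat base change ($\Ann_A(M)B=\Ann_B(M\otimes_AB)$ for $B$ flat over $A$ and $M$ finite), and that finite intersections of ideals commute with flat base change. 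Applying these to the identifications $H^i(\omega_B)\cong H^i(\omega_A)\otimes_AB$, each factor of $\fc(A)$ extends along $\varphi$ to the corresponding factor of $\fc(B)$, so $\fc(A)B=\fc(B)$.

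The one step that is not routine is the identification $\omega_B\simeq\omega_A\otimes_AB$ together with its normalization; I expect this to be the main obstacle, and it should be handled either by invoking the standard flat base-change theorem for dualizing complexes (in the spirit of the facts used without explicit reference in Discussion \ref{discu:dualcplxofS2}) or by the short $R\operatorname{Hom}$ computation sketched above, which only uses that $B/\fm_AB$ is an Artinian Gorenstein local ring.
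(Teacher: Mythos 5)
Your proposal is correct and follows essentially the same route as the paper: both proofs reduce strict functoriality to the facts that Fitting ideals (and, for $d>0$, annihilators of finite modules and finite intersections of ideals) commute with flat base change, together with the key identification $\omega_B\simeq\omega_A\otimes^L_AB$ as normalized dualizing complexes along any $\varphi\colon A\to B$ in $\cA^\bQ_\bP$. The one difference is that the paper simply cites \cite[Lemma 7.1]{Lyu-dual-complex-lift} for that identification, whereas you re-derive it directly (flat base change plus the $R\operatorname{Hom}$ computation using that $B/\fm_AB$ is Artinian Gorenstein); your derivation is valid and makes the argument self-contained.
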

\begin{proof}
We have seen $\fc(A)$ is $\fm_A$-primary and nonzero.
    Strict functoriality follows immediately from the fact Fitting ideals commute with base change
    \citestacks{07ZA}
    and that for $\varphi:A\to B$ in $\cA^\bQ_\bP$
    and a normalized dualizing complex $\omega$ of $A$,
    $\omega\otimes^L_A B$ is a normalized dualizing complex of $B$.
    See for example \cite[Lemma 7.1]{Lyu-dual-complex-lift}, note $\dim A=\dim B$.
\end{proof}

\section{A lci-assignment}\label{sec:lciAssign}
 Let $\bQ$=``$(S_2)$'' and $\bP$=``lci.''
 Let $A\in \cA^\bQ_\bP$.
Intuitively, we want to define $\fc(A)$ to be the Fitting invariant of modules $C_n(A/R)$ showing up in \cite{Briggs-Iyenger-Cotangent-Complex},
where $R$ is a regular local ring mapping surjectively to $A$.
The flatness of $C_n(A/R)$ characterizes lci.
However, these modules depend on the choice of $R$ and a projective resolution (in a way that does not change the Fitting invariant, however), 
and are fragile along ascent (\emph{i.e.} still involve non-finite modules).
We will work with $C_n(A/\bZ)$ instead,
which gives the same Fitting invariant.
We use standard notations for derived categories, 
and cohomological conventions for cotangent complexes,
as in \cite{stacks}.

\begin{Discu}\label{discu:modulesC}
    Let $A$ be a ring, and let $L\in D^{-}(A)$.
    For every bounded above complex of projectives $P^\bullet$ that represents $L$,
    we consider the module $C^a(P^\bullet)=H^a(\sigma_{\leq a}P^\bullet)$,
    where $\sigma_{\leq a}$ is the stupid truncation \citestacks{0118}.
    In other words, $C^a(P^\bullet)$ is the cokernel of the map $P^{a-1}\to P^a$,
    which is the module appearing at degree $a$ in $\tau_{\geq a}(P^\bullet)$.
    There is an obvious compatibility with shift and base change.

    The collection of all such $C^a(P^\bullet)$ is denoted $\cC^a(L)$.
    For $X,Y\in\cC^a(L)$,
    there exist projective modules $P,Q$ so that $X\oplus P\cong Y\oplus Q$, see \cite[(7.2)]{Briggs-Iyenger-Cotangent-Complex}.
    We write $C^a(L)$ for an unspecified element in $\cC^a(L)$.
    The flat and projective dimensions of $C^a(L)$ are well-defined.

    If $L$ has tor-amplitude in $[a,b]$,
    then $C^a(L)$ is flat.
    Indeed, let $P^\bullet$ represent $L$,
    %with $P^{>b}=0$,
    then $\tau_{\geq a}(P^\bullet\otimes_A M)$ represents $L\otimes_A^L M$ for all $A$-modules $M$,
    as $L\otimes_A^L M\in D^{[a,b]}(A)$.
    Unwinding the definitions,
    we see $C^a(P^\bullet)\otimes_R M=C^a(P^\bullet)\otimes^L_R M$,
    as desired.
    This also tells us
    if $C^a(L)$ has projective dimension $p<\infty$,
    then $L$ has projective-amplitude in $[a-p,b]$.

    If $L$ has projective-amplitude in $[a,b]$,
    then $C^a(L)$ is projective.
    This is because we can take $P^\bullet$ with $P^{m}=0$ for $m<a$,
    so $C^a(P^\bullet)=P^a$.
    \end{Discu}
    \begin{Discu}\label{discu:TrianglesandC}
    Let $L'\to L\to L''\to +1$ be a distinguished triangle.
    Given representations $P'^\bullet$ of $L'$ and  $P''^\bullet$ of $L''$,
    we can find a representation $P^\bullet$ of $L$ so that the triangle is realized by a short exact sequence of complexes $P'^\bullet\hookrightarrow P^\bullet\twoheadrightarrow P''^\bullet$.
    Indeed, $P^\bullet$ is the cone of any map $P''^\bullet\to P^{\bullet}[1]$ representing $L''\to L[1]$.
    Truncating, we get an exact sequence
    \[
    \begin{CD}
        H^{a-1}(L'')@>>> C^a(P'^\bullet)@>>> C^a(P^\bullet)@>>> C^a(P''^\bullet)@>>> 0.
    \end{CD}
    \]
\end{Discu}

\begin{Lem}\label{lem:StructureofCotangentModule}
    Let $R$ be a Noetherian lci ring and let $A$ be a finitely generated $R$-algebra.
    %with $\dim A<\infty$.
    Let $a\in\bZ,a<-\dim A-1$.
    Then there exist projective modules
    $P,Q$ and a finite module $M$ so that
    $C^a(L_{A/\bZ})\oplus P\cong M\oplus Q$.
\end{Lem}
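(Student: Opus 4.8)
The plan is to exploit the transitivity triangle for $\bZ\to R\to A$ in order to compare the (non‑finitely generated) module $C^a(L_{A/\bZ})$ with the corresponding module for $L_{A/R}$, whose cohomology is coherent. We may assume $\dim A<\infty$, since otherwise the condition $a<-\dim A-1$ is vacuous and there is nothing to prove; write $d=\dim A$. The input is the fundamental distinguished triangle
\[
L_{R/\bZ}\otimes^L_R A\to L_{A/\bZ}\to L_{A/R}\to (L_{R/\bZ}\otimes^L_R A)[1].
\]
The idea is that both outer terms admit particularly well‑behaved bounded‑above representatives by complexes of projectives, so that Discussions \ref{discu:modulesC} and \ref{discu:TrianglesandC} reduce the computation of $C^a(L_{A/\bZ})$, in the relevant range of $a$, to a manifestly finite module.

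For the left term I would use that $R$ is lci. By the characterization of lci rings through the cotangent complex — a consequence of Quillen's conjecture (Avramov, cf. \cite{avramov-ci}, \cite{Briggs-Iyenger-Cotangent-Complex}) together with a Cohen‑factorization argument describing complete intersection homomorphisms out of the regular ring $\bZ$ — the complex $L_{R/\bZ}$ has tor‑amplitude in $[-1,0]$; this is the key external fact. Hence $L_{R/\bZ}$ is quasi‑isomorphic to a two‑term complex $[F^{-1}\to F^0]$ of flat $R$‑modules, so $L_{R/\bZ}\otimes^L_R A\simeq[F^{-1}\otimes_R A\to F^0\otimes_R A]$ is a two‑term complex of flat $A$‑modules in cohomological degrees $-1,0$. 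By the theorem of Raynaud--Gruson that a flat module over a Noetherian ring of Krull dimension $d$ has projective dimension $\le d$, resolving each of the two flat terms by projectives produces a bounded‑above complex of projectives $P'^\bullet$ representing $L_{R/\bZ}\otimes^L_R A$ with $P'^m=0$ for all $m<-1-d$. In particular $P'^a=0$, hence $C^a(P'^\bullet)=0$, for every $a<-1-d=-\dim A-1$.

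For the right term, since $R$ is Noetherian and $R\to A$ is of finite type, $L_{A/R}$ is pseudo‑coherent and hence representable by a bounded‑above complex $P''^\bullet$ of finite free $A$‑modules, so that $C^a(P''^\bullet)$ is a finite $A$‑module for every $a$. Applying Discussion \ref{discu:TrianglesandC} to the triangle above with these two representatives, I obtain a representative $P^\bullet$ of $L_{A/\bZ}$ (a bounded‑above complex of projectives, with $P^\bullet=P'^\bullet\oplus P''^\bullet$ degreewise) together with the exact sequence
\[
H^{a-1}(L_{A/R})\to C^a(P'^\bullet)\to C^a(P^\bullet)\to C^a(P''^\bullet)\to 0 .
\]
For $a<-\dim A-1$ the term $C^a(P'^\bullet)$ vanishes, so $C^a(P^\bullet)\cong C^a(P''^\bullet)=:M$ is finite. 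Finally, since by Discussion \ref{discu:modulesC} any two elements of $\cC^a(L_{A/\bZ})$ differ by projective direct summands, we conclude $C^a(L_{A/\bZ})\oplus P\cong M\oplus Q$ for suitable projectives $P,Q$, which is the assertion.

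I expect the main obstacle to lie not in the (formal) bookkeeping above but in the two external inputs: first, that an lci Noetherian ring $R$ has $L_{R/\bZ}$ of tor‑amplitude in $[-1,0]$ — the substantial ingredient being Quillen's conjecture, supplemented by the passage from the essentially‑finite‑type case to the absolute map $\bZ\to R$; and second, the Raynaud--Gruson bound on projective dimensions of flat modules over a finite‑dimensional Noetherian ring, which is precisely what pins the cutoff degree at $-\dim A-1$ rather than at something a priori unbounded. With these in hand the remainder is routine manipulation of the stupid truncations set up in \S\ref{sec:lciAssign}.
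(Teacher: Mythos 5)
Your proof is correct and follows essentially the same route as the paper's. The only cosmetic difference is that you apply Discussion \ref{discu:TrianglesandC} to the transitivity triangle $L_{R/\bZ}\otimes^L_R A\to L_{A/\bZ}\to L_{A/R}\to+1$ in its original form and explicitly choose a representative of the left term vanishing below degree $-\dim A-1$, whereas the paper rotates the triangle to $L_{A/\bZ}\to L_{A/R}\to(L_{R/\bZ}\otimes^L_R A)[1]\to+1$ and reads off that the boundary term $H^a(L_{R/\bZ}\otimes^L_R A)$ vanishes and the cokernel term $C^{a+1}(L_{R/\bZ}\otimes^L_R A)$ is projective; both routes rest on the same two inputs, the tor-amplitude $[-1,0]$ of $L_{R/\bZ}$ for lci $R$ (Avramov) and the Raynaud--Gruson bound on projective dimension of flat modules.
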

\begin{proof}
    Apply Discussion \ref{discu:TrianglesandC} to the triangle 
    \[
    \begin{CD}
        %L_{R/\bZ}\otimes^L_R A@>>> 
        L_{A/\bZ}@>>> L_{A/R}@>>> (L_{R/\bZ}\otimes^L_R A)[1]@>>> +1,
    \end{CD}
    \]
    we get an exact sequence
    \[
    \begin{CD}
        H@>>> C^a(L_{A/\bZ})@>>> C^a(L_{A/R})@>>> P@>>> 0.
    \end{CD}
    \]
    where $P=C^{a+1}(L_{R/\bZ}\otimes^L_R A)$,
    $H=H^{a}(L_{R/\bZ}\otimes^L_R A)$.
    Since $R$ is lci, $L_{R/\bZ}$ has tor-amplitude in $[-1,0]$ \cite[(1.2) and (5.1)]{avramov-ci},
    so $H=0$.
    Moreover, every flat $A$-module  has projective dimension $\leq \dim A$ \cite[Seconde partie, Corollaire 3.2.7]{Raynaud}.
    Therefore $L_{R/\bZ}$ has projective-amplitude in $[-\dim A-1,0]$,
    so $P$ is projective,
    and we get $C^a(L_{A/\bZ})\oplus P\cong C^a(L_{A/R})$.
    It remains to observe  $C^a(L_{A/R})$ is finite up to projective summands,
    as $L_{A/R}\in D_{Coh}(A)$
    \citestacks{08PZ}.
\end{proof}
\begin{Rem}\label{rem:ImproveD+1to2}
    If $A$ is countable,
    then we can improve $-\dim A-1$ to $-2$,
    see \cite[Seconde partie, Corollaire 3.3.2]{Raynaud}.
    We could, if necessary, work extensively with countable rings, 
    via a L\"owenheim--Skolem type argument, cf. \cite{Lyu-elementary-subring}.
\end{Rem}

We would like to define the Fitting invariant of $C^a(L_{A/\bZ})$ to be that of $M$;
we will show this is well-defined.
Before that, note the following variant of the main theorem of \cite{Briggs-Iyenger-Cotangent-Complex}.
\begin{Thm}\label{thm:lciandCotangentModule}
    Let $R$ be a Noetherian lci ring and let $A$ be a finitely generated $R$-algebra of finite tor dimension as an $R$-module.
    %with $\dim A<\infty$.
    Let $a\in\bZ,a<-1$.
    Then $A$ is lci if and only if $C^a(L_{A/\bZ})$ is flat.
\end{Thm}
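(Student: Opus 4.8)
The statement to prove is Theorem~\ref{thm:lciandCotangentModule}: for $R$ a Noetherian lci ring and $A$ a finitely generated $R$-algebra of finite tor dimension over $R$, and $a<-1$, we have $A$ lci iff $C^a(L_{A/\bZ})$ is flat. The plan is to reduce to the original theorem of Briggs--Iyengar \cite{Briggs-Iyenger-Cotangent-Complex}, whose content is the analogous statement with $L_{A/R}$ in place of $L_{A/\bZ}$ and $C_a$ the cokernel module in a resolution over $R$. First I would recall the triangle $L_{A/\bZ}\to L_{A/R}\to (L_{R/\bZ}\otimes^L_R A)[1]\to{+1}$ and, exactly as in the proof of Lemma~\ref{lem:StructureofCotangentModule}, apply Discussion~\ref{discu:TrianglesandC} to get the four-term exact sequence
\[
\begin{CD}
H^{a}(L_{R/\bZ}\otimes^L_R A)@>>> C^a(L_{A/\bZ})@>>> C^a(L_{A/R})@>>> C^{a+1}(L_{R/\bZ}\otimes^L_R A)@>>> 0.
\end{CD}
\]
Since $R$ is lci, $L_{R/\bZ}$ has tor-amplitude in $[-1,0]$ by \cite[(1.2) and (5.1)]{avramov-ci}, so $L_{R/\bZ}\otimes^L_R A$ has tor-amplitude in $[-1,0]$ as well; hence for $a<-1$ the left term vanishes, and moreover $C^{a+1}(L_{R/\bZ}\otimes^L_R A)$ is flat by the tor-amplitude criterion in Discussion~\ref{discu:modulesC}. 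Thus the sequence exhibits $C^a(L_{A/R})$ as an extension of a flat module by $C^a(L_{A/\bZ})$, i.e.\ $C^a(L_{A/R})\in\cC^a(L_{A/\bZ}\oplus(\text{flat}))$ up to the ambiguity inherent in the notation $C^a(-)$.

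The second step is to observe that flatness is the right invariant to transport across this. On the one hand, if $C^a(L_{A/\bZ})$ is flat, then since $C^{a+1}(L_{R/\bZ}\otimes^L_R A)$ is flat the displayed sequence (with zero left term) shows $C^a(L_{A/R})$ is flat, being an extension of flats. On the other hand, if $C^a(L_{A/R})$ is flat, then localizing the sequence at any prime $\fp$ and using that a flat module over a Noetherian local ring is free (or rather, working with the well-definedness up to projective summands from \cite[(7.2)]{Briggs-Iyenger-Cotangent-Complex}), $C^a(L_{A/\bZ})$ is a direct summand of a flat module after adding a projective, hence flat. Either way, $C^a(L_{A/\bZ})$ is flat if and only if $C^a(L_{A/R})$ is flat. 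Here I would use the hypothesis that $A$ has finite tor dimension over $R$: this guarantees, via the conormal/cotangent triangle $L_{R/\bZ}\otimes^L_R A\to L_{A/\bZ}\to L_{A/R}\to{+1}$ and the fact that $L_{A/R}$ already has finite tor-amplitude-related structure, that the objects in play are genuinely bounded where needed; more precisely, finite tor dimension of $A$ over $R$ is what makes the Briggs--Iyengar theorem applicable in the "relative over $R$" formulation with a choice of finite free resolution of $A$ over $R$. Then I would invoke \cite{Briggs-Iyenger-Cotangent-Complex} directly: $A$ is lci over $\bZ$ (equivalently, $A$ is lci as a ring, since lci is an absolute notion here) if and only if $C^a(L_{A/R})$ is flat for one, equivalently all, $a<-1$.

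Finally I would assemble: $A$ lci $\iff$ $C^a(L_{A/R})$ flat (Briggs--Iyengar, using $\operatorname{tor\,dim}_R A<\infty$) $\iff$ $C^a(L_{A/\bZ})$ flat (by the extension argument above). One subtlety to spell out carefully: the module $C^a(L_{A/R})$ in the Briggs--Iyengar paper is defined via a resolution of $A$ as an $R$-module, i.e.\ it is a module of finite type, while $C^a(L_{A/\bZ})$ is only "finite up to projective summands"; I need the invariance statement \cite[(7.2)]{Briggs-Iyenger-Cotangent-Complex} to know that flatness of $C^a(L_{A/R})$ as computed from the derived category object $L_{A/R}\in D^-(A)$ matches the module-level definition in that paper, and that the stable-isomorphism ambiguity does not affect flatness.

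The main obstacle. The genuinely delicate point is matching the two frameworks: the absolute cotangent complex $L_{A/\bZ}$ is not perfect and $C^a(L_{A/\bZ})$ is not finite, whereas the Briggs--Iyengar criterion is phrased for a finite free resolution over a regular (or lci) base. The bridge is precisely the flatness of the "error term" $C^{a+1}(L_{R/\bZ}\otimes^L_R A)$, which rests on $R$ being lci (tor-amplitude of $L_{R/\bZ}$ in $[-1,0]$). Getting the bookkeeping of the stable-isomorphism classes $\cC^a(-)$ right, so that "flat" is unambiguously a property of $C^a(L_{A/\bZ})$ and is detected correctly by the exact sequence, is the part that needs care rather than new ideas; everything else is a direct consequence of Lemma~\ref{lem:StructureofCotangentModule}'s proof technique together with the cited theorem.
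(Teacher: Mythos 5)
Your proposal follows essentially the same route as the paper: the same triangle and four-term exact sequence from the proof of Lemma~\ref{lem:StructureofCotangentModule}, the vanishing of $H^a(L_{R/\bZ}\otimes^L_R A)$ and flatness of $C^{a+1}(L_{R/\bZ}\otimes^L_R A)$ from the tor-amplitude of $L_{R/\bZ}$ in $[-1,0]$, and then reduction to Briggs--Iyengar for $C^a(L_{A/R})$. Two points need repair, though neither changes the architecture. First, your argument for the direction ``$C^a(L_{A/R})$ flat $\Rightarrow$ $C^a(L_{A/\bZ})$ flat'' is faulty as written: a flat module over a Noetherian local ring need not be free (e.g.\ the fraction field of a DVR), and since the quotient $P=C^{a+1}(L_{R/\bZ}\otimes^L_R A)$ is only known to be flat, not projective, the short exact sequence $0\to C^a(L_{A/\bZ})\to C^a(L_{A/R})\to P\to 0$ has no reason to split, so the ``direct summand after adding a projective'' claim is unjustified. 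The fix is the standard one: flatness of $P$ gives $\Tor_1^A(C^a(L_{A/\bZ}),M)\cong\Tor_1^A(C^a(L_{A/R}),M)$ for all $M$, so the two modules are flat simultaneously; this is exactly what the paper's one-line ``thus'' amounts to. Second, Briggs--Iyengar's theorem characterizes when the \emph{homomorphism} $R\to A$ is complete intersection (this is where finite tor dimension enters); the passage between ``$R\to A$ is lci'' and ``$A$ is an lci ring'' is not automatic and uses that $R$ itself is lci, via Avramov's ascent/descent results, which the paper cites as \cite[(5.4) and (5.9)]{avramov-ci}. Your parenthetical ``lci is an absolute notion here'' sweeps this step into the Briggs--Iyengar citation; you should separate it out and cite (or prove) the transfer explicitly.
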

\begin{proof}
    Take the same exact sequence as in the proof of Lemma \ref{lem:StructureofCotangentModule}.
    We have $H=0$ and $P$ is flat.
    Thus $C^a(L_{A/\bZ})$ is flat if and only $C^a(L_{A/R})$ is flat,
    if and only if $R\to A$ is lci (\cite[Theorem B]{Briggs-Iyenger-Cotangent-Complex} and \cite[(1.2)]{avramov-ci}),
    if and only if $A$ is lci \cite[(5.4) and (5.9)]{avramov-ci}.
\end{proof}

\begin{Def}\label{def:Fitting}
    Let $A$ be a Noetherian local ring.
    We say an $A$-module $X$ is \emph{finite-by-flat} if there exists a finite submodule $M$ of $X$ such that $X/M$ is flat.
The \emph{Fitting invariant of $X$} is defined to be the Fitting invariant of $M$.
    
    We say $X$ is \emph{pseudo-finite-by-flat}
    if there exists a flat $A$-module $C$ so that $X\oplus C$ is finite-by-flat.
    The \emph{Fitting invariant of $X$} is defined to be the Fitting invariant of $X\oplus C$.

    As soon as these invariants are well-defined,
    they are clearly compatible with each other and the Fitting invariant of finite modules,
    as a finite flat module is free
    and as taking a direct sum with a finite free module does not change the Fitting invariant \citestacks{07ZA}.
    It is also clear that the Fitting invariant of $X$ is $A$ if and only if $X$ is flat.
 \end{Def}
\begin{Lem}\label{lem:FitofFinitebyFlat}
Let $A$ be a Noetherian local ring.
Then the following hold.
\begin{enumerate}[label=$(\roman*)$]
    \item\label{Fit:wdf} The Fitting invariant of a finite-by-flat or a pseudo-finite-by-flat module is well-defined.
    %\item The Fitting invariant of $X$ is $A$ if and only if $X$ is flat.
    \item\label{Fit:cokerFlatSameFit} Given an inclusion of modules $X\subseteq Y$ with flat quotient,
    if $X$ is finite-by-flat (resp. pseudo-finite-by-flat), so is $Y$,
    and the Fitting invariants of $X$ and $Y$ are the same.
\end{enumerate}
\end{Lem}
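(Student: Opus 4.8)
The whole statement reduces to the following, which is exactly well-definedness in \ref{Fit:wdf} for a finite-by-flat module:
\[(\star)\qquad\text{if }M,M'\subseteq X\text{ are finite with }X/M\text{ and }X/M'\text{ flat, then }\Fit(M)=\Fit(M').\]
Granting $(\star)$, the finite-by-flat half of \ref{Fit:cokerFlatSameFit} is immediate: if $X\subseteq Y$ with $Y/X$ flat and $M\subseteq X$ is finite with $X/M$ flat, then $0\to X/M\to Y/M\to Y/X\to 0$ exhibits $Y/M$ as an extension of flat modules, hence flat, so $M$ witnesses that $Y$ is finite-by-flat, and by $(\star)$ applied inside $Y$ its Fitting invariant equals $\Fit(M)$, the Fitting invariant of $X$. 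The two pseudo-finite-by-flat assertions follow formally: if $X\oplus C$ is finite-by-flat with $C$ flat and $X\subseteq Y$ has $Y/X$ flat, apply the finite-by-flat case to $X\oplus C\subseteq Y\oplus C$ (whose quotient is $Y/X$); and independence of the chosen flat summand is the special case $Y=X$ together with the finite-by-flat case of $(\star)$ applied inside $X\oplus C\oplus C'$.

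I would prove $(\star)$ first when $M\subseteq M'$. Then $M'/M$ lies in $0\to M'/M\to X/M\to X/M'\to 0$ with flat outer terms, so $\operatorname{Tor}_1^A(M'/M,-)=0$ and $M'/M$ is flat; being finite over the local ring $A$, it is free. Hence $0\to M\to M'\to M'/M\to 0$ splits, $M'\cong M\oplus A^{\oplus r}$, and $\Fit(M')=\Fit(M)$ by \citestacks{07ZA} together with the fact, recalled in Definition \ref{def:Fitting}, that adjoining a finite free summand does not change the first nonzero Fitting ideal.

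For general $M,M'$ the aim is to reduce to the nested case. When $X$ itself is finite this is automatic: $X$ is then a witness ($X/X=0$ is flat), every witness $N$ has $X/N$ finite flat, hence free, so $N$ is nested under $X$, and the previous paragraph gives $\Fit(N)=\Fit(X)$ for all witnesses $N$. In general $X$ need not be finite and a common finite super-witness of $M$ and $M'$ need not exist; what one wants instead is that any two witnesses have the same Fitting invariant, equivalently isomorphic ``non-free parts'' $M_0\cong M_0'$ in decompositions $M\cong M_0\oplus A^{\oplus r}$, $M'\cong M_0'\oplus A^{\oplus r'}$ with $M_0,M_0'$ having no free direct summand. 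I would establish this by passing to the completion $A^\wedge$ (harmless, since $\Fit_i$ commutes with the faithfully flat base change $A\to A^\wedge$), and comparing the two presentations $0\to M\to X\to X/M\to 0$ and $0\to M'\to X\to X/M'\to 0$ by a pullback construction, so as to reduce to a cancellation statement for finite modules over the complete local ring $A^\wedge$; the latter I would settle via Matlis duality---the flat quotients dualize to injective modules and the non-free cores to modules with no injective summand---and Krull--Schmidt--Azumaya uniqueness of decompositions into modules with local endomorphism rings.

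The main obstacle is exactly this general comparison of two unrelated witnesses of a possibly non-finitely-generated finite-by-flat module; the reductions in the first paragraph and the nested case are routine, and all the content of the lemma sits in $(\star)$ for non-nested witnesses.
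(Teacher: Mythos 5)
Your formal scaffolding is fine and matches the paper: deducing \ref{Fit:cokerFlatSameFit} for finite-by-flat modules from well-definedness via the extension $0\to X/M\to Y/M\to Y/X\to 0$, the nested case $M\subseteq M'$ (where $M'/M$ is flat, hence free, hence $M'\cong M\oplus A^{\oplus r}$), and the purely formal passage to the pseudo-finite-by-flat statements via $X\oplus C\oplus D$. But the entire content of the lemma is the comparison of two \emph{non-nested} witnesses $M,M'\subseteq X$, and there you give only a plan that you yourself flag as ``the main obstacle.'' As stated the plan is both incomplete and not quite aimed right: after applying Matlis duality over $A^\wedge$, the flat quotients $X/M$, $X/M'$ dualize to injective modules that are in general \emph{infinite} direct sums of indecomposable injectives, and $X^\vee$ is far from finite, so the statement you would need is not ``a cancellation statement for finite modules over $A^\wedge$'' but an Azumaya-type uniqueness theorem for possibly infinite decompositions into modules with local endomorphism rings, together with an identification of which duals of indecomposable finite modules are injective; none of this, nor the promised ``pullback construction,'' is actually set up or carried out. (A minor point: equal Fitting invariants is not literally \emph{equivalent} to isomorphic non-free parts, though you only use the harmless direction.)

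For comparison, the paper closes this gap with a short mechanism your sketch is missing. By Lazard's theorem \citestacks{058G} write the flat module $X/M$ as a filtered colimit of finite free modules $L_\alpha$ and pull back to get $X=\colim_\alpha X_\alpha$ with $X_\alpha\cong M\oplus L_\alpha$ finite. The finite module $M'$ factors through some $X_\alpha$; since $X/M'$ is flat, the inclusion $M'\subseteq X$ is universally injective \citestacks{058M}, hence so is $M'\to X_\alpha$, and a universally injective map with finitely presented cokernel splits \citestacks{058L}. Thus $M'$ is a direct summand of $M\oplus(\text{finite free})$ and, by symmetry, $M$ is a direct summand of $M'\oplus(\text{finite free})$; Krull--Schmidt over a complete local ring, or \cite[Corollary 1.15]{Leuschke-Wiegand-Cohen-Macaulay-Modules} in general (which also removes any need for your completion step), then yields $M\oplus A^{\oplus s}\cong M'\oplus A^{\oplus t}$ and hence equal Fitting invariants \citestacks{07ZA}. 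Your duality route may well be completable, but as submitted the essential step of the lemma is unproved.
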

\begin{proof}
    We first show \ref{Fit:wdf} for finite-by-flat modules.
    Let $M,M'$ be two finite submodules of a given module $X$ so that $X/M$ and $X/M'$ are flat.
    By Lazard's Theorem \citestacks{058G}, we can write
    $X/M=\colim_\alpha L_\alpha$,
    where the colimit is filtered and $L_\alpha$ are finite free.
    Let $X_\alpha=X\times_{X/M}L_\alpha$,
    so we have a commutative diagram with exact rows
    \[
    \begin{CD}
        0@>>> M@>>> X_\alpha@>>> L_\alpha@>>> 0\\
        @. @| @VVV @VVV @.\\
         0@>>> M@>>> X@>>> X/M@>>> 0.
    \end{CD}
    \]
    We have $X=\colim_\alpha X_\alpha$ as filtered colimits 
    commute
with finite limits.
    Therefore the inclusion $M'\subseteq X$ factors through some $X_\alpha$ 
\citestacks{0G8P}.
    By \citestacks{058M} $M'\subseteq X$ is pure (\emph{i.e.} universally injective),
    so $M'\to X_\alpha$ is also pure,
    thus split \citestacks{058L}.
    On the other hand $X_\alpha\cong M\oplus L_\alpha$ as $A$-modules as $L_\alpha$ is free.
    We conclude that $M'$ is isomorphic to a direct summand of $M\oplus F$ where $F$ is a finite free $A$-module,
    and by symmetry 
    $M$ is isomorphic to a direct summand of $M'\oplus F'$ where $F'$ is a finite free $A$-module.
    If $A$ is complete,
    it follows from Krull--Schmidt \cite[Corollary 1.10]{Leuschke-Wiegand-Cohen-Macaulay-Modules}
that    $M\oplus P\cong M'\oplus P'$ for some finite free $A$-modules $P,P'$.
For a general $A$ the same is true by \cite[Corollary 1.15]{Leuschke-Wiegand-Cohen-Macaulay-Modules}.
Therefore the Fitting invariants of $M$ and $M'$ are the same  \citestacks{07ZA}.

Given an inclusion of modules $X\subseteq Y$ with flat quotient,
if $M$ is a finite submodule of $X$ so that $X/M$ is flat,
then $M$ is a finite submodule of $Y$ so that $Y/M$ is flat,
as $Y/M$ is an extension of $X/M$ by $Y/X$.
This gives \ref{Fit:cokerFlatSameFit} in the finite-by-flat case.

Next, let $X$ be a pseudo-finite-by-flat module and $C,D$ be flat modules so that $X\oplus C$
 and $X\oplus D$ are both finite-by-flat.
 Then $X\oplus C\oplus D$ is finite-by-flat and has the same Fitting invariant as $X\oplus C$
 and $X\oplus D$
 by \ref{Fit:cokerFlatSameFit} for finite-by-flat modules.
Therefore the Fitting invariants of $X\oplus C$
 and $X\oplus D$ are the same,
 which is \ref{Fit:wdf} for $X$.

Finally, given an inclusion of modules $X\subseteq Y$ with flat quotient,
if $C$ is a flat module, then we have an inclusion of modules $X\oplus C\subseteq Y\oplus C$ with flat quotient.
This gives \ref{Fit:cokerFlatSameFit} in the pseudo-finite-by-flat case. 
 \end{proof}
To summarize, Lemma \ref{lem:StructureofCotangentModule}, Theorem \ref{thm:lciandCotangentModule},
and Lemma \ref{lem:FitofFinitebyFlat} give
\begin{Thm}\label{thm:FittofLAoverZ}
Let $R\to A$ be a finite type ring map
where $R$ is Noetherian and lci and $A$ is local. % and of finite tor dimension as an $R$-module.
Let $a\in \bZ,a<-\dim A-1$.
Then the following hold.
\begin{enumerate}[label=$(\roman*)$]
    \item The module $C^a(L_{A/\bZ})$ as in Discussion \ref{discu:modulesC} is pseudo-finite-by-flat.
    \item If $A$ is of finite tor dimension as an $R$-module,
    then the Fitting invariant of $C^a(L_{A/\bZ})$ is $A$ if and only if $A$ is lci.
\end{enumerate} 
%the Fitting invariant of $C^a(L_{A/\bZ})$ 
\end{Thm}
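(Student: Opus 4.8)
The plan is to simply assemble the three preceding results, supplying the book-keeping needed to pass from the ``up to projective summands'' statements to honest conclusions about pseudo-finite-by-flat modules and their Fitting invariants. First I would record that $A$, being a finitely generated algebra over the Noetherian ring $R$, is Noetherian, and that $A$ is local by hypothesis, so the notion of pseudo-finite-by-flat module and its Fitting invariant (Definition \ref{def:Fitting}) apply to $A$-modules; also $\dim A<\infty$, so the hypothesis $a<-\dim A-1$ is meaningful.

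For part (i), apply Lemma \ref{lem:StructureofCotangentModule} with this $a$: there are projective $A$-modules $P,Q$ and a finite $A$-module $M$ with $C^a(L_{A/\bZ})\oplus P\cong M\oplus Q$. Over a Noetherian local ring a projective module is free, hence so are $P$ and $Q$; but $Q$ need not be finite. Here I would invoke Kaplansky's theorem that a projective module over a local ring is free, write $Q\cong Q_0\oplus F$ with $F$ finite free chosen so that... actually the cleanest route is: $M\oplus Q$ has a finite submodule, namely $M$, with flat (indeed free) quotient $Q$, so $M\oplus Q$ is finite-by-flat; therefore $C^a(L_{A/\bZ})\oplus P$ is finite-by-flat, with $P$ flat, which is exactly the definition of $C^a(L_{A/\bZ})$ being pseudo-finite-by-flat. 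This proves (i), and moreover, by Lemma \ref{lem:FitofFinitebyFlat} (applied to the isomorphism $C^a(L_{A/\bZ})\oplus P\cong M\oplus Q$, viewing $M\subseteq M\oplus Q$ with flat quotient), the Fitting invariant of $C^a(L_{A/\bZ})$ equals the Fitting invariant of $M$.

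For part (ii), assume in addition $A$ has finite tor dimension as an $R$-module; since $a<-\dim A-1\leq -1$, Theorem \ref{thm:lciandCotangentModule} applies and says $A$ is lci if and only if $C^a(L_{A/\bZ})$ is flat. On the other hand, by the final sentence of Definition \ref{def:Fitting} (together with Lemma \ref{lem:FitofFinitebyFlat}(i) guaranteeing well-definedness), the Fitting invariant of the pseudo-finite-by-flat module $C^a(L_{A/\bZ})$ equals $A$ if and only if $C^a(L_{A/\bZ})$ is flat. Chaining the two equivalences gives that the Fitting invariant of $C^a(L_{A/\bZ})$ is $A$ if and only if $A$ is lci, which is (ii).

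I do not anticipate a genuine obstacle here; the content has already been extracted in Lemmas \ref{lem:StructureofCotangentModule} and \ref{lem:FitofFinitebyFlat} and Theorem \ref{thm:lciandCotangentModule}. The only mildly delicate point is the passage from ``$C^a(L_{A/\bZ})\oplus(\text{projective})\cong M\oplus(\text{projective})$'' to ``$C^a(L_{A/\bZ})$ is pseudo-finite-by-flat with the same Fitting invariant as $M$,'' which requires noting that a projective module over a local ring is free (hence flat) and then quoting Lemma \ref{lem:FitofFinitebyFlat}(ii) to see that adjoining free summands, or more generally passing through an inclusion with flat cokernel, leaves the Fitting invariant unchanged.
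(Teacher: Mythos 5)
Your assembly is correct and is exactly how the paper derives this theorem (the paper simply states that Lemma \ref{lem:StructureofCotangentModule}, Theorem \ref{thm:lciandCotangentModule}, and Lemma \ref{lem:FitofFinitebyFlat} ``give'' it); your bookkeeping with $C^a(L_{A/\bZ})\oplus P\cong M\oplus Q$ supplies precisely the intended details, and the Kaplansky detour you abandon mid-paragraph is indeed unnecessary since projectivity already gives flatness.
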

\begin{Rem}
    In fact, Theorem \ref{thm:FittofLAoverZ} holds for all $a<-2$ (cf. Remark \ref{rem:ImproveD+1to2}).
    This is because $R$ can be approximated by countable subrings using \cite{Lyu-elementary-subring} (and \cite[Corollary 3.4]{Gor-2-rings}).
\end{Rem}

We arrive at our lci-assignment.
\begin{Thm}\label{thm:lci-assignment}
    Let  $\bQ$=``$(S_2)$'' and $\bP$=``lci.''
    Let $A\in {^d\cA^\bQ_\bP}$.
    Let $a=-d-2$.
    Let $\fc(A)$ be the Fitting invariant of $C^a(L_{A/\bZ})$.
    Then $\fc(-)$ is a strictly functorial $\bP$-assignment on $\cA^\bQ_\bP$.
\end{Thm}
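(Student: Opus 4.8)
The plan is to check the three defining conditions of a strictly functorial $\bP$-assignment on $\cA^\bQ_\bP$ in turn. Fix $A\in{}^d\cA^\bQ_\bP$, so $A$ is complete local, $(S_2)$, and lci exactly on the punctured spectrum $U:=\Spec(A)\setminus\{\fm_A\}$, and put $a=-d-2$. For well-definedness, use the Cohen structure theorem to pick a surjection $R\twoheadrightarrow A$ with $R$ regular local; then $R$ is lci, $R\to A$ is of finite type, and $A$ has finite tor dimension over $R$ (finite global dimension). Since $a<-\dim A-1$, Theorem~\ref{thm:FittofLAoverZ}(i) makes $C^a(L_{A/\bZ})$ pseudo-finite-by-flat, so $\fc(A)$ is well defined (Lemma~\ref{lem:FitofFinitebyFlat}(i)), and it is nonzero since the first nonzero Fitting ideal is nonzero by construction. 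Moreover $\fc(A)=A$ iff $C^a(L_{A/\bZ})$ is flat (Definition~\ref{def:Fitting}) iff $A$ is lci (Theorem~\ref{thm:FittofLAoverZ}(ii)); as $A$ is not lci we get $\fc(A)\subseteq\fm_A$, i.e.\ $\fm_A\in V(\fc(A))$.

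The heart of the matter is the reverse inclusion: $\fp\notin V(\fc(A))$ for every $\fp\in U$. The case $d=0$ is vacuous, so assume $d\ge1$. Resolve $L_{A/R}$ by a complex $F^\bullet$ of finite free $A$-modules and set $M:=C^a(L_{A/R})=\coker(F^{a-1}\to F^a)$; by (the proof of) Lemma~\ref{lem:StructureofCotangentModule} there is a projective $P$ with $C^a(L_{A/\bZ})\oplus P\cong M$, so $\fc(A)=\Fit_r(M)$ where $\Fit_{r-1}(M)=0\ne\Fit_r(M)$. Since $A$ is complete local, hence catenary, and $(S_2)$, it is equidimensional \cite[Corollaire 5.10.9]{EGA4_2}. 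For $\fp\in U$, the surjection $R_{\fp'}\twoheadrightarrow A_\fp$ from the regular local ring $R_{\fp'}$ (where $\fp'\subset R$ is the preimage of $\fp$) is lci, so $L_{A_\fp/R_{\fp'}}$ is perfect with $H^0=0$ and $H^{-1}$ free of rank $\dim R_{\fp'}-\dim A_\fp=\dim R-\dim A=:c$, the last equality by equidimensionality. Hence $L_{A/R}|_U$ is, locally on $U$, quasi-isomorphic to a free module of rank $c$ in degree $-1$, so $F^\bullet|_U$ is exact in all degrees $\ne-1$; running the resulting rank recursion for the syzygy bundles, whose inputs $\rank(F^i)$ and $c$ are globally constant, shows that $M|_U$ is locally free of a rank $\rho$ independent of the point of $U$. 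Since $A$ is $(S_2)$ we have $\Ass(A)=\Min(A)\subseteq U$, so nonvanishing of a Fitting ideal of $M$ is detected at minimal primes, giving $r=\min_{\fq\in\Min(A)}\mu_{A_\fq}(M_\fq)=\rho$. Therefore, for each $\fp\in U$, $M_\fp$ is free of rank $r$ and $\fc(A)_\fp=\Fit_r(M)_\fp=A_\fp$, so $\fp\notin V(\fc(A))$. This proves $V(\fc(A))=\{\fm_A\}$, i.e.\ $\fc(A)$ is $\fm_A$-primary.

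For strict functoriality, let $\varphi\colon A\to B$ be a morphism of $\cA^\bQ_\bP$; its closed fibre is $0$-dimensional, so $\dim B=\dim A=d$ and $\fc(B)$ is defined with the same $a$. Being flat with geometrically lci closed fibre, $\varphi$ is a complete intersection homomorphism, so $L_{B/A}$ has tor-amplitude in $[-1,0]$ (see \cite{avramov-ci}); in particular $H^{a-1}(L_{B/A})=0$ (as $a-1<-1$) and $C^a(L_{B/A})$ is flat (Discussion~\ref{discu:modulesC}). Applying Discussion~\ref{discu:TrianglesandC} to the transitivity triangle $L_{A/\bZ}\otimes_A^LB\to L_{B/\bZ}\to L_{B/A}\to{+1}$ yields a short exact sequence
\[
0\longrightarrow C^a(L_{A/\bZ})\otimes_AB\longrightarrow C^a(L_{B/\bZ})\longrightarrow C^a(L_{B/A})\longrightarrow0
\]
with flat cokernel. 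The left term is pseudo-finite-by-flat over $B$, being the base change of a pseudo-finite-by-flat presentation of $C^a(L_{A/\bZ})$, and since $\varphi$ is faithfully flat its Fitting invariant is $\fc(A)B$ (Fitting ideals commute with base change and faithful flatness preserves which one is the first nonzero, \citestacks{07ZA}). By Lemma~\ref{lem:FitofFinitebyFlat}(ii), $C^a(L_{B/\bZ})$ is then pseudo-finite-by-flat with the same Fitting invariant $\fc(A)B$; as this invariant is $\fc(B)$ by definition, $\fc(A)B=\fc(B)$.

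I expect the $\fm_A$-primariness to be the main obstacle. Showing $\fc(A)\subseteq\fm_A$ is immediate, but ruling out other points of $V(\fc(A))$ is not, because the first nonzero Fitting ideal does not behave well under localization; one genuinely has to analyze $C^a(L_{A/R})$ over the punctured spectrum, and the decisive point is that the equidimensionality of $A$ — which here comes for free from completeness together with $(S_2)$ — forces the relevant vector bundle on $U$ to have constant rank equal to the first nonzero Fitting index of $M$ over $A$.
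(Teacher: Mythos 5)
Your proof is correct, and its skeleton coincides with the paper's: well-definedness and $0\neq\fc(A)\neq A$ via Theorem \ref{thm:FittofLAoverZ} applied to a Cohen presentation $R\twoheadrightarrow A$; strict functoriality via the transitivity triangle, the vanishing $H^{a-1}(L_{B/A})=0$ and flatness of $C^a(L_{B/A})$ for the lci map $\varphi$, base change of Fitting ideals, and Lemma \ref{lem:FitofFinitebyFlat}\ref{Fit:cokerFlatSameFit}; and $\fm_A$-primariness by reducing to the finite module $M=C^a(L_{A/R})$ and showing it is locally free of constant rank on the punctured spectrum $U$, so that the first nonzero Fitting ideal, detected at the minimal (= associated) primes, localizes to the unit ideal on $U$. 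The one place you genuinely deviate is the constancy-of-rank step: the paper splits into $d\geq 2$, where $(S_2)$ gives $\depth A\geq 2$ and hence a connected punctured spectrum (\citestacks{0BLR}), and $d=1$, where it runs an Euler-characteristic computation using that $R$ is a catenary domain; you instead give a single argument for all $d\geq 1$, using that the complete local $(S_2)$ ring $A$ is equidimensional (catenary plus $(S_2)$, \cite[Corollaire 5.10.9]{EGA4_2} --- the same fact the paper records in Discussion \ref{discu:dualcplxofS2}) to see that $\rank(I/I^2)_\fp=\dim R_{\fp'}-\dim A_\fp=\dim R-\dim A$ is constant on $U$, and then propagating constancy through the finite free resolution of $L_{A/R}$. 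Your version buys uniformity (no case division, no appeal to connectedness of the punctured spectrum) at the price of invoking equidimensionality everywhere on $U$; the paper's version only needs the dimension bookkeeping in the residual case $d=1$. Both are complete; the identification $r=\rho$ via $\Ass(A)=\Min(A)\subseteq U$ is an equivalent replacement for the paper's ``$\Fit_j(M)$ vanishes on $U$ and $\depth A\geq 1$'' step.
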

\begin{proof}
We can find a complete regular local ring $R$ and a surjective ring map $R\to A$. 
    By Theorem \ref{thm:FittofLAoverZ},
    $\fc(A)$ is well-defined and $0\neq \fc(A)\neq A$.
    To show $\fc(-)$ is strictly functorial,
    let $\varphi:A\to B$ be in $ {^d\cA^\bQ_\bP}$.
    Apply Discussion \ref{discu:TrianglesandC} to 
    \[\begin{CD}
        L_{A/\bZ}\otimes^L_A B@>>> 
 L_{B/\bZ}@>>> L_{B/A}@>>> +1,
    \end{CD}\]
    we get 
    an exact sequence
    \[
    \begin{CD}
        H@>>> C^a(L_{A/\bZ})\otimes_A B@>>> C^a(L_{B/\bZ})@>>> C@>>> 0
    \end{CD}
    \]
    where $H=H^{a-1}(L_{B/A})$ and $C=C^a(L_{B/A})$.
    As $\varphi$ is lci and as $a\leq -1$ we have $H=0$ and $C$ flat (in fact projective since $a=-d-2$, cf. proof of Lemma \ref{lem:StructureofCotangentModule}),
    so Lemma \ref{lem:FitofFinitebyFlat}\ref{Fit:cokerFlatSameFit} gives $\fc(A)B=\fc(B)$.

    It remains to show $\fc(A)$ is $\fm_A$-primary.
    When $d=0$ this is trivial, so we assume $d>0$.
    Let $P^\bullet$ be a complex of finite free modules that satisfies $P^{>-1}=0$ and represents $L_{A/R}$
    \citetwostacks{08PZ}{08QF}.
    As seen in Lemma \ref{lem:StructureofCotangentModule}
    $\fc(A)$ is the Fitting invariant of $C^a(P^\bullet)$.
    We will show $M:=C^a(P^\bullet)$ is finite flat of constant rank, say $r$, on the punctured spectrum of $A$.
    Then by \citestacks{07ZD},
    for $\fa=\Fit_j(M)\ (j<r)$,
    we have $\fa_\fp=0$ for all $\fp\in\Spec(A)\setminus \{\fm_A\}$,
    so $\fa=0$
as $\depth A\geq 1$;
therefore $\fc(A)=\Fit_r(M)$, and $\fc(A)_\fp=A_\fp$.

    We know $M$ is finite flat on the punctured spectrum of $A$ which is lci.
    If $d\geq 2$, then $\depth A\geq 2$,
    as $A$ is $(S_2)$.
    Therefore the punctured spectrum of $A$ is connected \citestacks{0BLR},
    so the rank is constant.
    We may now assume $d=1$.

Let $I=\ker(R\to A)$, $\fp\in V(I)\setminus\Max(R)$.
    The complex $(\tau_{\geq a} P^\bullet)_\fp$ represents $(I/I^2)_\fp[1]$ \citestacks{08SJ},
    as $A_\fp$ is lci.
    Computing Euler characteristic,
    we see
    $(-1)^a\rank M_\fp + \sum_{i=a+1}^{-1} (-1)^i\rank P^i=-\rank (I/I^2)_\fp$.
    We know $\rank (I/I^2)_\fp=\dim R_\fp-\dim A_\fp$,
    as $I_\fp$ is generated by a regular sequence.
    As $d=1$ and as $R$ is a catenary domain, we have $\dim R_\fp=\dim R-1,\dim A_\fp=0$,
    independent of the choice of $\fp$.
    Therefore,
    $\rank M_\fp$ is independent of the choice of $\fp$,
    as desired.
\end{proof}

\section{Local lifting}\label{sec:Local-lifting-result}

\begin{Thm}\label{thm:local-lift-P}
    Let $\bP$ be the property ``$(S_k)$'' $(k\geq 0)$, ``Cohen--Macaulay,'' ``Gorenstein,'' or ``lci.''
    
    Let $R$ be a Noetherian semilocal ring, $I$ an ideal of $R$.
    Assume 
    \begin{enumerate}
        \item $R$ is $I$-adically complete.
        \item\label{locallift:quotPring} $R/I$ is a $\bP$-ring.
    \end{enumerate}
    Then $R$ is a $\bP$-ring.
\end{Thm}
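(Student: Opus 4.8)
The plan is to obtain Theorem~\ref{thm:local-lift-P} from a single application of Theorem~\ref{thm:Nishimura-local-lifting-argument} for each choice of $\bP$. Besides $I$-adic completeness and the hypothesis that $R/I$ is a $\bP$-ring, that theorem requires an auxiliary property $\bQ$ implied by $\bP$ such that $R$ is a $\bQ$-ring (condition~\eqref{Nsmr:Qring}) and every finite $R$-algebra $B$ that is an integral domain admits a finite inclusion of domains $B\subseteq C$ with $C$ satisfying $\bQ$ (condition~\eqref{Nsmr:Q-ify}), together with a strictly functorial $(\bP,\bQ)$-assignment on $\cD^\bQ_\bP$. I would verify these ingredients case by case, settling $(S_0)$, $(S_1)$, $(S_2)$ before the remaining properties, so that the input ``$R$ is a $\bQ$-ring'' is always supplied by a case already proved.

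The case $\bP=(S_0)$ is vacuous, since every Noetherian ring is $(S_0)$. For $\bP=(S_1)$ take $\bQ$ to be the trivial property: \eqref{Nsmr:Qring} is automatic, \eqref{Nsmr:Q-ify} holds with $C=B$, and a strictly functorial $\bP$-assignment on $\cD_\bP$ is obtained by combining Lemma~\ref{lem:S1assignment} with Lemma~\ref{lem:extendPassignment} (the axioms \ref{PisSing}--\ref{PGroLocalizes} hold for $(S_1)$ by Remark~\ref{rem:MurGroLocalize} and trivially for the trivial property, and $(S_1)$ implies $(S_1)$); so $R$ is an $(S_1)$-ring. For $\bP=(S_2)$ take $\bQ=(S_1)$: since $R/I$ is a $\bP$-ring it is an $(S_1)$-ring, so the $(S_1)$-case just proved gives that $R$ is an $(S_1)$-ring, hence $R$ is semi-Nagata by Theorem~\ref{thm:semi-Nagatacharacterizelocal} (here $R$ is semilocal), which is exactly \eqref{Nsmr:Q-ify} for $\bQ=(S_1)$---indeed the inclusion $B\subseteq C$ can be chosen with $C$ satisfying $(S_2)$. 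Combining Lemma~\ref{lem:S2assignment} with Lemma~\ref{lem:extendPassignment} supplies the assignment on $\cD^\bQ_\bP$, so $R$ is an $(S_2)$-ring.

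For the remaining properties---$(S_k)$ with $k\geq 3$, Cohen--Macaulay, Gorenstein, and lci---take $\bQ=(S_2)$, using that each such $\bP$ implies $(S_2)$ and $(S_1)$, and that $\bP$ and $\bQ$ satisfy \ref{PisSing}--\ref{PGroLocalizes} by Remark~\ref{rem:MurGroLocalize}. Then $R/I$, being a $\bP$-ring, is an $(S_2)$-ring, so the $(S_2)$-case gives \eqref{Nsmr:Qring}: $R$ is an $(S_2)$-ring; in particular $R$ is an $(S_1)$-ring, hence semi-Nagata by Theorem~\ref{thm:semi-Nagatacharacterizelocal}, giving \eqref{Nsmr:Q-ify}. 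A strictly functorial $\bP$-assignment on $\cD^\bQ_\bP$ comes from Lemma~\ref{lem:extendPassignment} together with Lemma~\ref{lem:Skassignment} (for $(S_k)$, $k\geq 3$), Remark~\ref{rem:CMassignment} (Cohen--Macaulay), Lemma~\ref{lem:Gorassignment} (Gorenstein), or Theorem~\ref{thm:lci-assignment} (lci). Theorem~\ref{thm:Nishimura-local-lifting-argument} then gives that $R$ is a $\bP$-ring, finishing all cases.

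All the substance lies upstream: the engine is Theorem~\ref{thm:Nishimura-local-lifting-argument}, and the needed $\bP$-assignments are built in \S\ref{sec:SkCMGorAssign} and \S\ref{sec:lciAssign} (and extended to $\cD^\bQ_\bP$ in \S\ref{sec:ExtendAssignment}). The step I expect to be the real pressure point is verifying hypothesis~\eqref{Nsmr:Q-ify} of Theorem~\ref{thm:Nishimura-local-lifting-argument}: this is precisely where the first part of the paper enters, via the equivalence between being an $(S_1)$-ring and being semi-Nagata for semilocal Noetherian rings (Theorem~\ref{thm:semi-Nagatacharacterizelocal}); the stratified ordering of the cases is the device that makes the companion hypothesis~\eqref{Nsmr:Qring} available at each step without circularity.
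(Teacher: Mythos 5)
Your proposal is correct and follows essentially the same route as the paper: the stratified cases $(S_1)$, then $(S_2)$ with $\bQ=(S_1)$, then the rest with $\bQ=(S_2)$, all fed into Theorem \ref{thm:Nishimura-local-lifting-argument} with the assignments from \S\ref{sec:SkCMGorAssign}--\S\ref{sec:lciAssign} extended via Lemma \ref{lem:extendPassignment}, and with hypothesis \eqref{Nsmr:Q-ify} in the top layer supplied by Theorem \ref{thm:semi-Nagatacharacterizelocal}. The only (harmless) deviation is in the $(S_2)$ case, where you invoke the semi-Nagata machinery for \eqref{Nsmr:Q-ify} although it is trivial there, since an integral domain is already $(S_1)$ and one may take $C=B$.
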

\begin{proof}
    First consider the case $\bP$=``$(S_1)$,''
    as every Noetherian ring is $(S_0)$.
    Let $\bQ$ be the trivial property.
    A strictly functorial $\bP$-assignment on $\cD_\bP$ exists,
    Lemmas \ref{lem:extendPassignment} and \ref{lem:S1assignment}.
    The assumptions \eqref{Nsmr:Qring} and \eqref{Nsmr:Q-ify} in Theorem \ref{thm:Nishimura-local-lifting-argument} are trivial,
    and we conclude.

    Next, consider the case $\bP$=``$(S_2)$.''
    Let $\bQ$=``$(S_1)$.''
    A strictly functorial $\bP$-assignment on $\cD^\bQ_\bP$ exists,
    Lemmas \ref{lem:extendPassignment} and \ref{lem:S2assignment}.
    The assumption %\eqref{Qring} and 
    \eqref{Nsmr:Q-ify} in Theorem \ref{thm:Nishimura-local-lifting-argument} is trivial as a domain in $(S_1)$,
    whereas \eqref{Nsmr:Qring} follows from the case $\bP$=``$(S_1)$,''
    and we conclude.

    Finally, consider the case $\bP$=``$(S_k)$'' $(k\geq 3)$, ``Cohen--Macaulay,'' ``Gorenstein,'' or ``lci.''
    Let $\bQ$=``$(S_2)$.''
    A strictly functorial $\bP$-assignment on $\cD^\bQ_\bP$ exists,
    Lemma \ref{lem:extendPassignment}
    and Lemma \ref{lem:Skassignment},
    Remark \ref{rem:CMassignment},
    Lemma \ref{lem:Gorassignment},
    and Theorem \ref{thm:lci-assignment}.
    The assumption \eqref{Nsmr:Qring} in Theorem \ref{thm:Nishimura-local-lifting-argument} follows from the case $\bP$=``$(S_2)$,''
    and    \eqref{Nsmr:Q-ify}  follows from Theorem \ref{thm:semi-Nagatacharacterizelocal} (or \cite[Corollary 2.14]{Macaulay-Cesnavi}),
    and we conclude.
\end{proof}
\begin{Rem}\label{rem:PisGeometric}
    All properties $\bP$ in Theorem \ref{thm:local-lift-P}
    are preserved by finite field extensions,
    so being a $\bP$-ring is the same as having $\bP$ formal fibers.
    This is because a finite extension of fields is a syntomic ring map, cf. \citestacks{00SK}. 
\end{Rem}

From the case $\bP$=Cohen--Macaulay
    and the same argument as in \cite[\S 8]{Lyu-dual-complex-lift} we get
\begin{Cor}\label{cor:local-lift-quot-CM}
Let $R$ be a Noetherian semilocal ring, $I$ an ideal of $R$.
    Assume 
    \begin{enumerate}
        \item $R$ is $I$-adically complete.
        \item\label{quotCM} $R/I$ is a quotient of a Cohen--Macaulay ring.
    \end{enumerate}
    Then $R$ is a quotient of a Cohen--Macaulay ring.
\end{Cor}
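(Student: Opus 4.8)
The plan is to reduce the statement to Theorem~\ref{thm:local-lift-P} for $\bP=$``Cohen--Macaulay'' by means of Kawasaki's characterization of homomorphic images of Cohen--Macaulay rings, running the same argument as in \cite[\S 8]{Lyu-dual-complex-lift}, where the parallel case ``quotient of a Gorenstein ring'' (equivalently ``has a dualizing complex'') is treated. By Kawasaki, a Noetherian ring $A$ is a homomorphic image of a Cohen--Macaulay ring if and only if $A$ is \emph{CM-excellent}, i.e.\ $A$ is universally catenary, has Cohen--Macaulay formal fibers, and the Cohen--Macaulay locus of every finitely generated $A$-algebra is open. So it suffices to check that $R$ is CM-excellent, given that $R/I$ is.

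After the routine reductions --- since $R$ is $I$-adically complete, $I$ is contained in the Jacobson radical of $R$, so $\Max(R)=\Max(R/I)$ and universal catenarity may be checked one maximal ideal at a time; and, as in the proof of Theorem~\ref{thm:Nishimura-local-lifting-argument}, one inducts on $\dim R$, noting that for $\fp\in\Spec(R)$ the ring $R/\fp$ is again $I$-adically complete with CM-excellent reduction modulo $I$ --- the three conditions are obtained as follows. ``$R$ has Cohen--Macaulay formal fibers'' is exactly Theorem~\ref{thm:local-lift-P}; in particular the formal fibers of $R$ are now equidimensional. ``$R$ is universally catenary'' follows from Ratliff's criterion: $R_\fm$ is universally catenary if and only if $\widehat{R_\fm/\fp}$ is equidimensional for every $\fp\in\Spec(R_\fm)$; since the property of having Cohen--Macaulay formal fibers passes to quotients, $R_\fm/\fp$ is a local domain with equidimensional formal fibers which is moreover catenary (by the inductive hypothesis when $\fp\neq0$, otherwise as a quotient of $R/I$), and its completion is then equidimensional by the ascent of equidimensionality along flat local homomorphisms (cf.\ \cite[Corollaire 5.10.9]{EGA4_2} and the dimension formula). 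Finally, the openness of the Cohen--Macaulay loci of finitely generated $R$-algebras follows from Nagata's openness criterion together with the two conditions just established. Kawasaki's theorem then gives that $R$ is a quotient of a Cohen--Macaulay ring.

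The main obstacle is the universal-catenarity step: universal catenarity does not lift on its own (Greco's example), so one must genuinely exploit the Cohen--Macaulay --- hence equidimensional --- formal fibers furnished by Theorem~\ref{thm:local-lift-P}, not the formal-fiber statement alone, and dovetail this with Ratliff's theorem along the induction. This is precisely the part of the argument carried over from \cite[\S 8]{Lyu-dual-complex-lift}; everything else is formal bookkeeping.
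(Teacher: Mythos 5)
Your overall strategy---reduce to Kawasaki's characterization of quotients of Cohen--Macaulay rings (CM-excellence: CM formal fibers, universal catenarity, openness of CM loci), get the CM formal fibers from Theorem~\ref{thm:local-lift-P}, and supply the remaining two conditions by an inductive argument---is the right outline, and plausibly matches what \cite[\S 8]{Lyu-dual-complex-lift} does. But your sketch of the universal-catenarity step, which you yourself flag as the crux, contains a genuine error and leaves the hard case unhandled.

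You assert that a catenary local domain with equidimensional (indeed CM) formal fibers has equidimensional completion, justifying this by ``ascent of equidimensionality along flat local homomorphisms.'' No such ascent statement holds: the completion $R\to R^\wedge$ is flat and local, $R$ (a domain) is trivially equidimensional, yet $R^\wedge$ can fail to be equidimensional. Worse, the paper itself hands you a counterexample to the precise statement you need: by Example~\ref{exam:LechGOONS}, over a field of characteristic zero there is a \emph{quasi-excellent} local domain $R$ of dimension~$2$ with $R^\wedge\cong k[[x,y,z]]/((x,y)\cap(z))$, which is not equidimensional. That $R$ is catenary (automatic in dimension~$\leq 2$) and has geometrically regular, hence Cohen--Macaulay, formal fibers. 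So ``catenary $+$ CM formal fibers $\Rightarrow$ formally equidimensional'' is false, and \cite[Corollaire 5.10.9]{EGA4_2} (which concerns equidimensionality of a single $(S_2)$ catenary local ring) is not the tool you need.

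Structurally, your induction on $\dim R$ reduces $\widehat{R_\fm/\fp}$ equidimensional to the inductive hypothesis only when $\dim(R_\fm/\fp)<\dim R_\fm$. For $\fp$ a minimal prime of $R_\fm$ of maximal coheight (in particular $\fp=0$ when $R_\fm$ is a domain) the dimension does not drop, so the induction gives nothing; and the alternative justification you offer (``otherwise as a quotient of $R/I$'') is incoherent, since $R_\fm$ itself is not a quotient of $R/I$, and asserting its catenarity at that point is circular. The $I$-adic completeness and the fact that $R/I$ is a quotient of a CM ring must enter the universal-catenarity argument essentially in this base case---this is exactly what rules out Lech-type examples---but your sketch does not show how. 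The final step (openness of CM loci of all finitely generated $R$-algebras from ``Nagata's openness criterion'') is also stated without verifying its hypotheses, though that is a smaller issue than the catenarity gap.
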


%\section{Lifting of Cohen--Macaulay in codimension 1}
\section{Local lifting of properties in codimension zero and Cohen--Macaulayness in codimension one}\label{sec:Local-lifting-P0-CM1}

In this section we exploit the lifting problem for $\bP_k$=``$\bP$ in codimension $k$.''  
One important phenomenon is that Grothendieck localization fails (Example \ref{Exam:Gro-localize-false-R0}),
which forces us to add a universally catenary condition in Theorems \ref{thm:Nishimura-local-lifting-argument-P0} and \ref{thm:Nishimura-local-lifting-argument-CM1}.
Another new difficulty is that $\bP_k$ does not imply $(S_1)$ even if $\bP$ does,
so we are not able to apply Lemma \ref{lem:extendPassignment},
hence it is difficult (and maybe impossible) to find assignments $\fc(-)$ as formulated in Definition \ref{def:CategoryDQP}.
Even if we have a candidate $\fc(-)$, we also run into the problem that $\fc(A)$ can be zero when $U_\bP(A)=\emptyset$, see Discussion \ref{discu:P0Assign},
which forces us to add assumption \eqref{NsmrP0:P0-ify-codim-1} in Theorem \ref{thm:Nishimura-local-lifting-argument-P0}.

\begin{Discu}\label{discu:properties-k}
    Let $\bP$ be a property of Noetherian rings.
    Let $k\in \bZ_{\geq 0}$.
    We denote by $\bP_k$ the property of ``$\bP$ in codimension $k$,''
    that is, a Noetherian ring $A$ satisfies $\bP_k$ if and only if $A_\fp$ satisfies $\bP$ for all $\fp\in\Spec(A), \HT(\fp)\leq k$.
    In particular, Serre's property $(R_k)$ is $\bP_k$ for $\bP$=``regular.'' 

    It is trivial that if $\bP$ satisfies the condition \ref{PisSing} or \ref{PisPointwise} in Discussion \ref{discu:Properties}, so does $\bP_k$ for all $k$. 
    It is also clear that if $\bP$ satisfies the condition \ref{Pascends} or \ref{Pdescends} in Discussion \ref{discu:Properties}, so does $\bP_k$ for all $k$,
    cf. \citestacks{00ON}. 

    Let $A$ be a Noetherian ring such that $U_\bP(A)$ is open, say equal to the complement of $V(\fa)$ in $\Spec(A)$,
    where $\fa$ is a radical ideal.
    Let $\fp_1,\ldots,\fp_m$ be all the prime divisors of $\fa$ whose height is at most $k$.
    Then it is clear that $U_{\bP_k}(A)$ is open and equal to the complement of $V(\fp_1\cap\ldots\cap\fp_m)$ in $\Spec(A)$.
    Therefore, if $\bP$ satisfies the condition \ref{PisOpen} in Discussion \ref{discu:Properties}, so does $\bP_k$ for all $k$. 
    
    Similarly, for any $\bP$ and any $A$, $U_{\bP_0}(A)$ is always open, and its complement is the union of all $V(\fp)$, where $\fp\in\Min(A)$ is such that $A_\fp$ does not satisfy $\bP$.
\end{Discu}

In general, one cannot expect $\bP_k$ to satisfy Grothendieck localization (\ref{PGroLocalizes} in Discussion \ref{discu:Properties}).
\begin{Exam}\label{Exam:Gro-localize-false-R0}
    Let $A=k[y]_{(y)},B=A[x_1,\ldots,x_m,z]_{(x_1,\ldots,x_m,y,z)}$ and $C=B/((x_1^2,\ldots,x_m^2,y+z)\cap (z))$.
    Note that the sequences $z,y$; $x_1^2,\ldots,x_m^2,y+z,y$; and $x_1^2,\ldots,x_m^2,y+z,z$ are all regular sequences in $B$,
    hence $A\to C$ is flat, and $C=B/(zx_1^2,\ldots,zx_m^2,z(y+z))$.
    The closed fiber of the flat local ring map $A\to C$ is then (the spectrum of) $k[x_1,\ldots,x_m,z]_{(x_1,\ldots,x_m,z)}/(zx_1^2,\ldots,zx_m^2,z^2)$, which is geometrically $(R_k)$ for all $k<m$ but not $(R_m)$.
    On the other hand, the generic fiber of $A\to C$ is not even $(R_0)$,
    as the minimal prime $(x_1,\ldots,x_m,y+z)$ of $C$ is not in the regular locus of $C$.
\end{Exam}

On the positive side, we have the following, cf. \cite[Th\'eor\`eme 1.2]{ionescu-Rk-lifting}.
\begin{Thm}\label{thm:RkGroLocalizes}
    Let $\bP$ be a property of Noetherian rings.
    Let $k\in \bZ_{\geq 0}$.
    Assume that $\bP$ satisfies %\ref{PisSing}--\ref{Pascends} and 
    \ref{PisPointwise}\ref{PGroLocalizes} in Discussion \ref{discu:Properties}.

    Let $\varphi:A\to B$ be a flat local map of Noetherian local rings.
    Assume
    \begin{enumerate}
        \item\label{RkGro:AisRkRing} $A$ is a $\bP_k$-ring.
        \item\label{RkGro:closedFiber} The closed fiber of $\varphi$ is geometrically $\bP_k$.
        \item\label{RkGro:UCeqd} $B/\fp B$ is catenary and equidimensional for all $\fp\in\Spec(A)$.
    \end{enumerate}
        Then $\varphi$ is a $\bP_k$-map.
\end{Thm}
\begin{Rem}\label{Rem:UCeqd-BimpliesBoverpB}
   If $B$ is catenary and equidimensional, then \eqref{RkGro:UCeqd} is true, see \citestacks{0AW4}.
\end{Rem}
\begin{proof}
%Note that $B/\fp B$ is catenary and equidimensional for all $\fp\in\Spec A$ \citestacks{0AW4}.
By Noetherian induction, we may assume $A$ is an integral domain and that $A/I\to B/IB$ is a $\bP_k$-map for all nonzero ideals $I\subseteq A$. 
Let $K$ be the fraction field of $A$.
We need to show $B\otimes_A K$ is a geometrically $\bP_k$ $K$-algebra.
Let $\fq\in\Spec(B)$ be above $0\in\Spec(A)$ and of height $\leq k$.
By \ref{PisPointwise}, it suffices to show $B_\fq$ is a geometrically $\bP$ $K$-algebra.
%As $\bP$ satisfies \ref{PisSing}\ref{Pdescends}\ref{Pascends} it is clear that it suffices to show $B_\fq\otimes_K K'$ is  $\bP$ for all finite purely inseparable extensions $K'/K$.

% We first reduce to the case $K'=K$.
% We may assume $\operatorname{char}(K)=p>0$.
% Let $A'$ be a finite $A$-algebra in $K'$ so that $\Frac(A')=K'$ and that $A'^{p^n}\subseteq A$ for some $n$, and let $\varphi':A'\to B':=B\otimes_A A'$ be the base change of $\varphi$.
% A formal fiber of $A'$ (resp. the closed fiber of $\varphi'$) is a formal fiber of $A$ (resp. the closed fiber of $\varphi$) after base change to a finite extension, so \eqref{RkGro:AisRkRing}\eqref{RkGro:closedFiber} are true for $\varphi'$.
% Note that $\Spec(A')\to\Spec(A)$ is a universal homeomorphism, so \eqref{RkGro:closedFiber} is true for $\varphi'$.
% We may therefore replace $A$ by $A'$ to assume $K=K'$.

% It remains to show $B_\fq$ is $\bP$.
Let $\underline{x}$ be a system of parameters of $A$, and let $\fQ$ be a minimal divisor of $\fq+(\underline{x})B$.
Then $\HT(\fQ/\fq)\leq \dim A$, so $\HT(\fQ)\leq \HT(\fq)+\dim A$ as $B$ is catenary and equidimensional.
By \citestacks{00ON} we have $\HT(\fQ/\fm B)\leq \HT(\fq)\leq k$, $\fm$ being the maximal ideal of $A$.
Since $B/\fm B$ is geometrically $\bP_k$ by \eqref{RkGro:closedFiber},
we see $B_\fQ/\fm B_\fQ$ is geometrically $\bP$.
By \ref{PGroLocalizes}, $A\to B_\fQ$ is a $\bP$-map,
hence $B_\fQ\otimes_A K$ is geometrically $\bP$ over $K$, thus so is the localization $B_\fq\otimes_A K=B_\fq$ by \ref{PisPointwise}.
\end{proof}

Next, we discuss local lifting of $\bP_0$-rings.
It is easy to find a strictly functorial $\bP_0$-assignment, as follows.
\begin{Discu}\label{discu:P0Assign}
    Let $\bP$ be a property of Noetherian rings such that $\bP_0$ satisfies \ref{PisSing}--\ref{Pascends} in Discussion \ref{discu:Properties} (e.g. if $\bP$ itself does).
    For a Noetherian ring $A$, let $0=\fq_1\cap\ldots\cap\fq_r\cap\fq_{r+1}\cap\ldots\cap\fq_s$ be a shortest primary decomposition, such that for $\fp_i=\sqrt{\fq_i}$,
    we have $\fp_1,\ldots,\fp_r\in U_{\bP_0}(A)$ and $\fp_{r+1},\ldots,\fp_s\not\in U_{\bP_0}(A)$.

    Write $\fa=\fq_1\cap\ldots\cap\fq_r$ and $\fb=\fq_{r+1}\cap\ldots\cap\fq_s$.
    As seen in Discussion \ref{discu:properties-k} we know $U_{\bP_0}(A)$ is open and its complement is $V(\fb)$.
    Therefore $\fa=H^0_{\fb}(A)$ is independent of the choice of the primary decomposition \cite[Proposition 3.13]{Eisenbud-CA}.
    We define $\fc(A)=\operatorname{Ann}_A\fa$.
    It is clear that $V({\fc(A)})=V(\fb)$ as sets.
    Moreover, $\fc(A)$ depends only on $V({\fb})=\Spec(A)\setminus U_{\bP_0}(A)$ and not on $\fb$,
    thus for a $\bP_0$-map $A\to B$,
    we have $\fc(A)B=\fc(B)$.

    However, as opposed to cases considered in \S\ref{sec:SkCMGorAssign} and \S\ref{sec:lciAssign}, when $U_{\bP_0}(A)=\emptyset$, we have $\fc(A)=0$.
\end{Discu}

Combining Theorem \ref{thm:RkGroLocalizes} and Discussion \ref{discu:P0Assign} we have
\begin{Thm}\label{thm:Nishimura-local-lifting-argument-P0}
    Let $\bP$ a property of Noetherian rings so that $\bP$ satisfies \ref{PisSing}--\ref{Pascends} and \ref{PGroLocalizes} in Discussion \ref{discu:Properties}.

    Let $R$ be a Noetherian semilocal ring, $I$ an ideal of $R$.
    Assume 
    \begin{enumerate}
        \item $R$ is $I$-adically complete.
        \item\label{NsmrP0:quotP0ring} $R/I$ is a $\bP_0$-ring.
        \item\label{NsmrP0:UC} $R$ is universally catenary.
        \item\label{NsmrP0:P-ify-codim-1} For every finite $R$-algebra $B$ that is an integral domain of dimension $\geq 2$,
        there exists a finite inclusion of domains $B\subseteq C$
        such that $U_{\bP}(C)\neq\{0\}$ (e.g. if $U_{\bP}(C)$ is open).
    \end{enumerate}
    Then $R$ is a $\bP_0$-ring.
\end{Thm}
\begin{proof}
Note that $\bP$ satisfies \ref{PisSing}--\ref{PisOpen} in Discussion \ref{discu:Properties}, see Discussion \ref{discu:properties-k}.
Similar to the proof of Theorem \ref{thm:Nishimura-local-lifting-argument} we may assume $R$ is an integral domain and the theorem holds for all $R$ of smaller dimension.
If $\dim R\leq 1$ then either $I=0$ or $R$ is complete, so we may assume $\dim R\geq 2$.
By \eqref{NsmrP0:P-ify-codim-1} we may assume
$U_{\bP}(R)\neq\{0\}$.
Let $0\neq \fP\in U_{\bP}(R)$ and let $P\in\Min(\fP R^\wedge)$.
By induction hypothesis $R/\fP$ is a $\bP_0$-ring, so $(R^\wedge)_P/\fP (R^\wedge)_P$ is $\bP$.
Thus $(R^\wedge)_P$ is $\bP$ by \ref{Pascends},
so $U_\bP(R^\wedge)\neq\emptyset$.

Let $C=\fc(R^\wedge)$ where $\fc(-)$ is as in Discussion \ref{discu:P0Assign}.
Notice that $C\neq 0$ as $U_\bP(R^\wedge)\neq\emptyset$.
The rest of the proof is identical to that of Theorem \ref{thm:Nishimura-local-lifting-argument},
with \ref{PGroLocalizes} replaced by Theorem \ref{thm:RkGroLocalizes}.
We only need to show that for every prime ideal $M$ of $T=((R^\wedge)_\fp)^*$, $T_M$ is catenary and equidimensional (cf. Remark \ref{Rem:UCeqd-BimpliesBoverpB}), and we only need to check maximal $M$.

As $R^\wedge$ is a quotient of a regular ring, so are $S:=(R^\wedge)_\fp$ and $T$, thus we only need to check $T_M$ is equidimensional.
As $M$ is maximal the map $S_{M\cap S}\to T_M$ induces an isomorphism of completions.
If $S_{M\cap S}$ is equidimensional, so is its completion \citestacks{0AW3}, thus so is $T_M$ \citestacks{0AW4}.
As $S$ is a localization of $R^\wedge$ it now suffices to show $(R^\wedge)_Q$ is equidimensional for all $Q\in\Spec(R^\wedge)$.
Again, as $R^\wedge$ is catenary it suffices to check maximal $Q$, for which equidimensionality follows from \citestacks{0AW3} as $R$ is a universally catenary domain by assumption \eqref{NsmrP0:UC}.
\end{proof} 

Slightly modifying the arguments we get
\begin{Thm}\label{thm:Nishimura-local-lifting-argument-CM1}
    Let $\bP$=``Cohen--Macaulay.''
    
Let $R$ be a Noetherian semilocal ring, $I$ an ideal of $R$.
    Assume 
    \begin{enumerate}
        \item $R$ is $I$-adically complete.
        \item\label{NsmrCM1:quotCM1ring} $R/I$ is a $\bP_1$-ring.
        \item\label{NsmrCM1:UC} $R$ is universally catenary.
    \end{enumerate}
    Then $R$ is a $\bP_1$-ring.
\end{Thm}
\begin{proof}
Let $A$ be any Noetherian ring with $U_{\bP_1}(A)$ open.
As an Artinian ring is Cohen--Macaulay it is clear that there exists finitely many primes $\fp_1,\ldots,\fp_m\in\Spec_1(A)\cap \Ass(A)$ so that $U_{\bP_1}(A)=\Spec(A)\setminus V(\fp_1\cap\ldots\cap\fp_m)$.
%Since $\bP$=``Cohen--Macaulay'' we have $\fp_i\in\Ass(A)$.
Therefore, we can similarly take $\fc(A)=\Ann H^0_{\fb}(A)$ where $\fb=\fp_1\cap\ldots\cap\fp_m$,
so $V(\fc(A))=\Spec(A)\setminus U_{\bP_1}(A)$ and
and $\fc(A)B=\fc(B)$ for all $\bP_1$-maps $A\to B$.
Note that we automatically have $\fc(A)\neq 0$.

The rest of the proof is now verbatim to that of Theorem \ref{thm:Nishimura-local-lifting-argument},
with \ref{PGroLocalizes} replaced by Theorem \ref{thm:RkGroLocalizes}, and with equidimensionality guaranteed by \eqref{NsmrCM1:UC} as seen in the proof of Theorem \ref{thm:Nishimura-local-lifting-argument-P0}.
\end{proof}
\printbibliography
\end{document}